\documentclass[10pt,a4paper]{amsart}
\usepackage[T1]{fontenc}
\usepackage[utf8]{inputenc}
\usepackage[british]{babel}
\usepackage[a4paper,margin=1in]{geometry}
\usepackage{lmodern,microtype}
\usepackage{amsmath,amssymb,amsthm,mathtools,mathrsfs}
\usepackage{tikz-cd}
\usepackage[shortlabels]{enumitem}
\usepackage{xcolor}
\usepackage[colorlinks=true,allcolors=red!70!black]{hyperref}

\numberwithin{equation}{section}
\theoremstyle{plain}
\newtheorem{theorem}{Theorem}[section]
\newtheorem{proposition}[theorem]{Proposition}
\newtheorem{lemma}[theorem]{Lemma}
\newtheorem{corollary}[theorem]{Corollary}
\newtheorem*{theorema}{Theorem A}
\newtheorem*{theoremb}{Theorem B}
\newtheorem*{theoremc}{Theorem C}
\newtheorem*{theoremd}{Theorem D}
\newtheorem*{introcorollary}{Corollary}
\theoremstyle{definition}
\newtheorem{definition}[theorem]{Definition}
\newcommand{\C}{\mathbb C}
\newcommand{\R}{\mathbb R}
\newcommand{\Z}{\mathbb Z}
\newcommand{\N}{\mathbb N}
\newcommand{\Q}{\mathbb Q}
\newcommand{\Span}{\operatorname{Span}}
\newcommand{\rk}{\operatorname{rk}}
\newcommand{\Lie}{\operatorname{Lie}}
\newcommand{\Spec}{\operatorname{Spec}}

\newcommand{\Hom}{\operatorname{Hom}}
\newcommand{\Aut}{\operatorname{Aut}}
\newcommand{\supp}{\operatorname{supp}}
\newcommand{\Cone}{\operatorname{Cone}}
\newcommand{\cO}{\mathcal O}
\newcommand{\cF}{\mathscr F}
\newcommand{\g}{\mathfrak g}
\newcommand{\h}{\mathfrak h}
\newcommand{\T}{\mathbb T}

\author{Maur\'icio Corr\^ea}
\address{Maur\'icio Corr\^ea\\
Universit\`a degli Studi di Bari,
Via E. Orabona 4, I-70125, Bari, Italy}
\email[M. Corr\^ea]{mauricio.barros@uniba.it, mauriciomatufmg@gmail.com}

\author{Jos\'e Seade}
\address{Jos\'e Seade\\
Instituto de Matem\'aticas-Cuernavaca, Universidad Nacional Aut\'onoma de M\'exico.}
\email{jseade@im.unam.mx}

\date{}
\subjclass[2020]{32M25, 37F75, 32S65, 14L24, 14M25, 57R30}
\keywords{singular holomorphic foliation, Poincar\'e dynamics, transverse analytic envelope, resonance semigroup, flat connection, logarithmic residues, Baum--Bott residue, Camacho--Sad index, holonomy}

\title[Transverse analytic envelopes and Dynamics]{Transverse Analytic Envelopes and Semiglobal Dynamics}
\hypersetup{
  pdftitle={Transverse Analytic Envelopes, Holonomy and Residues in Poincare Dynamics},
  pdfauthor={Maur\'icio Corr\^ea and Jos\'e Seade},
  pdfsubject={Singular holomorphic foliations, transverse analytic envelopes, holonomy and residue invariants},
  pdfkeywords={singular foliations, Poincare dynamics, transverse analytic envelope, resonance semigroup, flat connection, logarithmic residues, Baum-Bott residue, Camacho-Sad index, holonomy}
}

\begin{document}

\begin{abstract}
We associate with every singular orbit of a diagonal holomorphic
action by a complex vector group a canonical transverse analytic
envelope, defined by the holomorphic first integrals on a local
transversal. This envelope is a normal affine toric germ. It provides
a holomorphic separation of the transverse leaves even when their
local leaf space fails the $T_1$ separation axiom.

We determine exactly which part of the residual linear action is
recovered by this envelope. Residual actions with a fixed labelled
envelope form Grassmannian families, and the envelope determines the
residual action precisely when the integral resonance relations span
the full complex relation space. For one-dimensional residual actions,
this criterion is reflected in Baum--Bott residues and, in transverse
dimension two, in the Camacho--Sad indices.

Along positive-dimensional singular orbits, the envelopes carry
canonical transport and a flat connection, which may be viewed as a
singular counterpart of transverse holonomy in the regular setting.
The connection is logarithmic when the support weights are independent,
and its residues and holonomy encode semiglobal information not
contained in the pointwise envelope. At full resonance rank, the
labelled envelope together with the logarithmic residues reconstructs
the ambient weight configuration up to linear equivalence.

Thus, the transverse analytic envelope and its canonical flat
connection provide a framework for measuring exactly the information
lost in passing from the transverse dynamics to holomorphic first
integrals, and for determining when this information suffices to
reconstruct the original linear action.\end{abstract}

\maketitle
\section*{Introduction} 
The leaf space of a singular holomorphic foliation need not be a
well-behaved complex analytic space. This raises a basic question:
what analytic structure can replace the leaf space while retaining
meaningful information about the transverse dynamics?

A natural first answer is provided by holomorphic first integrals.
Rather than trying to identify leaves themselves, one identifies
points that cannot be separated by holomorphic functions constant
along the leaves. Locally, when the resulting algebra of first
integrals is represented by an analytic germ, this gives a canonical
holomorphic separation of the leaf space. We call this germ the
\emph{transverse analytic envelope}. It is not a space of leaves;
rather, it is the analytic germ through which all holomorphic
functions constant on the transverse leaves factor.

 The construction of the transverse analytic envelope may be viewed as a natural continuation of the classical theory of holonomy. For a regular foliation, a local transversal already provides a local model for the transverse geometry, and transport along paths in a leaf gives the usual Poincaré holonomy maps between transversals. In the singular setting, an intrinsic transverse foliation and its associated pseudogroup were introduced in \cite{ACSV} as a singular counterpart of the classical transverse holonomy picture. A transversal to a singular leaf generally carries a non-trivial residual foliation, so that the transversal itself can no longer be regarded as a local space of leaves. This leads naturally to the problem of replacing the generally ill-behaved quotient of the transversal by its residual foliation with an analytic transverse model. The transverse analytic envelope introduced here provides precisely such a model: it is the holomorphic separation of the residual leaf space, retaining exactly the information detected by holomorphic first integrals. In the regular case the residual foliation is trivial and the envelope reduces to the transversal itself. From this viewpoint, the transport and monodromy on the envelopes constructed below extend classical Poincaré holonomy: the usual holonomy acts on local transversals, whereas along a singular leaf the corresponding transverse dynamics acts naturally on their analytic envelopes.

This point of view raises two natural questions. How much of the transverse dynamics is retained by the analytic envelope? And, when the singular leaf has positive dimension, what additional structure arises from transporting the envelopes along the leaf? In this work we study these questions for singular foliations defined by diagonal holomorphic actions of complex vector groups. This setting is explicit enough for the transverse envelope and its dynamics to be computed completely, while already exhibiting the essential phenomena: failure of the leaf space to be separated, loss of dynamical information under holomorphic separation, non-trivial transport and holonomy along singular orbits, and the possibility of reconstructing the original linear action from the envelope together with additional transverse data. The resulting picture suggests a general framework in which intrinsic transverse dynamics is first replaced by its analytic envelope, and this pointwise analytic object is then enriched by transport and holonomy along the singular leaf, leading to a semiglobal transverse geometry that may persist in substantially more general settings.

\medskip

We now make this picture explicit for diagonal actions and state the
main results. Let $\g=\C^k$ act diagonally on $\C^n$ by
$$
t\cdot(z_1,\ldots,z_n)
=
\bigl(e^{\Lambda_1(t)}z_1,\ldots,e^{\Lambda_n(t)}z_n\bigr),
$$
where $\Lambda_1,\ldots,\Lambda_n\in\g^*$ span $\g^*$. If $p$ has
support
$$
I=\{j:p_j\ne0\},
$$
put $J=I^c$,
$$
r_I=\dim_\C\Span\{\Lambda_i:i\in I\},
\qquad
\h_I=\bigcap_{i\in I}\ker\Lambda_i.
$$
The transverse model at the orbit of $p$ is the product of a zero
foliation of dimension $|I|-r_I$ and the diagonal $\h_I$-action on
$\C^J$, whose residual weights are
$$
\mu_j=\Lambda_j|_{\h_I},\qquad j\in J.
$$

The holomorphic first integrals of this residual action are governed
by the semigroup of non-negative integral resonance relations
$$
S_I=
\left\{
\alpha\in\N^J:
\sum_{j\in J}\alpha_j\mu_j=0
\right\}.
$$
We set
$$
Q_I=\Spec\C[S_I].
$$


\begin{theorema}
For every singular support $I$, the transverse local leaf space is non-$T_1$ and hence is not a complex analytic quotient.  Nevertheless, the transverse analytic envelope exists and is given by
the universal holomorphic map
$$
\operatorname{id}\times q_I:
(\C^{|I|-r_I}\times\C^J,0)
\longrightarrow
(\C^{|I|-r_I}\times Q_I,(0,o_I))
$$
through which every holomorphic map constant on the transverse leaves factors uniquely.  The target is a normal affine toric germ.  Every foliation-preserving transverse biholomorphism descends uniquely to the envelopes, and the kernel of the descended isotropy action consists exactly of the germs fixing every holomorphic first integral.
\end{theorema}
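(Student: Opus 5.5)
The plan is to reduce everything to the diagonal $\h_I$-action on $(\C^J,0)$, with weights $\mu_j$, because the factor $\C^{|I|-r_I}$ carries the zero foliation and only contributes parameters. I then treat the four assertions in order: the non-$T_1$ property, the algebra of first integrals, the universal property, and descent of biholomorphisms.

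\emph{Non-$T_1$.} "Singular support" should mean that the residual action is nontrivial, so some $\mu_j\neq 0$ with $j\in J$. The residual leaf through $e_j$ is $\{ze_j: z\in\C^*\}$ (near $0$), and its closure contains the fixed point $0$. Hence in the local leaf space the point $[e_j]$ is not closed, because its closure contains $[0]\neq[e_j]$. So the space is not $T_1$. In particular it is not Hausdorff, and therefore not the underlying space of any complex analytic quotient. The same conclusion persists after taking the product with $\C^{|I|-r_I}$.

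\emph{First integrals and toric structure.} Let $f=\sum c_\alpha z^\alpha$ be a germ constant on leaves. Then $f(e^{\mu(t)}z)=\sum c_\alpha e^{\langle\alpha,\mu\rangle(t)}z^\alpha$ must equal $f(z)$ for all small $t$. Comparing coefficients gives $c_\alpha=0$ unless $\alpha\in S_I$. In the product case the same computation applies with coefficients holomorphic in the parameters $w\in\C^{|I|-r_I}$.

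Next I show that $S_I$ is finitely generated and saturated. Write $S_I=L\cap\N^J$, where $L=\ker(\Z^J\to\h_I^*)$. The group $L$ is saturated in $\Z^J$, and $L_\R$ is a rational subspace. So $S_I$ is the set of lattice points of the rational polyhedral cone $L_\R\cap\R_{\ge0}^J$. Gordan's lemma then gives finite generation by some $\alpha^1,\dots,\alpha^m$, and saturation gives normality of $\C[S_I]$. Thus $Q_I$ is a normal affine toric variety. I set $q_I=(z^{\alpha^1},\dots,z^{\alpha^m})$, with $o_I=q_I(0)$.

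\emph{Universal property (main obstacle).} The delicate point is convergence. Every invariant convergent germ is a formal series in the $z^{\alpha}$ with $\alpha\in S_I$, but I must show it is a \emph{convergent} function of the generators; that is, that $q_I^*:\cO_{Q_I,o_I}\to\C\{z\}^{\h_I}$ is surjective. My first approach replaces $\h_I$ by the Zariski closure $T$ of $\exp(\h_I)$ in $(\C^*)^J$. This $T$ is a diagonalizable, hence reductive, group, and $\C\{z\}^{\h_I}=\C\{z\}^{T}$ by the coefficient computation above. I would then invoke analytic invariant theory for reductive groups (Luna, Snow), which states that analytic invariants are pulled back from the algebraic quotient $\Spec\C[z]^T=Q_I$. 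The parametric version over $w$ follows the same way, since $T$ acts trivially on the $w$-factor.

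If a self-contained argument is preferred, I would instead use the Reynolds projection onto $T$-weight zero. This is averaging over the maximal compact subtorus, and it preserves convergence. I would combine it with a division argument in $\C\{y\}$ to lift coefficients with uniform bounds.

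Uniqueness of the factorization follows because $q_I$ is dominant onto $Q_I$: the toric quotient map is surjective. Hence $q_I^*$ is injective on germs, and so is $(\operatorname{id}\times q_I)^*$.

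\emph{Descent of biholomorphisms.} Let $\Phi$ be a foliation-preserving transverse biholomorphism. Then $\Phi$ pulls back first integrals to first integrals. So $(\operatorname{id}\times q_I)\circ\Phi$ is constant on leaves and, by universality, factors uniquely as $\bar\Phi\circ(\operatorname{id}\times q_I)$. Uniqueness gives functoriality, $\overline{\Phi\Psi}=\bar\Phi\bar\Psi$. Applying this to $\Phi^{-1}$ shows that $\bar\Phi$ is a biholomorphism of the envelope germ.

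Finally, $\bar\Phi=\operatorname{id}$ if and only if $\bar\Phi^*$ fixes every element of $\cO_{Q}$. Via the isomorphism $\cO_Q\cong\{\text{first integrals}\}$ given by pullback, this is equivalent to $\Phi^*f=f$ for every holomorphic first integral $f$. This identifies the kernel of the descended isotropy action as claimed.
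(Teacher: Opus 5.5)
Apart from the convergence step, your outline follows the paper's own proof. The matching pieces are:
\begin{itemize}
\item non-$T_1$ from a punctured line $\C^*ae_j$ accumulating at $0$;
\item first integrals as the series supported on $S_I$;
\item Gordan's lemma plus saturation for normality;
\item descent and the kernel description, from uniqueness of factorisation and injectivity of $q_I^*$.
\end{itemize}

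For surjectivity of $q_I^*$ onto first integrals, you invoke Luna/Snow for the Zariski-closure torus $T$. The paper (Theorem~\ref{thm:universal}(i)) instead uses a direct estimate:
\begin{itemize}
\item Fix a Hilbert basis $\beta^1,\dots,\beta^N$ of $S_I$, and for each $\alpha\in S_I$ choose $u(\alpha)\in\N^N$ with $\alpha=\sum_a u_a(\alpha)\beta^a$.
\item Then $|\alpha|\le B\,|u(\alpha)|$, where $B=\max_a|\beta^a|$.
\item Cauchy's bound $|c_\alpha|\le MR^{-|\alpha|}$ shows that $g(y)=\sum_{\alpha\in S_I}c_\alpha y^{u(\alpha)}$ converges for $|y_a|<\rho$ once $\rho<1$ and $\rho^{1/B}<R$, and $q_I^*g=f$.
\end{itemize}
So your ``self-contained'' alternative targets the wrong difficulty. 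Invariance is already used up once the support lies in $S_I$; no averaging or division is needed, only this re-expansion.

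The citation route is valid and more general, covering any reductive linear action. However, the cited theorems concern $T$-invariant functions on $\pi$-saturated open sets, so you must bridge to germs at $0$. A germ convergent on a polydisc $U$ satisfies $f(gu)=f(u)$ whenever $u,gu\in U$, hence extends $T$-invariantly to $T\cdot U$. Moreover, $T\cdot U\supset\pi^{-1}(W)$ for some neighbourhood $W$ of $o_I$: $T\cdot U$ contains the null cone, and $\pi$ maps $T$-stable closed sets to closed sets.

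There are two smaller gaps.

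First, in the paper a singular support means $r_I<k$, not that the residual action is non-trivial, so you must derive the latter. This is exactly where the spanning hypothesis enters. The condition $r_I<k$ gives $\h_I\ne0$. If every $\mu_j$ vanished, then every $\Lambda_\ell$ would vanish on $\h_I$, since the support weights do so by definition. That contradicts $\Span\{\Lambda_\ell\}=\g^*$.

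Second, global surjectivity of $q_I$ does not by itself give injectivity of $q_I^*$ on germs at $o_I$. Argue locally, as the paper does:
\begin{itemize}
\item $q_I$ maps $(\C^*)^J$ submersively onto the dense torus of $Q_I$.
\item Hence $q_I$ of any small neighbourhood of $0$ contains a non-empty open subset of that torus near $o_I$.
\item Local irreducibility of the normal germ $(Q_I,o_I)$ then gives the identity principle.
\end{itemize}
The same injectivity is also what makes the factored map land in $Y$ when the target germ $(Y,y)$ is singular. Pulled-back equations of $Y$ vanish, so they already vanish on $Q_I$.
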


 The first question is how much of the residual dynamics is retained
by the envelope. More precisely, to what extent does the labelled
envelope determine the residual weights?

We first answer this question under a natural genericity condition.
The weight configuration is said to have \emph{uniform maximal rank}
if every subcollection of at most $k$ weights is linearly independent.
Then $r_I=|I|$ at every singular support and the transverse envelope is $Q_I$.

Put $q=n-k$, $d_I=k-|I|$, $M_I=\operatorname{gp}(S_I)$,
$R_I=M_I\otimes_\Z\C$ and
$$
\delta_I=q-\dim Q_I.
$$

\begin{theoremb}
Fix a singular support $I$ and the residual dimension $d_I$. The
linear-equivalence classes of labelled residual configurations of
uniform maximal rank with the same coordinate-labelled envelope are
naturally parametrised, in the Euclidean topology, by a dense
$G_\delta$ subset of
$ \,
\operatorname{Gr}_{\delta_I}(\C^J/R_I). \;
$
The ambient Grassmannian has dimension $\delta_I d_I$.

Consequently, the coordinate-labelled envelope determines the residual
weight configuration if and only if $\delta_I=0$, equivalently if and
only if
$
\dim Q_I=n-k.
$
When $\delta_I>0$, uncountably many pairwise non-equivalent residual
configurations have the same labelled envelope.
\end{theoremb}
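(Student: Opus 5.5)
The proof reduces everything to linear algebra on the relation space. A residual configuration is given by weights $\mu_j\in\h_I^*\cong\C^{d_I}$, $j\in J$, with $|J|=n-|I|=q+d_I$. Under uniform maximal rank the $\mu_j$ span $\h_I^*$.

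\textbf{Step 1 (the relation space).} The configuration is encoded, up to linear equivalence (the action of $GL(\h_I^*)$), by its complex relation space
$$
K=\ker\Bigl(\C^J\to\h_I^*,\ e_j\mapsto\mu_j\Bigr),
$$
which has dimension $|J|-d_I=q$. Two spanning configurations are linearly equivalent exactly when their relation spaces coincide. Hence classes correspond bijectively to $q$-dimensional subspaces $K\subset\C^J$, subject to the open condition coming from uniform maximal rank. Concretely, every $d_I$ of the $\mu_j$ are independent, which means $K$ meets no coordinate subspace of dimension $d_I$ nontrivially. More precisely, the condition must also involve the $\Lambda_i$ with $i\in I$; I would check that it translates into a countable or finite family of Zariski-open conditions on $K$.

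\textbf{Step 2 (the envelope in terms of $K$).} The semigroup is $S_I=K\cap\N^J$, and the labelled envelope $Q_I=\Spec\C[S_I]$, with coordinates labelled by $J$, is determined by $S_I\subset\N^J$ and determines it. Since $Q_I$ is normal, $S_I$ is saturated, so $S_I$ is equivalent to the data of $M_I=\operatorname{gp}(S_I)$ together with the cone. Set $R_I=M_I\otimes\C$; then $R_I\subset K$, and $\dim R_I=\dim Q_I$ because the rank of $M_I$ is the dimension of the toric variety. Thus fixing the labelled envelope fixes $R_I$, and the admissible $K$ are the $q$-dimensional subspaces containing $R_I$. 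These correspond to $\delta_I$-dimensional subspaces of $\C^J/R_I$, a space of dimension $q+d_I-\dim R_I=\delta_I+d_I$. This gives $\operatorname{Gr}_{\delta_I}(\C^J/R_I)$, of dimension $\delta_I d_I$.

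\textbf{Step 3 (the genericity set; main obstacle).} We must characterise which $K\supset R_I$ have exactly $K\cap\N^J=S_I$, and not a larger semigroup. The condition is $K\cap\Z^J\otimes\Q=M_I\otimes\Q$. For each lattice vector $v\in\Z^J\setminus (M_I\otimes\Q)$, the set of $K$ containing $R_I+\C v$ is a proper closed sub-Grassmannian. This is proper precisely when $\delta_I<\delta_I+d_I$, i.e.\ $d_I>0$; if $d_I=0$ the action is trivial and everything is degenerate, so that case must be handled separately. Removing countably many nowhere-dense closed sets, together with the proper Zariski-closed set where uniform maximal rank fails, leaves a dense $G_\delta$ set by Baire.

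The delicate points are two. First, one must show the maximal-rank locus is nonempty, hence dense, inside this Grassmannian; I would exhibit a generic complement directly. Second, one must check that the cone data is automatically recovered, since $K\cap\N^J$ is determined by $K\cap\Q^J$. The final claims then follow. If $\delta_I=0$, the Grassmannian is a point, so $K=R_I$ is determined; equivalently, $\dim Q_I=q=n-k$. If $\delta_I>0$ and $d_I>0$, the Grassmannian is a positive-dimensional manifold, and a dense $G_\delta$ subset of it is uncountable.
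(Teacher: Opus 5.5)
Your reduction to relation spaces and to $\operatorname{Gr}_{\delta_I}(\C^J/R_I)$ is the paper's route, and Steps~1 and~2 are sound. The gap is in Step~3.

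For a $q$-plane $K\supset R_I$, the requirement that its semigroup be exactly $S_I$ is $K\cap\N^J=S_I$. This is \emph{not} equivalent to $K\cap\Q^J=M_I\otimes\Q$; the latter is only sufficient. A configuration can acquire new integral relations of mixed sign without acquiring any new non-negative one. For example, take $d_I=2$ and $|J|=3$. Both $(\mu_1,\mu_2,\mu_1+\mu_2)$ and $(\mu_1,\mu_2,\sqrt2\,\mu_1+\pi\mu_2)$ have uniform maximal rank and $S_I=\{0\}$, hence the same labelled envelope. Yet the first has $K\cap\Q^J=\Q(1,1,-1)\ne0=M_I\otimes\Q$.

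So deleting the loci $\{K\supset R_I+\C v\}$ for all $v\in\Z^J\setminus M_I\otimes\Q$ does give a dense $G_\delta$ set, but a strict subset of the classes with the given labelled envelope. It therefore does not parametrise them. Knowing that the true parameter set contains a dense $G_\delta$ also does not show that it is itself $G_\delta$. The two consequences --- determination iff $\delta_I=0$, and uncountability when $\delta_I>0$ --- do survive, because your set lies inside the true one.

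The repair is to delete only the incidence loci $\Sigma_\alpha=\{P\in\operatorname{Gr}_{\delta_I}(\C^J/R_I):\bar\alpha\in P\}$ for $\alpha\in\N^J\setminus S_I$. For $K\supset R_I$, one has $K\cap\N^J=S_I$ exactly when $K$ avoids all of them. Provided $\bar\alpha\ne0$, each $\Sigma_\alpha$ is empty if $\delta_I=0$ and a proper Schubert variety if $\delta_I>0$.

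The condition $\bar\alpha\ne0$ for all such $\alpha$ is equivalent to $R_I\cap\N^J=S_I$, and this is the fact you must prove. Since $R_I$ is the complexification of the rational subspace $M_I\otimes\Q$, an integral vector of $R_I$ lies in $M_I\otimes\Q$. If $m\alpha\in M_I$ with $m\ge1$ and $\alpha\in\N^J$, then $m\alpha$ is an integral relation among the $\mu_j$. Because $\h_I^*$ is torsion-free, so is $\alpha$, and hence $\alpha\in S_I$.

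Some smaller points:
\begin{itemize}
\item Residual uniform maximal rank involves only the $\mu_j$, not the $\Lambda_i$. It amounts to the finitely many Schubert conditions $K\cap\Span\{e_j:j\in A\}=0$ with $1\le|A|\le d_I$.
\item $d_I\ge1$ holds automatically at a singular support, so that case needs no separate treatment.
\item The given configuration already shows the uniform-maximal-rank locus is non-empty and each of these conditions proper. No separate genericity construction is needed.
\end{itemize}
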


\medskip

Thus $\delta_I$ measures the defect of the labelled envelope as an
invariant of the residual dynamics: the ambiguity is parametrised by
a Grassmannian of dimension $\delta_I d_I$. We call the equality
$$
\dim Q_I=n-k
$$
\emph{full resonance rank}. At full resonance rank this ambiguity
disappears, and the labelled envelope determines the residual weight
configuration.

More generally, for arbitrary spanning residual configurations, the
same Grassmannian parametrises the configurations with fixed labelled
resonance semigroup. The uniform-maximal-rank locus is obtained by
deleting finitely many further incidence subvarieties.

These phenomena already occur within the Poincaré domain. Residual
configurations of uniform maximal rank arise along singular orbits of
ambient Poincaré-domain actions, and the dimension of the envelope
may increase by an arbitrarily prescribed amount from the origin to a
one-dimensional singular stratum. At the opposite extreme, for a
single diagonal vector field in the Poincaré domain the envelope at
an isolated singularity may be a point even when the classical
residues are non-zero.

For one-dimensional residual actions, the relation between the
information carried by the envelope and the classical residues can be
made completely explicit. Let $|I|=k-1$, assume that the support
weights are linearly independent and that $\Lambda_j|_{\h_I}\ne0$
for every $j\notin I$. Then
$Z_I=\{z_j=0:j\in J\}$  is an irreducible component of the singular
locus. Choose $0\ne\xi_I\in\h_I$ and put
$\lambda_j=\Lambda_j(\xi_I)$.

\begin{introcorollary}
Put $q=n-k$.  For every homogeneous symmetric polynomial $\varphi$ of degree $q+1$, the Baum--Bott coefficient of the original foliation along $Z_I$ is
$$
\operatorname{BB}_{\varphi}(\cF;Z_I)
=\frac{\varphi((\lambda_j)_{j\in J})}{\prod_{j\in J}\lambda_j}.
$$
At full resonance rank the coordinate-labelled envelope determines all these coefficients.  If $q=1$ and $J=\{j_1,j_2\}$, the coordinate separatrices have Camacho--Sad indices
$$
\left(\frac{\lambda_{j_2}}{\lambda_{j_1}},
      \frac{\lambda_{j_1}}{\lambda_{j_2}}\right),
$$
and the labelled envelope determines this ordered pair if and only if it has full resonance rank.  Below full resonance rank there are Poincar\'e-domain families with the same labelled envelope, logarithmic connection and monodromy but different transverse Camacho--Sad indices.
\end{introcorollary}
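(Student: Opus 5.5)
The plan is to localise at a generic point of $Z_I$ and reduce everything to the transverse linear model, then invoke Theorem~B for the determination statements.

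\emph{Baum--Bott coefficients.} Near a generic point $p\in Z_I$ (support exactly $I$), the support weights are independent and $|I|=k-1$, so the orbit of $p$ is $(k-1)$-dimensional. The foliation is locally a product of a regular $(k-1)$-dimensional foliation along $Z_I$ and the one-dimensional residual foliation on a transversal $\C^J$. That residual foliation is generated by the diagonal vector field
$$v=\sum_{j\in J}\lambda_j z_j\,\partial_{z_j},$$
which comes from $\xi_I\in\h_I$. The hypothesis $\lambda_j\ne0$ for all $j$ makes the origin an isolated zero of $v$. Since $Z_I$ has codimension $q=n-k+1$, the relevant Baum--Bott residue has degree $q+1=n-k+1$, with the paper's normalisation of $q$. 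By the standard localisation of Baum--Bott residues along a component of the singular set (transverse product structure), the coefficient along $Z_I$ equals the Grothendieck residue of $\varphi(Dv)\,dz/(\lambda_{j}z_{j})_{j}$ at $0$. For a linear diagonal field this is $\varphi(\lambda)/\prod\lambda_j$.

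The main obstacle is this product-structure step: $\Lambda_j$ restricted to the orbit directions must not twist the transverse identification. I would handle it by choosing a splitting $\g=\h_I\oplus\g'$ in which $\g'$ acts freely along $Z_I$ and then straightening, exactly as in the transverse model used for Theorem~A. The value depends on $\xi_I$ only up to scale, and the expression is homogeneous of degree $0$, so it is well defined.

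\emph{Determination at full resonance rank.} If $\dim Q_I=n-k$, then $\delta_I=0$, and Theorem~B says the labelled envelope determines $(\mu_j)_{j\in J}$ up to linear equivalence. Because $\h_I$ is one-dimensional, this means the vector $(\lambda_j)$ is determined up to a common scalar. Every formula above is invariant under such rescaling, so all the coefficients are determined.

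\emph{Case $q=1$.} The transversal is $\C^2$ with $v=\lambda_{j_1}z_1\partial_1+\lambda_{j_2}z_2\partial_2$. The direct computation $\mathrm{CS}=$ (eigenvalue transverse)/(eigenvalue along) gives the indices $(\lambda_{j_2}/\lambda_{j_1},\lambda_{j_1}/\lambda_{j_2})$. For the equivalence, the ordered pair is the projective class $[\lambda_{j_1}:\lambda_{j_2}]$, which is the residual configuration up to linear equivalence.
\begin{itemize}
\item \emph{Full resonance rank.} Theorem~B gives determination.
\item \emph{Below full resonance rank.} Here $\dim Q_I=0$, so $S_I=0$ and $\delta_I=1$. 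Theorem~B then produces uncountably many inequivalent ratios with the same labelled envelope. Concretely, these are all $\lambda_{j_2}/\lambda_{j_1}\notin\Q_{<0}$, and they give distinct ordered pairs.
\end{itemize}

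\emph{Poincaré-domain families.} I take ambient weights with $\lambda_{j_1},\lambda_{j_2}$ ranging over ratios in a small open sector avoiding $\R_{\le0}$. This keeps the configuration in the Poincaré domain. I would check from the later definitions that the logarithmic connection and monodromy depend only on the support weights $\Lambda_i$, $i\in I$, and on the envelope. If so, they stay unchanged along the family while the indices vary.
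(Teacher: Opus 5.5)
Your arguments for the Baum--Bott formula, for determination at full resonance rank, and for the Camacho--Sad \emph{if and only if} follow the paper. You use the transverse diagonal model (your straightening along a complement of $\h_I$ is a valid way to get the local product structure that the localisation step needs), the diagonal residue formula and its invariance under rescaling $\xi_I$, and the case $\delta_I=0$ of Theorem~B (Corollary~\ref{cor:maximal-exact-recovery} in the body). For the converse you use the $\delta_I=1$ family of ratios $\lambda_{j_2}/\lambda_{j_1}\notin\Q_{<0}$.

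The step that does not go through as written is the last one. The check you propose would fail, because the logarithmic connection and its monodromy are \emph{not} determined by the support weights and the envelope. The coefficients $c_i(\alpha)$ are defined by $\sum_{j\in J}\alpha_j\Lambda_j=\sum_{i\in I}c_i(\alpha)\Lambda_i$ for $\alpha\in M_I$, so they depend on the lifted weights $\Lambda_j$, $j\in J$. In Proposition~\ref{prop:same-holonomy-different-CS} the support weight $\Lambda_0$ and the envelope $Q_I\simeq\C$ are fixed while $c_0(\beta)=a+b+N$ varies. Theorem~\ref{thm:prescribed-monodromy} realises every monodromy in the acting torus with fixed support and residual weights.

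What actually makes your family work is simpler. A ratio $\lambda_{j_2}/\lambda_{j_1}\notin\R_{\le0}$ excludes every relation $\alpha_1\lambda_{j_1}+\alpha_2\lambda_{j_2}=0$ with $0\ne\alpha\in\N^2$. Hence $S_I=\{0\}$ and $M_I=0$ for every member, so $\C[S_I]=\C$, $\nabla_I=d$ on $\cO_{L_I}$, and the monodromy is trivial. This is exactly the paper's argument in Proposition~\ref{prop:original-residue-nondetermination}, with residual weights $(1,a)$, $a>0$.

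You also assert without proof that the sector condition keeps the \emph{ambient} configuration in the Poincar\'e domain. This is true because the support weights are independent. Choose $\xi_0\in\g$ with $\Lambda_i(\xi_0)=1$ for $i\in I$, and $c\in\C^*$ with $\operatorname{Re}(c\lambda_j)>0$ for $j\in J$. Then $\xi=\xi_0+Tc\,\xi_I$ works for $T\gg0$. Alternatively, realise the residual weights via Corollary~\ref{cor:rankone-realisation}, as the paper does; this also gives uniform maximal rank.
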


The classical residues therefore express the same determination problem
in familiar terms. In transverse dimension two, the Camacho--Sad
indices measure the ambiguity exactly.

We now turn to the second question: how the transverse envelopes vary
along a positive-dimensional singular orbit. The pointwise envelopes
are not independent. Moving the transversal along the orbit induces
canonical transport between them, and this transport carries additional
information not visible in any single envelope.

For a non-empty singular support $I$, set $V_I=\g/\h_I$ and
$$
K_I=\{t\in\g:\Lambda_i(t)\in2\pi i\Z
\text{ for every }i\in I\},
\qquad
\Gamma_I=K_I/\h_I\subset V_I.
$$
Then
$$
L_I=V_I/\Gamma_I
$$
is naturally identified, through the orbit map, with every singular
orbit having support $I$. The group $\Gamma_I$ records the periods
arising when the transversal is transported around the orbit.

\begin{theoremc}
For every non-empty singular support $I$, the transverse envelopes
admit canonical transport along $L_I$. Under their identification
with the toric residual envelope $Q_I$, this transport defines a
holomorphic action
$$
\tau_I:V_I\longrightarrow\Aut(Q_I)
$$
and a monodromy representation
$$
\rho_I:\Gamma_I\longrightarrow\Aut(Q_I).
$$
The suspension
$$
(V_I\times Q_I)/\Gamma_I\longrightarrow L_I
$$
is a holomorphically trivial analytic bundle carrying a canonical flat
connection with monodromy $\rho_I$. If the support weights are linearly
independent, $L_I\simeq(\C^*)^I$ and this connection has the logarithmic
form
$$ \nabla_I=d-\sum_{i\in I}\frac{dz_i}{z_i}\,\delta_i.
$$
\end{theoremc}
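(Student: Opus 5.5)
The plan is to make the transport completely explicit using the group action itself, and to read off all four assertions (the action $\tau_I$, the monodromy $\rho_I$, holomorphic triviality of the suspension, and the logarithmic form of the connection) from the resulting formula on monomial first integrals. Fix a singular point $p$ with support $I$ and the standard transversal $\Sigma_p$ through $p$, modelled on $\C^{|I|-r_I}\times\C^J$ as in Theorem A. For $t\in\g$ the diffeomorphism $z\mapsto t\cdot z$ is foliation preserving. It maps $\Sigma_p$ to a transversal at $t\cdot p$, and on the $\C^J$-coordinates it acts by $z_j\mapsto e^{\Lambda_j(t)}z_j$. By the descent statement of Theorem A, this map induces a unique isomorphism of envelopes. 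That isomorphism is independent of the choice of transversal, since any two transversals at the same point are related by a foliation-preserving germ, and that germ descends uniquely. This independence is what makes the transport canonical.

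Next I identify the envelope with $Q_I$ via the generators $z^\alpha$, $\alpha\in S_I$, of the algebra of first integrals. The induced map on coordinate rings is then
\[
z^\alpha\longmapsto \exp\Bigl(\textstyle\sum_{j\in J}\alpha_j\Lambda_j(t)\Bigr)\,z^\alpha .
\]
For $t\in\h_I$ the exponent is $\sum_j\alpha_j\mu_j(t)=0$, so $\h_I$ acts trivially. The formula is multiplicative in $t$, so it defines a holomorphic homomorphism $\tau_I:V_I\to T_{Q_I}\subset\Aut(Q_I)$ into the torus of $Q_I$. On the zero-foliation factor $\C^{|I|-r_I}$ I would normalise the coordinates using the orbit directions, so that the transport acts there by the identity. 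I expect this bookkeeping to be the delicate point, because the identification with $Q_I$ has to be made coherently along the whole orbit.

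Elements of $K_I$ fix $p$, since $e^{\Lambda_i(t)}=1$ for every $i\in I$. The orbit map therefore identifies the orbit with $V_I/\Gamma_I=L_I$, and transporting around a loop is the restriction $\rho_I=\tau_I|_{\Gamma_I}$. Form the suspension $(V_I\times Q_I)/\Gamma_I$, with $\gamma$ acting by $(v,x)\mapsto(v+\gamma,\rho_I(\gamma)x)$. Because $\rho_I$ extends to the homomorphism $\tau_I$ on all of $V_I$, the map
\[
(v,x)\longmapsto\bigl([v],\,\tau_I(v)^{-1}x\bigr)
\]
is $\Gamma_I$-invariant and descends to a biholomorphism onto $L_I\times Q_I$. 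This proves holomorphic triviality. The flat connection is the one whose horizontal leaves are the images of the slices $V_I\times\{x\}$. In the trivialisation these leaves are the graphs $x(v)=\tau_I(v)^{-1}x_0$, and the monodromy of the connection is $\rho_I$ by construction.

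For the logarithmic form, assume that $\{\Lambda_i\}_{i\in I}$ is linearly independent. Then $(\Lambda_i)_{i\in I}$ gives an isomorphism $V_I\cong\C^I$, and $z_i=p_ie^{\Lambda_i(t)}$ identifies $L_I$ with $(\C^*)^I$. For $\alpha\in M_I$ the linear form $\sum_j\alpha_j\Lambda_j$ vanishes on $\h_I$, so it can be written uniquely as $\sum_{i\in I}c_i(\alpha)\Lambda_i$ with $c_i(\alpha)\in\C$. Each $c_i$ is additive in $\alpha$, so it defines a semisimple torus derivation $\delta_i$ of $\C[S_I]$ by $\delta_i(z^\alpha)=c_i(\alpha)z^\alpha$. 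Along a horizontal section the coordinate $z^\alpha$ varies as $\prod_i(z_i/p_i)^{c_i(\alpha)}$ (up to the sign convention chosen in the trivialisation). Differentiating gives $d\,z^\alpha=\sum_i c_i(\alpha)\,z^\alpha\,dz_i/z_i$. This is exactly the statement that these sections are flat for $\nabla_I=d-\sum_{i\in I}\frac{dz_i}{z_i}\delta_i$. The branch ambiguity in $(z_i)^{c_i}$ for non-integral $c_i$ is precisely the monodromy $\rho_I$, which confirms the identification.
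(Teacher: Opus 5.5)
Your route is essentially the paper's. The transport is the descent of the ambient map $D_t$ between standard transversals, acting on $\C[S_I]$ by $X^\alpha\mapsto e^{\nu_\alpha(v)}X^\alpha$, i.e.\ by a point of the acting torus. The monodromy is $\rho_I=\tau_I|_{\Gamma_I}$. The suspension is trivialised because $\rho_I$ extends to the homomorphism $\tau_I$ on all of $V_I$; your map $(v,x)\mapsto([v],\tau_I(v)^{-1}x)$ is the mirror of the paper's $[v,q]\mapsto([v],\tau_I(v)q)$. The logarithmic form comes from $\nu_\alpha=\sum_i c_i(\alpha)\Lambda_i$.

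The one difference is that the paper defines the connection directly as the operator $\nabla_I(fX^\alpha)=df\,X^\alpha-f\,\omega_\alpha X^\alpha$ on $\cO_{L_I}\otimes\C[S_I]$, and gets flatness from $d\omega_\alpha=0$. You instead define it as the Ehresmann connection given by the slices $V_I\times\{x\}$. The two agree once the covariant derivative of a fibrewise-regular function is taken to be its derivative along horizontal leaves. With your convention, $X^\alpha$ varies as $\prod_i(z_i/p_i)^{-c_i(\alpha)}$ along horizontal curves. So it is the functions $\prod_i(z_i/p_i)^{c_i(\alpha)}X^\alpha$ that $\nabla_I$ annihilates; you should separate horizontal curves from flat functions rather than appeal to a sign convention.

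One supporting claim is false as stated: your justification of canonicity. Uniqueness of descent gives a unique $\bar h$ for a \emph{given} germ $h$. It does not make the envelope isomorphism independent of which foliation-preserving germ relates two transversals. Take $\gamma\in K_I$. Then $D_\gamma$ fixes $p$, maps the standard transversal $\Sigma_B$ to itself and preserves the foliation. Yet it descends to $\rho_I(\gamma)$, which is non-trivial as soon as $\nu_\alpha(\gamma)\notin 2\pi i\mathbb{Z}$ for some $\alpha\in S_I$. This ambiguity is exactly the monodromy the theorem is about.

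You can repair this in either of two ways:
\begin{itemize}
\item Read ``canonical'' as the paper does: transport induced by the ambient action on the standard transversals.
\item Restrict to germs that move each point within its own local leaf near $p$. Two such germs differ by a germ preserving every transverse local leaf, hence every first integral, so they descend to the same isomorphism by the description of the kernel of descent.
\end{itemize}

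Finally, the zero-foliation bookkeeping you flag as delicate is not needed. The statement concerns only the toric factor $Q_I$, and periods act trivially on the $u$-coordinates anyway.
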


\medskip

Thus passing from a single transversal to the singular orbit enriches
the pointwise envelope by transport, residues and holonomy. The
logarithmic residues give additive data whose exponentials are the
monodromy multipliers. In particular, the connection retains
information that is invisible in the pointwise envelope and is not,
in general, determined by the monodromy alone.

The holonomy also admits an intrinsic groupoid description: it is
represented by the action groupoid
$\Gamma_I\ltimes_{\rho_I}Q_I$ and the quotient stack
$[Q_I/\Gamma_I]$, while $Q_I$ is the analytic affinisation of the
residual quotient stack. If the support is dependent, the periods act
trivially on the factor $\C^{|I|-r_I}$.

Within the Poincaré domain, configurations of uniform maximal rank
realise arbitrary representations of a prescribed free abelian period
group into the acting torus while the pointwise envelope and residual
weights remain fixed.

\medskip
This brings us to the reconstruction problem. How much of the original
ambient action can be recovered once the pointwise envelope is
supplemented by its semiglobal transport data? The answer is again
controlled by the distinction between integral resonance relations
and complex-linear relations. For every support $I$,
$$
\dim Q_I\le n-k-(|I|-r_I)\le n-k.
$$
Equality with $n-k$ forces both inequalities to be equalities.

\begin{theoremd}
Let $I$ be a non-empty singular support of full resonance rank, that is, $\dim Q_I=n-k$.  Then $r_I=|I|$ and
$$
M_I\otimes_\Z\C=\ker\bigl(\mu_I:\C^J\to\h_I^*\bigr).
$$
In particular, the support weights are linearly independent. The
coordinate-labelled   and the logarithmic connection, with
labelled support coordinates, determine the ambient weight
configuration $(\Lambda_1,\ldots,\Lambda_n)$ up to linear equivalence.
\end{theoremd}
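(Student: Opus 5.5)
The plan has two parts: a dimension count that forces both inequalities $\dim Q_I\le n-k-(|I|-r_I)\le n-k$ to be equalities, and then linear algebra that rebuilds $(\Lambda_1,\dots,\Lambda_n)$ in two layers. The restriction to $\h_I$ will come from the envelope, and the complementary directions will come from the residues of $\nabla_I$.

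\emph{Step 1 (dimension count).}
\begin{itemize}
\item Since the $\Lambda_j$ span $\g^*$ and $\Lambda_i|_{\h_I}=0$ for $i\in I$, the residual weights $\mu_j$, $j\in J$, span $\h_I^*$.
\item We have $\dim\h_I=k-r_I$, so $\dim\ker\mu_I=|J|-(k-r_I)=n-k-(|I|-r_I)$.
\item $M_I$ is a subgroup of $\Z^J$, hence free, so $M_I\otimes_\Z\C\to\C^J$ is injective.
\item Its image lies in $\ker\mu_I$, because every $\alpha\in S_I$ satisfies $\sum\alpha_j\mu_j=0$.
\item Since $\dim Q_I=\rk M_I$, this gives $\dim Q_I\le n-k-(|I|-r_I)\le n-k$.
\item Full resonance rank forces $r_I=|I|$, so the support weights are independent. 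It also forces $M_I\otimes\C=\ker\mu_I$, an inclusion of spaces of equal dimension.
\end{itemize}

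\emph{Step 2 (the residual configuration from the envelope).}
\begin{itemize}
\item The coordinate-labelled envelope $Q_I=\Spec\C[S_I]$, with the labelled coordinates $z_j$ on $\C^J$, recovers $S_I\subset\N^J$.
\item Hence it recovers $M_I$, and by Step 1 it recovers $\ker\mu_I\subset\C^J$.
\item Because the $\mu_j$ span $\h_I^*$, the map $\mu_I:\C^J\to\h_I^*$ is surjective. It is therefore determined by its kernel up to an automorphism of $\h_I^*$.
\item So $(\mu_j)_{j\in J}$ is determined up to $GL(\h_I)$. This is the $\delta_I=0$ case of Theorem B, which I could also invoke directly.
\end{itemize}

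\emph{Step 3 (the remaining directions from the residues).}
\begin{itemize}
\item Since the support weights are independent, choose a complement $W$ with $\g=\h_I\oplus W$ and a basis $w_i\in W$, $i\in I$, such that $\Lambda_{i'}(w_i)=\delta_{ii'}$. Then $V_I\cong W\cong\C^I$ and $L_I\simeq(\C^*)^I$.
\item By the construction in Theorem C, $t\in\g$ acts on the residual coordinates by $z_j\mapsto e^{\pm\Lambda_j(t)}z_j$. On a monomial $z^\alpha$ with $\alpha\in S_I$ this is the character $e^{\pm\langle\alpha,(\Lambda_j(t))_j\rangle}$, which depends only on $t$ modulo $\h_I$.
\item Differentiating, the residue $\delta_i$ of $\nabla_I$ along $z_i=0$ is the derivation of $\C[S_I]$ acting on $\chi^m$ by the scalar $\pm\sum_j m_j\Lambda_j(w_i)$. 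Thus the labelled residues determine, for each $i\in I$, the linear functional $m\mapsto\sum_j m_j\Lambda_j(w_i)$ on $M_I\otimes\C=\ker\mu_I$.
\item The vector $(\Lambda_j(w_i))_{j\in J}\in\C^J$ depends on the lift $w_i$. Replacing $w_i$ by $w_i+h$ with $h\in\h_I$ changes it by $(\mu_j(h))_j$, which lies in $\operatorname{im}\mu_I^{\mathsf T}=(\ker\mu_I)^{\perp}$.
\item So the functional on $\ker\mu_I$ determines $(\Lambda_j(w_i))_j$ exactly up to the choice of lift. A change of lift is the linear automorphism of $\g$ that is the identity on $\h_I$ and sends $w_i\mapsto w_i+h_i$.
\end{itemize}

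\emph{Assembly.}
\begin{itemize}
\item In the basis adapted to $\h_I\oplus W$, $\Lambda_i$ for $i\in I$ is the dual coordinate of $w_i$.
\item $\Lambda_j$ for $j\in J$ equals $\mu_j$ on $\h_I$, which is determined up to $GL(\h_I)$ by Step 2. Its values $\Lambda_j(w_i)$ are determined up to the lift ambiguity by Step 3.
\item Combining the $GL(\h_I)$ ambiguity with the lift automorphisms, two configurations with the same labelled envelope and residues differ by an element of $GL(\g)$. This is linear equivalence.
\end{itemize}

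The main obstacle is Step 3. One must identify the residue endomorphisms $\delta_i$ of Theorem C precisely as the derivations $\chi^m\mapsto\langle m,(\Lambda_j(w_i))_j\rangle\chi^m$, with consistent sign and normalisation. One must also check that the labelling of support coordinates matches the dual basis $w_i$. I would first verify this by writing the transport explicitly in coordinates $z_i=e^{\Lambda_i(t)}p_i$ along the orbit, and then reading off $\nabla_I$.
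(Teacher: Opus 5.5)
Your proposal is correct and follows the paper's route. Step~1 is exactly the dimension count of Proposition~\ref{prop:full-resonance-criterion}, and Steps~2--3 match the proof of Theorem~\ref{thm:labelled-reconstruction}: your lift ambiguity $(\mu_j(h))_j\in(\ker\mu_I)^\perp$ is the paper's freedom in choosing the extension $A$ of $C_I^{\C}$ (absorbed by the shear $F_\phi$), your complement $W$ is dual to its splitting $s$, and the sign you worry about is harmless because $\delta_i(X^\alpha)=c_i(\alpha)X^\alpha$ with $c_i(\alpha)=\nu_\alpha(w_i)$ for any lift $w_i$ of the dual basis. The only difference is packaging: you compare two configurations directly, while the paper builds an explicit model $\widehat\Lambda$ on $\C^I\oplus W_I$ and identifies it with the original.
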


Thus full resonance rank is precisely the regime in which the
holomorphic information encoded by the envelope, together with its
semiglobal logarithmic data, is sufficient to reconstruct the ambient
linear action. Below full resonance rank, reconstruction can fail even
when the labelled envelope, logarithmic connection and monodromy are
fixed.

The existence, toric classification and reconstruction results above
are proved for diagonal actions. Some parts of the construction,
however, are intrinsic: the descent of transverse germs and the
envelope groupoid use only the universal factorisation property, and
therefore extend to holomorphic foliations whenever transverse
envelopes with that property exist. This supports the broader
perspective suggested at the beginning: the diagonal case provides a
model for passing from singular transverse dynamics to analytic
envelopes, holonomy transport and semiglobal analytic geometry.

\medskip

Intrinsic transversals to singular leaves are studied by
Androulidakis--Skandalis~\cite{AS},
Androulidakis--Zambon~\cite{AZ}, and
Laurent-Gengoux--Ryvkin~\cite{LGR-neighbourhood}. For diagonal orbit
foliations, Arroyo, Cabrera, Seade and Verjovsky~\cite{ACSV} construct
transverse holomorphic structures and intrinsic pseudogroups; 
Panov~\cite{Panov} studies exponential actions and Gale duality in
relation to leaf spaces and toric quotients. Resonant monomial first
integrals are classical in local dynamics; see
Stolovitch~\cite{Stolovitch}. Morel~\cite{Morel} studies analytic
substitutes for singular leaf spaces, while Mattei--Moussu~\cite{MM}
and León--Scárdua~\cite{LeonScardua} study holomorphic first
integrals; analytic Hilbert quotients are developed by
Snow~\cite{Snow} and Heinzner--Huckleberry~\cite{HH}.

The quotient considered here is  the holomorphic separation of
the transverse leaf space. Our main concern is the information retained
by this separation and the additional semiglobal structure obtained by
transporting it along singular orbits. This leads to the Grassmannian
ambiguity of residual weights, effective holonomy, logarithmic
connections and the reconstruction problem described above. The
comparison with classical residues uses
\cite{BaumBott,CorreaLourenco,CamachoSad,LehmannSuwa,CamachoLehmann}.

The paper follows the progression from local transverse models and
their analytic envelopes, through semiglobal transport and holonomy,
to the reconstruction problem. Section~\ref{sec:residual-models}
establishes the residual model.
Section~\ref{sec:leaf-envelopes} constructs the transverse analytic
envelope and establishes its universal property.
Section~\ref{sec:toric-support} describes its toric fibres and their
variation with the coordinate support, while
Section~\ref{sec:realisation-examples} proves the realisation results
and develops the examples.
Section~\ref{sec:holonomy} develops canonical transport and monodromy,
and Section~\ref{sec:flat-connection} constructs the canonical flat
connection, its logarithmic form and its relation with classical
residues.
Section~\ref{sec:envelope-groupoids} gives the corresponding groupoid
and quotient-stack descriptions. Finally,
Section~\ref{sec:reconstruction} proves reconstruction at full
resonance rank and describes the remaining ambiguity below it.

\section{Residual transverse models}\label{sec:residual-models}

Let $\g=\C^k$ and let $\Lambda_1,\ldots,\Lambda_n\in\g^*$ span $\g^*$.  Consider the holomorphic action $\Phi:\g\times\C^n\to\C^n$ defined by
\begin{equation}\label{eq:action}
\Phi(t,z)=\bigl(e^{\Lambda_1(t)}z_1,\ldots,e^{\Lambda_n(t)}z_n\bigr).
\end{equation}
For $v\in\g$ the infinitesimal vector field of the action is $X_v=\sum_{j=1}^n\Lambda_j(v)z_j\frac{\partial}{\partial z_j}$.  The $\cO_{\C^n}$-span of these vector fields is an involutive coherent
subsheaf of the tangent sheaf and defines a singular holomorphic
foliation $\cF$ in the Stefan--Sussmann sense. Its leaves are the
connected orbits of~\eqref{eq:action}.

For $p=(p_1,\ldots,p_n)$, put $I(p)=\{j:p_j\ne0\}$ and $J(p)=\{1,\ldots,n\}\setminus I(p)$.
For $I\subset\{1,\ldots,n\}$, set $r_I=\dim_{\C}\Span\{\Lambda_i:i\in I\}$ and $\h_I=\bigcap_{i\in I}\ker\Lambda_i$.  Then $\dim\h_I=k-r_I$.

\begin{proposition}\label{prop:leaf-dim}
Let $p\in\C^n$ have support $I$.  Then $\dim_{\C}L_p=r_I$ and $\Lie(\operatorname{Stab}(p))=\h_I$.
In particular, $p$ is regular if and only if $r_I=k$.
\end{proposition}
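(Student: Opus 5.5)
The plan is to compute the stabiliser and its Lie algebra explicitly from the formula~\eqref{eq:action}, and then read off the leaf dimension from the orbit map. Since $\Phi(t,p)_j=e^{\Lambda_j(t)}p_j$ and $p_j=0$ for $j\in J$, the condition $\Phi(t,p)=p$ reduces to $e^{\Lambda_i(t)}=1$ for all $i\in I$. That is,
$$
\operatorname{Stab}(p)=\{t\in\g:\Lambda_i(t)\in 2\pi i\Z\ \text{for all } i\in I\},
$$
which is the group $K_I$ from the introduction. It is a closed subgroup of $\g$. I will show its identity component is $\h_I$. Clearly $\h_I\subset\operatorname{Stab}(p)$. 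Conversely, the map $t\mapsto(\Lambda_i(t))_{i\in I}$ is continuous, so it sends the connected component of $0$ in $\operatorname{Stab}(p)$ into the discrete set $(2\pi i\Z)^I$. Its image is therefore $\{0\}$, and that component lies in $\h_I$. Hence $\Lie(\operatorname{Stab}(p))=\h_I$.

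Equivalently, at the infinitesimal level, $X_v(p)=\sum_j\Lambda_j(v)p_j\,\partial_{z_j}$ vanishes if and only if $\Lambda_i(v)=0$ for every $i\in I$, because $p_i\neq0$ there. So the kernel of the differential at $0$ of the orbit map $t\mapsto\Phi(t,p)$ is exactly $\h_I$. Its image is $\Span\{X_v(p)\}$. In the coordinates $(z_i)_{i\in I}$ this image is the image of the linear map $v\mapsto(\Lambda_i(v)p_i)_{i\in I}$, whose rank is $\dim\Span\{\Lambda_i:i\in I\}=r_I$, since rescaling by the nonzero $p_i$ does not change the rank.

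The leaf $L_p$ is the orbit $\g\cdot p$, the image of an abelian group action. Because $\g$ is abelian, the orbit map is equivariant: $\Phi(t+s,p)=\Phi(t,\Phi(s,p))$. Its differential therefore has constant rank $r_I$ at every point of $\g$. The orbit is then an immersed submanifold, identified with $\g/\operatorname{Stab}(p)$, of dimension $k-\dim\h_I=r_I$, using $\dim\h_I=k-r_I$ as noted before the statement. This is the only step needing a little care: one has to invoke the standard fact that orbits of a Lie group action are immersed submanifolds diffeomorphic to the homogeneous space. Alternatively one can note directly that the orbit lies in $\{z_J=0\}\times(\C^*)^I$ as the image of the exponential of a linear map of rank $r_I$.

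Finally, $p$ is regular exactly when its leaf has the generic (maximal) dimension $k$. That maximum is attained, since the $\Lambda_j$ span $\g^*$ and points with full support have $r_I=k$. So $p$ is regular if and only if $r_I=k$.
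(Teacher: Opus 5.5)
Your argument is correct and is essentially the paper's. In both, the key step is that the evaluation map $v\mapsto X_v(p)$ has only the components $(\Lambda_i(v)p_i)_{i\in I}$, so its kernel is $\h_I$ and its rank is $r_I$; the paper then simply invokes the Stefan--Sussmann orbit theorem to identify the image with $T_pL_p$ and the kernel with the infinitesimal stabiliser. Your extra steps are a harmless, slightly more explicit variant that anticipates Lemma~\ref{lem:leaf-uniformisation}: you compute the full stabiliser $K_I=\{t:\Lambda_i(t)\in2\pi i\Z,\ i\in I\}$ and its identity component $\h_I$, and then use $L_p\simeq\g/\operatorname{Stab}(p)$.
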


\begin{proof}
The evaluation map of the infinitesimal action is $\epsilon_p:\g\to T_p\C^n$, $v\mapsto X_v(p)$.
Since $p_j=0$ for $j\notin I$, its only possibly non-zero components are $\bigl(\Lambda_i(v)p_i\bigr)_{i\in I}$.
Since every $p_i$ with $i\in I$ is non-zero, $\ker\epsilon_p=\bigcap_{i\in I}\ker\Lambda_i=\h_I$,
so $\rk\epsilon_p=k-\dim\h_I=r_I$.
The Stefan--Sussmann orbit theorem identifies the image of $\epsilon_p$ with $T_pL_p$ and its kernel with the infinitesimal stabiliser.  Hence $\dim_\C L_p=r_I$ and $\Lie(\operatorname{Stab}(p))=\h_I$.
\end{proof}

Fix $p\in\C^n$, put $I=I(p)$, $J=I^c$, and let $r=r_I$.  Choose a subset $B\subset I$ with $|B|=r$,
such that $\{\Lambda_b:b\in B\}$ is a basis of $\Span\{\Lambda_i:i\in I\}$.  Define $\Sigma_B=\{z\in\C^n:z_b=p_b\text{ for all }b\in B\}$.  Near $p$, use coordinates $u_i=z_i-p_i$ for $i\in I\setminus B$ and $w_j=z_j$ for $j\in J$ on $\Sigma_B$.

\begin{lemma}\label{lem:slice}
The submanifold $\Sigma_B$ is a geometric transversal to $L_p$ at $p$: $T_p\C^n=T_pL_p\oplus T_p\Sigma_B$.
\end{lemma}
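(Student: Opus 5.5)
The plan is to compute both tangent spaces explicitly in the coordinates $z_1,\ldots,z_n$ and check that they are complementary.

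First, the transversal. $\Sigma_B$ is an affine subspace defined by the $r$ linear equations $z_b=p_b$ for $b\in B$, so it is a complex submanifold of dimension $n-r$. Its tangent space is
$$
T_p\Sigma_B=\{\zeta\in\C^n:\zeta_b=0\text{ for all }b\in B\},
$$
spanned by $\partial/\partial z_i$ for $i\in I\setminus B$ together with $\partial/\partial z_j$ for $j\in J$. The coordinates $u_i$ and $w_j$ are therefore valid coordinates on $\Sigma_B$ near $p$.

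Second, the leaf. By Proposition~\ref{prop:leaf-dim} and its proof, $T_pL_p$ is the image of $\epsilon_p$, which is
$$
\bigl\{\bigl(\Lambda_i(v)p_i\bigr)_{i\in I},\ 0_J\bigr):v\in\g\bigr\},
$$
and it has dimension $r$. Since $\dim T_pL_p+\dim T_p\Sigma_B=r+(n-r)=n$, it suffices to show that $T_pL_p\cap T_p\Sigma_B=0$.

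Third, the intersection. Suppose $X_v(p)\in T_p\Sigma_B$. Then $\Lambda_b(v)p_b=0$ for every $b\in B$, and since $p_b\neq0$ this gives $\Lambda_b(v)=0$ for all $b\in B$. The $\Lambda_b$ with $b\in B$ span $\Span\{\Lambda_i:i\in I\}$, so $\Lambda_i(v)=0$ for every $i\in I$, that is, $v\in\h_I$. Hence $X_v(p)=0$. This gives the direct sum decomposition $T_p\C^n=T_pL_p\oplus T_p\Sigma_B$.

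There is no real obstacle in this argument. The only step that needs care is the third one, where we must pass from the vanishing of the basis functionals $\Lambda_b$, $b\in B$, to the vanishing of all $\Lambda_i$ with $i\in I$. This uses the spanning property of $B$, not just the linear independence of the $\Lambda_b$.

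If "geometric transversal" is also meant to require transversality at nearby points of $\Sigma_B$, the same argument applies with $p$ replaced by a nearby $z$. We only need $z_b\neq0$ for $b\in B$, which holds near $p$, but then the statement is about $T_zL_z+T_z\Sigma_B$ and need not be a direct sum. We will state only the decomposition at $p$, as the lemma does.
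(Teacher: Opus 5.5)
Your proof is correct and follows essentially the same route as the paper. The dimension count $\operatorname{codim}\Sigma_B=|B|=r=\dim L_p$ reduces the claim to $T_pL_p\cap T_p\Sigma_B=0$, and you prove this exactly as the paper does: from $p_b\Lambda_b(v)=0$ for $b\in B$, the spanning property of $\{\Lambda_b\}_{b\in B}$, and the vanishing of the $J$-components at $p$.
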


\begin{proof}
The codimension of $\Sigma_B$ is $|B|=r=\dim L_p$, so it suffices to prove that the two tangent spaces have zero intersection.  Let $X_v(p)\in T_p\Sigma_B$.  For every $b\in B$ its $b$-component vanishes, hence $p_b\Lambda_b(v)=0$.
As $p_b\ne0$, the equality $p_b\Lambda_b(v)=0$ implies $\Lambda_b(v)=0$ for all $b\in B$.  The $\Lambda_b$, for $b\in B$, span the same subspace as the weights $\Lambda_i$, for $i\in I$; hence $\Lambda_i(v)=0$ for all $i\in I$.  The remaining coordinates of $X_v(p)$ vanish because $p_j=0$ for $j\in J$, so $X_v(p)=0$.
\end{proof}

For $j\in J$, set $\mu_j=\Lambda_j|_{\h_I}\in\h_I^*$.  By definition, $\Lambda_i|_{\h_I}=0$ for every $i\in I$.

\begin{theorem}\label{thm:residual-model}
With $p$, $I$, $J$ and $B$ as in Lemma~\ref{lem:slice}, the induced Stefan--Sussmann singular  foliation on $\Sigma_B$ is the product of
\begin{enumerate}[(i)]
\item the zero foliation on the coordinates $u_i$, $i\in I\setminus B$; and
\item the diagonal orbit foliation of the additive group $\h_I$ on $\C^J$, given by
      \begin{equation}\label{eq:resaction}
      v\cdot(w_j)_{j\in J}
      =\bigl(e^{\mu_j(v)}w_j\bigr)_{j\in J}.
      \end{equation}
\end{enumerate}
Equivalently, for $v\in\h_I$ the induced foliation is generated by
\begin{equation}\label{eq:resfields}
Y_v=\sum_{j\in J}\mu_j(v)w_j\frac{\partial}{\partial w_j}.
\end{equation}
There is no component in the $u$-directions.
\end{theorem}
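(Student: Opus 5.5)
The plan is to work with the induced foliation on $\Sigma_B$ in its standard meaning: the germ at $p$ of the sheaf of vector fields tangent to $\Sigma_B$ of the form $\sum_v f_v X_v$ with $f_v$ holomorphic, or equivalently the foliation whose leaves are the connected components of $L\cap\Sigma_B$ near $p$. Since Lemma~\ref{lem:slice} shows $\Sigma_B$ is transversal to $L_p$, both descriptions agree, and we use the orbit description.

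The first step is to decompose $\g=\h_I\oplus W$, where $W$ is a complement of dimension $r$. The map $W\to\C^B$, $w\mapsto(\Lambda_b(w))_{b\in B}$, is an isomorphism, because the $\Lambda_b$ restricted to $W$ are independent: they form a basis of the span of the $\Lambda_i$, $i\in I$, whose common kernel is $\h_I$. For each $i\in I\setminus B$ write $\Lambda_i=\sum_{b\in B}c_{ib}\Lambda_b$.

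The second step is to compute the orbit of a point $z\in\Sigma_B$ near $p$ under $t=v+w$, with $v\in\h_I$ and $w\in W$ small. The $B$-coordinates become $e^{\Lambda_b(w)}p_b$. Staying in $\Sigma_B$ forces $e^{\Lambda_b(w)}=1$, hence $\Lambda_b(w)=0$ for small $w$, hence $w=0$. Then every $\Lambda_i(t)$ with $i\in I$ vanishes, so the coordinates $u_i$ for $i\in I\setminus B$ are unchanged. The coordinates $w_j$ for $j\in J$ transform by $e^{\Lambda_j(v)}w_j=e^{\mu_j(v)}w_j$. So the local pieces of orbits inside $\Sigma_B$ are exactly $\{u\}\times(\h_I\text{-orbit of }w)$, which is the product in (i)–(ii).

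For the sheaf-level statement, we use the implicit function theorem on the map $\g\times\Sigma_B\to\C^n$, $(t,z)\mapsto\Phi(t,z)$. Its restriction to $W\times\Sigma_B$ is a local biholomorphism near $(0,p)$ by Lemma~\ref{lem:slice}. This gives a holomorphic retraction of a neighbourhood onto $\Sigma_B$ along $W$-flows, and the induced foliation is the pushforward of $\cF$ under it. In generator form: a field $\sum_v f_vX_v$ is tangent to $\Sigma_B$ exactly when its $B$-components $\sum f_v\Lambda_b(v)z_b$ vanish on $\Sigma_B$. Since $z_b=p_b\ne0$ there, the $W$-part of the coefficient vector vanishes, so the field lies in the $\cO$-span of the $X_v$ with $v\in\h_I$. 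Restricted to $\Sigma_B$, these fields are precisely $Y_v$ from \eqref{eq:resfields}, because $\Lambda_i(v)=0$ for $i\in I$ kills the $u$-components. This is the claim that there is no $u$-component.

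The main obstacle is making the sheaf-level induced foliation precise without the pointwise orbit argument. Tangency on $\Sigma_B$ only constrains the coefficients modulo the ideal of $\Sigma_B$, and vanishing on $\Sigma_B$ of an $\cO$-combination requires care at the singular points of $\cF$ inside $\Sigma_B$. I would handle it by working with restrictions directly: for each point of $\Sigma_B$ the vectors $X_w$ with $w\in W$ are transverse to $T\Sigma_B$ near $p$, because $\Lambda_b(w)z_b$ with $z_b=p_b$ is injective in $w$. This gives the clean splitting of the coefficients at every point.
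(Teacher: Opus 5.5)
Your generator-level argument in the third paragraph is essentially the paper's proof. You split $\g=W\oplus\h_I$, note that tangency to $\Sigma_B$ forces the $W$-coefficients to vanish on $\Sigma_B$ because the normal-component matrix $\operatorname{diag}(p_b)\bigl(\Lambda_b(v_a)\bigr)$ is constant and invertible, and then observe that the restrictions of the $X_h$, $h\in\h_I$, are exactly the fields $Y_h$, with no $u$-component. Strictly it is the restriction to $\Sigma_B$, not the field itself, that lies in the span of the $X_h$; and since that matrix is invertible at every point of $\Sigma_B$, singular or not, your anticipated obstacle does not arise, so the orbit-level computation and the implicit-function retraction are optional extras rather than needed steps.
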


\begin{proof}
Choose a direct sum decomposition $\g=V\oplus\h_I$, a basis $v_1,\ldots,v_r$ of $V$, and a basis $h_1,\ldots,h_{k-r}$ of $\h_I$.  Since the restrictions $\Lambda_b|_V$, for $b\in B$, form a basis of $V^*$, the matrix $A=\bigl(\Lambda_b(v_a)\bigr)_{b\in B,\,1\le a\le r}$ is invertible.
A local section of $\cF$ has the form
\begin{equation*}
X=\sum_{a=1}^r f_aX_{v_a}+\sum_{c=1}^{k-r}g_cX_{h_c}.
\end{equation*}
Along $\Sigma_B$, $z_b=p_b$ for every $b\in B$.  Since $h_c\in\h_I$, also $\Lambda_b(h_c)=0$.  Therefore
\begin{equation*}
X(z_b)|_{\Sigma_B}
=p_b\sum_{a=1}^r f_a|_{\Sigma_B}\Lambda_b(v_a).
\end{equation*}
Write $f=(f_1,\ldots,f_r)^{\mathsf T}$.  The condition that $X$ be tangent to $\Sigma_B$ is equivalent to
\begin{equation*}
\operatorname{diag}(p_b)_{b\in B}\,A\,f|_{\Sigma_B}=0.
\end{equation*}
Both matrices are invertible: $A$ by construction and $\operatorname{diag}(p_b)$ because $p_b\ne0$ on the support.  Hence $f_a|_{\Sigma_B}=0$ for every $a$.  Conversely, each $X_h$ with $h\in\h_I$ is tangent to $\Sigma_B$, because $X_h(z_b)=\Lambda_b(h)z_b=0$ for $b\in B$.  The induced tangent sheaf on $\Sigma_B$ is generated precisely by the restrictions of the fields $X_h$, $h\in\h_I$.

For $i\in I\setminus B$ and $h\in\h_I$, $X_h(u_i)=X_h(z_i)=\Lambda_i(h)z_i=0$, so the induced foliation is trivial in the $u$-directions.  For $j\in J$, $X_h(w_j)=\Lambda_j(h)w_j=\mu_j(h)w_j$, which is~\eqref{eq:resfields}.  The fields $Y_h$ commute, and the time-one flow of $Y_h$ is $(w_j)\mapsto(e^{\mu_j(h)}w_j)$, as in~\eqref{eq:resaction}.
\end{proof}

\begin{definition}\label{def:MR}
The weight configuration has \emph{uniform maximal rank} if $r_I=\min\{|I|,k\}$ for every $I\subset\{1,\ldots,n\}$.  Equivalently, every subcollection of at most $k$ weights is linearly independent.
\end{definition}

If the configuration has uniform maximal rank, the singular supports
are exactly those with $|I|<k$, and the trivial factor in the
transverse model vanishes.

The slice $\Sigma_B$ depends on $B$, whereas the stabiliser $\h_I$
and the restricted weights
$\mu_j=\Lambda_j|_{\h_I}$ depend only on the support $I$.
Different choices of $B$ give equivalent transverse germs; the
zero-foliation factor has intrinsic dimension $|I|-r_I$, while the
non-trivial residual factor is determined by $\h_I$ and the restricted
weights. Under uniform maximal rank the zero factor disappears. In particular, the quotient and holonomy constructions developed below
depend only on $\h_I$ and the restricted weights $\mu_j$.

\begin{corollary}\label{cor:maxrank-model}
Assume uniform maximal rank.  If $p$ lies in a singular support $I$, so $|I|<k$, then $r_I=|I|$ and the choice $B=I$ is admissible.  In this case $(\Sigma_I,p)\simeq(\C^{I^c},0)$ and the transverse foliation is the residual diagonal action of $\h_I=\bigcap_{i\in I}\ker\Lambda_i$, which has dimension $k-|I|$,
with weights $\mu_j=\Lambda_j|_{\h_I}$, $j\notin I$.
\end{corollary}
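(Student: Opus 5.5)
The corollary follows by specialising Theorem~\ref{thm:residual-model} once we check that the hypotheses of uniform maximal rank force the slice choice $B=I$.

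\textbf{Step 1: $r_I=|I|$.} Since $p$ lies in a singular support $I$, Proposition~\ref{prop:leaf-dim} gives $r_I<k$. Uniform maximal rank (Definition~\ref{def:MR}) gives $r_I=\min\{|I|,k\}$. If $|I|\ge k$ we would get $r_I=k$, which is impossible, so $|I|<k$ and $r_I=|I|$. Equivalently, the weights $\{\Lambda_i:i\in I\}$ are linearly independent.

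\textbf{Step 2: $B=I$ is admissible.} By Step~1, the set $\{\Lambda_i:i\in I\}$ is itself a basis of $\Span\{\Lambda_i:i\in I\}$ and has cardinality $r_I$. So $B=I$ satisfies the requirements in the construction preceding Lemma~\ref{lem:slice}.

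\textbf{Step 3: the slice and the transverse foliation.} With $B=I$ we have $\Sigma_I=\{z:z_i=p_i,\ i\in I\}$. The index set $I\setminus B$ is empty, so there are no $u$-coordinates. The coordinates $w_j=z_j$ for $j\in J=I^c$ then identify $(\Sigma_I,p)$ with $(\C^{I^c},0)$, since $p_j=0$ for $j\in J$. Lemma~\ref{lem:slice} shows that this is a transversal. Theorem~\ref{thm:residual-model}, with an empty zero-foliation factor, then identifies the induced foliation with the diagonal $\h_I$-action~\eqref{eq:resaction} with weights $\mu_j=\Lambda_j|_{\h_I}$ for $j\notin I$.

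\textbf{Step 4: dimension of $\h_I$.} We have $\dim\h_I=k-r_I=k-|I|$, which follows from the formula $\dim\h_I=k-r_I$ stated before Proposition~\ref{prop:leaf-dim}.

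No step is a real obstacle. The only point needing care is Step~1: we must use singularity of $p$ to exclude $|I|\ge k$ before invoking the minimum in the uniform maximal rank condition.
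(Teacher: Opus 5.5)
Your proposal is correct and follows the paper's own proof: uniform maximal rank gives $r_I=|I|$, so $B=I$ is admissible, Theorem~\ref{thm:residual-model} applies with an empty zero-foliation factor, and $\dim\h_I=k-r_I=k-|I|$. The paper leaves implicit your Step~1 point that $|I|<k$ follows from singularity via Proposition~\ref{prop:leaf-dim} before the minimum in Definition~\ref{def:MR} is used; making it explicit is a small but welcome addition.
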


\begin{proof}
Uniform maximal rank gives $r_I=|I|$.  Taking $B=I$ in Theorem~\ref{thm:residual-model}, the trivial factor has dimension $|I|-r_I=0$.  The same theorem identifies the remaining transverse action with that of $\h_I$ on the coordinates indexed by $I^c$, with weights $\Lambda_j|_{\h_I}$.  Finally, $\dim\h_I=k-r_I=k-|I|$.
\end{proof}

\section{Leaf spaces and analytic envelopes}\label{sec:leaf-envelopes}

The residual model gives a topological obstruction independent of invariant theory.

\begin{proposition}\label{prop:nonT1}
Consider the diagonal foliation on $(\C^s,0)$ generated by the vector fields $Y_v$, for $v\in\h$, where $Y_v=\sum_{j=1}^s\mu_j(v)w_j\partial/\partial w_j$.
If at least one weight $\mu_j$ is non-zero, then there exists a non-trivial leaf whose closure contains $0$.  Consequently, the quotient of a sufficiently small neighbourhood by
connected local leaves is not $T_1$.

There is no continuous map $q:U\longrightarrow Y$ from a neighbourhood $U$ of $0$ to a $T_1$
space $Y$ whose fibres are exactly the connected local leaves.
\end{proposition}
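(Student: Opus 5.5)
The plan is to exhibit explicitly a leaf accumulating at the origin, and then deduce the topological consequences. Since some $\mu_{j_0}\neq 0$, choose $v\in\h$ with $\mu_{j_0}(v)\neq0$. After rescaling $v$ we may assume $\operatorname{Re}\mu_{j_0}(v)\ne 0$; replacing $v$ by $-v$ if necessary, we may take $\operatorname{Re}\mu_{j_0}(v)>0$. Consider the point $w^0=\varepsilon e_{j_0}$, with all other coordinates zero and $\varepsilon>0$ small. Its orbit under the one-parameter subgroup $s\mapsto sv$, $s\in\R_{\le 0}$, is
$$
\gamma(s)=\bigl(0,\ldots,e^{s\mu_{j_0}(v)}\varepsilon,\ldots,0\bigr),
$$
which stays in any prescribed ball around $0$ for $s\le 0$ and tends to $0$ as $s\to-\infty$. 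This path lies in a single connected local leaf: it is the integral curve of the vector field $Y_v$ restricted to the neighbourhood, and it is connected. The leaf is non-trivial because $Y_v(w^0)=\mu_{j_0}(v)\varepsilon\,\partial/\partial w_{j_0}\neq0$. Since $0$ is a zero of every $Y_v$, the origin is itself a leaf, namely the point $\{0\}$. So $0\in\overline{L}\setminus L$, where $L$ is the local leaf through $w^0$.

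Next I treat the failure of $T_1$. Let $\pi:U\to U/\!\sim$ be the quotient by connected local leaves, with $U$ a small ball, which is invariant under the real flow in negative time as above since $|\gamma(s)|$ decreases. In a $T_1$ space points are closed, so $\pi^{-1}(\pi(0))=\{0\}$ would have to be closed and saturated, and likewise $\pi^{-1}(\pi(w^0))=L$ would be closed in $U$. But $L$ is not closed, since $0\in\overline L\setminus L$. Hence the quotient is not $T_1$.

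For the last assertion, suppose $q:U\to Y$ is continuous, $Y$ is $T_1$, and the fibres of $q$ are exactly the connected local leaves. Then $q^{-1}(q(w^0))$ is the preimage of a closed point, hence closed in $U$, yet it equals $L$, which is not closed. The same argument works for any neighbourhood $U$: shrink to a ball inside $U$, using $\varepsilon$ small, and note that the segment $\gamma$ for $s\le0$ lies in the ball and is contained in the connected leaf of $U$ through $w^0$.

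The only delicate point, and the one I expect to be the main obstacle, is the notion of connected local leaf: the leaf of $U$ through $w^0$ must be shown to avoid $0$ while containing $\gamma$. It avoids $0$ because $\{0\}$ is a separate orbit (a zero of all fields). It contains $\gamma$ because $\gamma$ is a connected integral curve inside $U$.
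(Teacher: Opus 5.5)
Your proof is correct and is essentially the paper's argument: an orbit along the $w_{j_0}$-axis flows into the origin while staying inside one connected local leaf of $U$, so that leaf is not closed in $U$. This contradicts the closedness of preimages of points of a $T_1$ space (the paper phrases the same step through the constant sequence $q(e^{-m}ae_j)=q(ae_j)$ and uses complex time to get a radial segment rather than your spiral).

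One harmless slip: a ball is not in general invariant under the negative-time real flow of $Y_v$, since coordinates with $\operatorname{Re}\mu_j(v)<0$ grow. You only need that the curve $\gamma$ itself stays in the ball, which is true.
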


\begin{proof}
Choose $j$ with $\mu_j\ne0$ and $v\in\h$ such that $c:=\mu_j(v)\ne0$.  Let $e_j$ denote the $j$th coordinate vector and choose $a\ne0$ sufficiently small.  The one-parameter orbit through $ae_j$ contains $e^{tc}ae_j$ for $t\in\C$.  Taking $t_m=-m/c$ gives $e^{t_mc}ae_j=e^{-m}ae_j\to0$, so the global leaf through $ae_j$ is not closed. 
Suppose that $q:U\to Y$ is continuous and that its fibres are the connected local leaves.  Choose $a$ so small that the radial segment $\{se_j:|s|\le |a|\}$ is contained in $U$.  For each $m\ge1$, the path $s\mapsto e^{-s}ae_j$, $0\le s\le m$, is contained in $U$ and in the orbit through $ae_j$.  The points $ae_j$ and $e^{-m}ae_j$ belong to the same connected local leaf, and $q(e^{-m}ae_j)=q(ae_j)$.  Since $e^{-m}ae_j\to0$, continuity gives $q(0)=q(ae_j)$, although $0$ and $ae_j$ lie on distinct local leaves.  This contradicts the assumption that the fibres are the connected local leaves.
\end{proof}

The diagonal linear case admits a complete criterion.
\begin{corollary}\label{cor:geometric-criterion}
For a diagonal linear transverse foliation at a fixed point, the following are equivalent:
\begin{enumerate}[(i)]
\item its local leaf space is $T_1$;
\item it admits a geometric quotient by a continuous map to a $T_1$ space;
\item it admits a holomorphic quotient map to a complex analytic space
whose fibres are exactly the connected local leaves;
\item all residual weights are zero.
\end{enumerate}
When these conditions hold, the foliation is the zero foliation and the geometric quotient is the transversal itself.
\end{corollary}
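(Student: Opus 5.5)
The plan is to prove the cycle of implications (iv)$\Rightarrow$(iii)$\Rightarrow$(ii)$\Rightarrow$(i)$\Rightarrow$(iv). The closing assertion will fall out of the first step.

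\emph{(iv)$\Rightarrow$(iii).} If all residual weights $\mu_j$ vanish, then every generator $Y_v=\sum_j\mu_j(v)w_j\,\partial/\partial w_j$ is identically zero. The foliation is therefore the zero foliation: its leaves, and its connected local leaves, are the points. The identity map of the transversal onto itself is then a holomorphic quotient map to a complex analytic space, with fibres exactly the connected local leaves. This also proves the final sentence of the corollary, since the geometric quotient is the transversal itself. More generally, in the product model of Theorem~\ref{thm:residual-model}, the zero-foliation factor in the $u$-coordinates contributes only points, so the same conclusion holds for the whole transverse germ.

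\emph{(iii)$\Rightarrow$(ii).} A complex analytic space is Hausdorff, hence $T_1$, and a holomorphic map is continuous. A holomorphic quotient map as in (iii) is therefore a continuous map to a $T_1$ space whose fibres are the connected local leaves.

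\emph{(ii)$\Rightarrow$(i).} Let $q:U\to Y$ be such a map. On the local leaf space $U/\!\sim$, the topology is the quotient topology. The map $q$ factors through a continuous injection $\bar q:U/\!\sim\,\to Y$. Each point of $Y$ is closed, so each point of $U/\!\sim$, being the preimage under $\bar q$ of a closed point, is closed. Thus $U/\!\sim$ is $T_1$. The converse (i)$\Rightarrow$(ii) is immediate, with the projection map itself as the geometric quotient, though the cycle does not need it.

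\emph{(i)$\Rightarrow$(iv).} This is the contrapositive of Proposition~\ref{prop:nonT1}. If some $\mu_j\neq0$, that proposition shows the local leaf space of a sufficiently small neighbourhood is not $T_1$. Concretely, the class of $0$ lies in the closure of the class of $ae_j$, since the points $e^{-m}ae_j$ stay in one connected local leaf and converge to $0$. The only care needed is the meaning of ``local leaf space'' for a germ. It must be read as holding for all sufficiently small neighbourhoods, and Proposition~\ref{prop:nonT1} supplies such a radial segment inside any neighbourhood, by shrinking $a$. I expect this bookkeeping about neighbourhoods to be the only delicate point; the rest is formal.
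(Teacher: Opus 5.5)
Your proposal is correct and follows the paper's proof: the same cycle (iv)$\Rightarrow$(iii)$\Rightarrow$(ii)$\Rightarrow$(i)$\Rightarrow$(iv), using the identity map for the first step, the $T_1$ property of complex analytic spaces for the second, and Proposition~\ref{prop:nonT1} for the last. Your factorisation through a continuous injection $\bar q$ simply spells out the step (ii)$\Rightarrow$(i), which the paper treats as immediate.
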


\begin{proof}
If all residual weights vanish, then the foliation is zero and the identity map gives (iii), so (iv) implies (iii).  Since complex analytic spaces are $T_1$, (iii) implies (ii), which implies (i).  Proposition~\ref{prop:nonT1} gives the implication from (i) to (iv).
\end{proof}

\begin{theorem}\label{thm:singular-noquot}
Let $p$ have support $I$ and suppose that its orbit is singular, equivalently $r_I<k$.  Then the transverse local leaf space at $p$ is not $T_1$.  In particular, no holomorphic map from a transversal to a complex analytic germ can have the connected local leaves as its fibres.
\end{theorem}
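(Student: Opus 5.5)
The plan is to reduce to Proposition~\ref{prop:nonT1} through the residual model of Theorem~\ref{thm:residual-model}. Choose $B\subset I$ as in Lemma~\ref{lem:slice}. By Theorem~\ref{thm:residual-model}, the germ $(\Sigma_B,p)$ with its induced foliation is the product of the zero foliation on $\C^{|I|-r_I}$ (coordinates $u$) and the diagonal $\h_I$-foliation on $\C^J$ with weights $\mu_j=\Lambda_j|_{\h_I}$. Since the orbit is singular, $r_I<k$, so $\h_I\neq0$.

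\textbf{Step 1: some residual weight is non-zero.} Suppose instead that $\mu_j=0$ for all $j\in J$. Then every $\Lambda_j$ with $j\in J$ vanishes on $\h_I$. Every $\Lambda_i$ with $i\in I$ vanishes there by definition. So all the $\Lambda_m$ vanish on $\h_I\neq0$, which contradicts the hypothesis that $\Lambda_1,\dots,\Lambda_n$ span $\g^*$. This is the key input, and it is short.

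\textbf{Step 2: non-$T_1$ on the product.} Apply Proposition~\ref{prop:nonT1} to the factor $(\C^J,0)$. It gives $j$ with $\mu_j\ne0$, and points $ae_j$ joined to $e^{-m}ae_j\to0$ by paths inside the local leaf. On the transversal, use the points $(0,ae_j)$ in $u,w$ coordinates. The path $s\mapsto (0,e^{-s}ae_j)$ lies in $\Sigma_B$ near $p$, and it is tangent to the generator $Y_v$, which has no $u$-component. So $(0,ae_j)$ and $(0,e^{-m}ae_j)$ lie in the same connected local leaf.

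Now let $q$ be a continuous map to a $T_1$ space whose fibres are the local leaves. Continuity forces $q(p)=q(0,ae_j)$. But $p$ (that is, $u=0,w=0$) is a fixed point of the residual action, hence its own leaf, while $(0,ae_j)\neq p$. This is a contradiction, exactly as in the proof of Proposition~\ref{prop:nonT1}. Equivalently, the leaf of $(0,ae_j)$ is not closed, since its closure contains $p$. So the leaf space is not $T_1$.

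\textbf{Step 3: the analytic statement.} A complex analytic germ is Hausdorff, hence $T_1$. A holomorphic map is continuous. So Step 2 excludes any holomorphic map whose fibres are the connected local leaves. This is the (iii)$\Rightarrow$(i) direction of Corollary~\ref{cor:geometric-criterion}.

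The only point needing care is that the product structure keeps the degenerating path inside a single local leaf of the full transversal, not merely of the factor. This holds because the generators have no $u$-components. Independence of the choice of $B$ is not needed, since any admissible $B$ gives a transversal exhibiting the failure.
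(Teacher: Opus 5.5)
Your proof is correct and follows the paper's argument. The same spanning argument shows that some residual weight $\Lambda_j|_{\h_I}$ is non-zero, and then Theorem~\ref{thm:residual-model} together with Proposition~\ref{prop:nonT1} gives a transverse leaf accumulating at $p$. Your check that the degenerating path stays in a single leaf of the product, because the generators have no $u$-component, and your remark that analytic spaces are $T_1$ only spell out steps the paper leaves implicit.
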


\begin{proof}
Since $r_I<k$, the stabiliser algebra $\h_I$ is non-zero.  If every residual weight $\Lambda_j|_{\h_I}$, $j\notin I$, vanished, then every ambient weight would vanish on $\h_I$: the support weights do so by definition, and the remaining weights by assumption.  Since the ambient weights span $\g^*$, this would force $\h_I=0$, a contradiction.  Thus the residual action has a non-zero weight.  Theorem~\ref{thm:residual-model} identifies the transverse foliation with a product of a zero foliation and this residual diagonal foliation, and Proposition~\ref{prop:nonT1} gives a non-trivial transverse leaf whose closure meets the origin.  Hence the local leaf space is not $T_1$.
\end{proof}

Theorem~\ref{thm:singular-noquot} excludes a complex analytic quotient whose fibres are the connected local leaves.  Holomorphic first integrals provide instead a canonical holomorphic separation characterised by a
universal mapping property. Related results on holomorphic first integrals and singular leaf spaces may be found in Mattei--Moussu~\cite{MM}, Morel~\cite{Morel}, and Le\'on--Sc\'ardua~\cite{LeonScardua}.  In the diagonal model the invariant algebra has an explicit monomial description.

Let $\h$ be a finite-dimensional complex vector space and let $\mu_1,\ldots,\mu_s\in\h^*$.
The associated exponential subgroup is $H=\{(e^{\mu_1(v)},\ldots,e^{\mu_s(v)}):v\in\h\}\subset\T_s:=(\C^*)^s$.  Let $G\subset\T_s$ be its Zariski closure.  Since $H$ is connected, its Zariski closure $G$ is connected. As a
connected algebraic subgroup of $(\C^*)^s$, $G$ is an algebraic torus.

Define the relation lattice
\begin{equation}\label{eq:L}
L=\left\{a\in\Z^s:\sum_{j=1}^sa_j\mu_j=0\right\},
\end{equation}
and the non-negative resonance semigroup $S=L\cap\N^s$.

\begin{lemma}\label{lem:L-saturated}
The lattice $L\subset\Z^s$ in~\eqref{eq:L} is saturated.  The semigroup $S=L\cap\N^s$ is finitely generated and saturated in its group completion $\operatorname{gp}(S)$.
\end{lemma}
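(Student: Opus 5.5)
Saturation of $L$ is immediate from its definition: $L$ is the kernel of the group homomorphism $\Z^s\to\h^*$, $a\mapsto\sum_j a_j\mu_j$, and $\h^*$ is a complex vector space, hence torsion-free. So if $a\in\Z^s$ and $ma\in L$ for some integer $m\ge1$, then $m\sum_j a_j\mu_j=0$ in $\h^*$. Dividing by $m$ gives $a\in L$. Equivalently, $\Z^s/L$ embeds in $\h^*$ and is therefore torsion-free. This is exactly saturation of $L$ in $\Z^s$.

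For finite generation we use Gordan's lemma. Let $L_\R=L\otimes_\Z\R\subset\R^s$. This is a rational linear subspace, since $L$ is a lattice spanned by integral vectors. The set $\sigma=L_\R\cap\R_{\ge0}^s$ is the intersection of a rational subspace with the rational polyhedral cone $\R^s_{\ge0}$. Hence $\sigma$ is a rational polyhedral cone; it is cut out by the equations defining $L_\R$ together with the inequalities $x_j\ge0$, all with rational coefficients. The one point needing care is the equality
\[
S=L\cap\N^s=\sigma\cap\Z^s.
\]
The inclusion ``$\subseteq$'' is clear. For ``$\supseteq$'', a lattice point $a\in\sigma\cap\Z^s$ lies in $L_\R\cap\Z^s$. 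This set equals $L$, because $L$ is saturated: a rational basis of $L_\R\cap\Q^s$ may be taken inside $L$, so every integral point of $L_\R$ has a multiple in $L$, and saturation then puts the point itself in $L$. Gordan's lemma now applies to the rational polyhedral cone $\sigma$ and shows that $\sigma\cap\Z^s$, and hence $S$, is a finitely generated monoid.

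For saturation of $S$ in $\operatorname{gp}(S)$, take $x\in\operatorname{gp}(S)$ and suppose $mx\in S$ for some integer $m\ge1$. Since $\operatorname{gp}(S)\subset L$, we have $x\in\Z^s$. Moreover $mx\in\N^s$, so every coordinate satisfies $mx_j\ge0$, and therefore $x_j\ge0$, giving $x\in\N^s$. Also $x\in L$. Hence $x\in L\cap\N^s=S$. Note that this step does not even use the saturation of $L$.

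The only step that is not purely formal is the identification of $S$ with the integral points of a rational polyhedral cone, which is what Gordan's lemma requires. This rests on two facts: $L_\R$ is a rational subspace, and $L$ is saturated, so that $L_\R\cap\Z^s=L$. Once these are in place, the remaining arguments are one-line checks.
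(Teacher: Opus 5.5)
Your proof is correct and follows the paper's argument. Saturation of $L$ comes from torsion-freeness of $\h^*$, finite generation from Gordan's lemma, and saturation of $S$ in $\operatorname{gp}(S)$ from the coordinatewise sign argument. The only difference is where you apply Gordan's lemma: the paper uses the cone $L_{\R}\cap\R^s_{\ge0}$ with the lattice $L$ itself, so $S=C\cap L$ holds by definition and the identification $L_{\R}\cap\Z^s=L$ you verify is not needed; and, as you observe, the paper's appeal to saturation of $L$ in the final step is also avoidable, since $\operatorname{gp}(S)\subset L$.
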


\begin{proof}
If $ma\in L$ for some $m\ge1$ and $a\in\Z^s$, then $0=\sum_jma_j\mu_j=m\sum_ja_j\mu_j$.
Since $\h^*$ is a complex vector space, it has no additive torsion, so $a\in L$.  Hence $L$ is saturated.
Let $L_\R=L\otimes_\Z\R$.  Then $C=L_\R\cap\R_{\ge0}^s$ is a rational polyhedral cone in the real vector
space $L_\R$.  Gordan's lemma~\cite{Fulton} implies that $S=C\cap L$ is finitely generated. 
Finally, suppose $a\in\operatorname{gp}(S)$ and $ma\in S$ for some $m\ge1$.  Since $ma$ has non-negative
coordinates, so does $a$.  Also $ma\in L$, so $a\in L$ by the first paragraph.  Therefore $a\in
L\cap\N^s=S$.
\end{proof}

The lattice $L$ also consists of the exponents of characters that are trivial on the exponential subgroup and on its Zariski closure.
\begin{lemma}\label{lem:chars}
For $a\in\Z^s$, let $\chi_a:\T_s\to\C^*$ be the character $\chi_a(t)=t_1^{a_1}\cdots t_s^{a_s}$.
Then the following are equivalent:
\begin{enumerate}[(i)]
\item $a\in L$;
\item $\chi_a|_H\equiv1$;
\item $\chi_a|_G\equiv1$.
\end{enumerate}
\end{lemma}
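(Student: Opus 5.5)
We prove the cycle of implications (i)$\Rightarrow$(ii)$\Rightarrow$(iii)$\Rightarrow$(i). The main tool is the formula obtained by evaluating $\chi_a$ on a point of $H$: for $v\in\h$,
\[
\chi_a\bigl(e^{\mu_1(v)},\ldots,e^{\mu_s(v)}\bigr)=\exp\Bigl(\sum_{j=1}^s a_j\mu_j(v)\Bigr).
\]

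\emph{(i)$\Rightarrow$(ii).} If $a\in L$, the exponent $\sum_ja_j\mu_j(v)$ vanishes identically. Hence $\chi_a\equiv1$ on $H$.

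\emph{(ii)$\Rightarrow$(iii).} The kernel of the character $\chi_a$ is the Zariski closed subgroup $\{t\in\T_s:\chi_a(t)=1\}$. If it contains $H$, it also contains the Zariski closure $G$ of $H$. Hence $\chi_a|_G\equiv1$.

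\emph{(iii)$\Rightarrow$(i).} Since $H\subset G$, we get $\chi_a|_H\equiv1$. This means $\exp(\ell(v))=1$ for all $v\in\h$, where $\ell=\sum_ja_j\mu_j\in\h^*$. Thus the linear functional $\ell$ takes values in $2\pi i\Z$ on all of $\h$. Since $\h$ is connected and $\ell$ is continuous, the image $\ell(\h)$ is a connected subset of the discrete set $2\pi i\Z$ containing $0$. Therefore $\ell\equiv0$. Equivalently, and avoiding topology: if $\ell(v)=2\pi i m\ne0$, then $\ell(v/2)=\pi i m$ would also have to lie in $2\pi i\Z$, so $m$ is even, and iterating the halving gives a contradiction. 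We conclude $\sum_ja_j\mu_j=0$, that is, $a\in L$.

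The only step requiring any care is this last one: passing from triviality of the exponential on $\h$ to vanishing of the linear form. It is settled by the connectedness (or scaling) argument. Everything else is formal.
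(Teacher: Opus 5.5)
Your proof is correct and follows essentially the same route as the paper: evaluate $\chi_a$ on $H$ via the exponential formula, then use that $\{\chi_a=1\}$ is Zariski closed to pass between $H$ and $G$. The only minor difference is the last step: the paper gets $\sum_j a_j\mu_j=0$ by differentiating $\exp(\ell(v))\equiv1$ at $v=0$, while you use a connectedness (or halving) argument. Both are valid.
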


\begin{proof}
For $v\in\h$, $\chi_a(e^{\mu_1(v)},\ldots,e^{\mu_s(v)})=\exp(\sum_{j=1}^sa_j\mu_j(v))$.
If $a\in L$, then this is identically one.  Conversely, if it is identically one, then differentiating at $v=0$ gives $\sum_ja_j\mu_j=0$, so $a\in L$.  Hence (i) and (ii) are equivalent.  Since $G$ is the Zariski closure of $H$ and the equation
$\chi_a=1$ is algebraic, (ii) and (iii) are equivalent.
\end{proof}

Let $\cO_{\C^s,0}^{\cF}$ denote the germs of holomorphic first integrals of the diagonal foliation.
\begin{proposition}\label{prop:first-integrals}
A germ $f(w)=\sum_{\alpha\in\N^s}c_\alpha w^\alpha\in\cO_{\C^s,0}$ is a first integral if and only if every exponent $\alpha$ with $c_\alpha\ne0$ belongs to $S$.
Consequently
\begin{equation}\label{eq:first-int-ring}
\cO_{\C^s,0}^{\cF}
=\cO_{\C^s,0}^{G}.
\end{equation}
\end{proposition}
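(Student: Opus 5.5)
The plan is to characterise first integrals as the germs annihilated by all the vector fields $Y_v$, and then compute the action of $Y_v$ on monomials. A holomorphic germ $f$ is constant on the connected local leaves exactly when $Y_v f=0$ for every $v\in\h$. One direction is clear: the leaves are integral manifolds of the $Y_v$. For the other, note that the local flows of the $Y_v$ sweep out the connected local leaves. Each $Y_v$ is diagonal, so $Y_v(w^\alpha)=\bigl(\sum_j\alpha_j\mu_j(v)\bigr)w^\alpha$. Since $f$ converges near $0$, we may apply $Y_v$ termwise and obtain
$$Y_vf=\sum_{\alpha}c_\alpha\Bigl(\sum_j\alpha_j\mu_j(v)\Bigr)w^\alpha.$$
By uniqueness of Taylor coefficients, $Y_vf=0$ for all $v$ if and only if $c_\alpha\sum_j\alpha_j\mu_j(v)=0$ for all $\alpha$ and all $v$. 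That is, whenever $c_\alpha\ne0$ we need $\sum_j\alpha_j\mu_j=0$ in $\h^*$, which together with $\alpha\in\N^s$ means exactly $\alpha\in S$. This proves the first claim.

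For \eqref{eq:first-int-ring}, I will compare both rings with the same monomial description. $G$ acts on germs by $(g\cdot f)(w)=f(g^{-1}w)$. This makes sense for $g$ near the identity, or for all $g$ on a polydisc, since $G\subset\T_s$ acts diagonally. It might enlarge absolute values, but we can work with the germ restricted to a polydisc, and the condition is coefficientwise anyway. A germ is $G$-invariant if and only if each coefficient satisfies $c_\alpha\chi_\alpha(g)=c_\alpha$ for all $g\in G$, again by comparing Taylor coefficients of $f(gw)=\sum c_\alpha\chi_\alpha(g)w^\alpha$. So $G$-invariance means $\chi_\alpha|_G\equiv1$ whenever $c_\alpha\neq0$, and by Lemma~\ref{lem:chars} this is equivalent to $\alpha\in L$. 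Intersecting with $\N^s$ gives $\alpha\in S$. Both rings therefore equal the ring of convergent series supported on $S$, which gives equality.

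The main point of care is the first step: identifying "first integral" (constant on connected local leaves) with the infinitesimal condition $Y_vf=0$. The forward direction follows by differentiating along the curves $t\mapsto\exp(tY_v)w$, which lie in a leaf. The converse follows because local leaves are generated by concatenating these flow lines inside the neighbourhood. A secondary point is making the $G$-action on germs precise. I will sidestep it by defining $\cO^{G}_{\C^s,0}$ through the coefficientwise condition $f(gw)=f(w)$ on a small polydisc, which the diagonal action preserves for $|g_j|\le1$. Invariance under such a Zariski-dense subset suffices, via Lemma~\ref{lem:chars}(ii)$\Leftrightarrow$(iii) applied to $H$.
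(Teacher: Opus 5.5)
Your proof is correct and is essentially the paper's argument: termwise application of $Y_v$ with uniqueness of Taylor coefficients gives the first claim, and Lemma~\ref{lem:chars} gives the identification with $G$-invariants. For the converse you compare the coefficients of $f\circ g$ directly and apply (iii)$\Rightarrow$(i), whereas the paper restricts to $H\subset G$ and differentiates; this is an inessential variation.

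The polydisc and Zariski-density detour in your last paragraph is unnecessary. Each $g\in G$ is a linear automorphism fixing $0$, so $f\circ g$ is a well-defined germ with Taylor series $\sum_\alpha c_\alpha\chi_\alpha(g)w^\alpha$, and the comparison works for all of $G$ at once. As written, the detour also cites the density argument for $H$, but $H$ need not lie in $\{g\in G:|g_j|\le1\}$.
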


\begin{proof}
Choose a polydisc on which the power series for $f$ converges.  Termwise differentiation on a smaller polydisc gives, for $v\in\h$,
\begin{equation*}
Y_v(f)=\sum_{\alpha\in\N^s}c_\alpha
\left(\sum_{j=1}^s\alpha_j\mu_j(v)\right)w^\alpha.
\end{equation*}
The monomials $w^\alpha$ are linearly independent as holomorphic germs.  Hence $Y_v(f)=0$ for every $v\in\h$ if and only if $\sum_{j=1}^s\alpha_j\mu_j(v)=0$ for each $\alpha$ with $c_\alpha\ne0$ and every $v\in\h$.  The last condition is equivalent to $\sum_j\alpha_j\mu_j=0$ in $\h^*$, that is, to $\alpha\in L\cap\N^s=S$.  Since the connected leaves are the integral manifolds of the fields $Y_v$, this is also equivalent to $f$ being constant on local leaves. 
If $\alpha\in S$, then Lemma~\ref{lem:chars} shows that the character of the monomial $w^\alpha$ is trivial on $G$.  Every convergent power series supported on $S$ is $G$-invariant as a germ.  Conversely, a $G$-invariant germ is $H$-invariant because $H\subset G$, and differentiating the $H$-action at the identity gives $Y_v(f)=0$ for every $v\in\h$.  Hence~\eqref{eq:first-int-ring} holds.
\end{proof}

The affine algebraic quotient of $\C^s$ by $G$ is
\begin{equation}\label{eq:Q}
Q=\C^s\!\!/G=\Spec\C[S].
\end{equation}
Since $G$ is reductive, the analytification of this affine quotient is the analytic Hilbert quotient; see Snow~\cite{Snow} and Heinzner--Huckleberry~\cite{HH}.  The symbol $Q$ will denote both the affine quotient and its analytification.

\begin{definition}\label{def:envelope}
The germ $(Q,q(0))$ of~\eqref{eq:Q} is called the \emph{transverse analytic envelope} of the diagonal foliation at the fixed point.
\end{definition}

The Poincar\'e--Siegel distinction concerns the real convex position of the weights, not their linear independence.
\begin{definition}\label{def:Poincare}
The configuration is in the \emph{Poincar\'e domain} if there exists $\xi\in\g$ such that $\operatorname{Re}\Lambda_j(\xi)>0$ for every $j$, equivalently if $0\notin\operatorname{Conv}_{\R}\{\Lambda_1,\ldots,\Lambda_n\}$.  The complementary case is the \emph{Siegel domain}.
\end{definition}

The Poincar\'e condition excludes every non-zero non-negative integral relation among the full weights.  The converse need not hold: Poincar\'e separation is convex-geometric, whereas integral resonance is arithmetic.

For a single diagonal vector field $X=\sum_{j=1}^n\lambda_jz_j\partial/\partial z_j$ with an isolated singularity at the origin, the envelope represents the holomorphic first integrals.  A monomial $z^\alpha$ is a first integral precisely when $\sum_j\alpha_j\lambda_j=0$, so $Q=\Spec\C[S]$ with $S=\{\alpha\in\N^n:\sum_j\alpha_j\lambda_j=0\}$.  In the Poincar\'e domain, $S=\{0\}$ and $Q$ is a point, so the envelope need not determine the isolated singularity.  In higher rank the same diagonal model occurs transversely along positive-dimensional singular orbits, where the envelope also carries transport.

The universal factorisation property characterises the envelope.  The orbit-closure assertion used in its proof is the reductive affine quotient theorem; see~\cite{HH,GreMie}.
\begin{theorem}\label{thm:universal}
Let $q:\C^s\longrightarrow Q$ be the affine quotient map.  Then the following assertions hold.
\begin{enumerate}[(i)]
\item $q^*\cO_{Q,q(0)}=\cO_{\C^s,0}^{\cF}$;
\item every germ of holomorphic map $F:(\C^s,0)\longrightarrow(Y,y)$ which is constant along the local leaves
      factors uniquely through $q$;
\item for $x,y\in\C^s$, $q(x)=q(y)$ if and only if $\overline{G\cdot x}\cap\overline{G\cdot y}\ne\varnothing$, and every fibre contains a unique closed $G$-orbit.
\end{enumerate}
\end{theorem}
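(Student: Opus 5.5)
The plan is to reduce everything to invariant theory of the reductive group $G$ acting linearly on $\C^s$. Assertion (iii) is the standard reductive affine quotient theorem, and (i) and (ii) then follow by analytic arguments.

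\textbf{Assertion (i).} I would first establish (i) by comparing two descriptions.
\begin{enumerate}[(a)]
\item The coordinate ring of $Q$ is $\C[S]$, which is generated by the finitely many monomials $w^{\alpha_1},\ldots,w^{\alpha_m}$ for generators $\alpha_1,\ldots,\alpha_m$ of $S$ (Lemma~\ref{lem:L-saturated}). So $q=(w^{\alpha_1},\ldots,w^{\alpha_m})$ embeds $Q$ in $\C^m$, and $q^*\cO_{Q,q(0)}$ is the image of convergent power series in $y_1,\ldots,y_m$ composed with these monomials.
\item By Proposition~\ref{prop:first-integrals}, $\cO^{\cF}_{\C^s,0}$ consists of the convergent series supported on $S$.
\end{enumerate}
The inclusion $\subset$ is immediate. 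For $\supset$ I would take an invariant germ $f=\sum_{\alpha\in S}c_\alpha w^\alpha$ and write it as a convergent series in the $w^{\alpha_i}$. Formally this is easy: choose, for each $\alpha\in S$, a fixed decomposition $\alpha=\sum n_i\alpha_i$. Convergence is the delicate point. The coefficients $c_\alpha$ are bounded by $C\rho^{-|\alpha|}$, but composing with the monomials of degree $|\alpha_i|$ requires a bound on the decomposition.

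Rather than fight this directly, I would invoke the analytic Hilbert quotient theorem of Snow and Heinzner--Huckleberry: for reductive $G$ acting on a Stein space, $(\pi_*\cO)^G=\cO_Q$. Equivalently, Luna-type averaging over the maximal compact torus $K=(S^1)^{\dim G}\subset G$ gives a Reynolds operator that is continuous in the Fréchet topology. Combined with (a), this identifies the invariant germs with pull-backs of germs on $Q$. An alternative elementary route would choose decompositions with $\sum n_i\le c|\alpha|$ (possible since $S$ is finitely generated and the degree is a positive linear form) and estimate directly. I expect this convergence step to be the main obstacle.

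\textbf{Assertion (iii).} This is the algebraic statement about affine quotients by a reductive group, cited from \cite{HH,GreMie}: invariants separate disjoint closed invariant sets, and each fibre contains a unique closed orbit. I would just quote it for the torus $G$.

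\textbf{Assertion (ii).} Given $F:(\C^s,0)\to(Y,y)$ constant on local leaves, I would embed $(Y,y)$ in some $(\C^N,0)$, so that the components $F_\ell$ are first integrals. By (i), $F_\ell=g_\ell\circ q$ with $g_\ell\in\cO_{Q,q(0)}$.
\begin{itemize}
\item \emph{Image in $Y$.} Let $h$ be a local equation of $Y$. Then $h\circ g\circ q=h\circ F=0$. Since $q$ is surjective onto a neighbourhood of $q(0)$ (from (iii), or because $q$ is open onto its image for torus quotients), and $Q$ is reduced, it follows that $h\circ g=0$. So $g$ lands in $Y$.
\item \emph{Uniqueness.} This follows from the same surjectivity: $q^*$ is injective on germs because $Q$ is reduced and $q$ is surjective near $q(0)$.
\end{itemize}
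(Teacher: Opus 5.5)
\emph{Gap in (i).} Your outline has the same shape as the paper's proof: (iii) quoted from reductive quotient theory, (i) via the monomial embedding of $Q$, and (ii) via an embedding of $(Y,y)$ plus injectivity of $q^*$. However, the one genuinely analytic step, convergence in (i), is never supplied, and none of your three routes around it works as stated.

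\begin{itemize}
\item The Snow/Heinzner--Huckleberry identity $\cO_Q=(q_*\cO_{\C^s})^G$ is about $G$-invariant functions on saturated open sets $q^{-1}(U)$. These contain the entire null cone $q^{-1}(q(0))$. An invariant germ at the single point $0$ would first have to be extended $G$-invariantly to such a set. You do not address this, and it needs the same local surjectivity of $q$ at $0$ discussed below.
\item The Reynolds-operator remark only shows that invariant functions are limits of polynomials in the generators. It does not show that they are convergent series in them.
\item Your elementary fallback targets the wrong inequality. An upper bound $\sum_i n_i\le c|\alpha|$ is useless (and trivially true). The estimate needs a \emph{lower} bound on the length of the decomposition, and that holds for every decomposition: with $B=\max_i|\alpha_i|$, one has $|\alpha|=\sum_i n_i(\alpha)|\alpha_i|\le B|n(\alpha)|$.
\end{itemize}
To finish, let $f$ converge on a polydisc of radius $R$, so $|c_\alpha|\le MR^{-|\alpha|}$, and take $0<\rho<1$ with $\rho^{1/B}<R$. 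Then for $|y_i|<\rho$,
\[
\sum_{\alpha\in S}\bigl|c_\alpha y^{n(\alpha)}\bigr|\le M\sum_{\alpha\in S}R^{-|\alpha|}\rho^{|\alpha|/B}\le M\sum_{\alpha\in\N^s}\bigl(\rho^{1/B}/R\bigr)^{|\alpha|}<\infty .
\]
This is exactly the paper's proof of (i). The step you flag as the main obstacle is a short direct estimate.

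\emph{Gap in (ii).} You rely on $q$ mapping every neighbourhood of $0$ onto a neighbourhood of $q(0)$. This is true at the origin (e.g.\ by Kempf--Ness), but neither of your justifications yields it. Assertion (iii) says nothing about where the closed orbits of nearby fibres lie. Torus quotient maps are also not open in general. For residual weights $1,1,-1$ on $(x_1,x_2,z)$ one has $q=(x_1z,x_2z)$ and $q(1,0,0)=q(0)$. Yet on $\{|x_1-1|,|x_2|,|z|<\epsilon\}$ one has $|x_2z|\le\frac{\epsilon}{1-\epsilon}|x_1z|$, so the image misses every $(0,b)$ with $b\ne0$.

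The paper needs no surjectivity. The map $q$ restricts to a surjective submersion $(\C^*)^s\to T_Q$ onto the dense torus. Suppose $q^*h=0$, with $h$ defined on a neighbourhood $U$ of $o$ whose regular part is connected; this is possible because the normal germ $(Q,o)$ is locally irreducible. Then for small $W$, $h$ vanishes on the non-empty open set $q(W\cap(\C^*)^s)\subset T_Q\cap U$, and the identity principle gives $h=0$. With this injectivity of $q^*$ in place, your arguments that $g$ lands in $Y$ and that the factorisation is unique go through unchanged.
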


\begin{proof}
If $S=\{0\}$, then $Q$ is a point and the assertions follow directly.  Assume $S\ne\{0\}$.  By Lemma~\ref{lem:L-saturated}, the semigroup $S$ has a finite Hilbert basis $\beta^1,\ldots,\beta^N$.  The corresponding invariant monomials realise $Q$ as an affine toric
subvariety of $\C^N$. The quotient map is
 $
q(w)=\bigl(w^{\beta^1},\ldots,w^{\beta^N}\bigr).
 $
Let $f(w)=\sum_{\alpha\in S}c_\alpha w^\alpha$ converge on a polydisc of radius $R>0$.  For each $\alpha\in S$, choose one expression
$\alpha=\sum_{a=1}^N u_a(\alpha)\beta^a$ with $u_a(\alpha)\in\N$, and put $u(\alpha)=(u_1(\alpha),\ldots,u_N(\alpha))$.  Set $|\alpha|=\sum_j\alpha_j$ and $|u|=\sum_a u_a$.  If $B=\max_a|\beta^a|$, then
\begin{equation*}
|\alpha|=\sum_{a=1}^N u_a(\alpha)|\beta^a|\le B|u(\alpha)|.
\end{equation*}
Cauchy's estimates give a constant $M>0$ such that $|c_\alpha|\le M R^{-|\alpha|}$.  Choose $0<\rho<1$ with $\rho^{1/B}<R$.  For $|z_a|<\rho$,
\begin{align*}
\sum_{\alpha\in S}|c_\alpha z^{u(\alpha)}|
&\le M\sum_{\alpha\in S}R^{-|\alpha|}\rho^{|u(\alpha)|}\\
&\le M\sum_{\alpha\in\N^s}\left(\frac{\rho^{1/B}}{R}\right)^{|\alpha|}<\infty.
\end{align*}
Thus $g(z)=\sum_{\alpha\in S}c_\alpha z^{u(\alpha)}$ defines a holomorphic germ at the origin of $\C^N$.  On the toric subvariety $Q$, the coordinate monomial $z^{u(\alpha)}$ is the character $X^\alpha$, independently of the chosen expression of $\alpha$ in the Hilbert basis.  Hence the restriction of $g$ to $Q$ satisfies
$q^*g=\sum_{\alpha\in S}c_\alpha w^\alpha=f$.  Conversely, every pullback by the invariant map $q$ is a first integral, so Proposition~\ref{prop:first-integrals} gives the equality in (i).

The pullback $q^*:\cO_{Q,q(0)}\to\cO_{\C^s,0}$ is injective.  Indeed, let $h\in\cO_{Q,q(0)}$ satisfy $q^*h=0$, and choose representatives on neighbourhoods of $q(0)$ and $0$.  The dense torus $T_Q$ of $Q$ is the image of $(\C^*)^s$ under $q$: on character lattices the corresponding homomorphism is the inclusion $M=\operatorname{gp}(S)\hookrightarrow\Z^s$, hence the torus map $(\C^*)^s\to T_Q$ is surjective and has differential of rank $\dim Q$.  Every neighbourhood of $0$ contains points of $(\C^*)^s$, so $h$ vanishes on a non-empty open subset of $T_Q$.  Since $Q$ is irreducible, the identity principle gives $h=0$ as a germ at $q(0)$.

For (ii), choose a representative of $(Y,y)$ embedded as a closed analytic subspace of a polydisc $\Delta^m$, and write $F=(F_1,\ldots,F_m)$.  Each coordinate $F_\nu$ is a first integral and hence, by (i), has the form $F_\nu=q^*g_\nu$ for a unique $g_\nu\in\cO_{Q,q(0)}$.  The tuple $g=(g_1,\ldots,g_m)$ is a holomorphic germ from $Q$ to $\Delta^m$.  If $R_0$ vanishes on $Y$, then $q^*(R_0\circ g)=R_0\circ F=0$.  Injectivity of $q^*$ gives $R_0\circ g=0$, so $g$ takes values in $Y$ and $F=g\circ q$.  The same injectivity proves uniqueness.

The reductive affine quotient theorem cited before the statement gives (iii): each fibre contains a unique closed $G$-orbit, and two points have the same quotient value exactly when their orbit closures contain that orbit.
\end{proof}

The space $Q$ represents holomorphic first integrals, but it does not in general parametrise the individual leaves of the $H$-foliation.  The equivalence relation defined by $q$ is coarser than leaf equivalence: two transverse points have the same image exactly when invariant holomorphic functions fail to separate them.  Accordingly, $Q$ is the holomorphic separation of the transverse foliation.  In the diagonal case, Theorem~\ref{thm:universal}(iii) identifies each fibre by its unique closed $G$-orbit.

The residual product decomposition gives the envelope at an arbitrary orbit.
\begin{corollary}\label{cor:full-transverse-envelope}
At an arbitrary orbit $p$ of the original $\C^k$-action, with notation as in Theorem~\ref{thm:residual-model}, the transverse analytic envelope is $(\C^{|I|-r_I},0)\times Q_I$, where $Q_I=\Spec\C[S_I]$ and $$S_I=\left\{\alpha\in\N^{I^c}:\sum_{j\notin I}\alpha_j\Lambda_j|_{\h_I}=0\right\}.$$
\end{corollary}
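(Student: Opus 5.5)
The plan is to combine the product decomposition of Theorem~\ref{thm:residual-model} with the universal property of Theorem~\ref{thm:universal}. By Theorem~\ref{thm:residual-model}, the transverse germ $(\Sigma_B,p)$ is biholomorphic to $(\C^{|I|-r_I},0)\times(\C^{J},0)$. The induced foliation is the product of the zero foliation on the $u$-coordinates and the diagonal $\h_I$-foliation on $\C^J$ with weights $\mu_j=\Lambda_j|_{\h_I}$. Applying Proposition~\ref{prop:first-integrals} and Theorem~\ref{thm:universal} to the second factor, with $s=|J|$, $\h=\h_I$ and the weights $\mu_j$, gives $S=S_I$ and $Q=Q_I=\Spec\C[S_I]$, together with the quotient map $q_I$. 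It therefore remains to show that $\operatorname{id}\times q_I$ has the universal factorisation property on the product germ, and that the ring of first integrals is $\cO_{\C^{|I|-r_I}\times Q_I,(0,o_I)}$ pulled back.

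First I will identify the first integrals. A germ $f(u,w)$ is a first integral exactly when $Y_v f=0$ for all $v\in\h_I$; the $Y_v$ involve only $w$, with no $\partial/\partial u$ components. Expanding $f=\sum_{\beta,\alpha}c_{\beta\alpha}u^\beta w^\alpha$ and arguing term by term as in Proposition~\ref{prop:first-integrals}, this holds if and only if every $\alpha$ with $c_{\beta\alpha}\ne0$ lies in $S_I$.

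Next I will construct the factorisation, repeating the Cauchy-estimate argument from the proof of Theorem~\ref{thm:universal} with the extra variables $u$. Fix a Hilbert basis $\beta^1,\dots,\beta^N$ of $S_I$ and a choice $u(\alpha)$ of $\N$-coefficients for each $\alpha\in S_I$. Put $g(u,z)=\sum c_{\beta\alpha}u^\beta z^{u(\alpha)}$. The inequality $|\alpha|\le B|u(\alpha)|$ together with the estimate $|c_{\beta\alpha}|\le MR^{-|\beta|-|\alpha|}$ shows that $g$ converges near the origin of $\C^{|I|-r_I}\times\C^N$. Its restriction to $\C^{|I|-r_I}\times Q_I$ is well defined, and it pulls back to $f$ under $\operatorname{id}\times q_I$.

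For injectivity of the pullback I will use the same identity-principle argument as before. The image of $\C^{|I|-r_I}\times(\C^*)^J$ is $\C^{|I|-r_I}\times T_{Q_I}$, which is a non-empty open subset of the irreducible space $\C^{|I|-r_I}\times Q_I$. Any germ killed by the pullback vanishes there, hence vanishes as a germ. Factorisation of maps to an arbitrary germ $(Y,y)$, and its uniqueness, then follow verbatim from the proof of Theorem~\ref{thm:universal}(ii).

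Finally I will address independence of the choices. Any other admissible $B$ yields an equivalent transverse germ, with the same $\h_I$ and $\mu_j$, so the envelope is well defined up to the induced isomorphism. This isomorphism is unique by the universal property.

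The only step that is not purely formal is the convergence estimate in the presence of the parameters $u$. It is a mild extension of the argument in the proof of Theorem~\ref{thm:universal}, so I do not expect a genuine obstacle.
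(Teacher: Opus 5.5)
Your proof is correct, but it reproves more than it needs to. The paper argues algebraically. The Zariski closure $G_I$ of the residual exponential group acts trivially on the $u$-factor and diagonally on $\C^J$. Hence its invariant polynomial algebra is $\C[u_i:i\in I\setminus B]\otimes_\C\C[S_I]$, the affine quotient is $\C^{|I|-r_I}\times Q_I$, and analytification gives the envelope.

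Implicitly, the paper treats the transverse foliation as the diagonal $\h_I$-foliation on $\C^{|I|-r_I}\times\C^J$ with weight $0$ on each $u_i$; indeed $\Lambda_i|_{\h_I}=0$ for $i\in I$. Nothing in Proposition~\ref{prop:first-integrals} or Theorem~\ref{thm:universal} requires the weights to be non-zero. The resonance semigroup of this enlarged configuration is $\N^{|I|-r_I}\times S_I$, so both results apply verbatim. They give the universal factorisation through $\operatorname{id}\times q_I$ without repeating the Cauchy estimate or the identity-principle argument.

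Your hands-on route has one merit: it proves the factorisation property for the product germ explicitly, which the paper only asserts in its final sentence. The paper's route is shorter and places the envelope directly in the analytified-affine-quotient framework of Definition~\ref{def:envelope}. If you write your version out, rename the coefficient vectors $u(\alpha)$, since they clash with the coordinates $u_i$.
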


\begin{proof}
Theorem~\ref{thm:residual-model} identifies the transverse foliation with the product of the zero foliation on $\C^{|I|-r_I}$ and the residual diagonal foliation on $\C^{I^c}$.  The Zariski closure $G_I$ of the residual exponential subgroup acts trivially on the first factor and diagonally on the second.  Hence its invariant polynomial algebra is
\begin{equation*}
\C[u_i:i\in I\setminus B]\otimes_\C\C[S_I].
\end{equation*}
Equivalently, the affine quotient is $\C^{|I|-r_I}\times Q_I$.  Analytification and passage to the germ at $p$ give the stated transverse analytic envelope.  Equivalently, the factorisation property separates the free $u$-coordinates from the $w$-dependence represented by $Q_I$.
\end{proof}

\section{Toric fibres and support variation}\label{sec:toric-support}

\subsection{Toric fibres}

\begin{proposition}\label{prop:normal-toric}
The transverse analytic envelope $Q=\Spec\C[S]$ is a normal affine toric variety.  Its dimension is
\begin{equation}\label{eq:dimQ}
\dim Q=\rk_\Z\operatorname{gp}(S).
\end{equation}
It is a point if and only if $S=\{0\}$, equivalently if there is no non-zero relation $\sum_{j=1}^s\alpha_j\mu_j=0$ with $\alpha_j\in\N$.
\end{proposition}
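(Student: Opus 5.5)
The plan is to derive everything from Lemma~\ref{lem:L-saturated} together with the standard dictionary between affine semigroups and affine toric varieties; see Fulton~\cite{Fulton}. By that lemma, $S$ is a finitely generated submonoid of $\Z^s$ and is saturated in $M:=\operatorname{gp}(S)$. Being a subgroup of $\Z^s$, the group $M$ is a free abelian group of finite rank.

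\textbf{Toric structure.} The semigroup algebra $\C[S]$ is a finitely generated subalgebra of $\C[M]$, which is a Laurent polynomial ring in $\rk_\Z M$ variables. Hence $\C[S]$ is an integral domain, and its fraction field equals that of $\C[M]$ because $S$ generates $M$. It follows that $\Spec\C[S]$ is irreducible and contains the torus $\Spec\C[M]\cong(\C^*)^{\rk M}$ as a dense open subset. The inclusion $\C[S]\subset\C[M]$ is the localisation at the Hilbert-basis monomials. The torus acts on $\Spec\C[S]$ through the $M$-grading, so $Q$ is an affine toric variety.

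\textbf{Dimension.} The dimension of $Q$ is the transcendence degree of $\operatorname{Frac}\C[S]=\operatorname{Frac}\C[M]$, which is $\rk_\Z M$. This gives \eqref{eq:dimQ}.

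\textbf{Normality.} This is the only point that needs an argument beyond quoting. I would write $S=M\cap\sigma^\vee$, where $\sigma^\vee=\R_{\ge0}S\subset M_\R$ is a rational polyhedral cone. The inclusion $S\subset M\cap\R_{\ge0}S$ is clear. For the reverse inclusion, take $m\in M$ with $m=\sum_i c_i\beta^i$ and $c_i\ge0$ real. Each $\beta^i$ has non-negative coordinates in $\Z^s$, so $m$ has non-negative coordinates. Moreover $m\in M\subset L$, so $m\in L\cap\N^s=S$. Alternatively, one can use the saturation statement of Lemma~\ref{lem:L-saturated} directly: some positive multiple $m'm$ lies in $S$. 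Either way, $S$ is a saturated affine semigroup, and Gordan's lemma together with the standard result (Fulton, \S1.3) shows that $\C[M\cap\sigma^\vee]$ is integrally closed.

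If a self-contained argument is preferred, I would write $\C[S]=\C[M]\cap\bigcap_\tau\C[M\cap\tau^\vee]$, with $\tau$ running over the facets of $\sigma^\vee$. Each ring in this intersection is a localised polynomial ring and hence normal, and an intersection of normal domains inside a common fraction field is normal. The remaining care is only in checking that the cone description is correct; I expect no genuine obstacle there.

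\textbf{Point criterion.} $Q$ is a point if and only if $\dim Q=0$, which by \eqref{eq:dimQ} means $\rk M=0$. Since $M\subset\Z^s$ is torsion free, this is equivalent to $M=0$, and hence to $S=\{0\}$. By the definition of $S$, the condition $S=\{0\}$ says exactly that there is no non-zero relation $\sum_j\alpha_j\mu_j=0$ with $\alpha_j\in\N$. In that case $\C[S]=\C$, so $Q$ is indeed a single reduced point.
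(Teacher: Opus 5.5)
Your proof is correct and follows the same route as the paper. The paper simply invokes finite generation and saturation from Lemma~\ref{lem:L-saturated} and then the standard toric dictionary: saturation gives an integrally closed $\C[S]$ with toric spectrum, the Krull dimension is $\rk_\Z\operatorname{gp}(S)$, and $\C[S]=\C$ iff $S=\{0\}$. Your direct check that $S=M\cap\R_{\ge0}S$ via $S=L\cap\N^s$ makes the saturation step explicit. One small slip in your optional self-contained variant: for a facet $\tau$ of $\sigma^\vee\subset M_\R$, the dual $\tau^\vee$ lies in the dual space, so $M\cap\tau^\vee$ is ill-typed. You should instead intersect over the half-spaces of $M_\R$ cutting out $\sigma^\vee$, equivalently the $\rho^\vee$ for the rays $\rho$ of $(\sigma^\vee)^\vee$.
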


\begin{proof}
Finite generation follows from Lemma~\ref{lem:L-saturated}.  Since $S$ is saturated in its group completion, the affine semigroup algebra $\C[S]$ is integrally closed, and $Q$ is normal.  The spectrum of a normal affine semigroup algebra is an affine toric variety.  Its Krull dimension is the rank of the group generated by the semigroup, which is~\eqref{eq:dimQ}.  Finally, $\C[S]=\C$ exactly when $S=\{0\}$.
\end{proof}

The usual toric smoothness criterion gives the corresponding local criterion; see Fulton~\cite[Section~2.1]{Fulton}.
\begin{proposition}\label{prop:smoothness}
Let $M=\operatorname{gp}(S)$ and let $\sigma^\vee=\R_{\ge0}S\subset M_\R$.
The distinguished torus-fixed point of $Q$ is smooth if and only if the primitive generators of the extremal rays of $\sigma^\vee$ form part of a $\Z$-basis of $M$ and their number equals $\rk M$.  Equivalently, the pointed saturated semigroup $S$ is isomorphic to $\N^{\dim Q}$.
\end{proposition}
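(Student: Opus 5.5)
We will prove the smoothness criterion by reducing it to the standard toric criterion for the distinguished fixed point, which is available because $Q$ is a normal affine toric variety by Proposition~\ref{prop:normal-toric}.

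\emph{Step 1: the torus and the cone.} The dense torus of $Q$ has character lattice $M=\operatorname{gp}(S)$, and $S=\sigma^\vee\cap M$ by saturation (Lemma~\ref{lem:L-saturated}). The cone $\sigma^\vee$ is a full-dimensional rational polyhedral cone in $M_\R$, since $S$ spans $M$. It is also strongly convex, i.e.\ pointed: $S\subset\N^s$ gives $S\cap(-S)=\{0\}$, so $\sigma^\vee$ contains no line. Pointedness is exactly what guarantees a unique torus-fixed point $o$, namely the point at which every non-trivial character $X^\alpha$, $\alpha\in S\setminus\{0\}$, vanishes.

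\emph{Step 2: tangent space at $o$.} The maximal ideal $\mathfrak m_o$ is spanned by the $X^\alpha$ with $\alpha\ne0$, and $\mathfrak m_o^2$ is spanned by the $X^{\alpha+\beta}$ with $\alpha,\beta\ne0$. Hence $\mathfrak m_o/\mathfrak m_o^2$ has a basis indexed by the irreducible elements of $S$, which form its Hilbert basis $\mathcal H$. Consequently $o$ is smooth if and only if $|\mathcal H|=\dim Q=\rk M$. Indeed, smoothness is equivalent to the embedding dimension equalling $\dim Q$, both in the algebraic sense and for the analytic germ, because the local ring is excellent and analytification preserves regularity.

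\emph{Step 3: combinatorics.} We show that $|\mathcal H|=\rk M$ holds if and only if $S\cong\N^{\rk M}$, and that this in turn is equivalent to the stated ray condition.
\begin{itemize}
\item Every extremal ray of $\sigma^\vee$ has its primitive generator in $\mathcal H$. Since $\sigma^\vee$ is full-dimensional, there are at least $\rk M$ such rays.
\item So if $|\mathcal H|=\rk M$, then $\mathcal H$ consists exactly of the $\rk M$ ray generators. These generate $S$, hence generate $M$ as a group, and $\rk M$ generators of a free abelian group of rank $\rk M$ form a $\Z$-basis. Thus $S\cong\N^{\rk M}$, and the ray condition holds.
\item Conversely, suppose the ray generators $e_1,\dots,e_d$ form part of a $\Z$-basis of $M$ and $d=\rk M$. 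Then they form a basis of $M$, and $\sigma^\vee$ is the simplicial cone they span. Therefore $S=\sigma^\vee\cap M=\bigoplus\N e_i$, so $\mathcal H=\{e_i\}$ and $|\mathcal H|=\rk M$.
\end{itemize}

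The main subtlety is the correct normalisation in Step 2: we must ensure that the tangent space of the analytic germ is computed by the semigroup algebra. This is handled by observing that the completions of the algebraic local ring and of the analytic local ring coincide, which is enough to transfer the embedding-dimension count. The rest is routine and follows Fulton~\cite[Section~2.1]{Fulton}.
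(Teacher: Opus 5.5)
Your proof is correct and takes essentially the same route as the paper, whose proof simply invokes Fulton's smoothness criterion and notes that the cone is pointed because $S\subset\N^s$ has no non-trivial units. You have unpacked that cited criterion: the embedding dimension at $o$ equals the number of Hilbert basis elements, and you add the ray-counting argument and the algebraic-versus-analytic comparison that the paper leaves implicit.
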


\begin{proof}
The cited smoothness criterion gives the first equivalence.  Here the cone is pointed because $S\subset\N^s$ has no non-trivial units, so smoothness is equivalent to $S\simeq\N^{\dim Q}$.
\end{proof}

If a real-linear functional $\ell:\h^*\to\R$ is positive on every residual weight, then $S=\{0\}$ and $Q$ is a point.  Indeed, applying $\ell$ to a relation $\sum_j\alpha_j\mu_j=0$ with $\alpha_j\ge0$ gives a sum of non-negative real numbers, positive whenever some $\alpha_j$ is non-zero.  All $\alpha_j$ must vanish.
The converse need not hold for arbitrary complex weights: $S=\{0\}$ excludes non-negative integral
resonances, whereas a separating real functional excludes all non-negative real relations.  This is an arithmetic distinction between the exponential action and its algebraic quotient.
The quotient theorem describes the fibres in terms of closure
equivalence of $G$-orbits. In the diagonal setting, this description
can be made explicit: the coordinate weights determine the unique
closed orbit in each fibre.

Let $M_G=X^*(G)\simeq\Z^s/L$ be the character lattice of $G$, and let $\lambda_j\in M_G$ be the image of the
$j$th standard basis vector of $\Z^s$.  For $K\subset\{1,\ldots,s\}$, set $C_K=\Cone_{\R_{\ge0}}\{\lambda_j:j\in K\}\subset (M_G)_\R$ and $\ell_K=C_K\cap(-C_K)$.  Define the
subset $B_K$ of $K$ by $B_K=\{j\in K:\lambda_j\in\ell_K\}$.

\begin{lemma}\label{lem:BK-resonance}
For $j\in K$, the following are equivalent:
\begin{enumerate}[(i)]
\item $j\in B_K$;
\item there exists $\alpha\in S$ with $\supp(\alpha)\subset K$ and $\alpha_j>0$.
\end{enumerate}
Moreover
\begin{equation}\label{eq:cone-BK}
\Cone_{\R_{\ge0}}\{\lambda_j:j\in B_K\}=\ell_K.
\end{equation}
\end{lemma}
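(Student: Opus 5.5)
The plan is to translate everything into linear relations among the $\lambda_j\in M_G=\Z^s/L$. The key tool is the exact sequence $0\to L\to\Z^s\to M_G\to0$ and its real version $0\to L_\R\to\R^s\to(M_G)_\R\to0$. Here $L\otimes\R\cap\Z^s=L$ because $L$ is saturated (Lemma~\ref{lem:L-saturated}). Consequently, a vector $a\in\Z^s$ satisfies $\sum_j a_j\lambda_j=0$ in $M_G$ exactly when $a\in L$. The same holds over $\Q$ and, after rational approximation, over $\R$ for relations with rational support patterns.

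For (ii)$\Rightarrow$(i), take $\alpha\in S$ with $\supp\alpha\subset K$ and $\alpha_j>0$. Then $\sum_{i\in K}\alpha_i\lambda_i=0$, so $-\alpha_j\lambda_j=\sum_{i\in K,\,i\ne j}\alpha_i\lambda_i\in C_K$. Dividing by $\alpha_j$ gives $-\lambda_j\in C_K$. Together with $\lambda_j\in C_K$ this yields $\lambda_j\in\ell_K$.

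For (i)$\Rightarrow$(ii), suppose $\lambda_j\in\ell_K$, so $-\lambda_j=\sum_{i\in K}c_i\lambda_i$ with $c_i\ge0$ real. This gives a non-negative real relation $\sum_{i\in K}b_i\lambda_i=0$ with $b_j=1+c_j>0$. The set of such non-negative relations supported in $K$ is the cone $\{b\in\R_{\ge0}^K:\sum b_i\lambda_i=0\}$, which is cut out by linear equations with rational coefficients, since the $\lambda_i$ lie in a lattice. It is therefore a rational polyhedral cone, and its rational points are dense in it. So I will pick a rational point near $b$ with $j$-th coordinate still positive, clear denominators to get $\alpha\in\N^K$, and use the exact sequence to conclude $\alpha\in L$. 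Extending $\alpha$ by zero outside $K$ gives $\alpha\in S$. The one step needing care is this rationality argument: that a real non-negative relation can be replaced by an integral one with the same positive coordinate. I would justify it by writing the relation cone as the intersection of $\R_{\ge0}^K$ with the kernel of a rational matrix (the $\lambda_i$ expressed in a $\Z$-basis of $M_G$). Such a cone is generated by finitely many rational vectors, so $b$ is a non-negative combination of rational generators, at least one of which has positive $j$-th coordinate.

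For \eqref{eq:cone-BK}, the inclusion $\subseteq$ is clear: each $\lambda_j$ with $j\in B_K$ lies in the linear space $\ell_K$, which is closed under non-negative combinations. For $\supseteq$, take $x\in\ell_K$ and write $x=\sum_{i\in K}a_i\lambda_i$ and $-x=\sum_{i\in K}a'_i\lambda_i$ with $a,a'\ge0$. Adding these gives the non-negative relation $\sum_{i\in K}(a_i+a'_i)\lambda_i=0$. For every $i$ with $a_i+a_i'>0$, the argument above shows $-\lambda_i\in C_K$, so $i\in B_K$. Hence $a$ is supported in $B_K$, and $x\in\Cone\{\lambda_i:i\in B_K\}$.
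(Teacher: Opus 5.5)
Your proof is correct. For (i)$\Leftrightarrow$(ii) it is essentially the paper's argument. The paper obtains non-negative rational coefficients for the lattice point $-\lambda_j$ directly from the rationality of $C_K$. You instead pass through the rational relation cone $\{b\in\R_{\ge0}^K:\sum_i b_i\lambda_i=0\}$ and pick a rational generator with positive $j$th coordinate. In both versions, saturation of $L$ (equivalently, torsion-freeness of $M_G$) is what turns an integral relation in $(M_G)_\R$ into an element of $L$. The remark about rational approximation over $\R$ in your first paragraph is not needed; the rational-generator argument does the work.

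Where you genuinely differ is the proof of \eqref{eq:cone-BK}. The paper passes to the pointed quotient cone $\overline C_K\subset(M_G)_\R/\ell_K$. It chooses an integral functional $\nu$ that vanishes on $\ell_K$ and is positive on $\lambda_j$ for $j\in K\setminus B_K$, and applies $\nu$ to an expression $u=\sum_j c_j\lambda_j$. You simply add the representations of $x$ and $-x$ to get a non-negative relation. Dividing through, every index with a positive coefficient then lies in $B_K$. Your route is shorter and uses neither rationality nor duality.

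What the paper's route buys is the functional $\nu$ itself. It is reused in Lemma~\ref{lem:closed-orbit} to build the one-parameter subgroup $\gamma_\nu$ that degenerates $x$ to $x^\circ$. With your proof, that functional would have to be constructed there separately.
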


\begin{proof}
Assume first that $j\in B_K$.  Then $\lambda_j\in\ell_K$, so $-\lambda_j\in C_K$.  The cone $C_K$ is generated by lattice vectors, so it is rational polyhedral.  Hence the linear system
$-\lambda_j=\sum_{i\in K}c_i\lambda_i$ has a solution with $c_i\in\Q_{\ge0}$.  Choose a positive integer $N$ such that $Nc_i\in\N$ for every $i$.  Multiplying by $N$ and moving the left-hand side to the right gives
\begin{equation*}
(N+Nc_j)\lambda_j+\sum_{i\in K\setminus\{j\}}Nc_i\lambda_i=0.
\end{equation*}
Define $\alpha_j=N+Nc_j$ and $\alpha_i=Nc_i$ for $i\ne j$.  Then $\alpha\in\N^s$ is supported in $K$, $\alpha_j>0$, and its image in $M_G$ is zero.  Since the kernel of $\Z^s\to M_G$ is $L$, the vector $\alpha$ belongs to $L\cap\N^s=S$.
Conversely, suppose that $\alpha\in S$ is supported in $K$ and that $\alpha_j>0$.  The relation $\sum_{i\in K}\alpha_i\lambda_i=0$ gives
\begin{equation*}
-\lambda_j=\sum_{i\in K\setminus\{j\}}\frac{\alpha_i}{\alpha_j}\lambda_i.
\end{equation*}
Both $-\lambda_j$ and $\lambda_j$ belong to $C_K$, so $\lambda_j\in\ell_K$ and $j\in B_K$. 
The definition of $B_K$ gives the inclusion from left to right in~\eqref{eq:cone-BK}.  For the reverse inclusion, since $C_K$ is rational, its lineality space $\ell_K$ is a rational vector subspace.  Let $\overline C_K$ be the image of $C_K$ in the quotient $(M_G)_\R/\ell_K$.  This is a pointed rational polyhedral cone.  Its dual has non-empty interior, so there is an integral functional $\bar\nu$ which is strictly positive on every non-zero generator of $\overline C_K$.  Lifting $\bar\nu$ gives $\nu\in\Hom(M_G,\Z)$ such that $\nu|_{\ell_K}=0$ and $\nu(\lambda_j)>0$ whenever $j\notin B_K$.
Take $u\in\ell_K$.  Since $u\in C_K$, write $u=\sum_{j\in K}c_j\lambda_j$ with $c_j\ge0$.  Applying $\nu$ yields
\begin{equation*}
0=\nu(u)=\sum_{j\in K}c_j\nu(\lambda_j).
\end{equation*}
Every summand is non-negative and those indexed by $K\setminus B_K$ are positive whenever the corresponding $c_j$ is positive.  Therefore $c_j=0$ for $j\notin B_K$, and $u$ lies in the cone generated by the $\lambda_j$ with $j\in B_K$.
\end{proof}

For $x=(x_1,\ldots,x_s)\in\C^s$, let $K=\supp(x)$.  Define $x^\circ$ by $x_j^\circ=x_j$ for $j\in B_K$ and $x_j^\circ=0$ otherwise.

\begin{lemma}\label{lem:closed-orbit}
The orbit $G\cdot x$ is closed in $\C^s$ if and only if $B_K=K$, equivalently if and only if $C_K$ is a vector subspace of $(M_G)_\R$.
\end{lemma}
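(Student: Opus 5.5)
We prove both implications using the torus $G$, its character lattice $M_G=\Z^s/L$ with weights $\lambda_j$, and the criterion of Lemma~\ref{lem:BK-resonance}. Note that $C_K$ is a vector subspace if and only if $C_K=\ell_K$. By~\eqref{eq:cone-BK}, this holds exactly when every $\lambda_j$ with $j\in K$ lies in $\ell_K$, that is, when $B_K=K$. So the second equivalence is formal, and it remains to show that $G\cdot x$ is closed if and only if $B_K=K$.

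\emph{Closed orbit implies $B_K=K$.} Suppose $B_K\ne K$. As in the proof of Lemma~\ref{lem:BK-resonance}, choose a cocharacter $\nu\in\Hom(M_G,\Z)$ with $\nu|_{\ell_K}=0$ and $\nu(\lambda_j)>0$ for $j\in K\setminus B_K$. Then $\nu$ gives a one-parameter subgroup $\gamma:\C^*\to G$ acting on the $j$th coordinate by $t^{\nu(\lambda_j)}$. We have $\nu(\lambda_j)=0$ for $j\in B_K$, since $\lambda_j\in\ell_K$ there. Hence
\[
\lim_{t\to0}\gamma(t)\cdot x=x^\circ .
\]
The support of $x^\circ$ is $B_K\subsetneq K$. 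Every point of $G\cdot x$ has support exactly $K$, because $G\subset(\C^*)^s$. So $x^\circ\in\overline{G\cdot x}\setminus G\cdot x$, and the orbit is not closed.

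\emph{$B_K=K$ implies closed orbit.} By Lemma~\ref{lem:BK-resonance}, for each $j\in K$ there is $\alpha^{(j)}\in S$ supported in $K$ with $\alpha^{(j)}_j>0$. Their sum $\alpha\in S$ satisfies $\supp\alpha=K$. Suppose $y\in\overline{G\cdot x}$. First, $y_i=0$ for $i\notin K$, since those coordinates vanish on the orbit. Second, the invariant monomial $w^\alpha$ is constant on $\overline{G\cdot x}$, so $y^\alpha=x^\alpha\ne0$, which forces $y_j\ne0$ for all $j\in K$. Hence $\overline{G\cdot x}$ lies in the stratum $\C^{*K}\times\{0\}$. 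This stratum is the orbit type with stabiliser-free quotient torus $(\C^*)^K$.

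Now project to $(\C^*)^K$. The orbit $G\cdot x$ is a translate of the image $G_K$ of $G$ under the coordinate projection $(\C^*)^s\to(\C^*)^K$. Since $G_K$ is an algebraic subgroup, it is Zariski closed in $(\C^*)^K$. Its translate is therefore closed in $(\C^*)^K\times\{0\}$, hence closed in the stratum. Combined with $\overline{G\cdot x}\subset$ stratum, this shows $G\cdot x$ is closed in $\C^s$.

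The main point needing care is this last step. We must make sure that closure in the Euclidean topology agrees with Zariski closure for a $G$-orbit. This follows because orbits of algebraic groups are constructible, so their Euclidean and Zariski closures coincide, or alternatively from the algebraic-subgroup argument above. We must also justify that the constant invariant $w^\alpha$ rules out boundary points with smaller support. The construction of $\nu$ is already available from the proof of~\eqref{eq:cone-BK}.
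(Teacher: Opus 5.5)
Your proof is correct and follows the paper's argument essentially step for step: invariant monomials supported in $K$ confine $\overline{G\cdot x}$ to the coordinate torus $(\C^*)^K$, where the orbit is a coset of a closed subtorus. When $B_K\ne K$, the cocharacter $\nu$ from the proof of Lemma~\ref{lem:BK-resonance} degenerates $x$ to $x^\circ$. The remaining differences are cosmetic: you merge the monomials into one of full support $K$, and you spell out the cone reformulation via \eqref{eq:cone-BK}, which the paper leaves implicit. Your closing concern about Euclidean versus Zariski closure is unnecessary, since Zariski-closed sets are Euclidean-closed and the monomial argument uses only continuity.
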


\begin{proof}
Assume first that $B_K=K$.  For each $j\in K$, Lemma~\ref{lem:BK-resonance} gives an invariant monomial $w^{\alpha(j)}$ supported in $K$ with $\alpha(j)_j>0$.  Since every coordinate of $x$ indexed by $K$ is non-zero, $w^{\alpha(j)}(x)\ne0$.  The monomial is constant on $G\cdot x$ and hence on its closure.  If $y\in\overline{G\cdot x}$ had $y_j=0$ for some $j\in K$, then $w^{\alpha(j)}(y)=0$, contradicting $w^{\alpha(j)}(y)=w^{\alpha(j)}(x)$.  Coordinates outside $K$ vanish identically on the orbit, so every point of $\overline{G\cdot x}$ has support exactly $K$.
Inside the coordinate torus $(\C^*)^K$, the orbit $G\cdot x$ is a coset of the image of the algebraic torus $G$ under the coordinate projection $G\to(\C^*)^K$.  The image is an algebraic subtorus, and its cosets are closed in $(\C^*)^K$.  Since the closure of $G\cdot x$ in $\C^s$ remains inside this coordinate torus, the orbit is closed in $\C^s$.

Suppose instead that $B_K\ne K$, and use the integral functional $\nu$ constructed in the proof of Lemma~\ref{lem:BK-resonance}.  Let $\gamma_\nu:\C^*\to G$ be the corresponding one-parameter subgroup.  On the $j$th coordinate it acts with weight $\langle\lambda_j,\nu\rangle$, so
\begin{equation*}
\bigl(\gamma_\nu(t)\cdot x\bigr)_j
=t^{\langle\lambda_j,\nu\rangle}x_j.
\end{equation*}
The exponent is zero for $j\in B_K$ and strictly positive for $j\in K\setminus B_K$, so $\gamma_\nu(t)\cdot x\to x^\circ$ as $t\to0$.  Since $B_K\ne K$, the supports of $x$ and $x^\circ$ differ, and no element of $G$ can change the support of a point.  Thus $x^\circ$ lies in the orbit closure but not in $G\cdot x$.  The orbit is not closed.
\end{proof}

The subset $B_K$ determines the closed orbit in each quotient fibre.
\begin{theorem}\label{thm:closed-orbit-fibres}
Let $q:\C^s\to Q$ be the transverse analytic envelope.  For every $x\in\C^s$:
\begin{enumerate}[(i)]
\item $q(x)=q(x^\circ)$;
\item $G\cdot x^\circ$ is closed;
\item $G\cdot x^\circ$ is the unique closed $G$-orbit in the fibre $q^{-1}(q(x))$.
\end{enumerate}
Consequently, for all $x,y\in\C^s$,
\begin{equation}\label{eq:fibre-closed-orbit}
q(x)=q(y)
\Longleftrightarrow
G\cdot x^\circ=G\cdot y^\circ.
\end{equation}
\end{theorem}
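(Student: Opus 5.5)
We prove the three assertions in order, using Lemmas~\ref{lem:BK-resonance} and~\ref{lem:closed-orbit} together with Theorem~\ref{thm:universal}(iii).

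\emph{Step (i).} Since $q$ is given by invariant monomials $w^\alpha$ with $\alpha\in S$, it suffices to prove $x^\alpha=(x^\circ)^\alpha$ for every $\alpha\in S$.
\begin{itemize}
\item If $\supp(\alpha)\not\subset K$, both sides vanish: some coordinate in the support of $\alpha$ is zero for $x$, and hence also for $x^\circ$, since $\supp(x^\circ)\subset K$.
\item If $\supp(\alpha)\subset K$, then Lemma~\ref{lem:BK-resonance} applied to each $j$ with $\alpha_j>0$ gives $j\in B_K$. Thus $\supp(\alpha)\subset B_K$, and $x$ and $x^\circ$ agree on these coordinates.
\end{itemize}
Hence $x^\alpha=(x^\circ)^\alpha$ in both cases. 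Alternatively, the one-parameter subgroup argument in Lemma~\ref{lem:closed-orbit} shows that $x^\circ\in\overline{G\cdot x}$, and $q$ is continuous and $G$-invariant. Either route works; the monomial one is cleaner and does not need $B_K\neq K$.

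\emph{Step (ii).} The support of $x^\circ$ is $K'=B_K$, because $x_j\neq0$ for $j\in B_K\subset K$. By Lemma~\ref{lem:closed-orbit} it suffices to show $B_{K'}=K'$. Take $j\in B_K$. Lemma~\ref{lem:BK-resonance} gives $\alpha\in S$ with $\supp(\alpha)\subset K$ and $\alpha_j>0$. The argument of Step (i) then shows $\supp(\alpha)\subset B_K=K'$. Applying the same lemma with $K'$ in place of $K$ gives $j\in B_{K'}$.

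The only point needing care is this stability $B_{B_K}=B_K$. An alternative is \eqref{eq:cone-BK}: $C_{B_K}=\ell_K$ is a vector subspace, so $C_{K'}$ is a subspace, which is exactly the criterion of Lemma~\ref{lem:closed-orbit}.

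\emph{Step (iii) and \eqref{eq:fibre-closed-orbit}.} By Steps (i) and (ii), $G\cdot x^\circ$ is a closed orbit lying in $q^{-1}(q(x))$. Theorem~\ref{thm:universal}(iii) says each fibre contains a unique closed $G$-orbit, which gives (iii).

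For the equivalence, suppose first $q(x)=q(y)$. Then $G\cdot x^\circ$ and $G\cdot y^\circ$ are closed orbits in the same fibre, so by uniqueness they are equal. Conversely, if $G\cdot x^\circ=G\cdot y^\circ$, then (i) and the $G$-invariance of $q$ give
\begin{equation*}
q(x)=q(x^\circ)=q(y^\circ)=q(y).
\end{equation*}

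I expect no serious obstacle. The main subtlety is Step (ii), which was handled above via the closedness of $\supp(\alpha)\subset B_K$ for relations supported in $K$.
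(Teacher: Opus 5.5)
Your proposal is correct and follows the paper's proof. Part (i) compares invariant monomials using Lemma~\ref{lem:BK-resonance}, part (ii) goes through Lemma~\ref{lem:closed-orbit}, and part (iii) together with \eqref{eq:fibre-closed-orbit} comes from the unique-closed-orbit property in Theorem~\ref{thm:universal}(iii). The only variation is in (ii): your main argument proves $B_{B_K}=B_K$ by applying the resonance characterisation of Lemma~\ref{lem:BK-resonance} twice, whereas the paper uses the cone identity \eqref{eq:cone-BK}, which is exactly the alternative you mention.
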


\begin{proof}
Let $K=\supp(x)$.  It suffices for (i) to compare invariant monomials.  If $w^\alpha$ is invariant and non-zero at $x$, then $\supp(\alpha)\subset K$.  Every index occurring in $\alpha$ belongs to $B_K$ by Lemma~\ref{lem:BK-resonance}, so $(x^\circ)^\alpha=x^\alpha$.
If $w^\alpha(x)=0$, then $w^\alpha(x^\circ)=0$ as well.  Every invariant polynomial, and hence the quotient map, has the same value at $x$ and $x^\circ$. 
Equation~\eqref{eq:cone-BK} shows that the cone generated by the weights in $B_K$ is the vector space
$\ell_K$.  Lemma~\ref{lem:closed-orbit} shows that $G\cdot x^\circ$ is closed.  The unique-closed-orbit property in Theorem~\ref{thm:universal}(iii) then identifies it with the unique closed orbit in $q^{-1}(q(x))$.  Two fibres are equal precisely when their unique closed orbits agree, which is~\eqref{eq:fibre-closed-orbit}.
\end{proof}

For a point of support $K$, the quotient value depends only on the coordinates indexed by $B_K$; changing the coordinates in $K\setminus B_K$ does not change any invariant holomorphic function on that orbit-closure class.  The arithmetic of the residual weights determines whether $H=G$.  Put $d_{\mathrm{eff}}=\dim_\C\Span\{\mu_1,\ldots,\mu_s\}$.

\begin{proposition}\label{prop:exponential-closure}
With $L$ as in~\eqref{eq:L}, $\dim H=d_{\mathrm{eff}}$ and $\dim G=s-\rk_\Z L$.
The following are equivalent:
\begin{enumerate}[(i)]
\item $H=G$;
\item $\rk_\Z L=s-d_{\mathrm{eff}}$;
\item the complex relation space $\ker\!\left(\C^s\longrightarrow\h^*,\ e_j\longmapsto\mu_j\right)$ is spanned over
      $\C$ by its integral points $L$.
\end{enumerate}
\end{proposition}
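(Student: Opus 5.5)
The two dimension formulas come first; the equivalences then follow by comparing dimensions and using saturation.

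\emph{Dimension of $H$.} The map $\epsilon:\h\to\T_s$, $v\mapsto(e^{\mu_1(v)},\ldots,e^{\mu_s(v)})$, is a holomorphic homomorphism of complex Lie groups. Its differential at $0$ is the linear map $v\mapsto(\mu_1(v),\ldots,\mu_s(v))$. This map has rank $d_{\mathrm{eff}}$, because its kernel is $\bigcap_j\ker\mu_j$, whose codimension in $\h$ is $\dim\Span\{\mu_j\}$. So $H$ is an immersed connected Lie subgroup, possibly not closed, of complex dimension $d_{\mathrm{eff}}$.

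\emph{Dimension of $G$.} By Lemma~\ref{lem:chars}, the characters of $\T_s$ that are trivial on $G$ are exactly the $\chi_a$ with $a\in L$. A closed subtorus of $\T_s$ is the common kernel of the characters vanishing on it. Hence $G=\bigcap_{a\in L}\ker\chi_a$, and its character lattice is $\Z^s/L$, which is free because $L$ is saturated (Lemma~\ref{lem:L-saturated}). Therefore $\dim G=\rk(\Z^s/L)=s-\rk_\Z L$.

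\emph{(i)$\Leftrightarrow$(ii).} Since $H\subset G$ are connected, $H=G$ forces equal dimensions, which is (ii). Conversely, suppose $\dim H=\dim G$. Then $\Lie H\subset\Lie G$ have the same dimension, so they are equal. A connected Lie group is generated by any neighbourhood of the identity, and $H$ contains the exponential image of $\Lie H=\Lie G$. Hence $H=G$.

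\emph{(ii)$\Leftrightarrow$(iii).} Let $N=\ker(\C^s\to\h^*)$. Its dimension is $s-d_{\mathrm{eff}}$, since the image of this map is $\Span\{\mu_j\}$. We have $L\subset N$, and $\Z$-linearly independent integer vectors are $\C$-linearly independent (they are $\Q$-independent, and rank over $\Q$ is unchanged by field extension). So $\dim_\C(L\otimes\C)=\dim_\C\Span_\C L=\rk_\Z L$. Consequently $\Span_\C L=N$ holds iff $\rk_\Z L=\dim N=s-d_{\mathrm{eff}}$. In particular $\rk_\Z L\le s-d_{\mathrm{eff}}$ always, which matches $\dim H\le\dim G$.

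The only step needing care is the description of $G$ as the intersection of the kernels of the characters trivial on it. This is the standard correspondence for diagonalizable groups between subgroups and subgroups of the character lattice: $G$ corresponds to $L$, and $G$ is a torus because $\Z^s/L$ is torsion-free.
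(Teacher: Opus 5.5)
Your proof is correct and follows essentially the same route as the paper: you get $\dim H=d_{\mathrm{eff}}$ from the rank of the differential $v\mapsto(\mu_j(v))_j$, and $\dim G=s-\rk_\Z L$ from the character lattice $\Z^s/L$ via Lemma~\ref{lem:chars}. You prove (ii)$\Rightarrow$(i) by equality of Lie algebras plus connectedness of $G$, and (ii)$\Leftrightarrow$(iii) by comparing $\rk_\Z L=\dim_\C\Span_\C L$ with $\dim\ker=s-d_{\mathrm{eff}}$. The differences are cosmetic: you describe $G$ as the annihilator of $L$ rather than through the restriction exact sequence $0\to L\to\Z^s\to X^*(G)\to0$, and you conclude $H=G$ from the exponential image generating $G$ rather than from $H$ being an open subgroup of the connected group $G$.
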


\begin{proof}
Let $\mu:\h\to\C^s=\Lie(\T_s)$ be the linear map $v\mapsto(\mu_1(v),\ldots,\mu_s(v))$.  Its rank is $d_{\mathrm{eff}}$.  The differential at the identity of the homomorphism $\h\to\T_s$ defining $H$ is $\mu$, so $\Lie(H)=\operatorname{im}\mu$ and $\dim H=d_{\mathrm{eff}}$.
Restriction of characters from $\T_s$ to the subtorus $G$ gives an exact sequence
\begin{equation*}
0\longrightarrow L\longrightarrow\Z^s\longrightarrow X^*(G)\longrightarrow0,
\end{equation*}
where Lemma~\ref{lem:chars} identifies the kernel with $L$.  Therefore $\rk X^*(G)=s-\rk_\Z L$.  The dimension of an algebraic torus equals the rank of its character lattice, so $\dim G=s-\rk_\Z L$.
If $H=G$, then the two dimensions are equal and (ii) follows.  Conversely, assume (ii).  Since $H\subset G$ and $\dim H=\dim G$, their Lie algebras coincide.  The homomorphism $\h\to G$ is therefore a holomorphic submersion at the origin.  Its image contains a neighbourhood of the identity of $G$.  As the image is the subgroup $H$, every translate of this neighbourhood by an element of $H$ remains in $H$, so $H$ is open in $G$.  The algebraic torus $G$ is connected; hence it has no proper open subgroup, and $H=G$. 
Finally, consider the complex-linear map $\C^s\to\h^*$ which sends the standard basis vector $e_j$ to $\mu_j$.  Its rank is $d_{\mathrm{eff}}$, so its kernel has dimension $s-d_{\mathrm{eff}}$.  The lattice $L$ lies in this kernel.  It spans the kernel over $\C$ if and only if $\rk_\Z L=s-d_{\mathrm{eff}}$, which is condition (ii).
\end{proof}

Under these equivalent conditions,~\eqref{eq:fibre-closed-orbit} classifies fibres by closure equivalence of the $H$-leaves themselves.  Otherwise the analytic envelope is governed by the strictly larger algebraic torus $G$. 
Let $\h=\C$ and take residual weights $\mu_1=1$, $\mu_2=\sqrt2$.  Then $L=0$, so $G=(\C^*)^2$, whereas $H$ is one-dimensional and Zariski dense.  In this example $S=\{0\}$ and $Q$ is a point.  Thus $H$ has dimension one, while its Zariski closure is the two-dimensional torus $(\C^*)^2$.

\subsection{Variation with support}

\begin{proposition}\label{prop:origin-point}
Assume the original configuration lies in the Poincar\'e domain.  Then the transverse analytic envelope at the origin is a point: $Q_0=\{\mathrm{pt}\}$.
Equivalently, $\cO_{\C^n,0}^{\cF}=\C$.
\end{proposition}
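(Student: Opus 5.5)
At the origin the support is $I=\emptyset$, so $J=\{1,\ldots,n\}$, $r_\emptyset=0$ and $\h_\emptyset=\g$. The zero-foliation factor therefore has dimension $|I|-r_I=0$, and the residual weights are the full weights, $\mu_j=\Lambda_j$. By Corollary~\ref{cor:full-transverse-envelope}, the transverse analytic envelope at $0$ is $Q_0=\Spec\C[S_\emptyset]$ with
$$
S_\emptyset=\Bigl\{\alpha\in\N^n:\sum_{j=1}^n\alpha_j\Lambda_j=0\Bigr\}.
$$
By Proposition~\ref{prop:normal-toric}, $Q_0$ is a point exactly when $S_\emptyset=\{0\}$. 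The proof thus reduces to excluding non-zero non-negative integral relations among the weights.

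To do this, take $\xi\in\g$ with $\operatorname{Re}\Lambda_j(\xi)>0$ for every $j$, which exists by Definition~\ref{def:Poincare}. Suppose $\alpha\in S_\emptyset$. Evaluating the relation at $\xi$ and taking real parts gives
$$
0=\sum_j\alpha_j\operatorname{Re}\Lambda_j(\xi).
$$
Each summand is non-negative, and it is strictly positive whenever $\alpha_j>0$. Hence $\alpha=0$. This is the argument already sketched after Proposition~\ref{prop:smoothness}, applied with $\ell=\operatorname{Re}\circ\operatorname{ev}_\xi$.

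For the equivalent formulation, Theorem~\ref{thm:universal}(i) gives $\cO_{\C^n,0}^{\cF}=q^*\cO_{Q_0,q(0)}$, and this is $\C$ when $Q_0$ is a point. One can also see this directly from Proposition~\ref{prop:first-integrals}: a first integral can only have exponents in $S_\emptyset=\{0\}$, so it is constant.

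No step is a real obstacle. The only point needing care is the bookkeeping at the empty support, namely that $\h_\emptyset=\g$ and that Theorem~\ref{thm:residual-model} degenerates there to the whole $\C^n$ with the original action, so the corollary applies with $B=\emptyset$.
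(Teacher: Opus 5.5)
Your proof is correct and follows the paper's own argument essentially verbatim. You identify the residual weights at the empty support with the original $\Lambda_j$, evaluate a non-negative integral relation at the Poincar\'e direction $\xi$ and take real parts to force $S_0=\{0\}$, and then invoke Proposition~\ref{prop:normal-toric}; your extra remark deriving $\cO_{\C^n,0}^{\cF}=\C$ from Theorem~\ref{thm:universal}(i) or Proposition~\ref{prop:first-integrals} just makes explicit the equivalence the paper leaves implicit.
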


\begin{proof}
At the origin the support is empty, the infinitesimal stabiliser is all of $\g$, and the residual weights are the original weights $\Lambda_1,\ldots,\Lambda_n$.  Choose a separating direction $\xi$ as in Definition~\ref{def:Poincare}.  If $\sum_j\alpha_j\Lambda_j=0$ with $\alpha_j\in\N$,
then evaluation at $\xi$ and taking real parts gives $0=\sum_j\alpha_j\operatorname{Re}\Lambda_j(\xi)$.
The positivity of every coefficient $\operatorname{Re}\Lambda_j(\xi)$ forces $\alpha=0$ and $S_0=\{0\}$.  Proposition~\ref{prop:normal-toric} applies.
\end{proof}

For singular supports away from the origin, the residual weights need not lie in a common open half-space.  Positive integral resonances may appear even when the original configuration has none.
Inclusions of coordinate supports induce monomial restriction morphisms in the opposite direction.
For a support $I$, put $J_I=I^c$, $A_I=\C[S_I]$ and $Q_I=\Spec A_I$, where
\begin{equation*}
S_I=\left\{\alpha\in\N^{J_I}:\sum_{j\in J_I}\alpha_j\Lambda_j|_{\h_I}=0\right\}.
\end{equation*}
For $I\subset I'$, put $D=I'\setminus I$ and $J'=J_{I'}\subset J_I$.  Identify $\N^{J'}$ with the face of
$\N^{J_I}$ on which all coordinates indexed by $D$ vanish.

\begin{proposition}\label{prop:support-restriction}
For every inclusion $I\subset I'$ there is an algebra homomorphism $\operatorname{res}_{I,I'}:A_I\longrightarrow A_{I'}$ determined on invariant monomials by
\begin{equation}\label{eq:res-monomial}
\operatorname{res}_{I,I'}(w^\alpha)=
\begin{cases}
w^{\alpha|_{J'}}, & \alpha_d=0\text{ for every }d\in D, \\
0,                & \text{otherwise}
\end{cases}.
\end{equation}
The morphisms $\vartheta_{I,I'}:Q_{I'}\longrightarrow Q_I$ are contravariantly functorial.  For $I\subset I'\subset I''$, the following triangle commutes:
\begin{equation}\label{eq:theta-compose}
\begin{tikzcd}[column sep=4.0em]
Q_{I''} \arrow[r,"\vartheta_{I',I''}"] \arrow[rr,bend right=18,"\vartheta_{I,I''}"']
& Q_{I'} \arrow[r,"\vartheta_{I,I'}"]
& Q_I.
\end{tikzcd}
\end{equation}
The residual analytic envelopes form an inverse system over the coordinate-support poset.
\end{proposition}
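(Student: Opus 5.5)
The plan is to show first that the restriction $\alpha\mapsto\alpha|_{J'}$ sends the relevant elements of $S_I$ into $S_{I'}$. Then $\operatorname{res}_{I,I'}$ is the algebra map of semigroup algebras obtained by composing a "face projection" with an inclusion of semigroups. The key observation is that $I\subset I'$ gives $\h_{I'}\subset\h_I$, since $\h_{I'}=\bigcap_{i\in I'}\ker\Lambda_i$ is cut out by more equations. Let $\alpha\in S_I$ with $\alpha_d=0$ for all $d\in D$. Then $\sum_{j\in J'}\alpha_j\Lambda_j|_{\h_I}=0$, and restricting this identity further to $\h_{I'}$ gives $\sum_{j\in J'}\alpha_j\Lambda_j|_{\h_{I'}}=0$. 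Hence $\alpha|_{J'}\in S_{I'}$. So the face $F_D=\{\alpha\in S_I:\alpha_d=0\ \forall d\in D\}$ maps injectively into $S_{I'}$ by $\alpha\mapsto\alpha|_{J'}$.

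Next I check that \eqref{eq:res-monomial} defines an algebra homomorphism. It suffices to check multiplicativity on the monomial basis $\{w^\alpha\}_{\alpha\in S_I}$ of $A_I$ and extend linearly. Define $\pi:\C[S_I]\to\C[F_D]$ by sending $w^\alpha$ to itself if $\alpha\in F_D$ and to $0$ otherwise. This is a ring map because $F_D$ is a face of $S_I$: it is cut out by the vanishing of the non-negative linear functional $\sum_{d\in D}\alpha_d$. Thus $\alpha+\beta\in F_D$ if and only if both $\alpha,\beta\in F_D$, so $\pi(w^\alpha w^\beta)=\pi(w^\alpha)\pi(w^\beta)$ in all cases. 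Equivalently, $\pi$ is the quotient by the monomial ideal spanned by $w^\alpha$ with $\alpha\notin F_D$. Composing $\pi$ with the semigroup-algebra map induced by the injective semigroup homomorphism $F_D\hookrightarrow S_{I'}$ from the first paragraph gives $\operatorname{res}_{I,I'}$. The unit $w^0\mapsto w^0$ is preserved. Geometrically, $\operatorname{res}_{I,I'}$ is the pullback of invariants along the closed embedding $\C^{J'}\hookrightarrow\C^{J_I}$ given by $w_d=0$ for $d\in D$. One can state this as an alternative justification, though the semigroup argument is self-contained. Setting $\vartheta_{I,I'}=\Spec(\operatorname{res}_{I,I'})$ gives morphisms $Q_{I'}\to Q_I$, and they analytify to the same maps.

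For functoriality, I would verify $\operatorname{res}_{I',I''}\circ\operatorname{res}_{I,I'}=\operatorname{res}_{I,I''}$ on monomials, with $D'=I''\setminus I'$. Take $w^\alpha$ with $\alpha\in S_I$. Both sides are nonzero exactly when $\alpha_d=0$ for all $d\in D\cup D'=I''\setminus I$. In that case both sides equal $w^{\alpha|_{J''}}$, because restriction of index sets composes: $(\alpha|_{J'})|_{J''}=\alpha|_{J''}$. Otherwise both sides vanish. On the left this happens either at the first step or at the second, since $(\alpha|_{J'})_{d'}=\alpha_{d'}$ for $d'\in D'$. Also $\operatorname{res}_{I,I}=\operatorname{id}$. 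Applying $\Spec$ reverses the arrows, which gives the commuting triangle \eqref{eq:theta-compose}, and hence an inverse system of the $Q_I$ indexed by the coordinate-support poset.

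The only real obstacle is multiplicativity. The "otherwise $0$" clause is a ring map only because $F_D$ is a face, and that is the step to state carefully. Everything else is bookkeeping with restrictions.
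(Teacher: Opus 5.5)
Your proposal is correct and follows the paper's proof essentially step for step. The paper checks that $\alpha|_{J'}\in S_{I'}$ using $\h_{I'}\subset\h_I$, just as you do. It proves multiplicativity by the same case split: if either exponent has a positive $D$-coordinate, both sides vanish. That is exactly your observation that $F_D$ is a face, since $\alpha+\beta$ vanishes on $D$ if and only if $\alpha$ and $\beta$ both do. Finally, it obtains the triangle from $\operatorname{res}_{I,I''}=\operatorname{res}_{I',I''}\circ\operatorname{res}_{I,I'}$, composing the coordinate restrictions before passing to spectra.
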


\begin{proof}
At the level of normal coordinate spaces, the inclusion $J'\subset J_I$ is realised by setting $w_d=0$ for $d\in D$.  Let $w^\alpha$ be an $I$-invariant monomial.  If $\alpha_d>0$ for some $d\in D$, then its restriction is zero.  Otherwise $\alpha$ is supported on $J'$ and $\sum_{j\in J'}\alpha_j\Lambda_j|_{\h_I}=0$.  Because $\h_{I'}\subset\h_I$, restricting the last equality to $\h_{I'}$ gives $\sum_{j\in J'}\alpha_j\Lambda_j|_{\h_{I'}}=0$, so $\alpha|_{J'}\in S_{I'}$, and~\eqref{eq:res-monomial} takes values in $A_{I'}$.
The rule is multiplicative.  Indeed, if either $w^\alpha$ or $w^\beta$ uses a coordinate indexed by $D$, then both $\operatorname{res}_{I,I'}(w^{\alpha+\beta})$ and $\operatorname{res}_{I,I'}(w^\alpha)\operatorname{res}_{I,I'}(w^\beta)$ are zero.  If neither uses such a coordinate, then $(\alpha+\beta)|_{J'}=\alpha|_{J'}+\beta|_{J'}$, so the same equality holds for their monomials.  Linear extension gives an algebra homomorphism $A_I\to A_{I'}$.

For $I\subset I'\subset I''$, restriction from $J_I$ to $J_{I''}$ sets to zero exactly the coordinates in $I''\setminus I$.  Doing this first for $I'\setminus I$ and then for $I''\setminus I'$ gives the same monomial restriction, namely
$\operatorname{res}_{I,I''}=\operatorname{res}_{I',I''}\circ\operatorname{res}_{I,I'}$.  Passing to spectra reverses the arrows and gives the commutative triangle~\eqref{eq:theta-compose}.
\end{proof}

The restriction morphism distinguishes the resonances already present at $I$ and supported on $J'$ from those created after passage to the smaller stabiliser $\h_{I'}$.  For $I\subset I'$, the former constitute the subsemigroup $S_I\cap\N^{J'}\subset S_{I'}$.
\begin{proposition}\label{prop:support-restriction-resonances}
For $I\subset I'$:
\begin{enumerate}[(i)]
\item $\operatorname{im}\operatorname{res}_{I,I'}=\C[S_I\cap\N^{J'}]$;
\item the kernel of $\operatorname{res}_{I,I'}$ is the monomial ideal spanned by the $w^\alpha$, $\alpha\in S_I$, for which
      $\alpha_d>0$ for some $d\in I'\setminus I$;
\item the quotient $A_I/\ker\operatorname{res}_{I,I'}\simeq\C[S_I\cap\N^{J'}]$ defines a closed toric subvariety
      $\Spec\C[S_I\cap\N^{J'}]\subset Q_I$, and $\vartheta_{I,I'}$ factors as
      \begin{equation}\label{eq:factor-support}
      Q_{I'}\longrightarrow \Spec\C[S_I\cap\N^{J'}]\lhook\joinrel\longrightarrow Q_I,
      \end{equation}
      where the first arrow is induced by the semigroup inclusion $S_I\cap\N^{J'}\subset S_{I'}$.
\end{enumerate}
In particular, $S_{I'}\setminus(S_I\cap\N^{J'})$ indexes precisely the invariant monomials which become available only after passage from $\h_I$ to the smaller stabiliser $\h_{I'}$.
\end{proposition}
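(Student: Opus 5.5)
The proof is a direct computation with monomials, using the explicit rule \eqref{eq:res-monomial} from Proposition~\ref{prop:support-restriction}. Since $\operatorname{res}_{I,I'}$ sends each invariant monomial either to zero or to a single monomial, every assertion reduces to statements about exponents in $S_I$.

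For (i), I would first note that a monomial $w^\alpha$ with $\alpha\in S_I$ and $\alpha_d=0$ for all $d\in D=I'\setminus I$ is exactly an element $\alpha\in S_I\cap\N^{J'}$, under the identification of $\N^{J'}$ with the face of $\N^{J_I}$. For such $\alpha$ the restriction map sends $w^\alpha$ to $w^{\alpha|_{J'}}$. This exponent lies in $S_{I'}$ by the restriction argument $\h_{I'}\subset\h_I$ already used. All other monomials map to $0$. Hence the image is the span of the monomials indexed by $S_I\cap\N^{J'}$, namely $\C[S_I\cap\N^{J'}]\subset\C[S_{I'}]=A_{I'}$. Here one checks that $S_I\cap\N^{J'}$ is a subsemigroup, which holds because it is an intersection of semigroups.

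For (ii), the key point is that distinct monomials of $A_I$ with nonzero image have distinct images. The map $\alpha\mapsto\alpha|_{J'}$ is injective on exponents vanishing on $D$. So the images of distinct surviving monomials are linearly independent in $A_{I'}$. Now write $f=\sum c_\alpha w^\alpha$ with $\operatorname{res}(f)=0$. This gives $\sum_{\alpha\text{ surviving}}c_\alpha w^{\alpha|_{J'}}=0$, so $c_\alpha=0$ for every surviving $\alpha$. Therefore $f$ lies in the span of the monomials with $\alpha_d>0$ for some $d\in D$. Conversely, each such monomial lies in the kernel. I would also observe that this span is an ideal, since multiplying by any $w^\beta$ with $\beta\in S_I$ preserves positivity of the $D$-coordinates; this makes "monomial ideal" accurate.

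For (iii), the first isomorphism theorem together with (i) gives $A_I/\ker\operatorname{res}_{I,I'}\simeq\C[S_I\cap\N^{J'}]$. Hence $\Spec\C[S_I\cap\N^{J'}]$ is a closed subscheme of $Q_I$. It is an affine semigroup variety because $S_I\cap\N^{J'}=S_I\cap F$, where $F$ is a face of $\N^{J_I}$. Indeed $S_I\cap F$ is a face of $S_I$, so it is finitely generated and saturated, and the subvariety is the corresponding torus-orbit closure. The map $\operatorname{res}_{I,I'}$ factors as the surjection $A_I\to\C[S_I\cap\N^{J'}]$ followed by the inclusion $\C[S_I\cap\N^{J'}]\hookrightarrow\C[S_{I'}]$ induced by $S_I\cap\N^{J'}\subset S_{I'}$. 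Taking spectra gives \eqref{eq:factor-support}.

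The final sentence then follows from (i): the monomials of $A_{I'}$ not in the image are exactly those indexed by $S_{I'}\setminus(S_I\cap\N^{J'})$. I expect no real obstacle. The only step needing care is the face argument in (iii), which guarantees that the closed subvariety is toric (normal) rather than merely a subscheme.
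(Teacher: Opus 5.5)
Your proposal is correct and follows essentially the same route as the paper: you read off the image from the monomial rule, identify the kernel using the monomial basis together with the ideal property, and obtain the closed immersion and the factorisation from $A_I\twoheadrightarrow\C[S_I\cap\N^{J'}]\hookrightarrow A_{I'}$. Your extra observation that $S_I\cap\N^{J'}$ is a face of $S_I$ supplies a detail the paper leaves implicit, namely why the closed subvariety is a normal toric orbit closure.
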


\begin{proof}
The monomial formula~\eqref{eq:res-monomial} proves (i).  Since distinct monomials form a basis of the semigroup algebra, the same formula identifies the kernel with the stated monomial span; that span is an ideal because multiplying by an element of $S_I$ cannot remove a positive discarded exponent.  Hence $$A_I/\ker\operatorname{res}_{I,I'}\simeq\C[S_I\cap\N^{J'}],$$
which gives the closed immersion in~\eqref{eq:factor-support}.  The remaining factor is induced by the inclusion $S_I\cap\N^{J'}\subset S_{I'}$.
\end{proof}

If $S_{I'}=S_I\cap\N^{J'}$, then $\vartheta_{I,I'}$ is the closed immersion of $Q_{I'}$ onto $\Spec\C[S_I\cap\N^{J'}]\subset Q_I$.  If the inclusion is strict, then the exponents in $S_{I'}\setminus(S_I\cap\N^{J'})$ index first-integral monomials which appear only after passage from $\h_I$ to the smaller stabiliser $\h_{I'}$.
For $I=\varnothing$ the envelope is a point, so every support morphism from a larger stratum to the origin
envelope is necessarily constant.

\section{Realisation and examples}\label{sec:realisation-examples}

\subsection{Realisation}

Suppose first that $k=2$ and write $\Lambda_i=(a_i,b_i)\in(\C^2)^*$.
Under uniform maximal rank every pair of weights is linearly independent.  The singular supports are the
empty support and the singleton supports.
Fix $i$ and a point $p$ with support $I=\{i\}$.  Then $\h_i=\ker\Lambda_i$ is one-dimensional.  Put
\begin{equation*}
\Delta_{ij}=\det
\begin{pmatrix}
a_i & b_i \\
a_j & b_j
\end{pmatrix}
=a_ib_j-b_ia_j.
\end{equation*}

In this $\C^2$ setting, the residual resonance semigroup is expressed by the $2\times2$ minors of the weight matrix.
\begin{proposition}\label{prop:detformula}
The residual weight of the coordinate $j\ne i$ is, after choosing a generator of $\h_i$, equal to a common non-zero scalar multiple of $\Delta_{ij}$.  Consequently the transverse resonance semigroup at the support $\{i\}$ is
\begin{equation}\label{eq:Si-det}
S_i=
\left\{
\alpha\in\N^{\{1,\ldots,n\}\setminus\{i\}}:
\sum_{j\ne i}\alpha_j\Delta_{ij}=0
\right\},
\end{equation}
and $Q_i=\Spec\C[S_i]$.
\end{proposition}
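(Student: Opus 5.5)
We need a generator of $\h_i=\ker\Lambda_i$ and the values $\Lambda_j(\xi)$. Since $\Lambda_i=(a_i,b_i)\ne0$ (uniform maximal rank forces every single weight to be non-zero), the vector
$$\xi_i=(b_i,-a_i)\in\C^2$$
satisfies $\Lambda_i(\xi_i)=a_ib_i-b_ia_i=0$ and is non-zero. Because $\dim\h_i=k-r_{\{i\}}=1$, it spans $\h_i$. For $j\ne i$ we then get
$$\Lambda_j(\xi_i)=a_jb_i-b_ja_i=-\Delta_{ij}.$$
Any other generator is $c\,\xi_i$ with $c\ne0$, and it gives residual weights $\mu_j(c\xi_i)=-c\,\Delta_{ij}$. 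Identifying $\h_i^*\simeq\C$ through evaluation at the chosen generator, the residual weight of coordinate $j$ is therefore the scalar $-c\Delta_{ij}$. The factor $-c$ is non-zero and independent of $j$, which proves the first assertion.

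For the semigroup, recall the definition from the introduction (equivalently Corollary~\ref{cor:full-transverse-envelope} with $I=\{i\}$):
$$S_i=\Bigl\{\alpha\in\N^{J}:\textstyle\sum_{j\in J}\alpha_j\mu_j=0\Bigr\},\qquad J=\{1,\dots,n\}\setminus\{i\}.$$
Since $\h_i$ is one-dimensional, a linear form on it vanishes if and only if its value at the generator $c\xi_i$ vanishes. That value is $-c\sum_j\alpha_j\Delta_{ij}$, and dividing by $-c\ne0$ gives exactly~\eqref{eq:Si-det}.

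Uniform maximal rank gives $r_{\{i\}}=1=|I|$. Hence the trivial factor $\C^{|I|-r_I}$ of Corollary~\ref{cor:full-transverse-envelope} vanishes (as in Corollary~\ref{cor:maxrank-model}), and the envelope is $Q_i=\Spec\C[S_i]$.

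There is no real obstacle here; the proof is a direct computation. The only points needing care are the overall sign and the independence of the scalar from $j$. Neither affects $S_i$, because $S_i$ is the zero set of a linear condition.
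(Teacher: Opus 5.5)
Your proof is correct and follows essentially the same route as the paper. Both take the generator $(b_i,-a_i)$ of $\ker\Lambda_i$, compute $\Lambda_j(b_i,-a_i)=-\Delta_{ij}$, and observe that a common non-zero rescaling does not change the resonance relations; your extra remarks (that $\Lambda_i\neq0$, that $\dim\h_i=1$, and that uniform maximal rank kills the trivial factor so the envelope is exactly $Q_i$) are accurate details the paper leaves implicit.
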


\begin{proof}
A generator of $\ker\Lambda_i$ is $v_i=(b_i,-a_i)$.
For $j\ne i$, $\Lambda_j(v_i)=a_jb_i-b_ja_i=-\Delta_{ij}$.
Replacing $v_i$ by another non-zero generator multiplies all residual weights by the same non-zero scalar.  The zero relation $\sum_{j\ne i}\alpha_j\mu_j=0$ is unchanged by such a common rescaling, so~\eqref{eq:Si-det} is intrinsic.
\end{proof}

For a $\C^2$-action whose weight configuration has uniform maximal rank, every singleton support has non-zero residual weights $\Delta_{ij}$ for $j\ne i$.  Its transverse leaf space is non-$T_1$ whenever a normal coordinate is present, while $Q_i$ is non-trivial exactly when the determinants $\Delta_{ij}$ satisfy a non-zero non-negative integral relation.

\begin{definition}\label{def:residual-UMR}
A spanning configuration $\beta_1,\ldots,\beta_s\in(\C^d)^*$, with $s\ge d$,
has \emph{uniform maximal rank in dimension $d$} if $\dim\Span\{\beta_j:j\in K\}=\min\{|K|,d\}$ for every $K\subset\{1,\ldots,s\}$.
\end{definition}

Residual configurations of uniform maximal rank admit the following realisation inside ambient Poincar\'e-domain configurations.
\begin{theorem}\label{thm:higher-realisation}
Fix $m,d\ge1$ and let $\beta_1,\ldots,\beta_s$ be a spanning configuration in $(\C^d)^*$.  The following are equivalent:
\begin{enumerate}[(i)]
\item the $\beta_j$ have uniform maximal rank in dimension $d$;
\item there is a configuration $\Lambda_1,\ldots,\Lambda_{m+s}\in(\C^{m+d})^*$ of uniform maximal rank in the Poincar\'e domain and
      a support $I_0=\{1,\ldots,m\}$ such that $\h_{I_0}\simeq\C^d$ and the residual weights on the coordinates
      outside $I_0$ are exactly $\beta_1,\ldots,\beta_s$ under this identification.
\end{enumerate}
\end{theorem}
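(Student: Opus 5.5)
The plan is to prove (ii)$\Rightarrow$(i) by an elementary linear-algebra argument, and (i)$\Rightarrow$(ii) by an explicit block construction followed by a generic perturbation.

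\textbf{(ii)$\Rightarrow$(i).} Assume the ambient configuration has uniform maximal rank with $k=m+d$. Then $\Lambda_1,\ldots,\Lambda_m$ are independent, so $\h_{I_0}$ has dimension $d$. Let $K\subset\{1,\ldots,s\}$ with $|K|\le d$. The weights $\{\Lambda_i:i\in I_0\}\cup\{\Lambda_{m+j}:j\in K\}$ number at most $m+d=k$, so they are independent. Suppose $\sum_{j\in K}c_j\beta_j=0$ on $\h_{I_0}$. Then $\sum_{j\in K} c_j\Lambda_{m+j}$ vanishes on $\h_{I_0}=\bigcap_{i\le m}\ker\Lambda_i$, so it lies in $\Span\{\Lambda_i:i\in I_0\}$. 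Independence of the enlarged family then forces all $c_j=0$. Hence every $\beta$-subfamily of size at most $d$ is independent, and since the $\beta_j$ span, this is exactly uniform maximal rank in dimension $d$.

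\textbf{(i)$\Rightarrow$(ii), first approximation.} Write $\C^{m+d}=\C^m\oplus\C^d$ with coordinates $(x,y)$. Take $\Lambda_i=(\ell_i,0)$ for $i\le m$, where $\ell_1,\ldots,\ell_m$ form a basis of $(\C^m)^*$. Take $\Lambda_{m+j}=(\gamma_j,\beta_j)$ for $j\le s$, with $\gamma_j\in(\C^m)^*$ still to be chosen. Then $\h_{I_0}=\{0\}\oplus\C^d$, and the residual weights are exactly $\Lambda_{m+j}|_{\h_{I_0}}=\beta_j$, whatever the $\gamma_j$ are. So the residual data is fixed, and only two conditions remain to be arranged by choosing $\ell$ and $\gamma$.

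\textbf{Poincar\'e condition.} The first $m$ weights must have positive real part on some $\xi$, and so must the last $s$. Choose $\xi=(\xi',0)$ and choose the $\ell_i$ so that $\ell_i(\xi')=1$. For instance, take $\xi'=e_1+\cdots+e_m$ and a basis $\ell_i$ whose values on $\xi'$ are all $1$. Then it suffices to take $\gamma_j(\xi')$ with positive real part, and the $\beta_j$ play no role in this condition.

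\textbf{Uniform maximal rank, the main obstacle.} We need every subfamily of at most $m+d$ of the ambient weights to be independent. This is where the real work lies.

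I would handle it by choosing the $\gamma_j$ generically inside the open set $\{\operatorname{Re}\gamma_j(\xi')>0\}$. Consider a subset $A\subset I_0$ and $K\subset\{1,\ldots,s\}$ with $|A|+|K|\le m+d$. The determinant condition for this subfamily is polynomial in $\gamma$, so it suffices to exhibit, for each such subset, a single $\gamma$ where it holds. Intersecting finitely many nonempty Zariski-open sets with a nonempty Euclidean-open set then gives a common choice.

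For a fixed subset, we are asking whether the rows $(\ell_a,0)$ for $a\in A$ together with $(\gamma_j,\beta_j)$ for $j\in K$ can be made independent. Split $K=K_1\sqcup K_2$ with $|K_2|=\min(|K|,d)$. By hypothesis (i), the $\beta_j$ for $j\in K_2$ are independent. For $j\in K_1$ we have $|K_1|\le m-|A|$ since $|A|+|K|\le m+d$, so we can set $\gamma_j$ equal to distinct basis vectors $\ell_i$ with $i\notin A$. We set $\gamma_j=0$ for $j\in K_2$.

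The resulting matrix is block triangular. The first block consists of the rows $\ell_a$ for $a\in A$ and $\ell_i$ for $j\in K_1$, which are distinct members of a basis of $(\C^m)^*$. The second block consists of the rows $\beta_j$ for $j\in K_2$, which are independent. So the family has full rank for this $\gamma$.

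This witness $\gamma$ may violate the Poincar\'e inequality, but that does not matter: it only shows that the Zariski-open set for this subfamily is nonempty, and the open Poincar\'e set is nonempty as well. The full space of $\gamma$ is irreducible, so the intersection of all these sets is nonempty.

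Finally, the whole configuration spans $(\C^{m+d})^*$. Indeed, the $\ell_i$ together with any $d$ of the $\beta_j$ that form a basis already span, using $s\ge d$.
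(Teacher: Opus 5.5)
Your proposal is correct and follows essentially the same route as the paper: the lift $\Lambda_i=(\ell_i,0)$, $\Lambda_{m+j}=(\gamma_j,\beta_j)$, a witness for each pair $(A,K)$ with $|A|+|K|\le m+d$ showing that independence is a non-empty Zariski-open condition on $\gamma$, and intersection with the Euclidean-open Poincar\'e set along $\xi=(\xi',0)$. Your block-triangular witness is a concrete version of the paper's quotient-and-reduction step, and the one point to state explicitly is why the final intersection is non-empty: a non-empty Zariski-open subset of the affine space $((\C^m)^*)^s$ is dense in the Euclidean topology, and this, rather than irreducibility alone, is the fact you need.
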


\begin{proof}
Assume (ii).  The $m$ support weights are linearly independent.  Let $K$ be a set of residual indices with $|K|\le d$.  The subcollection consisting of the $m$ support weights and the weights indexed by $K$ has at most $m+d$ elements and is linearly independent.  Quotienting $(\C^{m+d})^*$ by the span of the support weights preserves the independence of the residual classes.  These classes are precisely the $\beta_j$, $j\in K$.  Every subcollection of at most $d$ residual weights is independent, proving (i). 
Conversely, write $(\C^{m+d})^*=E^*\oplus F^*$ with $E\simeq\C^m$ and $F\simeq\C^d$, and choose a basis $\varepsilon_1,\ldots,\varepsilon_m$ of $E^*$.  Put $\Lambda_i=(\varepsilon_i,0)$ for $1\le i\le m$.  For $1\le j\le s$, choose $a_j\in E^*$ and set
\begin{equation}\label{eq:generic-lifts}
\Lambda_{m+j}=(a_j,\beta_j).
\end{equation}
Uniform maximal rank imposes finitely many non-vanishing conditions on the support components.

Fix subsets $A\subset\{1,\ldots,m\}$ and $K\subset\{1,\ldots,s\}$, and put $r=|A|$ and $q=|K|$.  Assume $r+q\le m+d$.  The weights indexed by $A\cup(m+K)$ are independent if and only if the images of the $q$ lifted weights in $(E^*/\Span\{\varepsilon_i:i\in A\})\oplus F^*$ have rank $q$.  If $q\le d$, then the $F^*$-components $\beta_j$, $j\in K$, are already independent, so this rank condition holds for every choice of the $a_j$.

Suppose that $q>d$, and put $t=q-d$.  The inequality $r+q\le m+d$ gives $t\le m-r$.  Choose a subset $K_0\subset K$ of cardinality $d$, and put $K_1=K\setminus K_0$, so $|K_1|=t$.  By (i), the vectors $\beta_j$, $j\in K_0$, form a basis of $F^*$.  In the quotient $E^*/\Span\{\varepsilon_i:i\in A\}$ choose linearly independent covectors $\eta_j$, $j\in K_1$.  Set the projected support component of the lifted weight indexed by $j\in K_0$ equal to zero and that of the weight indexed by $j\in K_1$ equal to $\eta_j$.

For $j\in K_1$, write $\beta_j=\sum_{b\in K_0}r_{jb}\beta_b$.  Subtracting the same linear combination of the lifted weights indexed by $K_0$ transforms $(\eta_j,\beta_j)$ into $(\eta_j,0)$.  Elementary column operations transform the $q$ lifted vectors into the union of the $d$ vectors $(0,\beta_b)$, $b\in K_0$, and the $t$ vectors $(\eta_j,0)$, $j\in K_1$.  Both families are independent and lie in complementary summands, so their union is independent and the required rank is $q$ for this choice of support components.

For each pair $(A,K)$, the non-vanishing of a suitable $q\times q$ minor is consequently a non-empty Zariski-open condition on $(a_1,\ldots,a_s)\in(E^*)^s$.  There are only finitely many relevant pairs.  The intersection of their non-empty Zariski-open sets is a non-empty Zariski-open subset $\mathcal U\subset(E^*)^s$.

Choose $\xi_E\in E$ with $\varepsilon_i(\xi_E)=1$ for every $i$.  The inequalities $\operatorname{Re}a_j(\xi_E)>0$ define a non-empty Euclidean open subset $\mathcal V\subset(E^*)^s$.  Since a non-empty Zariski-open subset of a complex affine space is Euclidean dense, $\mathcal U\cap\mathcal V\ne\varnothing$.  Choose $(a_1,\ldots,a_s)$ in this intersection.  For $\xi=(\xi_E,0)\in E\oplus F$, the support weights satisfy $\Lambda_i(\xi)=1$, while $\operatorname{Re}\Lambda_{m+j}(\xi)=\operatorname{Re}a_j(\xi_E)>0$ for every $j$.  The configuration lies in the Poincar\'e domain and has uniform maximal rank. 
For $I_0=\{1,\ldots,m\}$, $\h_{I_0}=\bigcap_{i=1}^m\ker(\varepsilon_i,0)=\{0\}\oplus F$.  Restricting~\eqref{eq:generic-lifts} to this subspace gives $\Lambda_{m+j}|_{\h_{I_0}}=\beta_j$.  The prescribed residual configuration occurs along the support $I_0$.
\end{proof}

In residual dimension one, uniform maximal rank is equivalent to the
non-vanishing of all residual weights.
\begin{corollary}\label{cor:rankone-realisation}
Let $\beta_1,\ldots,\beta_s\in\C^*$ be arbitrary.  Then they occur as the residual weights along a one-dimensional singular orbit of a $\C^2$-action whose weight configuration has uniform maximal rank and lies in the Poincar\'e domain.
\end{corollary}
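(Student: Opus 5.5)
This is a direct specialisation of Theorem~\ref{thm:higher-realisation} to $m=d=1$. Given $\beta_1,\ldots,\beta_s\in\C^*$, regard them as covectors on $\C^1$, so $(\C^1)^*\simeq\C$. The configuration spans $(\C^1)^*$ because $s\ge1$ and each $\beta_j\neq0$. The first step is to check condition~(i) of the theorem in dimension $d=1$.

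For $K\subset\{1,\ldots,s\}$ we need $\dim\Span\{\beta_j:j\in K\}=\min\{|K|,1\}$. For $K=\varnothing$ both sides are zero. For $K\neq\varnothing$ the span is all of $(\C^1)^*$ precisely because some, hence every, $\beta_j$ with $j\in K$ is non-zero. This is the remark preceding the corollary: in residual dimension one, uniform maximal rank is exactly the non-vanishing of all residual weights.

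The implication (i)$\Rightarrow$(ii) of Theorem~\ref{thm:higher-realisation} with $m=1$ then supplies the required data:
\begin{itemize}
\item a configuration $\Lambda_1,\ldots,\Lambda_{1+s}\in(\C^2)^*$ of uniform maximal rank lying in the Poincar\'e domain;
\item the support $I_0=\{1\}$, for which $\h_{I_0}=\ker\Lambda_1\simeq\C$;
\item residual weights $\Lambda_{1+j}|_{\h_{I_0}}=\beta_j$ under this identification.
\end{itemize}
So $n=1+s$ and $k=2$, and the action is a $\C^2$-action.

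It remains to see that points of support $I_0$ lie on one-dimensional singular orbits. Take $p$ with support $\{1\}$, for instance $p=e_1$. By Proposition~\ref{prop:leaf-dim}, $\dim L_p=r_{\{1\}}=1$. Since $r_{\{1\}}=1<2=k$, the orbit is singular. By Corollary~\ref{cor:maxrank-model}, the transverse foliation at $p$ is the residual $\h_{\{1\}}$-action with exactly the weights $\beta_j$.

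No step is a real obstacle; the only point needing care is the identification $\h_{I_0}\simeq\C$. It is fixed only up to a scalar, which rescales all residual weights by a common factor. The theorem builds the configuration so that the identification $\h_{I_0}=\{0\}\oplus F$ gives the $\beta_j$ exactly, so the weights are realised on the nose and not merely up to this scalar.
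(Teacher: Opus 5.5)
Your proposal is correct and is the paper's argument: in dimension $d=1$, uniform maximal rank reduces to the non-vanishing of the $\beta_j$, and Theorem~\ref{thm:higher-realisation} with $m=1$ then supplies the lift. Your additional checks are accurate, though the paper leaves them implicit: that support $\{1\}$ gives a one-dimensional singular orbit by Proposition~\ref{prop:leaf-dim}, and that the identification $\h_{I_0}=\{0\}\oplus F$ realises the $\beta_j$ exactly rather than up to a common scalar.
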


\begin{proof}
For $d=1$, uniform maximal rank is equivalent to $\beta_j\ne0$ for every $j$.  Apply Theorem~\ref{thm:higher-realisation} with $m=1$.
\end{proof}

\subsection{Examples}

Let the residual action of $\C$ on $\C^2$ have non-zero integer weights $a>0$ and $-b<0$, with $\gcd(a,b)=1$, so that $t\cdot(x,y)=(e^{at}x,e^{-bt}y)$.
The resonance equation is $a\alpha-b\beta=0$,
so $S=\N(b,a)$.
Thus $\C[S]=\C[x^by^a]$ and $Q\simeq\C$ with quotient map $q(x,y)=x^by^a$.
The zero fibre contains the two coordinate punctured leaves and the origin, so $Q$ is not the leaf space.
By Corollary~\ref{cor:rankone-realisation}, this example occurs inside a $\C^2$-action whose weight configuration has uniform maximal rank and lies in the Poincar\'e domain for
every coprime pair $(a,b)$. 
A first singular example is obtained from residual weights $1,1,-2$ on coordinates $(x_1,x_2,z)$.  The semigroup equation is $\alpha_1+\alpha_2-2\gamma=0$.
A Hilbert basis is $(2,0,1)$, $(1,1,1)$ and $(0,2,1)$.
The invariant ring is generated by $u=x_1^2z$, $v=x_1x_2z$ and $w=x_2^2z$, with the single relation $v^2=uw$.  Hence $Q\simeq\{v^2=uw\}\subset\C^3$,
the normal quadratic cone.
An explicit lift whose weight configuration has uniform maximal rank and lies in the Poincar\'e domain is given by $\Lambda_0=(1,0)$, $\Lambda_1=(1,1)$, $\Lambda_2=(2,1)$ and $\Lambda_3=(1,-2)$.
All first coordinates are positive, and the pairwise determinants are non-zero.  Along the support $\{0\}$
the residual weights are $1,1,-2$. 
Fix integers $p,q\ge1$.  Let a residual $\C$-action on $\C^{p+q}=\C_x^p\oplus\C_y^q$ have weight $+1$ on
$x_1,\ldots,x_p$ and weight $-1$ on $y_1,\ldots,y_q$:
\begin{equation}\label{eq:segreact}
t\cdot(x,y)=(e^tx,e^{-t}y).
\end{equation}

The corresponding invariant ring is the coordinate ring of the rank-one determinantal cone.
\begin{proposition}\label{prop:segre}
The invariant algebra of~\eqref{eq:segreact} is generated by the quadratic monomials $z_{ab}=x_ay_b$ for $1\le a\le p$ and $1\le b\le q$,
and the complete ideal of relations is generated by the $2\times2$ minors of the matrix $Z=(z_{ab})$.  Hence $Q_{p,q} \simeq \{Z\in\operatorname{Mat}_{p\times q}(\C):\rk Z\le1\}$,
which is the affine cone over the Segre embedding $\mathbb P^{p-1}\times\mathbb P^{q-1} \hookrightarrow\mathbb P^{pq-1}$.
Moreover, $\dim Q_{p,q}=p+q-1$.
If $p,q\ge2$, then the origin is singular.
\end{proposition}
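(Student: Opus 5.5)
Since the weights are $\pm1$, the resonance semigroup is read off directly: for $(\alpha,\gamma)\in\N^p\times\N^q$ the relation is $\sum_a\alpha_a-\sum_b\gamma_b=0$. So
\[
S=\{(\alpha,\gamma):|\alpha|=|\gamma|\}.
\]
By Proposition~\ref{prop:first-integrals} the invariant algebra is $\C[S]$. I will show that the degree-one elements $e_a+f_b$ generate $S$. Given $(\alpha,\gamma)\in S$ with $|\alpha|=|\gamma|=m>0$, pick $a$ with $\alpha_a>0$ and $b$ with $\gamma_b>0$. Subtracting $e_a+f_b$ leaves an element of $S$ of degree $m-1$, so induction on $m$ gives the claim. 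Hence the monomials $z_{ab}=x_ay_b$ generate the invariant ring, and $\phi:\C[z_{ab}]\to\C[S]$ is surjective.

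Next, the relations. The $2\times2$ minors $z_{ab}z_{a'b'}-z_{ab'}z_{a'b}$ lie in $\ker\phi$. For the converse I will use the classical fact that the determinantal ideal $I_2(Z)$ is prime and that the rank-$\le1$ locus is irreducible of dimension $p+q-1$. A self-contained route is a standard monomial argument. Modulo the minors, every monomial in the $z_{ab}$ can be rewritten as a sorted monomial $z_{a_1b_1}\cdots z_{a_mb_m}$ with $a_1\le\dots\le a_m$ and $b_1\le\dots\le b_m$. Such a sorted monomial is determined by its image $x^\alpha y^\gamma$, since the sorted sequences are recovered from $\alpha$ and $\gamma$. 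So distinct standard monomials map to distinct elements of $\C[S]$. Because $\phi$ is graded, the kernel is therefore spanned by the minors, and $\ker\phi=I_2(Z)$.

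Geometrically, $\operatorname{Spec}\C[S]$ is the image of $(x,y)\mapsto xy^{\mathsf T}$, which is precisely the set of matrices of rank at most $1$. This matches the affine cone over the Segre embedding. I expect the straightening/injectivity step to be the main technical point; citing the primality of determinantal ideals is the fallback.

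Dimension: by \eqref{eq:dimQ}, $\dim Q=\rk\operatorname{gp}(S)$. The group completion is $\operatorname{gp}(S)=\{(\alpha,\gamma)\in\Z^{p+q}:|\alpha|=|\gamma|\}$, the kernel of a surjective map $\Z^{p+q}\to\Z$, so it has rank $p+q-1$.

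Singularity: if $p,q\ge2$, the semigroup $S$ has the $pq$ Hilbert basis elements $e_a+f_b$, and these are indecomposable because they have degree one. Since $pq>p+q-1$, $S$ is not isomorphic to $\N^{p+q-1}$, so the origin is singular by Proposition~\ref{prop:smoothness}. Equivalently, the Zariski tangent space at $0$ is all of $\C^{pq}$, because the ideal is generated in degree $2$.
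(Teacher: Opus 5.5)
Your proof is correct. The generation step is the same pairing argument the paper uses. The real difference is the relation ideal.

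The paper cites the classical fact that the ideal of the rank-$\le1$ determinantal variety is generated by the $2\times2$ minors. You prove it directly:
\begin{itemize}
\item the minors let you swap column indices between two factors, so every monomial in the $z_{ab}$ is congruent modulo $I_2(Z)$ to the unique sorted monomial with the same row and column contents;
\item distinct sorted monomials map to distinct, hence linearly independent, monomials $x^\alpha y^\gamma$;
\item therefore $\C[z_{ab}]/I_2(Z)\to\C[S]$ is injective.
\end{itemize}
The appeal to gradedness is unnecessary, since this linear-algebra argument already suffices.

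For dimension and singularity you use the paper's own toric criteria: $\rk\operatorname{gp}(S)$ via Proposition~\ref{prop:normal-toric}, and Proposition~\ref{prop:smoothness} by counting $pq>p+q-1$ indecomposables when $p,q\ge2$. The paper instead argues through the Zariski tangent space, which you also mention as an alternative.

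The equality $\operatorname{gp}(S)=\{(\alpha,\gamma)\in\Z^{p+q}:|\alpha|=|\gamma|\}$ deserves one line of justification. The differences $(e_a+f_b)-(e_{a'}+f_b)$ and $(e_a+f_b)-(e_a+f_{b'})$, together with $e_1+f_1$, generate that lattice; alternatively, cite Proposition~\ref{prop:rankone-dim}.

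Your route gives a self-contained, elementary proof that stays inside the paper's semigroup framework. The paper's route is shorter and places the result directly in the classical Segre-cone and determinantal-ideal context; this is exactly your stated fallback.
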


\begin{proof}
An invariant monomial $x_1^{\alpha_1}\cdots x_p^{\alpha_p} y_1^{\beta_1}\cdots y_q^{\beta_q}$ satisfies $\sum_a\alpha_a=\sum_b\beta_b$.
Pair the $\sum\alpha_a$ copies of the $x$-variables with the same number of copies of the $y$-variables.  This expresses every invariant monomial as a product of the quadrics $x_ay_b$.  Hence these quadrics generate the invariant algebra. 
The map $(x,y)\longmapsto xy^{\mathsf T}$ has image the rank-at-most-one matrices.  The defining ideal of
this determinantal variety is generated by the $2\times2$ minors.  Its dimension is $p+q-1$; the same value is obtained from the generic one-dimensional torus quotient.  For $p,q\ge2$ the cone over a
positive-dimensional Segre variety is singular at its vertex; equivalently, the Zariski tangent space at the
origin has dimension $pq>p+q-1$.
\end{proof}

Corollary~\ref{cor:rankone-realisation}, applied to $p$ copies of $+1$ and $q$ copies of $-1$, realises $Q_{p,q}$ along a one-dimensional singular orbit of a $\C^2$-action whose weight configuration has uniform maximal rank and lies in the Poincar\'e domain for every $p,q\ge1$.  Explicitly, choose pairwise distinct positive real numbers $a_1,\ldots,a_p$ and $c_1,\ldots,c_q$.
Take $\Lambda_0=(1,0)$, $\Lambda_a=(a_a,1)$ and $\Lambda_{p+b}=(c_b,-1)$.
The determinant of two positive-residual weights is $a_a-a_{a'}$, the determinant of two negative-residual
weights is $c_{b'}-c_b$, and a mixed determinant is $-(a_a+c_b)$.  All are non-zero.  The separating
direction is again $(1,0)$.

\begin{theorem}\label{thm:jump}
For every integer $N\ge1$ there exists a $\C^2$-action on some $\C^n$ whose weight configuration has uniform maximal rank and lies in the Poincar\'e domain, with the following properties:
\begin{enumerate}[(i)]
\item the transverse analytic envelope at the origin is a point;
\item along a one-dimensional singular orbit, the transverse analytic envelope has dimension $N$.
\end{enumerate}
For every $N\ge3$ the positive-dimensional envelope can be chosen singular at its distinguished point.
\end{theorem}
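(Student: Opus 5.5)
The plan is to build the example from Corollary~\ref{cor:rankone-realisation}: choose non-zero residual weights $\beta_1,\ldots,\beta_s\in\C^*$ whose resonance semigroup has group completion of rank exactly $N$. That corollary realises them along a one-dimensional singular orbit (support $\{0\}$) of a $\C^2$-action of uniform maximal rank in the Poincar\'e domain. Proposition~\ref{prop:origin-point} then gives (i) for free, so everything reduces to a one-dimensional residual computation.

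For (ii), I would take $\h\simeq\C$ and look for weights whose relation lattice $L$ has rank $N$ and is spanned by non-negative relations.

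\emph{Case $N\ge 2$.} Use the Segre family of Proposition~\ref{prop:segre}: weights $+1$ on $p$ coordinates and $-1$ on $q$ coordinates with $p+q-1=N$. Then $\dim Q_{p,q}=N$, using Proposition~\ref{prop:normal-toric}, i.e.\ $\rk\operatorname{gp}(S)=N$.

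\emph{Case $N=1$.} Take $p=q=1$, which gives $Q\simeq\C$ as in the example with $a=b=1$. Alternatively, use any coprime pair $(a,-b)$.

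The explicit lift after Proposition~\ref{prop:segre} already exhibits the ambient configuration: $\Lambda_0=(1,0)$, $\Lambda_a=(a_a,1)$, $\Lambda_{p+b}=(c_b,-1)$ with distinct positive reals. It has pairwise non-zero determinants and separating direction $(1,0)$. So I would simply invoke that construction, with $n=p+q+1$.

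For the singularity claim when $N\ge3$, I would choose $p,q\ge2$ with $p+q-1=N$, which is possible exactly when $N\ge3$ (e.g.\ $p=2$, $q=N-1$). Proposition~\ref{prop:segre} then says the vertex is singular, because its Zariski tangent space has dimension $pq>p+q-1$. For $N=3$ this is the quadric cone $v^2=uw$ appearing earlier.

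The only point needing care is confirming that the dimension of the envelope equals $p+q-1$ via the rank of $\operatorname{gp}(S)$. The differences $e_{x_a}+e_{y_b}$ span the full relation lattice $\{\sum\alpha=\sum\beta\}$ of rank $p+q-1$. This is immediate, so I expect no real obstacle beyond bookkeeping.
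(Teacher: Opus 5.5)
Your proposal is correct and is essentially the paper's proof: (i) comes from Proposition~\ref{prop:origin-point}, (ii) from the Segre family realised via Corollary~\ref{cor:rankone-realisation}, and singularity for $N\ge3$ from the choice $p=2$, $q=N-1$. The paper takes $p=1$, $q=N$, which also covers your separate $N=1$ case.

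There are two harmless slips. The generators $e_{x_a}+e_{y_b}$ are sums rather than differences. And for $N=3$ the envelope $Q_{2,2}$ is the three-dimensional cone $z_{11}z_{22}=z_{12}z_{21}$ in $\C^4$, not the surface $v^2=uw$ coming from the weights $1,1,-2$.
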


\begin{proof}
At the origin the envelope is a point by Proposition~\ref{prop:origin-point} for every configuration in the Poincar\'e domain. 
For the second assertion take the Segre family with $p=1$ and $q=N$.
Then Proposition~\ref{prop:segre} gives $\dim Q_{1,N}=N$.
For a singular example of dimension $N\ge3$, take $p=2$ and $q=N-1$.
Then $\dim Q_{2,N-1}=2+(N-1)-1=N$,
and both $p$ and $q$ are at least two, so the vertex is singular.  Corollary~\ref{cor:rankone-realisation} realises these quotients inside $\C^2$-actions whose weight configurations have uniform maximal rank and lie in the Poincar\'e domain.
\end{proof}

The analytic-envelope dimension can therefore increase by an arbitrarily prescribed amount between the origin and an adjacent singular stratum.  For the rank-one case, let $b_1,\ldots,b_s\in\Z\setminus\{0\}$ be residual weights for a one-dimensional stabiliser and suppose that
both signs occur.  Define $\varphi:\Z^s\to\Z$ by $\varphi(\alpha)=\sum_{j=1}^sb_j\alpha_j$.
Then $L=\ker\varphi$ and $S=L\cap\N^s$.

\begin{proposition}\label{prop:rankone-dim}
If both positive and negative weights occur, then $\operatorname{gp}(S)=L$ so $\dim Q=s-1$.
If all $b_j$ have the same sign, then $S=\{0\}$ and $Q$ is a point.
\end{proposition}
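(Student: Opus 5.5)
The second assertion is the easy half, so I would dispose of it first. If all $b_j$ have the same sign, say all are positive, then for $\alpha\in\N^s$ every term $b_j\alpha_j$ is non-negative. Hence $\varphi(\alpha)=0$ forces $\alpha=0$, so $S=\{0\}$. Proposition~\ref{prop:normal-toric} then says that $Q$ is a point. (This is also the real-functional remark made after Proposition~\ref{prop:smoothness}.)

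For the first assertion the strategy is to show $\operatorname{gp}(S)=L$ and then read off the dimension. Since $\varphi\neq0$, its kernel $L$ has rank $s-1$, and Proposition~\ref{prop:normal-toric} gives $\dim Q=\rk\operatorname{gp}(S)$. The inclusion $\operatorname{gp}(S)\subset L$ is automatic. For the reverse inclusion I would take an arbitrary $a\in L$ and write it as a difference of two elements of $S$. The key construction is a single element $\omega\in S$ all of whose coordinates are strictly positive. Choose indices $j_+$ with $b_{j_+}>0$ and $j_-$ with $b_{j_-}<0$, and put $B=\sum_j|b_j|$. Start from the vector of all ones $\mathbf 1$, whose $\varphi$-value is $\sum_j b_j$. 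Correct it by adding the non-negative vector $\epsilon(b)$ defined as follows: add $|b_{j_-}|\cdot c$ to the $j_+$ coordinate and $b_{j_+}\cdot c'$ to the $j_-$ coordinate, with integers $c,c'\ge0$ chosen so the total is zero. The cleanest concrete way to do this is to pair each index with one of opposite sign: for every $j$ with $b_j>0$, the vector $|b_{j_-}|e_j+b_je_{j_-}$ lies in $S$, and for every $j$ with $b_j<0$, the vector $|b_j|e_{j_+}+b_{j_+}e_j$ lies in $S$. Summing these $s$ vectors (for $j_\pm$ themselves the vector $|b_{j_-}|e_{j_+}+b_{j_+}e_{j_-}$ already qualifies) gives $\omega\in S$ with $\omega_j\ge1$ for every $j$.

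With $\omega$ in hand, take $a\in L$ and choose $N\in\N$ with $N\ge\max_j|a_j|$. Then $a+N\omega$ has non-negative coordinates and lies in $L$, so it belongs to $S$, and $N\omega\in S$ as well. Therefore $a=(a+N\omega)-N\omega\in\operatorname{gp}(S)$. This gives $\operatorname{gp}(S)=L$, and hence $\dim Q=\rk L=s-1$.

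The only step that needs care is producing a strictly positive element of $S$; everything else is formal. Both signs occurring is exactly what makes the pairing construction possible.
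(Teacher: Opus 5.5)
Your proof is correct and takes the same approach as the paper: produce an element of $S$ with all coordinates positive, shift an arbitrary $a\in L$ into $\N^s$ by a large multiple of it to get $\operatorname{gp}(S)=L$, and then read off $\dim Q=\rk L=s-1$ from Proposition~\ref{prop:normal-toric}. The paper builds its positive element as $\eta_j=B$ on positive-weight indices and $\eta_j=A$ on negative-weight indices, where $A=\sum_{b_j>0}b_j$ and $B=-\sum_{b_j<0}b_j$; your pairing construction works equally well, but you should delete the abandoned ``start from $\mathbf 1$'' sketch, since its $\epsilon(b)$ is never defined and its $B$ is never used.
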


\begin{proof}
If all weights have the same sign, then a relation $\sum_jb_j\alpha_j=0$ with $\alpha_j\in\N$ forces every $\alpha_j$ to vanish.  Hence $S=\{0\}$.
Assume that both signs occur.  Put $P=\{j:b_j>0\}$ and $N=\{j:b_j<0\}$.  Define $A=\sum_{j\in P}b_j>0$ and $B=-\sum_{j\in N}b_j>0$, and let $\eta\in\N^s$ be given by $\eta_j=B$ for $j\in P$ and $\eta_j=A$ for $j\in N$.  Every coordinate of $\eta$ is positive, and
\begin{equation*}
\sum_{j=1}^sb_j\eta_j
=B\sum_{j\in P}b_j+A\sum_{j\in N}b_j
=BA-AB=0.
\end{equation*}
Thus $\eta\in S$ lies in the interior of the positive orthant.
Let $a\in L=\ker\varphi$.  Choose $m$ so large that $a_j+m\eta_j\ge0$ for every $j$.  Then $a+m\eta\in L\cap\N^s=S$, while $m\eta\in S$.  Hence
$a=(a+m\eta)-m\eta\in\operatorname{gp}(S)$, so $L\subset\operatorname{gp}(S)$; the reverse inclusion follows from $S\subset L$.  Therefore $\operatorname{gp}(S)=L$.  Since $\varphi:\Z^s\to\Z$ is non-zero, its kernel has rank $s-1$, and~\eqref{eq:dimQ} gives $\dim Q=s-1$.
\end{proof}

\section{Envelope transport and monodromy}\label{sec:holonomy}

The envelope $Q_I$ is attached to a single transversal. Transport along
the singular orbit identifies the envelopes attached to different
transversals, and the universal property makes these identifications
functorial. The universal property also yields the intrinsic descent,
whereas the diagonal action supplies the explicit formula.

\subsection{Transport along a singular orbit}
\begin{proposition}\label{prop:pseudogroup-descent}
Let $(\Sigma_a,\cF_a,p_a)$ and $(\Sigma_b,\cF_b,p_b)$ be holomorphic foliated germs.  Suppose that they admit analytic envelopes $q_a:(\Sigma_a,p_a)\to(Q_a,o_a)$ and $q_b:(\Sigma_b,p_b)\to(Q_b,o_b)$, both satisfying the factorisation property of Theorem~\ref{thm:universal}.  Let
$h:(\Sigma_a,p_a)\longrightarrow(\Sigma_b,p_b)$ be a germ of biholomorphism which maps local leaves of
$\cF_a$ bijectively onto local leaves of $\cF_b$.  Then there is a unique germ of biholomorphism $\bar
h:(Q_a,o_a)\longrightarrow(Q_b,o_b)$ making the square
\begin{equation}\label{eq:descent-square}
\begin{tikzcd}[column sep=4.0em,row sep=1.8em]
(\Sigma_a,\cF_a,p_a) \arrow[r,"h"] \arrow[d,"q_a"']
& (\Sigma_b,\cF_b,p_b) \arrow[d,"q_b"] \\
(Q_a,o_a) \arrow[r,"\bar h"']
& (Q_b,o_b)
\end{tikzcd}
\end{equation}
commute.
The construction respects identities, inverses and composition.  Every transverse pseudogroup whose elements preserve the induced foliation descends to a pseudogroup on the analytic envelope.
\end{proposition}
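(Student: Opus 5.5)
The plan is to apply the factorisation property twice, once to $q_b\circ h$ and once to $q_a\circ h^{-1}$. We then use the uniqueness part of that property to show the two descended maps are mutually inverse.

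First I will check that $F:=q_b\circ h:(\Sigma_a,p_a)\to(Q_b,o_b)$ is constant along the local leaves of $\cF_a$. Since $h$ maps local leaves of $\cF_a$ bijectively onto local leaves of $\cF_b$, a connected local leaf of $\cF_a$ is carried into a local leaf of $\cF_b$. The map $q_b$ is constant on those, because by property (i) of Theorem~\ref{thm:universal} every coordinate of $q_b$ is a first integral. Hence $F$ is constant along leaves, and the factorisation property (ii) gives a unique holomorphic germ $\bar h:(Q_a,o_a)\to(Q_b,o_b)$ with $\bar h\circ q_a=q_b\circ h$. Here we use $h(p_a)=p_b$ and $q_b(p_b)=o_b$, so that $\bar h(o_a)=o_b$. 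Symmetrically, $h^{-1}$ is leaf-preserving, which yields $\overline{h^{-1}}:(Q_b,o_b)\to(Q_a,o_a)$ with $\overline{h^{-1}}\circ q_b=q_a\circ h^{-1}$.

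Next I will prove functoriality, which follows from uniqueness in the factorisation. The identity germ of $Q_a$ satisfies the defining square for $h=\mathrm{id}$, so $\overline{\mathrm{id}}=\mathrm{id}$. For composable $h,g$, both $\bar g\circ\bar h$ and $\overline{g\circ h}$ satisfy $(\cdot)\circ q_a=q_c\circ g\circ h$, so they coincide. Applying this to $g=h^{-1}$ gives $\overline{h^{-1}}\circ\bar h=\mathrm{id}_{Q_a}$ and $\bar h\circ\overline{h^{-1}}=\mathrm{id}_{Q_b}$. Thus $\bar h$ is a biholomorphic germ, and uniqueness of $\bar h$ is already part of the factorisation statement.

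The step needing the most care is the uniqueness used in these identities. It has to be uniqueness of the factoring map $Q\to Y$ for a target $Y$ that is itself an envelope germ, so it rests on the injectivity of $q_a^*$ on $\cO_{Q_a,o_a}$ (equivalently, $q_a$ being an epimorphism of germs). In the abstract setting I will take this as part of the assumed factorisation property, since the statement explicitly includes uniqueness. For the leaf-constancy step, I will also note that \emph{local} leaves should be taken in representatives small enough that $h$ maps connected plaques into plaques. Connectedness guarantees that $q_b$ is constant on a plaque, because $q_b$ is constant along integral curves of the foliation.

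Finally, for a transverse pseudogroup whose elements preserve the induced foliation, each element descends by the above. Compatibility with composition, inverses, identities and restriction to smaller domains follows from functoriality and uniqueness, the latter applied locally at each point. Therefore the descended germs form a pseudogroup on the analytic envelope.
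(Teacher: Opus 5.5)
Your proposal is correct and takes essentially the same approach as the paper. Both factor $q_b\circ h$ and $q_a\circ h^{-1}$ through the envelopes, then invoke uniqueness in the factorisation property, applied to $q_a$, $q_b$ and $q_c\circ h_2\circ h_1$, to get the inverse identities and compatibility with composition. The only difference is the order: you derive $\overline{h^{-1}}\circ\bar h=\mathrm{id}$ from functoriality, whereas the paper proves it directly before treating composition, and this is the same uniqueness argument.
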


\begin{proof}
The map $q_b$ is constant on the leaves of $\cF_b$.  Since $h$ sends leaves of $\cF_a$ to leaves of $\cF_b$, the composite $q_b\circ h$ is constant on the leaves of $\cF_a$.  The factorisation property of $q_a$ gives a unique holomorphic germ $\bar h$ making~\eqref{eq:descent-square} commute. 
Apply the same argument to the inverse germ $h^{-1}$.  It gives a holomorphic germ $\overline{h^{-1}}:Q_b\to
Q_a$.  Composing the two factorisation identities yields $(\overline{h^{-1}}\circ\bar h)\circ q_a=q_a$.
The identity of $Q_a$ has the same property.  Uniqueness in the factorisation property gives $\overline{h^{-1}}\circ\bar h=\operatorname{id}_{Q_a}$.
The reverse composition is the identity of $Q_b$ by the same argument, so $\bar h$ is a biholomorphism.

Now let $h_1$ and $h_2$ be composable foliation-preserving germs, with source envelope map $q_a$ and final target envelope map $q_c$.  Both $\overline{h_2\circ h_1}$ and $\bar h_2\circ\bar h_1$ factor $q_c\circ h_2\circ h_1$ through $q_a$.  Uniqueness gives $\overline{h_2\circ h_1}=\bar h_2\circ\bar h_1$.
The identity germ descends to the identity, and the descended maps form a pseudogroup.
\end{proof}

The kernel of the descended action is determined by the holomorphic first integrals.
\begin{corollary}\label{cor:effective-pseudogroup}
Let $q:(\Sigma,p)\to(Q,o)$ be an analytic envelope with the factorisation property of Theorem~\ref{thm:universal}, and let $\mathscr P_p$ be the isotropy at $p$ of a transverse pseudogroup preserving the induced foliation.  Descent defines a homomorphism $\mathscr P_p\longrightarrow\Aut(Q,o)$.
Its kernel consists exactly of those transverse germs $h$ which fix every holomorphic first integral:
\begin{equation}\label{eq:pseudogroup-kernel}
h^*f=f
\text{ for every }f\in\cO_{\Sigma,p}^{\cF}.
\end{equation}
The effective holonomy on the envelope is the quotient of transverse holonomy by the subgroup acting trivially on all holomorphic first integrals.
\end{corollary}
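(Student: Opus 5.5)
The plan is to obtain the homomorphism directly from Proposition~\ref{prop:pseudogroup-descent}, applied with $(\Sigma_a,\cF_a,p_a)=(\Sigma_b,\cF_b,p_b)=(\Sigma,\cF,p)$ and $q_a=q_b=q$. Each $h\in\mathscr P_p$ is a germ of biholomorphism fixing $p$ that maps local leaves to local leaves. The proposition therefore produces a unique germ $\bar h\in\Aut(Q,o)$ with $q\circ h=\bar h\circ q$. Its compatibility with identities, inverses and composition shows that $h\mapsto\bar h$ is a group homomorphism $\mathscr P_p\to\Aut(Q,o)$.

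For the kernel, I will use the identification $q^*\cO_{Q,o}=\cO^{\cF}_{\Sigma,p}$ together with injectivity of $q^*$. This is Theorem~\ref{thm:universal}(i) and the injectivity step in its proof. In the abstract setting I will take both as part of what the factorisation property means: (i) follows by factoring each first integral through $q$ as a map to $(\C,c)$, and injectivity follows from uniqueness of the factorisation applied to the zero map.

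First suppose $\bar h=\operatorname{id}$, and let $f\in\cO^{\cF}_{\Sigma,p}$. Write $f=q^*g$. Then
$h^*f=g\circ q\circ h=g\circ\bar h\circ q=g\circ q=f$.
Conversely, suppose $h^*f=f$ for every first integral $f$. For each $g\in\cO_{Q,o}$ we then have $q^*(\bar h^*g)=h^*q^*g=q^*g$, and injectivity of $q^*$ gives $\bar h^*g=g$ for all $g$.

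The main point needing care is turning "$\bar h^*$ is the identity on $\cO_{Q,o}$" into "$\bar h=\operatorname{id}$" as a germ of a possibly singular space. I will embed $(Q,o)$ in $(\C^N,0)$ and apply $\bar h^*$ to the coordinate functions $z_a|_Q$. This shows that every coordinate of $\bar h$ equals the corresponding coordinate of the identity, so $\bar h=\operatorname{id}_Q$. A cleaner alternative is to use the uniqueness clause of the factorisation property: both $\bar h$ and $\operatorname{id}$ factor $q\circ h=q$ through $q$, so they coincide. This second route avoids embeddings altogether, and I would present it as the main argument.

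The final sentence of the statement then follows from the first isomorphism theorem: the image of $\mathscr P_p$ in $\Aut(Q,o)$, which is the effective holonomy on the envelope, is isomorphic to $\mathscr P_p$ modulo the kernel just identified.
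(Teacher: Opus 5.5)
Your proposal is correct and follows the paper's proof: the homomorphism comes from Proposition~\ref{prop:pseudogroup-descent}, the forward inclusion comes from writing each first integral as $q^*g$, and the converse comes from $q\circ h=q$ together with uniqueness in the factorisation property, so your preferred second route is exactly the paper's argument. One small remark: that route still has to derive $q\circ h=q$ from the hypothesis, which needs the fact that a map into the germ $(Q,o)$ is determined by the pullbacks of its coordinate functions. The paper gets this by applying the hypothesis to the components of $q$ in a local embedding, so the second route relocates the use of an embedding rather than avoiding it.
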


\begin{proof}
Suppose first that the descended germ $\bar h$ is the identity.  Proposition~\ref{prop:pseudogroup-descent} gives $q\circ h=q$.  Every first integral has the form $q^*g$ by the defining property of the envelope, so $h^*(q^*g)=(q\circ h)^*g=q^*g$.
Equation~\eqref{eq:pseudogroup-kernel} follows.
Conversely, suppose that $h$ fixes every first integral.  Each component of the quotient map in a local
embedding of $Q$ is a first integral, so $q\circ h=q$.  The identity germ of $Q$ and the descended
germ $\bar h$ both factor this same map through $q$.  Uniqueness in the factorisation property gives $\bar
h=\operatorname{id}_Q$.
\end{proof}

\begin{corollary}\label{cor:envelope-equivalence-invariant}
Let $(\Sigma_a,\cF_a,p_a)$ and $(\Sigma_b,\cF_b,p_b)$ be holomorphic foliated germs admitting analytic envelopes with the factorisation property of Theorem~\ref{thm:universal}, and let $h:(\Sigma_a,\cF_a,p_a)\to(\Sigma_b,\cF_b,p_b)$ be a foliated biholomorphism.  If $\mathscr P_a$ is a transverse pseudogroup at $p_a$ and $\mathscr P_b=h\mathscr P_a h^{-1}$, then the induced envelope isomorphism $\bar h:Q_a\to Q_b$ conjugates the descended pseudogroup actions and their effective quotients.  The pair consisting of the analytic envelope and its effective pseudogroup action is invariant under equivalence of transverse germs equipped with their pseudogroups.
\end{corollary}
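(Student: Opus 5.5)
The plan is to reduce everything to Proposition~\ref{prop:pseudogroup-descent} and its functoriality. First I check that the conjugated family $\mathscr P_b=h\mathscr P_a h^{-1}$ is a transverse pseudogroup at $p_b$ preserving $\cF_b$. This holds because $h$ is a foliated biholomorphism, so each $h g h^{-1}$ with $g\in\mathscr P_a$ maps local leaves of $\cF_b$ bijectively to local leaves. Conjugation also preserves identities, inverses and composition, so it is an isomorphism of pseudogroups; restricted to isotropy, it gives a group isomorphism $\mathscr P_{a,p_a}\to\mathscr P_{b,p_b}$.

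Next, Proposition~\ref{prop:pseudogroup-descent} applied to $h$ gives the envelope biholomorphism $\bar h:(Q_a,o_a)\to(Q_b,o_b)$ with $q_b\circ h=\bar h\circ q_a$. For $g\in\mathscr P_a$ it also gives descended germs $\bar g$ on $Q_a$. The compatibility of descent with composition and inverses, proved in that proposition, then yields
\begin{equation*}
\overline{hgh^{-1}}=\bar h\,\bar g\,\bar h^{-1}.
\end{equation*}
The only care needed is that the three germs $h$, $g$, $h^{-1}$ go between possibly different envelopes. The uniqueness argument still applies, because both sides factor the same map $q_b\circ hgh^{-1}$ through $q_b$. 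So $\bar h$ conjugates the descended pseudogroups. I would phrase this for germs at arbitrary base points where the pseudogroup elements are defined, and not only at the isotropy point, since the envelope factorisation holds locally.

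For the effective quotients, I use Corollary~\ref{cor:effective-pseudogroup}: the kernel at $p_a$ consists of the $g$ with $g^*f=f$ for all $f\in\cO^{\cF_a}_{\Sigma_a,p_a}$. Since $h^*$ identifies $\cO^{\cF_b}_{\Sigma_b,p_b}$ with $\cO^{\cF_a}_{\Sigma_a,p_a}$, because $h$ preserves the foliations, conjugation by $h$ carries this kernel onto the kernel for $\mathscr P_b$. Alternatively, $\bar g=\mathrm{id}$ if and only if $\bar h\bar g\bar h^{-1}=\mathrm{id}$. Hence conjugation induces an isomorphism of effective quotients, intertwined by $\bar h$ with their faithful actions on $Q_a$ and $Q_b$.

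The invariance statement follows: an equivalence of germs with pseudogroups produces, via $\bar h$, an isomorphism of envelope-with-action pairs. Equivalences compose, and descent respects composition and identities, so this assignment is well defined up to canonical isomorphism. The only mildly delicate point, and the one I expect to be the main obstacle, is keeping track of domains: ensuring that conjugated germs and their descents are compared at corresponding base points and that uniqueness of factorisation is invoked with the correct target envelope. No new analytic input beyond the universal property is needed.
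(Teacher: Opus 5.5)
Your argument is correct and is essentially the paper's proof: you descend $h$ via Proposition~\ref{prop:pseudogroup-descent}, use the functoriality identity $\overline{hgh^{-1}}=\bar h\,\bar g\,\bar h^{-1}$, and invoke Corollary~\ref{cor:effective-pseudogroup} to match the kernels, so that the conjugacy passes to the effective quotients. One caution: drop the aside about extending this to germs at arbitrary base points, because the factorisation property is assumed only at $p_a$ and $p_b$ and can fail at nearby points (for residual weights $1,\sqrt2$ the envelope is a point, yet at a point with both coordinates non-zero there are non-constant local first integrals), so keep the argument at the base points, which is all the statement requires.
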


\begin{proof}
Proposition~\ref{prop:pseudogroup-descent} gives the envelope isomorphism $\bar h$.  For $g\in\mathscr P_a$, functoriality of descent gives $\overline{hgh^{-1}}=\bar h\,\bar g\,\bar h^{-1}$, so $\bar h$ conjugates the descended actions.  Corollary~\ref{cor:effective-pseudogroup} identifies their kernels with the germs acting trivially on all holomorphic first integrals; the conjugacy passes to the effective quotients.
\end{proof}

These descent statements use only the universal factorisation property.  Applied to the intrinsic transverse pseudogroup of~\cite{ACSV}, Corollary~\ref{cor:envelope-equivalence-invariant} yields an invariant of the transverse holomorphic germ.  The same argument applies to representatives of singular holonomy transformations in the sense of~\cite{AZ}.  Two such germs induce the same automorphism of the envelope if and only if they agree on every holomorphic first integral.  For the diagonal action~\eqref{eq:action}, the resulting transport admits an explicit formula.

Fix a non-empty singular support $I$, put $J=I^c$, and choose a point $p$ with support $I$.  Set $V_I=\g/\h_I$.  The global stabiliser of $p$ is
$$
K_I=\{t\in\g:\Lambda_i(t)\in2\pi i\Z\text{ for every }i\in I\},
$$
and $\Gamma_I=K_I/\h_I\subset V_I$.  Put $L_I=V_I/\Gamma_I$.

\begin{lemma}\label{lem:leaf-uniformisation}
The orbit map through $p$ identifies $L_I$ with $L_p$, and $V_I\to L_I$ is the universal covering with deck group $\Gamma_I$.  If the support weights $\Lambda_i$, $i\in I$, are linearly independent, then $\Lambda_I:V_I\to\C^I$, $v\mapsto(\Lambda_i(v))_{i\in I}$, is a linear isomorphism, identifies $\Gamma_I$ with $(2\pi i\Z)^I$, and identifies $L_I$ with $(\C^*)^I$.
\end{lemma}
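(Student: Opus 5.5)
The plan is to study the orbit map $\phi_p:\g\to\C^n$, $t\mapsto\Phi(t,p)$, and show that it descends to a biholomorphism $V_I/\Gamma_I\to L_p$.

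First, $\phi_p(t)=(e^{\Lambda_i(t)}p_i)_{i\in I}$ in the support coordinates and is $0$ elsewhere. Two parameters $t,t'$ have the same image exactly when $\Lambda_i(t-t')\in2\pi i\Z$ for every $i\in I$, that is, when $t-t'\in K_I$. Since $\h_I\subset K_I$ (its elements have $\Lambda_i=0$), the map factors through $V_I=\g/\h_I$, and it is injective on $V_I/\Gamma_I$. The image is the orbit, which is the connected leaf $L_p$. By Proposition~\ref{prop:leaf-dim}, $\dim L_p=r_I=\dim V_I$.

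Next, I must check that $\Gamma_I$ is discrete in $V_I$. Choose $B\subset I$ as in Lemma~\ref{lem:slice}. The map $V_I\to\C^B$, $v\mapsto(\Lambda_b(v))_{b\in B}$, is a linear isomorphism: it has the right dimension, and it is injective because the kernel of $(\Lambda_b)_{b\in B}$ on $\g$ is $\h_I$. The group $\Gamma_I$ is contained in the preimage of $(2\pi i\Z)^B$, which is a lattice, so $\Gamma_I$ is discrete. It follows that $V_I\to V_I/\Gamma_I$ is a covering whose deck group is $\Gamma_I$ acting by translations, and it is universal because $V_I$ is simply connected.

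The induced map $V_I/\Gamma_I\to L_p$ is an injective immersion. Its rank is $r_I$ by the computation of $\epsilon_p$ in the proof of Proposition~\ref{prop:leaf-dim}. Since $L_p$ carries the leaf (orbit) manifold structure, under which the orbit map $\g\to L_p$ is a submersion, this injective equivariant map is a biholomorphism onto $L_p$.

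For the independent case, $B=I$, so $\Lambda_I:V_I\to\C^I$ is the isomorphism just described. The condition defining $K_I$ says precisely that $\Lambda_I(\Gamma_I)=(2\pi i\Z)^I$, and surjectivity holds because $\Lambda_I$ is onto. Hence $L_I\simeq\C^I/(2\pi i\Z)^I\simeq(\C^*)^I$ via the coordinatewise exponential; this agrees with $\phi_p$ up to the rescaling by $p_i$.

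The main subtlety is the step identifying $V_I/\Gamma_I$ with the leaf as a manifold, rather than only as a set with the subspace topology. In the dependent case the orbit need not be closed, so I will use the leaf topology and rely on the Stefan--Sussmann orbit structure invoked in Proposition~\ref{prop:leaf-dim}.
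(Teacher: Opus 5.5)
Your proposal is correct and follows essentially the same route as the paper: you descend the orbit map to $V_I/\Gamma_I$, note that it is a bijective immersion between manifolds of equal dimension and hence a biholomorphism onto the leaf, and use $\Lambda_I$ in the independent case. Your extra steps fill in points the paper's terse proof leaves implicit: you prove that $\Gamma_I$ is discrete via the basis $\{\Lambda_b\}_{b\in B}$, and you make explicit that $L_p$ carries the leaf topology rather than the subspace topology.
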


\begin{proof}
On the support coordinates the orbit map is $v\mapsto(p_i e^{\Lambda_i(v)})_{i\in I}$ and has kernel $\Gamma_I$.  Its differential is injective because the kernel of the infinitesimal orbit map is $\h_I$.  Hence the induced map $L_I\to L_p$ is a bijective local biholomorphism and therefore an isomorphism.  Since $V_I$ is a complex vector space, $V_I\to L_I$ is the universal cover. 
If the support weights are independent, then $r_I=|I|=\dim V_I$ and their descended classes form a basis of $V_I^*$.  Thus $\Lambda_I$ is an isomorphism, and it carries $\Gamma_I$ to $(2\pi i\Z)^I$.  Exponentiation then identifies $L_I$ with $(\C^*)^I$.
\end{proof}

Let $S_I$ be the residual resonance semigroup from Corollary~\ref{cor:full-transverse-envelope}, put $M_I=\operatorname{gp}(S_I)$ and define $\nu_\alpha=\sum_{j\in J}\alpha_j\Lambda_j$ for $\alpha\in M_I$.  Since $\alpha$ is an integral relation among the residual weights, $\nu_\alpha$ vanishes on $\h_I$ and therefore descends to $V_I^*$.  Let $T_I=\Hom(M_I,\C^*)$ be the dense torus of $Q_I$.  For every support, $Q_I$ is the toric residual factor of the full transverse envelope; when $r_I=|I|$, it is the entire transverse envelope. 
Write $X^\alpha$ for the basis monomial of the semigroup algebra $\C[S_I]$.  For every $\alpha\in S_I$, the quotient map $q_I:\C^J\to Q_I$ satisfies $q_I^*(X^\alpha)=w^\alpha$.

\begin{theorem}\label{thm:envelope-holonomy}
There is a holomorphic action of the additive group $V_I$ on $Q_I$.  Equivalently, there is a homomorphism $\tau_I:V_I\to\Aut(Q_I)$ with holomorphic evaluation map, characterised, for every $\alpha\in S_I$, by
\begin{equation}\label{eq:tau-on-monomials}
\tau_I(v)^*(X^\alpha)=e^{\nu_\alpha(v)}X^\alpha.
\end{equation}
This action satisfies the following properties.
\begin{enumerate}[(i)]
\item The map $\tau_I(v)$ is the transport on the toric residual factor of the transverse envelopes induced by the ambient diagonal action from $p$ to the point of $L_I$ represented by $v$.
\item The restriction $\rho_I=\tau_I|_{\Gamma_I}:\Gamma_I\longrightarrow\Aut(Q_I)$ is the monodromy of this toric transport around loops in $L_I$.  Its elements belong to the acting torus of the affine toric variety $Q_I$.
\item The transport is compatible with the subsets $B_K$: diagonal transport preserves the support $K$ and the corresponding subset $B_K$, and carries the closed $G_I$-orbit associated with $B_K$ to the corresponding closed orbit.
\item The effective monodromy group is
      \begin{equation}\label{eq:effective-holonomy}
      \begin{aligned}
      \Gamma_I^{\mathrm{env}}&=\Gamma_I/\ker\rho_I,\\
      \ker\rho_I&=\{\gamma\in\Gamma_I:e^{\nu_\alpha(\gamma)}=1\text{ for every }\alpha\in S_I\}.
      \end{aligned}
      \end{equation}
\end{enumerate}
\end{theorem}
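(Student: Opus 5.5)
The plan is to define $\tau_I$ directly on the torus $T_I=\Hom(M_I,\C^*)$ and then check that it agrees with the transport obtained by descent. First I verify that $\nu_\alpha$ is well defined on $V_I$. For $\alpha\in M_I$ we have $\sum_j\alpha_j\mu_j=0$, so $\nu_\alpha|_{\h_I}=0$; and $\alpha\mapsto\nu_\alpha$ is additive on $M_I$. Hence for each $v\in V_I$ the map $\alpha\mapsto e^{\nu_\alpha(v)}$ is a character of $M_I$ with values in $\C^*$, that is, an element $t(v)\in T_I$. The assignment $v\mapsto t(v)$ is a holomorphic homomorphism $V_I\to T_I$. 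Let $\tau_I(v)$ be the action of $t(v)$ on $Q_I$ given by the torus action of the toric variety. On coordinate rings this is the graded automorphism $X^\alpha\mapsto e^{\nu_\alpha(v)}X^\alpha$ of $\C[S_I]$, which gives \eqref{eq:tau-on-monomials}. The evaluation map $V_I\times Q_I\to Q_I$ is holomorphic because the torus action is algebraic. Since $\C[S_I]$ is spanned by the $X^\alpha$, the formula \eqref{eq:tau-on-monomials} determines $\tau_I(v)$ uniquely.

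For (i), I will lift $v$ to $t\in\g$ and use the ambient map $\Phi(t,\cdot)$. It sends $p$ to $p'=\Phi(t,p)$ and maps the slice $\Sigma_B$ at $p$ onto the slice $\{z_b=e^{\Lambda_b(t)}p_b\}$ at $p'$, which is again transverse by Lemma~\ref{lem:slice}. The map preserves the foliation, since the action is abelian and so $\Phi(t,\cdot)$ commutes with every $X_v$. In the $w$-coordinates it is $w_j\mapsto e^{\Lambda_j(t)}w_j$. By Proposition~\ref{prop:pseudogroup-descent} it descends uniquely to the envelopes. Pulling back $X^\alpha$ gives $e^{\sum\alpha_j\Lambda_j(t)}w^\alpha=q_I^*(e^{\nu_\alpha(t)}X^\alpha)$, and the uniqueness in Theorem~\ref{thm:universal} identifies the descended map with $\tau_I(v)$. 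The ambiguity in the lift lies in $\h_I$, where $\nu_\alpha$ vanishes, so the result is independent of the lift. I expect the main obstacle to be the bookkeeping in identifying the envelopes at the different base points with the single model $Q_I$. The identification of each slice with $(\C^J,0)$ through the coordinates $w_j=z_j$ is canonical, so I will fix it once and note that the zero-foliation factor $u$ is simply carried along.

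For (ii), if $\gamma\in\Gamma_I$, then a lift $t\in K_I$ fixes $p$. The path $s\mapsto sv$ in $V_I$ projects to a loop in $L_I$, by Lemma~\ref{lem:leaf-uniformisation}. Transport along this loop is therefore $\tau_I(\gamma)=t(\gamma)$, which lies in $T_I$.

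For (iii), diagonal maps preserve coordinate supports. Moreover $G_I$ commutes with the diagonal transport, so the cones $C_K$, and hence the subsets $B_K$, depend only on $G_I$ and are unchanged. Since $x\mapsto x^\circ$ commutes with diagonal scaling, Theorem~\ref{thm:closed-orbit-fibres} shows that closed orbits go to closed orbits.

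For (iv), an element of $T_I$ acts trivially on $Q_I$ exactly when it is the identity element, because $T_I$ acts faithfully on its dense orbit. The identity element is the character that is trivial on $S_I$, equivalently on $M_I=\operatorname{gp}(S_I)$. This yields \eqref{eq:effective-holonomy}.
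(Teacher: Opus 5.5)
Your proposal is correct and follows essentially the paper's proof: $\tau_I(v)$ is the action of the point $\alpha\mapsto e^{\nu_\alpha(v)}$ of $T_I=\Hom(M_I,\C^*)$, and it is identified with ambient diagonal transport via $q_I^*X^\alpha=w^\alpha$ (you go through uniqueness of descent, whereas the paper checks $q_I\circ D_t=\tau_I(v)\circ q_I$ directly on monomials), with (ii)--(iv) argued the same way. One small slip: in (ii) the path should be $s\mapsto s\gamma$, not $s\mapsto sv$.
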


\begin{proof}
For $v\in V_I$ and $\alpha\in M_I$, define
 $
\theta_v(\alpha)=e^{\nu_\alpha(v)}.
 $
The map $\alpha\mapsto\nu_\alpha$ is additive, so $\theta_v(\alpha+\beta)=\theta_v(\alpha)\theta_v(\beta)$ and $\theta_v$ is an element of $T_I=\Hom(M_I,\C^*)$.  The standard action of $T_I$ on the affine toric variety $Q_I=\Spec\C[S_I]$ has comorphism
$X^\alpha\mapsto\theta_v(\alpha)X^\alpha$.  Define $\tau_I(v)$ to be this torus automorphism; its comorphism is precisely~\eqref{eq:tau-on-monomials}.

For $v_1,v_2\in V_I$ and $\alpha\in M_I$,
\begin{equation*}
\theta_{v_1+v_2}(\alpha)
=e^{\nu_\alpha(v_1)+\nu_\alpha(v_2)}
=\theta_{v_1}(\alpha)\theta_{v_2}(\alpha).
\end{equation*}
Hence $\tau_I(v_1+v_2)=\tau_I(v_1)\tau_I(v_2)$ and $\tau_I(-v)=\tau_I(v)^{-1}$.  The functions $v\mapsto\theta_v(\alpha)$ are holomorphic exponentials of linear forms; since finitely many monomials generate $\C[S_I]$, the evaluation map $V_I\times Q_I\to Q_I$ is holomorphic.
The torus action is precisely the transverse transport induced by the ambient diagonal action.  Choose a lift $t\in\g$ of $v$.  The ambient diagonal action carries the standard transversal at $p$ to the corresponding standard transversal at $t\cdot p$ and acts on the normal coordinates by
$D_t(w)_j=e^{\Lambda_j(t)}w_j$.  For $\alpha\in S_I$,
\begin{align*}
(D_tw)^\alpha
&=\prod_{j\in J}e^{\alpha_j\Lambda_j(t)}w_j^{\alpha_j}\\
&=\exp\!\left(\sum_{j\in J}\alpha_j\Lambda_j(t)\right)w^\alpha
=e^{\nu_\alpha(v)}w^\alpha.
\end{align*}
The last expression depends only on $v$ because $\nu_\alpha$ vanishes on $\h_I$.  Since $q_I^*(X^\alpha)=w^\alpha$, the displayed identity gives
$q_I\circ D_t=\tau_I(v)\circ q_I$; hence (i) holds.

A loop in $L_I$ lifts to a path in $V_I$ whose endpoint differs from its starting point by a unique $\gamma\in\Gamma_I$.  The induced transport on the envelope is therefore $\tau_I(\gamma)$, proving (ii) and the identity $\rho_I=\tau_I|_{\Gamma_I}$.  Since every $\tau_I(v)$ is the action of the point $\theta_v\in T_I$, the monodromy lies in the acting torus.

For (iii), $D_t$ multiplies every coordinate by a non-zero scalar and therefore preserves support.  The subset $B_K$ depends only on the support $K$ and the torus weights, so
$(D_tx)^\circ=D_t(x^\circ)$.  The map $D_t$ also commutes with the residual diagonal torus $G_I$, so it carries $G_I\cdot x^\circ$ to $G_I\cdot(D_tx)^\circ$.

Finally, $\rho_I(\gamma)$ is the identity if and only if its comorphism fixes the monomial basis of $\C[S_I]$.  By~\eqref{eq:tau-on-monomials}, this is equivalent to $e^{\nu_\alpha(\gamma)}=1$ for every $\alpha\in S_I$, which is~\eqref{eq:effective-holonomy}.
\end{proof}

Every element of the intrinsic transverse pseudogroup also descends to $Q_I$ by Proposition~\ref{prop:pseudogroup-descent}; the representation $\rho_I$ is the part induced by the ambient diagonal action along the homogeneous leaf.
For nested non-empty singular supports $I\subset I'$, the inclusion $\h_{I'}\subset\h_I$ gives a natural surjection
\begin{equation}\label{eq:support-projection-V}
\pi_{I',I}:V_{I'}=\g/\h_{I'}\longrightarrow V_I=\g/\h_I.
\end{equation}
The transport is compatible with this projection and with the support morphisms of Proposition~\ref{prop:support-restriction}.
\begin{proposition}\label{prop:support-equivariance}
For every $v'\in V_{I'}$, the support morphism $\vartheta_{I,I'}:Q_{I'}\to Q_I$ satisfies
\begin{equation}\label{eq:support-transport-equivariance}
\vartheta_{I,I'}\circ\tau_{I'}(v')
=\tau_I\!\left(\pi_{I',I}(v')\right)\circ\vartheta_{I,I'}.
\end{equation}
Moreover $\pi_{I',I}(\Gamma_{I'})\subset\Gamma_I$, and for every $\gamma\in\Gamma_{I'}$,
\begin{equation}\label{eq:support-monodromy-equivariance}
\vartheta_{I,I'}\circ\rho_{I'}(\gamma)
=\rho_I\!\left(\pi_{I',I}(\gamma)\right)\circ\vartheta_{I,I'}.
\end{equation}
If $I\subset I'\subset I''$ are non-empty singular supports, then
\begin{equation}\label{eq:support-projection-compose}
\pi_{I'',I}=\pi_{I',I}\circ\pi_{I'',I'}.
\end{equation}
\end{proposition}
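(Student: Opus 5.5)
The plan is to verify the equivariance identity on the coordinate rings, where both sides become monomial formulas. Both composites are morphisms of affine toric varieties, so two such morphisms agree once their comorphisms agree on the basis monomials $X^\alpha$, $\alpha\in S_I$, of $A_I=\C[S_I]$.

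Fix $v'\in V_{I'}$ and put $v=\pi_{I',I}(v')$. Choose a lift $t\in\g$ of $v'$. Then $t$ is also a lift of $v$, because the projection $\g\to V_I$ factors through $V_{I'}$. I would compute the comorphism of the left-hand side of \eqref{eq:support-transport-equivariance} as $\tau_{I'}(v')^*\circ\operatorname{res}_{I,I'}$, and that of the right-hand side as $\operatorname{res}_{I,I'}\circ\tau_I(v)^*$, splitting into two cases according to \eqref{eq:res-monomial}.

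If $\alpha_d>0$ for some $d\in I'\setminus I$, then $\operatorname{res}_{I,I'}(X^\alpha)=0$. The right-hand side then gives $e^{\nu_\alpha(v)}\cdot 0=0$, so both sides vanish. If instead $\alpha$ is supported on $J'$, the left-hand side gives $e^{\nu'_{\alpha|_{J'}}(v')}X^{\alpha|_{J'}}$ and the right-hand side gives $e^{\nu_\alpha(v)}X^{\alpha|_{J'}}$. Both exponents equal $\sum_{j\in J'}\alpha_j\Lambda_j(t)$ evaluated at the common lift $t$, so they agree. The only point needing care is that $\nu_\alpha$ is defined on $\g$ and descends to $V_I$ and to $V_{I'}$ compatibly with $\pi_{I',I}$; this compatibility is immediate from \eqref{eq:tau-on-monomials}. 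The two comorphisms therefore coincide on a basis, and \eqref{eq:support-transport-equivariance} follows.

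Next I would show $\pi_{I',I}(\Gamma_{I'})\subset\Gamma_I$. The stabilisers satisfy $K_{I'}\subset K_I$, since the defining conditions $\Lambda_i(t)\in2\pi i\Z$ for $i\in I'$ include those for $i\in I\subset I'$. Passing to the quotients by $\h_{I'}\subset\h_I$ gives the inclusion. Restricting \eqref{eq:support-transport-equivariance} to $\gamma\in\Gamma_{I'}$ and using $\rho=\tau|_\Gamma$ from Theorem~\ref{thm:envelope-holonomy}(ii) then yields \eqref{eq:support-monodromy-equivariance}.

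Finally, \eqref{eq:support-projection-compose} holds because all three maps are induced by the identity of $\g$ on quotients by $\h_{I''}\subset\h_{I'}\subset\h_I$.

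There is no real obstacle. The one point to state carefully is that agreement of comorphisms on monomials determines the morphisms, which holds because the monomials span $A_I$.
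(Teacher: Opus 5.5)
Your proposal is correct and follows the paper's proof essentially step for step. You reduce to the comorphism identity $\tau_{I'}(v')^*\circ\operatorname{res}_{I,I'}=\operatorname{res}_{I,I'}\circ\tau_I(\pi_{I',I}(v'))^*$ on the monomial basis, split according to whether $\alpha$ meets $I'\setminus I$, and compare exponents through a common lift $t\in\g$. You then obtain the period inclusion from $K_{I'}\subset K_I$ and the composition rule from $\h_{I''}\subset\h_{I'}\subset\h_I$, as the paper does.

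One cosmetic point: the compatibility of $\nu_\alpha$ with $\pi_{I',I}$ does not come from \eqref{eq:tau-on-monomials}. It holds because $\nu_\alpha$ is a single ambient covector that vanishes on $\h_I\supset\h_{I'}$.
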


\begin{proof}
Put $J=I^c$, $J'=(I')^c$, and $D=I'\setminus I$.  On coordinate rings the support morphism is induced by
$\operatorname{res}_{I,I'}:\C[S_I]\to\C[S_{I'}]$ from Proposition~\ref{prop:support-restriction}.  It is enough to prove
\begin{equation}\label{eq:support-equivariance-comorphism}
\tau_{I'}(v')^*\circ\operatorname{res}_{I,I'}
=\operatorname{res}_{I,I'}\circ\tau_I\!\left(\pi_{I',I}(v')\right)^*.
\end{equation}
Let $\alpha\in S_I$.  If $\alpha_d>0$ for some $d\in D$, then $\operatorname{res}_{I,I'}(X^\alpha)=0$, so both sides of~\eqref{eq:support-equivariance-comorphism} vanish on $X^\alpha$.
Suppose that $\alpha_d=0$ for every $d\in D$.  Then $\alpha$ is supported on $J'$ and belongs to $S_{I'}$.  Choose $t\in\g$ representing $v'$.  The class of $t$ in $V_I$ represents $\pi_{I',I}(v')$, and the ambient covector $\nu_\alpha=\sum_{j\in J'}\alpha_j\Lambda_j$ is the same in the two residual models.  Hence $\nu_\alpha(v')=\nu_\alpha(t)=\nu_\alpha\!\left(\pi_{I',I}(v')\right)$.  Using~\eqref{eq:tau-on-monomials} on both sides gives
\begin{align*}
\tau_{I'}(v')^*\operatorname{res}_{I,I'}(X^\alpha)
&=e^{\nu_\alpha(v')}X^\alpha,\\
\operatorname{res}_{I,I'}\tau_I\!\left(\pi_{I',I}(v')\right)^*(X^\alpha)
&=e^{\nu_\alpha(\pi_{I',I}(v'))}X^\alpha.
\end{align*}
Equation~\eqref{eq:support-equivariance-comorphism} holds on the monomial basis and proves~\eqref{eq:support-transport-equivariance}. 
If $t\in K_{I'}$, then $\Lambda_i(t)\in2\pi i\Z$ for every $i\in I'$, hence for every $i\in I$.  Thus $t\in K_I$, and~\eqref{eq:support-projection-V} restricts to a homomorphism $\Gamma_{I'}\to\Gamma_I$.  Formula~\eqref{eq:support-monodromy-equivariance} is~\eqref{eq:support-transport-equivariance} restricted to $\Gamma_{I'}$.  Finally,~\eqref{eq:support-projection-compose} follows from the quotient maps associated with $\h_{I''}\subset\h_{I'}\subset\h_I$.
\end{proof}

\subsection{Arithmetic and realisation of monodromy}

The period group $\Gamma_I$ itself gives a coordinate-free arithmetic description for every support.  Let $\beta^1,\ldots,\beta^N$ be a Hilbert basis of $S_I$.
\begin{proposition}\label{prop:holonomy-arithmetic-general}
For an arbitrary non-empty singular support $I$:
\begin{enumerate}[(i)]
\item the monodromy $\rho_I$ is trivial if and only if $\nu_{\beta^a}(\Gamma_I)\subset2\pi i\Z$ for every $a$;
\item the image of $\rho_I$ is finite if and only if $\nu_{\beta^a}(\Gamma_I)\subset2\pi i\Q$ for every $a$.
\end{enumerate}
\end{proposition}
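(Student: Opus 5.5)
The plan is to reduce both assertions to statements about the characters $\theta_\gamma\in T_I=\Hom(M_I,\C^*)$ constructed in the proof of Theorem~\ref{thm:envelope-holonomy}. By that theorem, $\rho_I(\gamma)$ is the torus automorphism of $Q_I$ given by the point $\theta_\gamma$, with comorphism $X^\alpha\mapsto e^{\nu_\alpha(\gamma)}X^\alpha$. The key observation is that the torus $T_I$ acts faithfully on $Q_I$. This holds because $T_I$ is the dense torus of the normal affine toric variety $Q_I$, and the monomials $X^\alpha$, $\alpha\in S_I$, form a basis of $\C[S_I]$. Consequently $\gamma\mapsto\theta_\gamma$ identifies $\rho_I$ with a homomorphism $\Gamma_I\to T_I$, and triviality or finiteness of its image can be read off from the values $\theta_\gamma(\alpha)$.

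For (i), start from \eqref{eq:effective-holonomy}: $\rho_I(\gamma)=\operatorname{id}$ if and only if $e^{\nu_\alpha(\gamma)}=1$ for all $\alpha\in S_I$. Since $\alpha\mapsto\nu_\alpha$ is additive, the map $\alpha\mapsto e^{\nu_\alpha(\gamma)}$ is a semigroup homomorphism $S_I\to\C^*$. It is therefore identically $1$ on $S_I$ exactly when it equals $1$ on the Hilbert basis $\beta^1,\ldots,\beta^N$, because every $\alpha\in S_I$ is an $\N$-combination of the $\beta^a$. Hence $\rho_I$ is trivial if and only if $e^{\nu_{\beta^a}(\gamma)}=1$ for all $a$ and all $\gamma\in\Gamma_I$, which is the condition $\nu_{\beta^a}(\Gamma_I)\subset2\pi i\Z$.

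For (ii), use the injective homomorphism $\Gamma_I/\ker\rho_I\hookrightarrow(\C^*)^N$, $\gamma\mapsto(e^{\nu_{\beta^a}(\gamma)})_a$. It is injective by the argument in (i). So the image of $\rho_I$ is finite if and only if the image of this map in $(\C^*)^N$ is finite, which happens if and only if each coordinate subgroup $e^{\nu_{\beta^a}(\Gamma_I)}\subset\C^*$ is finite. For the forward direction, a finite image forces every element to have finite order. Thus $m\,\nu_{\beta^a}(\gamma)\in2\pi i\Z$ for some $m\ge1$, i.e.\ $\nu_{\beta^a}(\gamma)\in2\pi i\Q$. For the converse, $\Gamma_I$ is a discrete subgroup of $V_I$, hence finitely generated (a lattice in a real subspace). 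Choosing generators $\gamma_1,\ldots,\gamma_r$, all the rational numbers $\nu_{\beta^a}(\gamma_l)/2\pi i$ have a common denominator $m$. Then every element of the image has order dividing $m$, and since the image is a finitely generated abelian group of exponent $m$, it is finite.

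The main obstacle is the converse in (ii). Rationality of values is only useful once $\Gamma_I$ is known to be finitely generated; otherwise the denominators need not be bounded. I will justify finite generation from the description $\Gamma_I=K_I/\h_I$. The map $v\mapsto(\Lambda_i(v))_{i\in I}$ embeds $V_I$ linearly into $\C^I$, because $\h_I$ is its kernel, and it carries $\Gamma_I$ into $(2\pi i\Z)^I$. Thus $\Gamma_I$ is isomorphic to a subgroup of a free abelian group of finite rank, and so it is finitely generated free abelian.
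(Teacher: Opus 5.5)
Your proof is correct and follows the paper's argument: (i) reduces triviality of $\rho_I$ to the Hilbert-basis multipliers $e^{\nu_{\beta^a}(\gamma)}$, and (ii) uses finite generation of $\Gamma_I$ to bound the denominators, exactly as in the paper. The only cosmetic differences are that you justify finite generation by embedding $\Gamma_I$ into $(2\pi i\Z)^I$ via $v\mapsto(\Lambda_i(v))_{i\in I}$, whereas the paper uses discreteness in $V_I$, and that your faithfulness remark about $T_I$ is harmless but not needed.
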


\begin{proof}
By~\eqref{eq:tau-on-monomials}, a period $\gamma$ acts on $X^{\beta^a}$ by multiplication by $e^{\nu_{\beta^a}(\gamma)}$.  Since the Hilbert-basis monomials generate $\C[S_I]$, the monodromy is trivial exactly when all these multipliers are one, which proves (i). 
The group $\Gamma_I$ is a discrete subgroup of the finite-dimensional real vector space underlying $V_I$, hence is a finitely generated free abelian group.  If $\nu_{\beta^a}(\Gamma_I)\subset2\pi i\Q$ for every $a$, the image of each additive homomorphism $\nu_{\beta^a}:\Gamma_I\to2\pi i\Q$ has bounded denominators.  Thus the corresponding multipliers form a finite group of roots of unity.  There are only finitely many Hilbert-basis generators, so the image of $\rho_I$ is finite.  Conversely, if $\rho_I(\Gamma_I)$ is finite, every multiplier $e^{\nu_{\beta^a}(\gamma)}$ is a root of unity, hence $\nu_{\beta^a}(\gamma)\in2\pi i\Q$.  This proves (ii).
\end{proof}

Put
$
M_I^{\mathrm{per}}=\{\alpha\in M_I:\nu_\alpha(\Gamma_I)\subset2\pi i\Z\},
$
and let $D_I$ be the Zariski closure of $\rho_I(\Gamma_I)$ in $T_I$.  Thus $M_I^{\mathrm{per}}$ is defined intrinsically by the period group $\Gamma_I$.
\begin{theorem}\label{thm:monodromy-closure}
The characters indexed by $M_I^{\mathrm{per}}$ are precisely those which are trivial on $D_I$, and the character group of $D_I$ is canonically
\begin{equation}\label{eq:DI-character-group}
X^*(D_I)\simeq M_I/M_I^{\mathrm{per}}.
\end{equation}
Consequently
\begin{equation}\label{eq:DI-dimension}
\dim D_I=\rk M_I-\rk M_I^{\mathrm{per}}.
\end{equation}
Moreover, the monodromy is trivial exactly when $M_I^{\mathrm{per}}=M_I$, it has finite image exactly when $M_I^{\mathrm{per}}$ has finite index in $M_I$, and it is Zariski dense in $T_I$ exactly when $M_I^{\mathrm{per}}=0$.
\end{theorem}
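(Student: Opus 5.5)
The plan is to reduce everything to the standard duality between diagonalisable subgroups of the torus $T_I=\Hom(M_I,\C^*)$ and quotients of its character lattice $X^*(T_I)=M_I$. By Theorem~\ref{thm:envelope-holonomy}(ii), each $\rho_I(\gamma)$ is the point $\theta_\gamma\in T_I$ with $\theta_\gamma(\alpha)=e^{\nu_\alpha(\gamma)}$. The character $\chi_\alpha$ of $T_I$ indexed by $\alpha\in M_I$ takes the value $e^{\nu_\alpha(\gamma)}$ on $\rho_I(\gamma)$. So $\chi_\alpha$ is trivial on $\rho_I(\Gamma_I)$ if and only if $\nu_\alpha(\gamma)\in2\pi i\Z$ for all $\gamma$, that is, if and only if $\alpha\in M_I^{\mathrm{per}}$. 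The equation $\chi_\alpha=1$ is algebraic, so, exactly as in Lemma~\ref{lem:chars}, triviality on $\rho_I(\Gamma_I)$ is equivalent to triviality on its Zariski closure $D_I$. This proves the first assertion. Note that $M_I^{\mathrm{per}}$ is a subgroup because $\alpha\mapsto\nu_\alpha$ is additive.

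For \eqref{eq:DI-character-group}, I will use the fact that $D_I$ is a Zariski closed subgroup of the torus $T_I$, hence diagonalisable. For such a subgroup, restriction of characters $X^*(T_I)\to X^*(D_I)$ is surjective, and closed subgroups are cut out by the characters vanishing on them. The kernel of restriction is, by the first step, $M_I^{\mathrm{per}}$, which gives $X^*(D_I)\simeq M_I/M_I^{\mathrm{per}}$ canonically. The dimension formula \eqref{eq:DI-dimension} follows because the dimension of a diagonalisable group equals the rank of its character group, and rank is additive in the exact sequence $0\to M_I^{\mathrm{per}}\to M_I\to X^*(D_I)\to0$. The main point needing care is the surjectivity of restriction. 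I would cite the standard correspondence between closed subgroups of a torus and subgroups of the character lattice, which is the same fact used implicitly in Proposition~\ref{prop:exponential-closure}. Via that correspondence, $D_I=\bigcap_{\alpha\in M_I^{\mathrm{per}}}\ker\chi_\alpha$.

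For the three consequences I argue as follows:
\begin{enumerate}[(a)]
\item The monodromy is trivial iff $D_I=\{1\}$ iff $X^*(D_I)=0$ iff $M_I^{\mathrm{per}}=M_I$. This is consistent with Proposition~\ref{prop:holonomy-arithmetic-general}(i).
\item The image is finite iff it is Zariski closed and finite, i.e.\ iff $D_I$ is finite. A finite subgroup is already closed, and conversely a finite closure has a finite dense subset. $D_I$ is finite iff $\dim D_I=0$, iff $\rk M_I^{\mathrm{per}}=\rk M_I$, iff $M_I^{\mathrm{per}}$ has finite index.
\item The image is Zariski dense iff $D_I=T_I$ iff the restriction map is injective iff $M_I^{\mathrm{per}}=0$.
\end{enumerate}
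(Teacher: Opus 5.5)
Your proposal is correct and follows essentially the same route as the paper. Both identify $M_I^{\mathrm{per}}$ as the annihilator of $\rho_I(\Gamma_I)$, hence of its Zariski closure $D_I$, and read off $X^*(D_I)$, $\dim D_I$ and the three criteria from the duality between closed subgroups of $T_I$ and quotients of $M_I$. The one difference is that you state explicitly two standard facts the paper uses without comment: restriction of characters to a closed subgroup of a torus is surjective, and such subgroups are cut out by the characters vanishing on them.
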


\begin{proof}
For $\gamma\in\Gamma_I$ and $\alpha\in M_I$, formula~\eqref{eq:tau-on-monomials} gives
\begin{equation}\label{eq:rho-pairing}
\rho_I(\gamma)^*(X^\alpha)=e^{\nu_\alpha(\gamma)}X^\alpha.
\end{equation}
Hence the character $X^\alpha$ is trivial on $\rho_I(\Gamma_I)$ exactly when $\nu_\alpha(\Gamma_I)\subset2\pi i\Z$, that is, exactly when $\alpha\in M_I^{\mathrm{per}}$.  A character is trivial on a subset of an algebraic torus if and only if it is trivial on its Zariski closure, so $M_I^{\mathrm{per}}$ is the annihilator of $D_I$.  Restriction of characters therefore gives~\eqref{eq:DI-character-group}, and taking ranks gives~\eqref{eq:DI-dimension}. 
The remaining assertions follow from the same annihilator description.  The image is trivial exactly when every character annihilates it.  The group $D_I$ is finite exactly when its character group has rank zero, equivalently when $M_I/M_I^{\mathrm{per}}$ is finite; this is equivalent to finiteness of the monodromy image.  Finally, the image is Zariski dense in $T_I$ exactly when its annihilator is zero.
\end{proof}

When the support weights $\Lambda_i$, $i\in I$, are linearly independent, Lemma~\ref{lem:leaf-uniformisation} identifies $V_I$ with $\C^I$ and $\Gamma_I$ with $(2\pi i\Z)^I$.  For $\alpha\in M_I$ there are then unique coefficients $c_i(\alpha)$ such that
$$
\nu_\alpha=\sum_{i\in I}c_i(\alpha)\Lambda_i.
$$
They define an additive map $C_I:M_I\to\C^I$, $C_I(\alpha)=(c_i(\alpha))_{i\in I}$.  In these coordinates the intrinsic criteria above take their familiar integral and rational form.
\begin{proposition}\label{prop:holonomy-arithmetic}
Choose vectors $e_i\in V_I$, for $i\in I$, satisfying $\Lambda_\ell(e_i)=\delta_{i\ell}$ for $i,\ell\in I$.  Then the elements $2\pi i e_i$ form a basis of $\Gamma_I$.  Write
\begin{equation}\label{eq:cai}
\nu_{\beta^a}=\sum_{i\in I}c_{ia}\Lambda_i.
\end{equation}
The period $2\pi i e_i$ acts on $X^{\beta^a}$ by multiplication by
\begin{equation}\label{eq:holonomy-multiplier}
\exp(2\pi i c_{ia}).
\end{equation}
Moreover, $M_I^{\mathrm{per}}=\{\alpha\in M_I:C_I(\alpha)\in\Z^I\}$; the monodromy is trivial if and only if every $c_{ia}$ is an integer, and it has finite image if and only if every $c_{ia}$ is rational.
\end{proposition}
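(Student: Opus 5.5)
The plan is to reduce everything to Lemma~\ref{lem:leaf-uniformisation} and then read off the intrinsic criteria of Proposition~\ref{prop:holonomy-arithmetic-general} and Theorem~\ref{thm:monodromy-closure} in coordinates.

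\emph{Step 1: the basis of $\Gamma_I$.} Since the support weights are independent, $\Lambda_I:V_I\to\C^I$ is a linear isomorphism by Lemma~\ref{lem:leaf-uniformisation}. Hence the vectors $e_i=\Lambda_I^{-1}(\epsilon_i)$, where $\epsilon_i$ is the standard basis of $\C^I$, exist and are unique. The same lemma gives $\Lambda_I(\Gamma_I)=(2\pi i\Z)^I$. Pulling back the standard basis $2\pi i\,\epsilon_i$ of $(2\pi i\Z)^I$, we see that the elements $2\pi i e_i$ form a $\Z$-basis of $\Gamma_I$.

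\emph{Step 2: the multipliers.} By~\eqref{eq:tau-on-monomials}, the period $\gamma=2\pi i e_i$ acts on $X^{\beta^a}$ by $e^{\nu_{\beta^a}(\gamma)}$. Expanding $\nu_{\beta^a}$ by~\eqref{eq:cai} and using $\Lambda_\ell(e_i)=\delta_{i\ell}$ gives
\[
\nu_{\beta^a}(2\pi i e_i)=2\pi i\,c_{ia},
\]
which is~\eqref{eq:holonomy-multiplier}. We must check that~\eqref{eq:cai} is well posed. The form $\nu_{\beta^a}$ vanishes on $\h_I$, so it lies in $V_I^*$, and the classes of the $\Lambda_i$ form a basis of $V_I^*$. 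Thus the coefficients $c_{ia}$ are unique; they are just $c_i(\beta^a)$.

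\emph{Step 3: the lattice $M_I^{\mathrm{per}}$.} For $\alpha\in M_I$, additivity gives $\nu_\alpha(\sum_i 2\pi i\, n_ie_i)=2\pi i\sum_i n_i c_i(\alpha)$ for integers $n_i$. This lies in $2\pi i\Z$ for all $n\in\Z^I$ if and only if every $c_i(\alpha)$ is an integer. Hence $M_I^{\mathrm{per}}=\{\alpha:C_I(\alpha)\in\Z^I\}$.

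\emph{Step 4: triviality and finiteness.} By Step 3, $\nu_{\beta^a}(\Gamma_I)\subset 2\pi i\Z$ if and only if all $c_{ia}$ are integers. The same computation shows $\nu_{\beta^a}(\Gamma_I)\subset 2\pi i\Q$ if and only if all $c_{ia}$ are rational. Proposition~\ref{prop:holonomy-arithmetic-general}(i) and (ii) then give the two stated criteria.

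No step is a real obstacle. The only point needing care is the well-definedness of the coefficients in Step 2, which follows from the descent of $\nu_\alpha$ to $V_I^*$.
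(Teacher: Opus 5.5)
Your proof is correct and follows the paper's argument essentially step for step. The basis of $\Gamma_I$ comes from Lemma~\ref{lem:leaf-uniformisation}, the multiplier from evaluating $\nu_{\beta^a}$ on $2\pi i e_i$ via~\eqref{eq:tau-on-monomials}, the description of $M_I^{\mathrm{per}}$ from the same evaluation on a general period, and the triviality and finiteness criteria from Proposition~\ref{prop:holonomy-arithmetic-general}, with your check that the $c_{ia}$ are well defined being a harmless addition the paper takes from the preceding paragraph.
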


\begin{proof}
The first assertion is Lemma~\ref{lem:leaf-uniformisation}.  Using~\eqref{eq:cai}, one has $\nu_{\beta^a}(2\pi i e_i)=2\pi i c_{ia}$, so~\eqref{eq:tau-on-monomials} gives~\eqref{eq:holonomy-multiplier}.  More generally, for $\alpha\in M_I$ and $n=(n_i)\in\Z^I$,
$$
\nu_\alpha\!\left(2\pi i \sum_i n_i e_i\right)=2\pi i \sum_i n_i c_i(\alpha).
$$
Thus $\nu_\alpha(\Gamma_I)\subset2\pi i\Z$ if and only if every $c_i(\alpha)$ is integral.  The final two assertions are Proposition~\ref{prop:holonomy-arithmetic-general} in these coordinates.
\end{proof}

The coefficients $c_i(\alpha)$ will reappear below as the residues of the logarithmic flat connection carried by the semiglobal envelope.

The prescribed-monodromy construction uses an integral perturbation lemma.
\begin{lemma}\label{lem:integral-perturbation}
Let $\beta_1,\ldots,\beta_s\in(\C^d)^*$ have uniform maximal rank, let $r\ge1$, and let $\varepsilon_1,\ldots,\varepsilon_r$ be the standard dual basis of $(\C^r)^*$.  Fix covectors $a_1^0,\ldots,a_s^0\in(\C^r)^*$.  There are covectors $n_1,\ldots,n_s\in(\C^r)^*$ with integer coordinates in this basis such that the following weights in $(\C^r)^*\oplus(\C^d)^*$ have uniform maximal rank:
\begin{equation}\label{eq:perturbed-lifts}
\varepsilon_1,\ldots,\varepsilon_r,\ (a_1^0+n_1,\beta_1),\ldots,(a_s^0+n_s,\beta_s).
\end{equation}
If $\xi_E=(1,\ldots,1)\in\C^r$, then the $n_j$ may moreover be chosen so that $\operatorname{Re}(a_j^0+n_j)(\xi_E)>0$ for every $j$.
\end{lemma}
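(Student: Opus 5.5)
The approach is to follow the proof of Theorem~\ref{thm:higher-realisation}, with $m$ replaced by $r$ and $E^*=(\C^r)^*$. The one new point is that the lifts may now only move by integral covectors, so the ``generic'' lifts must be taken inside the lattice coset $a^0+(\Z^r)^s$ rather than anywhere in $(E^*)^s$.

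First I would record, exactly as in that proof, that uniform maximal rank of the family~\eqref{eq:perturbed-lifts} is the conjunction of finitely many conditions, one for each pair $(A,K)$ with $A\subset\{1,\ldots,r\}$, $K\subset\{1,\ldots,s\}$ and $|A|+|K|\le r+d$. Each condition is the non-vanishing of a suitable minor, a polynomial $P_{A,K}$ in the support components $a_1,\ldots,a_s\in(\C^r)^*$ of the lifted weights. When $|K|\le d$ the condition holds automatically by uniform maximal rank of the $\beta_j$. When $|K|>d$, the explicit choice made in that proof (zero components on $K_0$, independent $\eta_j$ on $K_1$) shows that $P_{A,K}$ is not identically zero. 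Hence $P=\prod_{A,K}P_{A,K}$ is a non-zero polynomial on $(\C^r)^s\simeq\C^{rs}$.

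The key step, which I also expect to be the main obstacle, is to show that the translate $a^0+(\Z^r)^s$ is not contained in the zero set of $P$. Put $F(n)=P(a^0+n)$. This is a non-zero polynomial in $n\in\C^{rs}$, since it is a translate of $P$. A non-zero polynomial in $N$ variables cannot vanish on all of $\Z^N$, or even on $\Z_{\ge c}^N$ for any bound $c$. The proof is by induction on $N$: expand $F$ in the last variable, with coefficients that are polynomials in the other variables. Some coefficient is non-zero, so by induction it is non-zero at some integral point. The resulting one-variable polynomial is then non-zero and has only finitely many roots, so it is non-zero at some large integer.

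For the Poincaré condition, note that $\operatorname{Re}(a_j^0+n_j)(\xi_E)=\operatorname{Re}a_j^0(\xi_E)+\sum_\ell(n_j)_\ell$. This is positive whenever every coordinate of $n_j$ exceeds $c:=\max_j|a_j^0(\xi_E)|$. So I would apply the non-vanishing statement on the shifted orthant $\{n:(n_j)_\ell\ge c+1\}$, which is again a translate of $\N^{rs}$. The same induction applies there. Any integral $n$ found in this region satisfies both requirements: $P(a^0+n)\ne0$ gives uniform maximal rank, and the positivity inequalities hold.
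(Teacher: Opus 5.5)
Your proposal is correct and follows the paper's proof essentially step by step: the same reduction to finitely many non-vanishing minors borrowed from Theorem~\ref{thm:higher-realisation}, the same product polynomial $P$ translated by $a^0$, and the same induction showing that a non-zero polynomial cannot vanish at every integer point of a shifted orthant. Your threshold $c+1$ with $c=\max_j|a_j^0(\xi_E)|$ plays exactly the role of the paper's $R$ chosen with $\operatorname{Re}\bigl(\sum_i a_{ij}^0\bigr)+rR>0$.
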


\begin{proof}
Write $E=\C^r$ and $F=\C^d$.  The variable support components form the affine space $\mathcal A=(E^*)^s$.  Fix a subset $A\subset\{1,\ldots,r\}$ of support indices and a subset $K\subset\{1,\ldots,s\}$ of lifted indices.  Put $p=|A|$ and $q=|K|$, and assume $p+q\le r+d$.  Independence of the corresponding weights in~\eqref{eq:perturbed-lifts} is a non-empty Zariski-open condition on $\mathcal A$.

Quotient $E^*\oplus F^*$ by $\Span\{\varepsilon_i:i\in A\}$.  The quotient is
$(E^*/\Span\{\varepsilon_i:i\in A\})\oplus F^*$, whose first summand has dimension $r-p$.  If $q\le d$, then the residual vectors $\beta_j$, $j\in K$, are independent, so the lifted vectors are independent for every choice of their support components.

Suppose $q>d$ and set $t=q-d$.  The inequality $p+q\le r+d$ gives $t\le r-p$.  Choose $d$ indices $K_0\subset K$ and put $K_1=K\setminus K_0$.  The vectors $\beta_j$, $j\in K_0$, form a basis of $F^*$.  Choose $t$ independent vectors $\eta_j$ in $E^*/\Span\{\varepsilon_i:i\in A\}$, indexed by $K_1$.  Give the lifted vectors indexed by $K_0$ zero projected support component and those indexed by $K_1$ the projected support components $\eta_j$.  Subtracting suitable linear combinations of the first $d$ lifted vectors replaces each vector indexed by $K_1$ by $(\eta_j,0)$.  The $q$ vectors are independent.  Hence at least one $q\times q$ minor testing this rank condition is a non-zero polynomial on $\mathcal A$.

Choose one such non-zero minor for each relevant pair $(A,K)$ and let $P$ be their product.  The coordinate ring of $\mathcal A$ is an integral domain, so $P$ is non-zero.  Whenever $P(a_1,\ldots,a_s)\ne0$, every subcollection of at most $r+d$ weights in~\eqref{eq:perturbed-lifts} is independent; hence the full configuration has uniform maximal rank. 
Choose an integral translate of $(a_1^0,\ldots,a_s^0)$ outside the zero locus of $P$ and sufficiently far in the positive real directions.  Such a translate exists by the following elementary fact.  If $F$ is a non-zero polynomial in $N$ complex variables and $R$ is a real number, then there is $m=(m_1,\ldots,m_N)\in\Z^N$ with every $m_i\ge R$ and $F(m)\ne0$.  The proof is by induction on $N$.  For $N=1$, a non-zero polynomial has only finitely many roots.  For $N>1$, write
$F=\sum_{a=0}^d F_a(x_1,\ldots,x_{N-1})x_N^a$ with $F_d\ne0$.  By induction choose the first $N-1$ integers, all at least $R$, so that $F_d$ does not vanish.  The resulting polynomial in $x_N$ is non-zero, so it misses some integer $m_N\ge R$.

Apply this fact to the translated polynomial
$\widetilde P(N)=P(a_1^0+n_1,\ldots,a_s^0+n_s)$ in the $rs$ integer coordinates of the $n_j$.  Translation is an automorphism of the polynomial ring, so $\widetilde P$ is non-zero.  Choose $R$ so large that $\operatorname{Re}\!\left(\sum_{i=1}^r a_{ij}^0\right)+rR>0$ for every $j$.  Applying this lattice-avoidance fact gives integer coordinates $n_{ij}\ge R$ at which $P$ does not vanish.  The resulting weights have uniform maximal rank, and
$\operatorname{Re}(a_j^0+n_j)(\xi_E)=\operatorname{Re}(\sum_i a_{ij}^0)+\sum_i n_{ij}>0$ for every $j$.
\end{proof}

Let $\beta_1,\ldots,\beta_s\in(\C^d)^*$ have uniform maximal rank, and set
\begin{equation*}
S_\beta=\left\{\alpha\in\N^s:\sum_{j=1}^s\alpha_j\beta_j=0\right\}.
\end{equation*}
Write $M_\beta=\operatorname{gp}(S_\beta)$, $Q_\beta=\Spec\C[S_\beta]$ and $T_\beta=\Hom(M_\beta,\C^*)$.  The perturbation lemma allows arbitrary monodromy in $T_\beta$ to be realised without changing these residual weights.
\begin{theorem}\label{thm:prescribed-monodromy}
Fix $r\ge1$.  For every homomorphism $\eta:\Z^r\longrightarrow T_\beta$ there exists a configuration in $(\C^{r+d})^*$ of uniform maximal rank in the Poincar\'e domain with a singular support of cardinality $r$ whose residual weights are the $\beta_j$ and whose envelope monodromy, under the standard identification of the period group with $2\pi i\Z^r$, is $\eta$.
\end{theorem}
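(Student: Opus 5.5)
The plan is to build the configuration explicitly in the form of Lemma~\ref{lem:integral-perturbation}: support weights $\varepsilon_1,\ldots,\varepsilon_r$ and lifted weights $(a_j,\beta_j)$. The support components $a_j$ will be chosen so that the period multipliers realise $\eta$. The key observation is that monodromy depends on the $a_j$ only modulo integral covectors. Integer perturbations $n_j$ therefore leave the monodromy unchanged, and they can be used to restore uniform maximal rank and the Poincar\'e condition.

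\emph{Step 1: compute the monodromy of such a configuration.} Take the support $I_0=\{1,\ldots,r\}$. Then $\h_{I_0}=\{0\}\oplus\C^d$, and the residual weights are the $\beta_j$, exactly as at the end of the proof of Theorem~\ref{thm:higher-realisation}. Hence $S_{I_0}=S_\beta$, $M_{I_0}=M_\beta$ and $T_{I_0}=T_\beta$. The support weights are independent, so Lemma~\ref{lem:leaf-uniformisation} applies. With $e_i$ the standard basis of $E=\C^r$, the period group is spanned by the vectors $2\pi i e_i$. For $\alpha\in M_\beta$ one has $\nu_\alpha=\sum_j\alpha_j(a_j,\beta_j)=(\sum_j\alpha_ja_j,0)$, because $\sum_j\alpha_j\beta_j=0$. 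Writing $a_{ij}=a_j(e_i)$, this gives $c_i(\alpha)=\sum_j\alpha_ja_{ij}$. By Proposition~\ref{prop:holonomy-arithmetic}, the period $2\pi i e_i$ therefore acts through the element $\alpha\mapsto\exp(2\pi i\sum_j\alpha_ja_{ij})$ of $T_\beta$.

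\emph{Step 2: solve for the $a_j^0$.} The homomorphism $\eta$ is determined by the images $\eta(e_i)\in T_\beta=\Hom(M_\beta,\C^*)$, and these may be prescribed freely since $\Z^r$ is free. For each $i$ I need $(a_{ij}^0)_j\in\C^s$ with $\exp(2\pi i\sum_j\alpha_ja_{ij}^0)=\eta(e_i)(\alpha)$ for all $\alpha\in M_\beta$. Since $M_\beta\subset\Z^s$ is saturated by Lemma~\ref{lem:L-saturated} (the group completion of a saturated semigroup of the saturated lattice $L$ is saturated in $\Z^s$; one can argue directly that $\Z^s/M_\beta$ is torsion-free), restriction $\Hom(\Z^s,\C^*)\to\Hom(M_\beta,\C^*)$ is surjective. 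So $\eta(e_i)$ extends to a character $\chi_i$ of $\Z^s$. Choosing logarithms $\chi_i(e_j)=\exp(2\pi i a_{ij}^0)$ gives the required $a_j^0=\sum_i a_{ij}^0\varepsilon_i$.

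\emph{Step 3: fix the genericity conditions.} Apply Lemma~\ref{lem:integral-perturbation} to these $a_j^0$. It yields integral $n_j$ such that the configuration \eqref{eq:perturbed-lifts} has uniform maximal rank and $\operatorname{Re}(a_j^0+n_j)(\xi_E)>0$. With $\xi=(\xi_E,0)$ one has $\varepsilon_i(\xi)=1$, so the configuration lies in the Poincar\'e domain. Replacing $a_{ij}$ by $a_{ij}+n_{ij}$ with $n_{ij}\in\Z$ changes each $c_i(\alpha)$ by an integer, so the multipliers in Step 1 are unchanged and the monodromy is still $\eta$. The support $I_0$ is singular because $|I_0|=r<r+d$.

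\emph{Expected obstacle.} The delicate point is the extension in Step 2, i.e.\ surjectivity of restriction to $M_\beta$. If $M_\beta$ were not saturated in $\Z^s$, the restriction would still be surjective, since $\C^*$ is divisible and hence an injective $\Z$-module. This divisibility argument is the one I would actually cite, so saturation is not needed. Everything else is bookkeeping with the identification of periods from Lemma~\ref{lem:leaf-uniformisation}.
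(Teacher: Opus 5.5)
Your proposal is correct and follows the paper's proof essentially step for step: it uses the same configuration $\varepsilon_1,\ldots,\varepsilon_r,(a_j^0+n_j,\beta_j)$, the same appeal to Lemma~\ref{lem:integral-perturbation}, and the same observation that integral shifts of the support components change each $c_i(\alpha)$ by an integer, so the monodromy is unchanged. The only difference is in your Step~2: you extend the $\C^*$-valued character $\eta(e_i)$ from $M_\beta$ to $\Z^s$ using divisibility of $\C^*$ and then take logarithms, whereas the paper first takes logarithms on a basis of $M_\beta$ and then extends the resulting additive map $M_\beta\to\C$ to $\Z^s$ via Smith normal form and divisibility of $\C$.
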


\begin{proof}
Let $e_1,\ldots,e_r$ be the standard basis of $\Z^r$, and let $\varepsilon_1,\ldots,\varepsilon_r$ be the dual basis of $(\C^r)^*$.  For each $i$, the point $\eta(e_i)\in T_\beta$ is a character $\eta_i:M_\beta\to\C^*$.
The group $M_\beta$ is free abelian.  Choose a basis $m_1,\ldots,m_\ell$.  For every $i$ and $a$, choose $b_{ia}\in\C$ with $e^{2\pi i b_{ia}}=\eta_i(m_a)$.  For $u_1,\ldots,u_\ell\in\Z$, define $\ell_i:M_\beta\to\C$ by
\begin{equation*}
\ell_i\!\left(\sum_{a=1}^\ell u_am_a\right)=\sum_{a=1}^\ell u_ab_{ia}.
\end{equation*}
Then $\ell_i$ is additive and, for every $\alpha\in M_\beta$,
\begin{equation}\label{eq:log-lift-eta}
e^{2\pi i \ell_i(\alpha)}=\eta_i(\alpha).
\end{equation}
Extend each $\ell_i$ to $\Z^s$.  Smith normal form provides a basis $f_1,\ldots,f_s$ of $\Z^s$ and positive integers $d_1,\ldots,d_\ell$ such that $d_1f_1,\ldots,d_\ell f_\ell$ is a basis of $M_\beta$.  Define
\begin{equation*}
\widetilde\ell_i(f_a)=
\begin{cases}
d_a^{-1}\ell_i(d_af_a),&1\le a\le\ell,\\
0,&a>\ell
\end{cases}.
\end{equation*}
Since $\C$ is divisible as an additive group, these assignments define a homomorphism $\widetilde\ell_i:\Z^s\to\C$.  On each generator $d_af_a$ of $M_\beta$ it agrees with $\ell_i$, and hence $\widetilde\ell_i|_{M_\beta}=\ell_i$.

Let $u_1,\ldots,u_s$ be the standard basis of $\Z^s$ and set
$a_j^0=\sum_{i=1}^r\widetilde\ell_i(u_j)\varepsilon_i$.  Apply Lemma~\ref{lem:integral-perturbation} to obtain covectors $n_j=\sum_i n_{ij}\varepsilon_i$ with $n_{ij}\in\Z$.  For $1\le j\le s$, set
\begin{equation}\label{eq:prescribed-lift-weights}
\Lambda_{r+j}=(a_j^0+n_j,\beta_j).
\end{equation}
The support weights $\varepsilon_1,\ldots,\varepsilon_r$ together with these lifted weights have uniform maximal rank and positive real part along $\xi=(1,\ldots,1,0)\in\C^r\oplus\C^d$.  Since $\varepsilon_i(\xi)=1$, all weights have positive real part on $\xi$, so the configuration lies in the Poincar\'e domain.
Take the support $I_0=\{1,\ldots,r\}$.  Its connected stabiliser is $\{0\}\oplus\C^d$.  Restriction of~\eqref{eq:prescribed-lift-weights} to this stabiliser gives $\beta_j$, so the transverse envelope is $Q_\beta$.

For the monodromy computation, let $\alpha=(\alpha_1,\ldots,\alpha_s)\in M_\beta$.  Since $\alpha$ is an integral relation among the residual weights, $\sum_j\alpha_j\beta_j=0$.  For the support components,
\begin{align*}
\sum_{j=1}^s\alpha_j a_j^0
&=\sum_{j=1}^s\alpha_j\sum_{i=1}^r\widetilde\ell_i(u_j)\varepsilon_i\\
&=\sum_{i=1}^r\widetilde\ell_i\!\left(\sum_{j=1}^s\alpha_j u_j\right)\varepsilon_i
=\sum_{i=1}^r\ell_i(\alpha)\varepsilon_i.
\end{align*}
Define $N_i(\alpha)=\sum_j\alpha_jn_{ij}\in\Z$.  Combining the support and residual components gives
\begin{equation*}
\sum_{j=1}^s\alpha_j\Lambda_{r+j}
=\sum_{i=1}^r\bigl(\ell_i(\alpha)+N_i(\alpha)\bigr)\varepsilon_i.
\end{equation*}
The period $2\pi i e_i$ acts on $X^\alpha$ by the scalar
\begin{align*}
\exp\!\bigl(2\pi i (\ell_i(\alpha)+N_i(\alpha))\bigr)
&=e^{2\pi i \ell_i(\alpha)}\\
&=\eta_i(\alpha),
\end{align*}
where the first equality uses $N_i(\alpha)\in\Z$ and the second is~\eqref{eq:log-lift-eta}.  Since characters indexed by $M_\beta$ determine points of $T_\beta$, the monodromy representation is exactly $\eta$. Thus the integral perturbation changes the ambient weight
configuration while leaving the prescribed monodromy unchanged.
\end{proof}

Fix $r\ge1$ and set
\begin{equation}\label{eq:monodromy-parameter-space}
\mathcal R_{\beta,r}=\Hom(\Z^r,T_\beta)\simeq T_\beta^r,
\end{equation}
where the isomorphism is evaluation on the standard basis of $\Z^r$.  For $\eta\in\mathcal R_{\beta,r}$, set
\begin{equation*}
M_\eta=\{m\in M_\beta:\eta(n)(m)=1\text{ for every }n\in\Z^r\},
\end{equation*}
and let $D_\eta$ be the Zariski closure of $\eta(\Z^r)$ in $T_\beta$.  For $m\in M_\beta$, let $\chi_m:T_\beta\to\C^*$ be evaluation at $m$, and let $T_\beta^{\mathrm{tors}}$ denote the torsion subgroup of $T_\beta$.
The realisation theorem also describes the full parameter space of monodromy for the fixed residual envelope $Q_\beta$.
\begin{theorem}\label{thm:monodromy-parameter-space}
For every $\eta\in\mathcal R_{\beta,r}$,
\begin{equation}\label{eq:Deta-character-group}
X^*(D_\eta)\simeq M_\beta/M_\eta,
\end{equation}
and
\begin{equation}\label{eq:Deta-dimension}
\dim D_\eta=\rk M_\beta-\rk M_\eta.
\end{equation}
The representation $\eta$ is trivial if and only if $M_\eta=M_\beta$, has finite image if and only if $M_\eta$ has finite index in $M_\beta$, and is Zariski dense in $T_\beta$ if and only if $M_\eta=0$.  The Zariski-dense locus is
\begin{equation}\label{eq:dense-monodromy-locus}
\mathcal R_{\beta,r}^{\mathrm{dense}}
=\mathcal R_{\beta,r}\setminus
\bigcup_{0\ne m\in M_\beta}(\ker\chi_m)^r,
\end{equation}
whereas the finite-image locus is
\begin{equation}\label{eq:finite-monodromy-locus}
\mathcal R_{\beta,r}^{\mathrm{fin}}
=(T_\beta^{\mathrm{tors}})^r.
\end{equation}
Every point of $\mathcal R_{\beta,r}$ is realised as the envelope monodromy of a lift whose ambient weight configuration has uniform maximal rank and lies in the Poincar\'e domain, with residual envelope $Q_\beta$.
\end{theorem}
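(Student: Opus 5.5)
The plan is to follow the proof of Theorem~\ref{thm:monodromy-closure}, with the group $\eta(\Z^r)\subset T_\beta$ in place of $\rho_I(\Gamma_I)$, and then to read off the two loci from the characters $\chi_m$.

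\textbf{Character group and dimension.} For $m\in M_\beta$ the character $\chi_m$ is trivial on $\eta(\Z^r)$ exactly when $m\in M_\eta$. A character of an algebraic torus is trivial on a subset if and only if it is trivial on its Zariski closure, so $M_\eta$ is the annihilator of $D_\eta$. Since $D_\eta$ is a closed subgroup, hence diagonalisable, restriction $X^*(T_\beta)=M_\beta\to X^*(D_\eta)$ is surjective with kernel the annihilator. This gives \eqref{eq:Deta-character-group}. Taking ranks gives \eqref{eq:Deta-dimension}, because $\dim D_\eta=\rk X^*(D_\eta)$ (the identity component of $D_\eta$ is a torus).

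\textbf{The three criteria.} Triviality means every character annihilates the image, that is $M_\eta=M_\beta$. Finite image is equivalent to $D_\eta$ being finite: a finitely generated subgroup whose Zariski closure is finite is finite, and conversely finite sets are closed. This is equivalent to $M_\beta/M_\eta$ being finite. Zariski density means $D_\eta=T_\beta$, i.e.\ the annihilator is zero, i.e.\ $M_\eta=0$.

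\textbf{The two loci.} Evaluating on generators, $m\in M_\eta$ iff $\chi_m(\eta(e_i))=1$ for all $i$, iff $\eta\in(\ker\chi_m)^r$. Hence $M_\eta\neq0$ iff $\eta$ lies in the union over $0\ne m$, which is \eqref{eq:dense-monodromy-locus}.

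For \eqref{eq:finite-monodromy-locus}: if each $\eta(e_i)$ is torsion, the image is a finitely generated abelian group generated by torsion elements, hence finite. Conversely, finite image forces every $\eta(e_i)$ to have finite order. Alternatively, finite index of $M_\eta$ gives $N M_\beta\subset M_\eta$, so $\eta(e_i)^N=1$.

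\textbf{Realisation.} This is exactly Theorem~\ref{thm:prescribed-monodromy}, applied to each $\eta$, and it yields residual envelope $Q_\beta$.

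The only mildly delicate point is the surjectivity of restriction of characters to the closed subgroup $D_\eta$ used in \eqref{eq:Deta-character-group}. Everything else is already contained in Theorem~\ref{thm:monodromy-closure}.
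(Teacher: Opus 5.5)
Your proposal is correct and follows essentially the same route as the paper. Both arguments identify $M_\eta$ as the annihilator of $\eta(\Z^r)$, hence of its Zariski closure $D_\eta$, and obtain $X^*(D_\eta)\simeq M_\beta/M_\eta$ by restricting characters. Both then read off the two loci generator by generator and cite Theorem~\ref{thm:prescribed-monodromy} for the realisation statement. The only difference is that you make explicit the surjectivity of restriction to the closed subgroup $D_\eta$ and the equality $\dim D_\eta=\rk X^*(D_\eta)$, which the paper uses tacitly.
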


\begin{proof}
Write $e_1,\ldots,e_r$ for the standard basis of $\Z^r$.  A character $m\in M_\beta=X^*(T_\beta)$ is trivial on $\eta(\Z^r)$ if and only if $\eta(e_i)(m)=1$ for every $1\le i\le r$.  The annihilator of $\eta(\Z^r)$ in $M_\beta$ is exactly $M_\eta$.  A character is trivial on a subset of a torus if and only if it is trivial on its Zariski closure, so $M_\eta$ is also the annihilator of $D_\eta$, and restriction of characters gives~\eqref{eq:Deta-character-group}.  Taking ranks gives~\eqref{eq:Deta-dimension}. 
The three criteria follow from~\eqref{eq:Deta-character-group}.  In particular, $D_\eta$ is finite exactly when $M_\beta/M_\eta$ is finite.  Since $\eta(\Z^r)\subset D_\eta$, this is equivalent to finiteness of the image of $\eta$.  The image is trivial exactly when every character of $T_\beta$ annihilates it, and it is Zariski dense exactly when its annihilator is zero.

Under~\eqref{eq:monodromy-parameter-space}, write $\eta=(t_1,\ldots,t_r)$.  The condition $0\ne m\in M_\eta$ is equivalent to $t_i\in\ker\chi_m$ for every $i$.  Hence~\eqref{eq:dense-monodromy-locus}.  The image of $\eta$ is finite if and only if each generator $t_i=\eta(e_i)$ has finite order, yielding~\eqref{eq:finite-monodromy-locus}.  Every point of the representation space is realised by Theorem~\ref{thm:prescribed-monodromy}.
\end{proof}

The Zariski-dense and finite-image loci have the following elementary properties.
\begin{corollary}\label{cor:monodromy-loci}
If $\rk M_\beta>0$, then the Zariski-dense locus in~\eqref{eq:dense-monodromy-locus} is the complement of a countable union of proper algebraic subsets of $\mathcal R_{\beta,r}$.  The finite-image locus in~\eqref{eq:finite-monodromy-locus} is Zariski dense in $\mathcal R_{\beta,r}$.
\end{corollary}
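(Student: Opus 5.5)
Write $\ell=\rk M_\beta>0$ and choose a basis of $M_\beta$. This identifies $T_\beta\simeq(\C^*)^\ell$ and $\mathcal R_{\beta,r}\simeq(\C^*)^{\ell r}$, an irreducible algebraic torus. The plan is to treat the two loci in~\eqref{eq:dense-monodromy-locus} and~\eqref{eq:finite-monodromy-locus} separately, using only the explicit descriptions established in Theorem~\ref{thm:monodromy-parameter-space}.

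For the Zariski-dense locus, note first that $M_\beta$ is a finitely generated free abelian group, so the index set $0\ne m\in M_\beta$ is countable. For each such $m$, the character $\chi_m$ is a non-trivial character of the torus $T_\beta$, because $X^*(T_\beta)=M_\beta$. Hence $\ker\chi_m$ is a proper Zariski-closed subgroup of $T_\beta$. Its complement is non-empty, since a non-trivial character takes some value other than $1$. It follows that $(\ker\chi_m)^r$ is a proper algebraic subset of $T_\beta^r=\mathcal R_{\beta,r}$. This gives the first assertion directly from~\eqref{eq:dense-monodromy-locus}. Optionally, one can add that over $\C$ such a countable union cannot exhaust the space (Baire category), so the dense locus is non-empty; however, the statement does not require this.

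For the finite-image locus, by~\eqref{eq:finite-monodromy-locus} and the fact that a product of Zariski-dense subsets of irreducible varieties is Zariski dense in the product, it suffices to show that $T_\beta^{\mathrm{tors}}$ is Zariski dense in $T_\beta$. In the coordinates $(\C^*)^\ell$, the torsion subgroup is $\mu_\infty^\ell$, where $\mu_\infty$ is the group of all roots of unity. This is where the main (mild) point lies, and I would argue it by induction on $\ell$, exactly as in the lattice-avoidance argument in Lemma~\ref{lem:integral-perturbation}. Suppose a Laurent polynomial $F$ vanishes on $\mu_\infty^\ell$. Write $F=\sum_a F_a(x_1,\dots,x_{\ell-1})x_\ell^a$. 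Fix $x'\in\mu_\infty^{\ell-1}$. The one-variable Laurent polynomial $F(x',x_\ell)$ then has infinitely many zeros, so it is identically zero, and hence every $F_a(x')=0$. By induction each $F_a\equiv0$, so $F=0$.

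An alternative for the density step is to observe that the Zariski closure of $T_\beta^{\mathrm{tors}}$ is an algebraic subgroup. If it were proper, its annihilator would contain some non-zero $m\in M_\beta$, so $\chi_m$ would be trivial on all torsion points. This is impossible, because $\chi_m$ restricted to the torsion points surjects onto the roots of unity of order dividing the multiples of the divisibility of $m$. The inductive argument is more elementary, so I would use it.

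Finally, to pass from density in $T_\beta$ to density in $T_\beta^r$, I would either apply the same induction directly in $\ell r$ variables, since $(T_\beta^{\mathrm{tors}})^r=\mu_\infty^{\ell r}$ in the chosen coordinates, or cite the product statement. The direct route avoids any appeal to facts about products of varieties.
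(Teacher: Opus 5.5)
Your proposal is correct and follows the paper's proof essentially verbatim. For the first assertion, countability of $M_\beta$ together with properness of $\ker\chi_m$ for $m\ne0$ suffices; for the second, the induction on the number of variables (a one-variable Laurent polynomial vanishing at all roots of unity is zero) shows that $(\mu_\infty)^\ell$ is Zariski dense. Running the induction directly on $\mu_\infty^{\ell r}$ is a harmless streamlining of the paper's appeal to density of products, and your optional alternative argument is not needed. Its surjectivity remark is garbled: for every $m\ne0$, $\chi_m$ in fact maps the torsion points onto all of $\mu_\infty$.
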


\begin{proof}
The lattice $M_\beta$ is countable.  For $0\ne m\in M_\beta$, the character $\chi_m$ is non-trivial, so $\ker\chi_m$ is a proper algebraic subgroup of $T_\beta$.  Formula~\eqref{eq:dense-monodromy-locus} gives the first assertion. 
Choose an isomorphism $T_\beta\simeq(\C^*)^\ell$, where $\ell=\rk M_\beta$.  Let $\mu_\infty\subset\C^*$ be the group of all roots of unity.  The torsion subgroup of $T_\beta$ is $(\mu_\infty)^\ell$.  To prove that it is Zariski dense, let a Laurent polynomial vanish on $(\mu_\infty)^\ell$.  After multiplication by a monomial, assume that it is a polynomial.  For $\ell=1$, a non-zero polynomial cannot vanish at all roots of unity.  For $\ell>1$, fix torsion values in the first $\ell-1$ variables.  The resulting polynomial in the last variable vanishes at every root of unity, so each coefficient vanishes on $(\mu_\infty)^{\ell-1}$; induction shows that all coefficients are zero.  Thus $T_\beta^{\mathrm{tors}}$ is Zariski dense in $T_\beta$, and its $r$-fold product is Zariski dense in $T_\beta^r$.
\end{proof}

Fixing the local envelope does not fix its semiglobal monodromy.
\begin{corollary}\label{cor:local-monodromy-independence}
For a fixed residual configuration of uniform maximal rank and a fixed support dimension, every representation of the period lattice into the acting torus of the residual analytic envelope occurs in a lift whose ambient weight configuration has uniform maximal rank and lies in the Poincar\'e domain.  In particular, the local analytic envelope does not determine the semiglobal flat monodromy.
\end{corollary}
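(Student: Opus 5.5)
The corollary follows by combining Theorem~\ref{thm:prescribed-monodromy} (equivalently, the last assertion of Theorem~\ref{thm:monodromy-parameter-space}) with the observation that the residual envelope depends only on the residual weights. Fix the residual configuration $\beta_1,\ldots,\beta_s\in(\C^d)^*$ of uniform maximal rank and the support dimension $r\ge1$.

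First, the period lattice is the expected one. For any lift as in~\eqref{eq:prescribed-lift-weights}, the support weights $\varepsilon_1,\ldots,\varepsilon_r$ are independent. Lemma~\ref{lem:leaf-uniformisation} then identifies $\Gamma_{I_0}$ with $(2\pi i\Z)^r\simeq\Z^r$, and Proposition~\ref{prop:holonomy-arithmetic} supplies the standard basis $2\pi i e_i$.

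Second, the residual envelope is fixed. The stabiliser $\h_{I_0}=\{0\}\oplus\C^d$ and the restricted weights $\beta_j$ do not depend on the chosen support components $a_j^0+n_j$. Hence $S_{I_0}=S_\beta$, $Q_{I_0}=Q_\beta$ and its acting torus $T_{I_0}=T_\beta$ are the same for every lift.

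Third, realise the monodromy. Given an arbitrary representation $\eta\colon\Z^r\to T_\beta$, Theorem~\ref{thm:prescribed-monodromy} produces a lift whose ambient configuration has uniform maximal rank, lies in the Poincar\'e domain, and satisfies $\rho_{I_0}=\eta$.

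For the ``in particular'' clause, take $\rk M_\beta>0$ and compare two representations: the trivial one, and a non-trivial $\eta$, for instance $\eta(e_1)=t$ with $t\ne1$ in $T_\beta$, which exists since $T_\beta$ is a torus of positive dimension. Both are realised with the same local envelope $Q_\beta$ but with different monodromy $\rho_{I_0}$, so the pointwise envelope cannot determine the monodromy.

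If $\rk M_\beta=0$, then $T_\beta$ is trivial and every representation is trivial, so there is nothing to distinguish. In that case the statement should be read for envelopes of positive dimension; the first clause still holds trivially.

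Two further checks are needed. One must confirm that the identification of the period group with $\Z^r$ used in Theorem~\ref{thm:prescribed-monodromy} is the intrinsic $\Gamma_{I_0}$, which is exactly what the first step gives. One must also confirm that ``monodromy'' here means $\rho_I$ of Theorem~\ref{thm:envelope-holonomy}. There is no real obstacle: the only delicate point is the degenerate case $\rk M_\beta=0$, and otherwise everything is bookkeeping on already-proved results.
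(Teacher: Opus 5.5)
Your proposal is correct and follows the paper's own argument. The residual weights alone fix $S_\beta$, $Q_\beta$ and $T_\beta$, and Theorem~\ref{thm:prescribed-monodromy} realises every $\eta\colon\Z^r\to T_\beta$ without changing those weights. Your extra checks — identifying the period lattice via Lemma~\ref{lem:leaf-uniformisation}, and noting that the ``in particular'' clause needs $\rk M_\beta>0$ — are harmless bookkeeping that the paper leaves implicit.
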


\begin{proof}
The residual weights determine the semigroup, its analytic envelope and its acting torus.  Theorem~\ref{thm:prescribed-monodromy} realises every representation of the period lattice into that torus without changing the residual weights.
\end{proof}

The determinantal examples make the coefficients $c_i(\alpha)$ and the envelope monodromy explicit.
\begin{proposition}\label{prop:segre-holonomy}
Consider the explicit $\C^2$-lift whose weights lie in the Poincar\'e domain, given by $\Lambda_0=(1,0)$, $\Lambda_a=(a_a,1)$ and $\Lambda_{p+b}=(c_b,-1)$,
where the $a_a$ are pairwise distinct, the $c_b$ are pairwise distinct, and all $a_a$ and $c_b$ have positive real part.  Along the singular orbit with support $\{0\}$, the envelope is the rank-at-most-one determinantal cone with coordinates $z_{ab}=x_ay_b$.  A positive generator of the period group acts by
\begin{equation}\label{eq:segre-monodromy}
z_{ab}\longmapsto e^{2\pi i (a_a+c_b)}z_{ab}.
\end{equation}
The envelope monodromy is trivial exactly when every sum $a_a+c_b$ is an integer, and it has finite image exactly when every such sum is rational.
\end{proposition}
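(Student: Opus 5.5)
The plan is to apply the general machinery of Theorem~\ref{thm:envelope-holonomy} and Proposition~\ref{prop:holonomy-arithmetic} to this explicit lift; what remains is bookkeeping. Write the coordinates as $(z_0,x_1,\ldots,x_p,y_1,\ldots,y_q)$. Take the support $I=\{0\}$; then $\h_I=\ker\Lambda_0=\{0\}\times\C$, generated by $v=(0,1)$. The residual weights are $\Lambda_a(v)=1$ on each $x_a$ and $\Lambda_{p+b}(v)=-1$ on each $y_b$, which is exactly the residual action~\eqref{eq:segreact}. Proposition~\ref{prop:segre} then identifies the envelope $Q_I$ with the rank-at-most-one determinantal cone. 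Its Hilbert basis is given by the monomials $z_{ab}=x_ay_b$, whose exponents $\beta^{ab}=e_a+e_{p+b}$ lie in $S_I$. Uniform maximal rank has already been checked via the determinants listed before Theorem~\ref{thm:jump}, and the Poincar\'e condition holds with $\xi=(1,0)$ because every first coordinate has positive real part.

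The key computation is the covector $\nu_{\beta^{ab}}=\Lambda_a+\Lambda_{p+b}$. It equals $(a_a,1)+(c_b,-1)=(a_a+c_b,0)=(a_a+c_b)\Lambda_0$. In the notation of~\eqref{eq:cai}, the single coefficient is therefore $c_{0,ab}=a_a+c_b$. The support weight $\Lambda_0$ is nonzero, hence independent, so Lemma~\ref{lem:leaf-uniformisation} applies. It identifies $\Gamma_I$ with $2\pi i\Z$, with positive generator $2\pi i e_0$, where $\Lambda_0(e_0)=1$; concretely, $e_0$ is the class of $(1,0)$ in $V_I$. Formula~\eqref{eq:holonomy-multiplier}, equivalently~\eqref{eq:tau-on-monomials} with $\nu_{\beta^{ab}}(2\pi i e_0)=2\pi i(a_a+c_b)$, then gives the action $X^{\beta^{ab}}\mapsto e^{2\pi i(a_a+c_b)}X^{\beta^{ab}}$, which is~\eqref{eq:segre-monodromy}.

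The last two assertions follow from Proposition~\ref{prop:holonomy-arithmetic} applied with the Hilbert basis $\{\beta^{ab}\}$. The monodromy is trivial exactly when every $c_{0,ab}=a_a+c_b$ is an integer, and it has finite image exactly when every such sum is rational.

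The one point needing care is that the Hilbert basis really consists of all the $\beta^{ab}$, so that the criteria of Proposition~\ref{prop:holonomy-arithmetic-general} are stated over the correct generating set. This holds because the generation argument in Proposition~\ref{prop:segre} shows the $z_{ab}$ generate $\C[S_I]$, and each $z_{ab}$ is irreducible in $S_I$ since it has degree two and $S_I$ contains no degree-one elements. Even without irreducibility, any finite monomial generating set would suffice for the triviality and finiteness criteria, so this step is not a real obstacle.
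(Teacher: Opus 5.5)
Your proposal is correct and follows essentially the paper's argument. You identify $\h_I$ and the residual weights $\pm1$, invoke Proposition~\ref{prop:segre}, compute the action of the period generator (the class of $(2\pi i,0)$) on the generators $z_{ab}$, and deduce the triviality and finiteness criteria as in Proposition~\ref{prop:holonomy-arithmetic}. The only cosmetic difference is that you read the multiplier off $\nu_{\beta^{ab}}=(a_a+c_b)\Lambda_0$ via \eqref{eq:holonomy-multiplier}, whereas the paper multiplies $x_a$ by $e^{2\pi i a_a}$ and $y_b$ by $e^{2\pi i c_b}$ directly; this is the same computation.
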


\begin{proof}
For the support $I=\{0\}$, the connected stabiliser is the second coordinate axis in $\C^2$.  The residual weights are $+1$ on the $x_a$ and $-1$ on the $y_b$.  Proposition~\ref{prop:segre} identifies the envelope coordinates with the invariant quadrics $z_{ab}=x_ay_b$. 
The quotient $V_I$ is the first coordinate line, and the period group is generated by the class of
$(2\pi i,0)$.  Acting by this period multiplies $x_a$ by $e^{2\pi i a_a}$ and $y_b$ by $e^{2\pi i c_b}$.
Their product is multiplied by the scalar in~\eqref{eq:segre-monodromy}.  Since the $z_{ab}$ generate the
coordinate ring of the determinantal cone, the final two assertions follow exactly as in
Proposition~\ref{prop:holonomy-arithmetic}.
\end{proof}

The simplest case is an affine-line envelope with residual weights $+1$ and $-1$, whose acting torus is $\C^*$.  Theorem~\ref{thm:prescribed-monodromy} then shows that every $\lambda\in\C^*$ occurs as scalar envelope monodromy.

\section{Flat connections, residual determination and classical residues}\label{sec:flat-connection}

For every non-empty singular support $I$, the toric residual factor $Q_I$ carries a canonical semiglobal flat connection.  When the support weights are linearly independent, $Q_I$ is the full pointwise transverse envelope and the connection has a logarithmic coordinate expression.  The logarithmic residues determine additive coefficients whose exponentials are the monodromy multipliers.

\subsection{Semiglobal envelope and flat connection}

Fix a non-empty singular support $I$.  Define an action of $\Gamma_I$ on $V_I\times Q_I$ by
\begin{equation}\label{eq:Gamma-bundle-action}
\gamma\cdot(v,q)=\bigl(v+\gamma,\rho_I(\gamma)^{-1}q\bigr).
\end{equation}
The resulting suspension is the semiglobal bundle of toric residual envelopes.
\begin{theorem}\label{thm:semiglobal-bundle}
The quotient $\mathscr Q_I=(V_I\times Q_I)/\Gamma_I$ is a complex analytic space.  The projections fit into the commutative diagram
\begin{equation*}
\begin{tikzcd}[column sep=4.4em,row sep=1.8em]
V_I\times Q_I \arrow[r,"\mathrm{quot}"] \arrow[d,"\operatorname{pr}_1"']
& \mathscr Q_I \arrow[d,"\pi_I"] \\
V_I \arrow[r,"\mathrm{quot}"']
& L_I=V_I/\Gamma_I,
\end{tikzcd}
\end{equation*}
in which $\pi_I$ is a locally trivial analytic fibre bundle with fibre $Q_I$.  Its transition functions are locally constant with values in $\rho_I(\Gamma_I)$, its pullback to $V_I$ is the product bundle, and its flat monodromy is $\rho_I$.  Each lift of $x\in L_I$ identifies $\pi_I^{-1}(x)$ with the toric residual factor of the transverse analytic envelope at $x$, uniquely up to $\rho_I(\Gamma_I)$.  If $r_I=|I|$, this factor is the full transverse analytic envelope.
\end{theorem}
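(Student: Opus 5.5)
The plan is to treat $\mathscr Q_I$ as the standard suspension of the representation $\rho_I$, and to verify its properties in order: properness and freeness of the $\Gamma_I$-action on $V_I\times Q_I$, local triviality with locally constant transition functions, and finally the identification of each fibre with the transverse envelope via Theorem~\ref{thm:envelope-holonomy}(i).

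\textbf{The quotient.} The action~\eqref{eq:Gamma-bundle-action} is a group action, because $\rho_I$ is a homomorphism. Indeed, $\gamma'\cdot(\gamma\cdot(v,q))=(v+\gamma+\gamma',\rho_I(\gamma')^{-1}\rho_I(\gamma)^{-1}q)$, and this equals $(\gamma+\gamma')\cdot(v,q)$ by commutativity of $\Gamma_I$. Lemma~\ref{lem:leaf-uniformisation} shows that $\Gamma_I$ is discrete in $V_I$ and acts freely and properly on $V_I$ by translations. Freeness and proper discontinuity on the product then follow, since the first component already separates orbits. To produce analytic charts, I will choose an open set $U\subset V_I$ with $(U+\gamma)\cap U=\varnothing$ for $\gamma\ne0$. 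Such sets exist because $\Gamma_I$ is discrete; a small ball suffices. The map $U\times Q_I\to\mathscr Q_I$ is then injective and open onto its image. These charts give $\mathscr Q_I$ the structure of a complex analytic space. It is Hausdorff because the action is properly discontinuous, and $\pi_I$, induced by $\operatorname{pr}_1$, makes the diagram commute.

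\textbf{Bundle structure.} The charts $U\times Q_I$ trivialise $\pi_I$ over $\mathrm{quot}(U)$. On an overlap, two charts $U$ and $U'$ are related by $U'\cap(U+\gamma)$ for locally constant $\gamma$. The transition function is therefore $q\mapsto\rho_I(\gamma)^{\pm1}q$, which is locally constant with values in $\rho_I(\Gamma_I)$. This exhibits a flat structure. The pullback to $V_I$ is $V_I\times Q_I$ with the tautological map, so it is the product bundle. Transporting a point $(v,q)$ along a loop whose lift ends at $v+\gamma$ returns $\rho_I(\gamma)^{-1}q$ in the chart at $v$. With the convention built into~\eqref{eq:Gamma-bundle-action}, this identifies the monodromy with $\rho_I$. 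I expect this sign and inversion bookkeeping to be the only delicate point. I will fix it by checking directly that the identification $\Phi$ defined next is $\Gamma_I$-equivariant, and then reading off the monodromy from $\Phi$.

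\textbf{Fibre identification.} For $x\in L_I$ and a lift $v\in V_I$, I will identify the fibre $\pi_I^{-1}(x)=\{v\}\times Q_I$ with the envelope at the point $x$ through the map $\tau_I(v)$. By Theorem~\ref{thm:envelope-holonomy}(i), $\tau_I(v)$ is the transport of the toric residual factor from $p$ to the point represented by $v$. If $v$ is replaced by $v+\gamma$, the identification changes by $\tau_I(\gamma)=\rho_I(\gamma)$. This is consistent with the $\Gamma_I$-action, which gives the uniqueness up to $\rho_I(\Gamma_I)$. Finally, when $r_I=|I|$ the factor $\C^{|I|-r_I}$ in Corollary~\ref{cor:full-transverse-envelope} vanishes, so $Q_I$ is the full transverse envelope.
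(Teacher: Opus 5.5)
Your proposal is correct and follows the paper's proof essentially step for step. The ingredients match: a free, properly discontinuous action controlled by the translation component; trivialisations from local lifts with constant transition maps coming from $[v+\gamma,q]=[v,\rho_I(\gamma)q]$; monodromy and fibre identification via Theorem~\ref{thm:envelope-holonomy}; and the final assertion from Corollary~\ref{cor:full-transverse-envelope}.

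There is one small slip. The horizontal lift starting at $[v,q]$ along a loop with lifted endpoint $v+\gamma$ ends at $[v+\gamma,q]=[v,\rho_I(\gamma)q]$. So the return map in the chart at $v$ is $\rho_I(\gamma)$ itself, not $\rho_I(\gamma)^{-1}$, and no appeal to a convention is needed.
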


\begin{proof}
The translation action of $\Gamma_I$ on $V_I$ is free and properly discontinuous.  The action~\eqref{eq:Gamma-bundle-action} on $V_I\times Q_I$ is free because its first component is translation.  If a compact set $C\subset V_I\times Q_I$ meets $\gamma C$, then its projection $K\subset V_I$ meets $K+\gamma$; this occurs for only finitely many $\gamma$.  Hence the quotient is a complex analytic space. 
The first projection descends to $\pi_I:\mathscr Q_I\to V_I/\Gamma_I$.  Over an evenly covered open set $U\subset L_I$, choose a lift $s:U\to V_I$ and identify $U\times Q_I$ with the quotient by $(x,q)\mapsto[s(x),q]$.  If $s'=s+\gamma$ on a connected overlap, then
$$
[s'(x),q]=[s(x),\rho_I(\gamma)q],
$$
so the transition function is the constant automorphism $\rho_I(\gamma)$.  Pullback to $V_I$ removes the deck transformations.  Theorem~\ref{thm:envelope-holonomy} identifies the resulting parallel transport with $\tau_I$ and its monodromy with $\rho_I$.  The last assertion follows from Corollary~\ref{cor:full-transverse-envelope}.
\end{proof}

If $r_I<|I|$, a slice as in Theorem~\ref{thm:residual-model} has an additional zero-foliation factor $\C^{|I|-r_I}$.  A period $\gamma\in\Gamma_I$ satisfies $e^{\Lambda_i(\gamma)}=1$ for every $i\in I$, so the monodromy on this factor is trivial.  Thus all non-trivial semiglobal monodromy is carried by $\mathscr Q_I$.
For $\alpha\in M_I$, let $\omega_\alpha$ be the holomorphic $1$-form on $L_I$ whose pullback to the universal cover $V_I$ is the constant form $d\nu_\alpha$.  Such a form exists because constant forms are invariant under translations by $\Gamma_I$, and $\omega_{\alpha+\beta}=\omega_\alpha+\omega_\beta$.  Put $\mathscr A_I=\cO_{L_I}\otimes_\C\C[S_I]$ and define
\begin{equation}\label{eq:flat-envelope-connection}
\nabla_I(fX^\alpha)=df\,X^\alpha-f\,\omega_\alpha X^\alpha.
\end{equation}

\begin{theorem}\label{thm:analytic-triviality}
For every non-empty singular support, the bundle $\mathscr Q_I$ is holomorphically trivial over $L_I$.  More precisely,
\begin{equation}\label{eq:global-trivialisation}
[v,q]\longmapsto\bigl([v],\tau_I(v)q\bigr)
\end{equation}
defines a biholomorphism $\mathscr Q_I\simeq L_I\times Q_I$.  The operator~\eqref{eq:flat-envelope-connection} is a flat connection on the $\cO_{L_I}$-module $\mathscr A_I$, is a derivation for its algebra structure, and has monodromy $\rho_I$.  In the product trivialisation, its horizontal lifts satisfy
\begin{equation}\label{eq:horizontal-equation-general}
dX^\alpha=X^\alpha\omega_\alpha.
\end{equation}
\end{theorem}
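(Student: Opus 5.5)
The proof has two parts. First I check that the map \eqref{eq:global-trivialisation} is well defined and biholomorphic. Then I check the connection \eqref{eq:flat-envelope-connection} directly on the monomial basis $X^\alpha$, $\alpha\in S_I$, of $\C[S_I]$.

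\emph{Triviality.} Define $\Phi:V_I\times Q_I\to L_I\times Q_I$ by $\Phi(v,q)=([v],\tau_I(v)q)$. It is holomorphic because the evaluation map of $\tau_I$ is holomorphic (Theorem~\ref{thm:envelope-holonomy}). To see that it descends, use the action \eqref{eq:Gamma-bundle-action} and the homomorphism property of $\tau_I$, together with $\rho_I=\tau_I|_{\Gamma_I}$:
\[
\Phi(v+\gamma,\rho_I(\gamma)^{-1}q)=([v],\tau_I(v)\tau_I(\gamma)\tau_I(\gamma)^{-1}q)=\Phi(v,q).
\]
So $\Phi$ is $\Gamma_I$-invariant and induces a holomorphic map $\bar\Phi:\mathscr Q_I\to L_I\times Q_I$. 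For the inverse, take $([v],q)\mapsto[v,\tau_I(v)^{-1}q]$. This is independent of the lift $v$ by the same computation. It is holomorphic, since locally it is given by a holomorphic section of the covering $V_I\to L_I$ (Lemma~\ref{lem:leaf-uniformisation}) composed with holomorphic maps. The two maps are mutually inverse by construction.

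\emph{Connection.} The form $\omega_\alpha$ is holomorphic and closed, being locally the differential of the linear function $\nu_\alpha$. Additivity $\omega_{\alpha+\beta}=\omega_\alpha+\omega_\beta$ and $X^\alpha X^\beta=X^{\alpha+\beta}$ give the Leibniz rule for $\nabla_I$ as a map $\mathscr A_I\to\Omega^1_{L_I}\otimes\mathscr A_I$. The derivation property for the algebra structure is
\[
\nabla_I(fX^\alpha\cdot gX^\beta)=\nabla_I(fX^\alpha)\,gX^\beta+fX^\alpha\,\nabla_I(gX^\beta),
\]
and it follows from the same two facts. For flatness, extend $\nabla_I$ to forms in the usual way and compute
\[
\nabla_I^2(fX^\alpha)=-(d\omega_\alpha)fX^\alpha+(df\wedge\omega_\alpha-df\wedge\omega_\alpha)X^\alpha,
\]
using $\omega_\alpha\wedge\omega_\alpha=0$. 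Since $d\omega_\alpha=0$, the curvature vanishes.

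\emph{Horizontal sections and monodromy.} A section $fX^\alpha$ is horizontal exactly when $df=f\omega_\alpha$. On the cover $V_I$ this says $f$ is a constant multiple of $e^{\nu_\alpha(v)}$. Continuing such a section around the period $\gamma$ multiplies it by $e^{\nu_\alpha(\gamma)}$, which is the comorphism of $\rho_I(\gamma)$ on $X^\alpha$ by \eqref{eq:tau-on-monomials}. The monodromy of $\nabla_I$ is therefore $\rho_I$; the sign and inverse conventions must be matched with \eqref{eq:Gamma-bundle-action} when writing this out. For \eqref{eq:horizontal-equation-general}, regard $X^\alpha$, in the product trivialisation, as the function on $\mathscr Q_I$ obtained by pulling back the second coordinate through $\bar\Phi$. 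Along a horizontal lift, $v\mapsto[v,q_0]$ with $q_0$ fixed, it equals $\tau_I(v)^*X^\alpha(q_0)=e^{\nu_\alpha(v)}X^\alpha(q_0)$. Its differential is therefore $X^\alpha\,d\nu_\alpha$, which descends to $X^\alpha\omega_\alpha$.

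The main obstacle is bookkeeping of conventions rather than any substantive step. One must reconcile the direction of $\rho_I(\gamma)^{-1}$ in the suspension, the sign in $\nabla_I$, and the identification of $\mathscr A_I$ with functions on the trivialised bundle that are polynomial in the fibre. I would fix these first, by checking that the section $X^\alpha\circ\bar\Phi$ corresponds to $X^\alpha$ and that the horizontal sections of $\mathscr A_I$ pull back to fibrewise-constant functions on $V_I\times Q_I$. The rest is formal.
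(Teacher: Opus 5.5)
Your proposal is correct and follows essentially the same route as the paper. It checks well-definedness and invertibility of the trivialisation through the homomorphism property of $\tau_I$ and $\rho_I=\tau_I|_{\Gamma_I}$, obtains the Leibniz rule and flatness from additivity and closedness of $\omega_\alpha$ (via $\nabla_I^2(fX^\alpha)=-f\,d\omega_\alpha\,X^\alpha$), and reads off the horizontal equation and the monodromy from $X^\alpha(\tau_I(v)q_0)=e^{\nu_\alpha(v)}X^\alpha(q_0)$ and the period multiplier $e^{\nu_\alpha(\gamma)}$. Your extra remarks, on holomorphicity of the inverse via local sections of $V_I\to L_I$ and on matching the inverse convention in the suspension, only make explicit what the paper leaves implicit.
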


\begin{proof}
In the suspension, $(v,q)$ and $(v+\gamma,\rho_I(\gamma)^{-1}q)$ represent the same point.  Since $\rho_I(\gamma)=\tau_I(\gamma)$ and $\tau_I$ is a homomorphism,
$$
\tau_I(v+\gamma)\rho_I(\gamma)^{-1}q=\tau_I(v)q,
$$
so~\eqref{eq:global-trivialisation} is well defined.  Its inverse sends $([v],q')$ to $[v,\tau_I(-v)q']$.  If $v$ is replaced by $v+\gamma$, then
$$
[v+\gamma,\tau_I(-v-\gamma)q']
=[v,\rho_I(\gamma)\tau_I(-v-\gamma)q']
=[v,\tau_I(-v)q'],
$$
so the inverse is independent of the lift and~\eqref{eq:global-trivialisation} is a biholomorphism. 
Additivity of $\alpha\mapsto\omega_\alpha$ gives the Leibniz rule for the algebra structure.  Each $\omega_\alpha$ is the descent of the translation-invariant constant form $d\nu_\alpha$ and is therefore closed.  On the monomial summand $\cO_{L_I}X^\alpha$ one has $\nabla_I^2(fX^\alpha)=-f\,d\omega_\alpha X^\alpha=0$, so the connection is flat.  A horizontal lift in the trivialisation is the graph $v\mapsto\tau_I(v)q_0$.  Formula~\eqref{eq:tau-on-monomials} gives
$$
X^\alpha(\tau_I(v)q_0)=e^{\nu_\alpha(v)}X^\alpha(q_0),
$$
and differentiation yields~\eqref{eq:horizontal-equation-general}.  Along a loop represented by $\gamma\in\Gamma_I$, its integral is $\int_\gamma\omega_\alpha=\nu_\alpha(\gamma)$, so the horizontal multiplier is $e^{\nu_\alpha(\gamma)}$.  This agrees with~\eqref{eq:rho-pairing}, and the monodromy is $\rho_I$.
\end{proof}

If the support weights $\Lambda_i$, $i\in I$, are linearly independent, Lemma~\ref{lem:leaf-uniformisation} gives $L_I\simeq(\C^*)^I$ with coordinates $z_i=e^{\Lambda_i(v)}$.  Write $\nu_\alpha=\sum_{i\in I}c_i(\alpha)\Lambda_i$ and define $\delta_i(X^\alpha)=c_i(\alpha)X^\alpha$.  Since $dz_i/z_i$ pulls back to $d\Lambda_i$ on $V_I$, one has
$$
\omega_\alpha=\sum_{i\in I}c_i(\alpha)\frac{dz_i}{z_i}.
$$
Thus~\eqref{eq:flat-envelope-connection} becomes the logarithmic connection
\begin{equation}\label{eq:logarithmic-flat-operator}
\nabla_I=d-\sum_{i\in I}\frac{dz_i}{z_i}\,\delta_i,
\end{equation}
and~\eqref{eq:horizontal-equation-general} becomes
\begin{equation}\label{eq:horizontal-equation}
dX^\alpha=X^\alpha\sum_{i\in I}c_i(\alpha)\frac{dz_i}{z_i}.
\end{equation}
Its monodromy around the positive $i$th coordinate loop is $\rho_I(2\pi i e_i)$. 
For the logarithmic extension and the residue statements below, assume that the support weights are linearly independent.  Put $\overline L_I=(\mathbb P^1)^I$.  For $i\in I$, set $D_i^0=\{z_i=0\}$ and $D_i^\infty=\{z_i=\infty\}$, and put
\begin{equation*}
\partial\overline L_I=\overline L_I\setminus(\C^*)^I
=\bigcup_{i\in I}(D_i^0\cup D_i^\infty).
\end{equation*}
Set $\overline{\mathscr A}_I=\cO_{\overline L_I}\otimes_\C\C[S_I]$.  Proposition~\ref{prop:connection-residues} identifies the coefficients $c_i(\alpha)$ with the eigenvalues of the boundary logarithmic residues.
\begin{proposition}\label{prop:connection-residues}
The connection~\eqref{eq:logarithmic-flat-operator} extends to
\begin{equation}\label{eq:extended-log-connection}
\overline\nabla_I:\overline{\mathscr A}_I
\longrightarrow
\Omega^1_{\overline L_I}(\log\partial\overline L_I)\otimes_{\cO_{\overline L_I}}\overline{\mathscr A}_I.
\end{equation}
For $i\in I$, its residues along the two boundary components in the $i$th factor are
\begin{equation}\label{eq:connection-residues}
\begin{aligned}
\operatorname{Res}_{D_i^0}(\overline\nabla_I)&=-\delta_i,\\
\operatorname{Res}_{D_i^\infty}(\overline\nabla_I)&=\delta_i.
\end{aligned}
\end{equation}
After localisation from $\C[S_I]$ to $\C[M_I]$, the residues satisfy, for $i\in I$ and $\alpha\in M_I$,
\begin{equation}\label{eq:residue-on-character}
\begin{aligned}
\operatorname{Res}_{D_i^0}(\overline\nabla_I)(X^\alpha)&=-c_i(\alpha)X^\alpha,\\
\operatorname{Res}_{D_i^\infty}(\overline\nabla_I)(X^\alpha)&=c_i(\alpha)X^\alpha.
\end{aligned}
\end{equation}
For $i\in I$ and $\alpha\in M_I$, the local monodromy about $D_i^0$ acts on $X^\alpha$ by
\begin{equation}\label{eq:residue-monodromy}
\exp\!\left(-2\pi i \operatorname{Res}_{D_i^0}(\overline\nabla_I)\right)(X^\alpha)
=e^{2\pi i c_i(\alpha)}X^\alpha.
\end{equation}
\end{proposition}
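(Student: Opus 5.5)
The proof is a local computation in the standard charts of $(\mathbb P^1)^I$. The connection \eqref{eq:logarithmic-flat-operator} is diagonal on the monomial basis $X^\alpha$ of $\C[S_I]$. The operators $\delta_i$ are $\C$-linear endomorphisms of $\C[S_I]$ that do not depend on the point of $L_I$. Hence the extension problem splits factor by factor and monomial by monomial.

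First fix $i\in I$ and work near $D_i^0$. Here $z_i$ is a coordinate vanishing on $D_i^0$, and $dz_i/z_i$ is by definition a local generator of $\Omega^1_{\overline L_I}(\log\partial\overline L_I)$. Thus the formula $\nabla_I(fX^\alpha)=df\,X^\alpha-\sum_i c_i(\alpha)f\,(dz_i/z_i)X^\alpha$ with $f\in\cO_{\overline L_I}$ already defines a map into the logarithmic forms tensored with $\overline{\mathscr A}_I$. The Leibniz rule and flatness persist from Theorem~\ref{thm:analytic-triviality}, since they are identities that can be checked on the dense open set $(\C^*)^I$.

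Near $D_i^\infty$ I will use the coordinate $u_i=1/z_i$. Then $dz_i/z_i=-du_i/u_i$, so the connection becomes $d+\sum_i(du_i/u_i)\delta_i$ in the $i$th slot, again with logarithmic poles. With the convention that $\operatorname{Res}_D$ is the coefficient of $dz/z$ for a local defining coordinate $z$, this gives $\operatorname{Res}_{D_i^0}=-\delta_i$ and $\operatorname{Res}_{D_i^\infty}=\delta_i$. That is \eqref{eq:connection-residues}.

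Next I pass to $\C[M_I]$. Additivity of $\alpha\mapsto\nu_\alpha$, and hence of $c_i$, shows that $\delta_i(X^\alpha)=c_i(\alpha)X^\alpha$ extends to a derivation of the Laurent algebra $\C[M_I]=\C[S_I][X^{-\beta}]$. The localized connection is still given by the same formula, so \eqref{eq:residue-on-character} follows. One should check that $M_I=\operatorname{gp}(S_I)$, so that every character is a quotient of monomials; this holds by definition.

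Finally, for \eqref{eq:residue-monodromy} I compute directly. The exponential of $-2\pi i\operatorname{Res}_{D_i^0}=2\pi i\delta_i$ acts on $X^\alpha$ by $e^{2\pi i c_i(\alpha)}$. I then compare with Proposition~\ref{prop:holonomy-arithmetic}, where the period $2\pi i e_i$ acts by $e^{2\pi i c_i(\alpha)}$, and with the horizontal equation \eqref{eq:horizontal-equation}. Integrating the latter along the positive loop $|z_i|=\varepsilon$ gives the multiplier $\exp(c_i(\alpha)\oint dz_i/z_i)=e^{2\pi i c_i(\alpha)}$.

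The main obstacle is bookkeeping of sign conventions rather than analysis. I need to fix the residue convention (coefficient of $dz/z$) and the orientation of the loop around $D_i^0$ consistently. Then the minus sign in $\nabla_I$ produces $-\delta_i$ at zero, and $\exp(-2\pi i\operatorname{Res})$ matches the horizontal multiplier $e^{+2\pi i c_i}$ computed from \eqref{eq:horizontal-equation}.
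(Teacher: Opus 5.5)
Your proposal is correct and follows essentially the same route as the paper's proof. The paper reads the residues directly from the fact that $dz_i/z_i$ has residue $1$ at $0$ and $-1$ at $\infty$ on $\mathbb P^1$, which is your chart change $u_i=1/z_i$. It extends $\delta_i$ to $\C[M_I]$ by additivity of $C_I$, and it obtains the multiplier $e^{2\pi i c_i(\alpha)}$ by continuing $\log z_i$ around the loop in the horizontal equation~\eqref{eq:horizontal-equation}.
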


\begin{proof}
The one-form $dz_i/z_i$ is logarithmic on $\mathbb P^1$, with residue $1$ at $0$ and residue $-1$ at $\infty$.  Formula~\eqref{eq:logarithmic-flat-operator} defines~\eqref{eq:extended-log-connection} and gives~\eqref{eq:connection-residues}.  Additivity of $C_I$ extends each $\delta_i$ from $\C[S_I]$ to the group algebra $\C[M_I]$, where $\delta_i(X^\alpha)=c_i(\alpha)X^\alpha$; hence~\eqref{eq:residue-on-character}.  A positive loop about $D_i^0$ changes $\log z_i$ by $2\pi i$.  Equation~\eqref{eq:horizontal-equation} gives the multiplier $e^{2\pi i c_i(\alpha)}$, which is~\eqref{eq:residue-monodromy}.
\end{proof}

The arithmetic monodromy criteria can be stated entirely in terms of residues.
\begin{corollary}\label{cor:residue-arithmetic}
The lattice $M_I^{\mathrm{per}}$ of Theorem~\ref{thm:monodromy-closure} is the integral-residue lattice of the envelope connection:
\begin{equation}\label{eq:integral-residue-lattice}
M_I^{\mathrm{per}}
=\{\alpha\in M_I:c_i(\alpha)\in\Z\text{ for every }i\in I\}.
\end{equation}
Theorem~\ref{thm:monodromy-closure} gives
 $
X^*(D_I)\simeq M_I/M_I^{\mathrm{per}}. $
The envelope monodromy is trivial if and only if the residues on every character are integral, and it has finite image if and only if the residues on a Hilbert basis of $S_I$ are rational.
\end{corollary}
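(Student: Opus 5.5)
The corollary packages earlier results, so the plan is to assemble them in order. First, under the standing assumption that the support weights are linearly independent, I identify $M_I^{\mathrm{per}}$ with the integral-residue lattice. Proposition~\ref{prop:holonomy-arithmetic} already gives
$$
M_I^{\mathrm{per}}=\{\alpha\in M_I:C_I(\alpha)\in\Z^I\}.
$$
By Proposition~\ref{prop:connection-residues}, after localisation to $\C[M_I]$, the residue of $\overline\nabla_I$ along $D_i^0$ (or $D_i^\infty$) acts on $X^\alpha$ by the scalar $\mp c_i(\alpha)$. Integrality of $c_i(\alpha)$ is unaffected by the sign. This gives \eqref{eq:integral-residue-lattice}. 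The isomorphism $X^*(D_I)\simeq M_I/M_I^{\mathrm{per}}$ is then just \eqref{eq:DI-character-group} from Theorem~\ref{thm:monodromy-closure}.

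For the triviality criterion, Theorem~\ref{thm:monodromy-closure} says the monodromy is trivial exactly when $M_I^{\mathrm{per}}=M_I$. By \eqref{eq:integral-residue-lattice}, this holds exactly when $c_i(\alpha)\in\Z$ for every $\alpha\in M_I$ and every $i\in I$, that is, when the residues are integral on every character. Equivalently, one can argue directly from \eqref{eq:residue-monodromy}: the local monodromies about the $D_i^0$ are the images of the basis $2\pi i e_i$ of $\Gamma_I$, and they act on $X^\alpha$ by $e^{2\pi i c_i(\alpha)}$.

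For finiteness I use Proposition~\ref{prop:holonomy-arithmetic}, which states that the image is finite iff every $c_{ia}=c_i(\beta^a)$ is rational for a Hilbert basis $\beta^1,\ldots,\beta^N$. Since $c_i(\beta^a)$ is, up to sign, the residue eigenvalue on $X^{\beta^a}$, this is the stated criterion. As a cross-check, rationality on a Hilbert basis propagates to all of $M_I$ by additivity of $C_I$, since the $\beta^a$ generate $M_I$ as a group. Hence $M_I^{\mathrm{per}}$ contains $m\,M_I$, where $m$ is a common denominator, and so has finite index, consistent with Theorem~\ref{thm:monodromy-closure}.

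The only step needing care is the bookkeeping between $S_I$ and its localisation $\C[M_I]$: the residues $\delta_i$ are defined on $\C[S_I]$ but are evaluated on characters in $M_I$. This is handled by additivity of $C_I$, as in the proof of Proposition~\ref{prop:connection-residues}. I do not expect a real obstacle.
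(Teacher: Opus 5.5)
Your proposal is correct and follows essentially the same route as the paper. You read off the residue eigenvalues $\mp c_i(\alpha)$ from Proposition~\ref{prop:connection-residues}, take the lattice description of $M_I^{\mathrm{per}}$ and the rationality criterion from Proposition~\ref{prop:holonomy-arithmetic}, and obtain the character group and the triviality criterion from Theorem~\ref{thm:monodromy-closure}; your finite-index cross-check ($mM_I\subset M_I^{\mathrm{per}}$ for a common denominator $m$) is a correct extra consistency remark that the argument does not need.
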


\begin{proof}
Equation~\eqref{eq:residue-on-character} identifies the residue eigenvalues, up to the fixed sign convention, with the coefficients $c_i(\alpha)$.  By Proposition~\ref{prop:holonomy-arithmetic}, the intrinsic condition $\nu_\alpha(\Gamma_I)\subset2\pi i\Z$ is equivalent, in these support coordinates, to $c_i(\alpha)\in\Z$ for every $i$.  Hence~\eqref{eq:integral-residue-lattice} agrees with the definition of $M_I^{\mathrm{per}}$.  Theorem~\ref{thm:monodromy-closure}, Proposition~\ref{prop:holonomy-arithmetic} and~\eqref{eq:connection-residues} give the remaining assertions.
\end{proof}

A finite coordinate covering trivialises the connection precisely when the relevant residues are rational.
\begin{corollary}\label{cor:finite-cover}
The following conditions are equivalent.
\begin{enumerate}[(i)]
\item The image of $\rho_I$ is finite.
\item Every coefficient $c_{ia}$ of Proposition~\ref{prop:holonomy-arithmetic} is rational.
\item There are positive integers $N_i$, for $i\in I$, such that the pullback of $\nabla_I$ by the finite covering
\begin{equation}\label{eq:coordinate-cover}
\varpi_N(w)=(w_i^{N_i})_{i\in I},
\end{equation}
is gauge-equivalent, as a connection on the envelope algebra, to the trivial connection.
\end{enumerate}
The covering~\eqref{eq:coordinate-cover} is \'etale on $(\C^*)^I$ and extends to a finite map $(\mathbb P^1)^I\to(\mathbb P^1)^I$ ramified only over $\partial\overline L_I$.
\end{corollary}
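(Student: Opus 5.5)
The plan is to go around the cycle (i)$\Rightarrow$(ii)$\Rightarrow$(iii)$\Rightarrow$(i), and then treat the statement about the covering separately.

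\emph{(i)$\Leftrightarrow$(ii).} This is already the last clause of Proposition~\ref{prop:holonomy-arithmetic}. The period $2\pi i e_i$ acts on $X^{\beta^a}$ by $\exp(2\pi i c_{ia})$. The Hilbert basis monomials generate $\C[S_I]$, so the image of $\rho_I$ is finite exactly when all of these multipliers are roots of unity. That happens exactly when every $c_{ia}$ is rational.

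\emph{(ii)$\Rightarrow$(iii).} Choose $N_i$ to be a common denominator of the $c_{ia}$, $a=1,\ldots,N$. By additivity, $c_i(\alpha)N_i\in\Z$ for every $\alpha\in M_I$. Pulling back by $\varpi_N$ gives $\varpi_N^*(dz_i/z_i)=N_i\,dw_i/w_i$, so
$$
\varpi_N^*\nabla_I=d-\sum_{i\in I}N_i\frac{dw_i}{w_i}\,\delta_i .
$$
On the summand $\cO X^\alpha$ this connection acts by $d-\sum_i N_ic_i(\alpha)\,dw_i/w_i$. Take the gauge transformation $g(X^\alpha)=w^{-N c(\alpha)}X^\alpha$, where $w^{Nc(\alpha)}=\prod_i w_i^{N_ic_i(\alpha)}$ is a genuine Laurent monomial on $(\C^*)^I$ because the exponents are integers. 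Conjugating by $g$ kills the connection form, so the pullback is gauge-equivalent to $d$. The map $g$ is multiplicative in $\alpha$, since $\alpha\mapsto N_ic_i(\alpha)$ is additive. Hence $g$ is an $\cO$-algebra automorphism of $\cO_{(\C^*)^I}\otimes\C[S_I]$, as the phrase ``as a connection on the envelope algebra'' requires.

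\emph{(iii)$\Rightarrow$(i).} A gauge transformation preserves monodromy up to conjugacy, so $\varpi_N^*\nabla_I$ has trivial monodromy. The covering $\varpi_N$ is a finite Galois étale cover of $(\C^*)^I$ whose image on $\pi_1$ is the subgroup $\prod_i N_i\Z$, which has finite index. The monodromy of the pullback is the restriction of $\rho_I$ to this subgroup. Since $\rho_I$ kills a finite-index subgroup, its image is finite.

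\emph{The covering.} The map $w_i\mapsto w_i^{N_i}$ is a local biholomorphism on $\C^*$ and extends to $\mathbb P^1\to\mathbb P^1$ with ramification only at $0$ and $\infty$. Taking the product over $i\in I$ gives the stated finite map, ramified only over $\partial\overline L_I$.

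\emph{Main obstacle.} The only delicate point is (iii)$\Rightarrow$(i). There one has to say precisely what ``gauge-equivalent'' means: the gauge transformation should be an $\cO$-linear automorphism of the pulled-back module, so that horizontal sections correspond and monodromy is preserved. One also has to identify the pulled-back monodromy with $\rho_I|_{\prod N_i\Z}$, which follows from functoriality of parallel transport under coverings.
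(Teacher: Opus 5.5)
Your proof is correct and follows the paper's route. The paper also reads (i)$\Leftrightarrow$(ii) off Proposition~\ref{prop:holonomy-arithmetic}, and for (ii)$\Rightarrow$(iii) it uses the same Laurent-monomial gauge transformation $fX^\alpha\mapsto f\,w^{-NC_I(\alpha)}X^\alpha$, an algebra automorphism because the scaled residues $N_ic_i(\alpha)$ are additive and integral. For (iii)$\Rightarrow$(i) and the extension of the covering to $(\mathbb P^1)^I$ it argues exactly as you do, via the finite-index subgroup $\prod_iN_i\Z\subset\ker\rho_I$.
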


\begin{proof}
The equivalence of (i) and (ii) is Proposition~\ref{prop:holonomy-arithmetic}.  Assume (ii), and choose $N_i$ so that $N_i c_i(\beta^a)\in\Z$ for every element $\beta^a$ of a Hilbert basis of $S_I$.  Additivity gives $N_i c_i(\alpha)\in\Z$ for every $\alpha\in S_I$.  Pulling back~\eqref{eq:logarithmic-flat-operator} gives
\begin{equation*}
\varpi_N^*\nabla_I
=d-\sum_{i\in I}N_i\frac{dw_i}{w_i}\,\delta_i.
\end{equation*}
For $\alpha\in S_I$, put $w^{NC_I(\alpha)}=\prod_{i\in I}w_i^{N_i c_i(\alpha)}$.  The rule $G_N(fX^\alpha)=f\,w^{-NC_I(\alpha)}X^\alpha$ defines an automorphism of the pulled-back $\cO$-algebra because $C_I$ is additive and all exponents are integral.  For a local function $f$, the gauge transformation gives
\begin{align*}
G_N^{-1}(fX^\alpha)&=f\,w^{NC_I(\alpha)}X^\alpha,\\
\varpi_N^*\nabla_I\bigl(f\,w^{NC_I(\alpha)}X^\alpha\bigr)&=w^{NC_I(\alpha)}df\,X^\alpha.
\end{align*}
Applying $G_N$ gives
\begin{equation*}
G_N\circ\varpi_N^*\nabla_I\circ G_N^{-1}(fX^\alpha)=df\,X^\alpha.
\end{equation*}
Therefore $G_N\circ\varpi_N^*\nabla_I\circ G_N^{-1}=d$, so (iii) holds. 
If (iii) holds, then the pulled-back connection has trivial monodromy.  The subgroup of $\Gamma_I$ corresponding to~\eqref{eq:coordinate-cover} has finite index and lies in $\ker\rho_I$.  Thus $\rho_I(\Gamma_I)$ is finite.  The identity $z_i=w_i^{N_i}$ gives the stated description of the compactified cover.
\end{proof}

\subsection{Residual determination by the labelled envelope}\label{subsec:exact-envelope-information}

Fix an arbitrary support $I$.  The coordinate-labelled toric factor $Q_I$ determines the semigroup $S_I$ of non-negative integral residual relations, but need not determine the full complex relation space.  Put $J=I^c$, $d_I=\dim\h_I=k-r_I$, $M_I=\operatorname{gp}(S_I)$, $R_I=M_I\otimes_{\Z}\C$ and
$$
\delta_I=|J|-d_I-\dim Q_I.
$$
Thus $|J|-d_I=\dim\ker\mu_I=n-k-(|I|-r_I)$ for the residual weight map $\mu_I:\C^J\to\h_I^*$.  Two spanning labelled residual configurations of dimension $d_I$ are linearly equivalent when a linear isomorphism carries each labelled weight to the corresponding one.

Let $N_I=\ker(\mu_I)\cap\Z^J$ be the lattice of integral residual relations, so $S_I=N_I\cap\N^J$.  Lemma~\ref{lem:L-saturated} shows that $N_I$ is saturated.  If $S_I\ne\{0\}$, choose $\eta\in S_I$ in the relative interior of the rational cone $(N_I\otimes\R)\cap\R_{\ge0}^J$.  For every $a\in N_I\cap\Span_\R(S_I)$, both $a+N\eta$ and $N\eta$ belong to $S_I$ for all sufficiently large $N$, and hence $a\in M_I$.  If $S_I=\{0\}$ there is nothing to prove.  Since $N_I$ is saturated, one also has $N_I=(N_I\otimes_\Z\R)\cap\Z^J$, and therefore
$$
\begin{aligned}
M_I&=(M_I\otimes_\Z\R)\cap\Z^J,\\
R_I\cap\N^J&=S_I.
\end{aligned}
$$
The second equality follows because an integral vector in the complex span $R_I$ lies in the real span of $M_I$.

\begin{theorem}\label{thm:exact-envelope-information}
Fix $I$, $d_I$ and a semigroup $S_I\subset\N^J$ arising from a spanning labelled residual configuration.  The linear-equivalence classes of spanning labelled $d_I$-dimensional configurations whose non-negative resonance semigroup is exactly $S_I$ are naturally parametrised by a dense $G_\delta$ subset of
$
\operatorname{Gr}_{\delta_I}(\C^J/R_I).
$
The complement is a countable union of proper closed incidence subvarieties, and the Grassmannian has complex dimension $\delta_I d_I$.  Hence the labelled envelope determines the residual configuration among all spanning $d_I$-dimensional configurations if and only if $\delta_I=0$.  If $\delta_I>0$, uncountably many pairwise non-linearly-equivalent configurations have the same labelled envelope.

For residual configurations of uniform maximal rank, finitely many additional proper incidence subvarieties are removed.  The resulting locus, when non-empty, is again a dense $G_\delta$ subset.  If $r_I=|I|$, then $d_I=k-|I|$ and $\delta_I=n-k-\dim Q_I$, giving the parametrisation stated in Theorem~B.  When $I\ne\varnothing$, every configuration in this uniform maximal-rank locus is realised inside an ambient Poincar\'e-domain configuration of uniform maximal rank.
\end{theorem}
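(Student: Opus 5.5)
The plan is to translate a configuration into its relation space and reduce everything to linear algebra in $\C^J$. A spanning labelled configuration $(\mu_j)_{j\in J}$ in $\h_I^*\simeq\C^{d_I}$ is determined up to linear equivalence by the kernel $W=\ker(\mu_I:\C^J\to\h_I^*)$. Indeed, $\mu_I$ is surjective, so it identifies $\C^J/W$ with $\h_I^*$ and sends $e_j$ to $\mu_j$. Two configurations are therefore linearly equivalent if and only if their kernels coincide. This gives a bijection between equivalence classes and the Grassmannian $\operatorname{Gr}_{|J|-d_I}(\C^J)$ of codimension-$d_I$ subspaces.

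Next I will express the semigroup in terms of $W$. The non-negative resonance semigroup is $W\cap\N^J$. Its integral lattice $W\cap\Z^J$ is saturated, and by the paragraph preceding the statement the group $M=\operatorname{gp}(W\cap\N^J)$ satisfies $M\otimes\C\cap\N^J=W\cap\N^J$. So the condition that the semigroup equal $S_I$ splits into two parts:
\begin{enumerate}[(a)]
\item $R_I\subset W$;
\item $W$ contains no point of $\N^J\setminus S_I$.
\end{enumerate}
Condition (a) is closed. The subspaces satisfying it are exactly the preimages of $(|J|-d_I-\dim Q_I)$-dimensional, i.e.\ $\delta_I$-dimensional, subspaces $\overline W\subset\C^J/R_I$. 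This identifies the candidates with $\operatorname{Gr}_{\delta_I}(\C^J/R_I)$, whose dimension is $\delta_I\bigl((|J|-\dim Q_I)-\delta_I\bigr)=\delta_I d_I$. Condition (b) removes, for each $\alpha\in\N^J\setminus S_I$, the Schubert-type incidence variety $\{\overline W\ni[\alpha]\}$. Since $\alpha\notin R_I$ (using $R_I\cap\N^J=S_I$), the class $[\alpha]$ is non-zero, so each such variety is closed and proper whenever $\delta_I<|J|-\dim Q_I$, i.e.\ whenever $d_I>0$. These are countably many, and Baire's theorem gives a dense $G_\delta$ complement; it is non-empty because the given configuration lies in it.

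When $\delta_I=0$ the Grassmannian is a point, so the envelope determines the configuration. When $\delta_I>0$ the parameter space has positive dimension, so a dense $G_\delta$ subset of it is uncountable, again by Baire. There is one subtle case: I must check that the case $d_I=0$ forces $\delta_I=0$. This also takes care of the genuinely delicate point that "fixed labelled envelope" is the same as "fixed $S_I$". The latter equivalence holds because the coordinate-labelled envelope records exactly which monomials are invariant.

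For uniform maximal rank, dependence of a subset $K$ of size at most $d_I$ means that $\{\mu_j\}_{j\in K}$ is dependent, i.e.\ $W+\C^{J\setminus K}\ne\C^J$. For each of the finitely many $K$ this is a closed incidence condition in $\operatorname{Gr}_{\delta_I}(\C^J/R_I)$. Intersecting with a dense open set keeps the locus a dense $G_\delta$ whenever it is non-empty. The identifications $d_I=k-|I|$ and $\delta_I=n-k-\dim Q_I$ when $r_I=|I|$ are then bookkeeping. The realisation statement follows from Theorem~\ref{thm:higher-realisation} applied with $m=|I|\ge1$.

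I expect the main obstacle to be the correct parametrisation step, namely that the semigroup condition is precisely $R_I\subset W$ together with avoidance of the classes $[\alpha]$. This requires the identity $M=(W\cap\Z^J)\cap\Span_\R(\text{cone})$ and hence the preceding-paragraph identity $R_I\cap\N^J=S_I$. Properness of each incidence variety also needs care.
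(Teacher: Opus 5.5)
Your argument is essentially the paper's. You parametrise configurations by the kernel $W=\ker\mu$, note that the semigroup equals $S_I$ exactly when $W\supset R_I$ and $W$ avoids the classes $\bar\alpha$ for $\alpha\in\N^J\setminus S_I$, and conclude by Baire on $\operatorname{Gr}_{\delta_I}(\C^J/R_I)$. You leave the case $d_I=0$ as "I must check". It is immediate: $d_I=0$ forces $W=\C^J$, hence $R_I=\C^J$, $S_I=\N^J$ and $\delta_I=0$. With that filled in, it guarantees $\delta_I>0\Rightarrow d_I>0$, a point the paper leaves implicit.

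There is, however, a slip in the uniform-maximal-rank step. The weights $\{\mu_j\}_{j\in K}$ are dependent if and only if $W\cap\C^K\neq0$, where $\C^K=\Span\{e_j:j\in K\}$; this is the paper's condition $K\cap E_A\neq0$. Your condition $W+\C^{J\setminus K}\neq\C^J$ says instead that the complementary weights $\{\mu_j\}_{j\notin K}$ fail to span $\h_I^*$. That is the right dependence test for the row space $W^\perp=\operatorname{im}(\h_I\to\C^J)$, not for the kernel. For example, take $d_I=1$ and $\mu=(0,1)$. Then $W=\C e_1$ and $W+\C e_2=\C^2$, so your test declares $\{\mu_1\}$ independent although $\mu_1=0$.

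With the corrected condition, each locus is again a closed Schubert-type incidence subvariety. The paper notes that non-emptiness forces $R_I\cap\C^K=0$, so the condition becomes $P\cap\overline{\C^K}\neq0$ in $\C^J/R_I$. Your phrase "intersecting with a dense open set" also needs one line of justification. The density of that open set comes from properness of these finitely many loci, which follows from the assumed non-emptiness together with irreducibility of the Grassmannian, exactly as in the paper.
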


\begin{proof}
Let $\mu:\C^J\to V$ be any spanning labelled configuration of dimension $d_I$ with semigroup $S_I$, and put $K_0=\ker\mu$.  Then $\dim K_0=|J|-d_I$, $R_I\subset K_0$, and $R_I\cap\N^J=S_I$.  Since $\dim R_I=\dim Q_I$, the quotient $K_0/R_I$ has dimension $\delta_I$. 
Conversely, let $K\subset\C^J$ contain $R_I$, have dimension $|J|-d_I$, and set $W_K=\C^J/K$.  The labelled classes $e_j+K$ span $W_K$, every spanning labelled $d_I$-dimensional configuration arises in this way, and two such configurations are linearly equivalent exactly when their kernels agree.  Thus the possible kernels are parametrised by $\operatorname{Gr}_{\delta_I}(\C^J/R_I)$.
The configuration defined by $K$ has semigroup $S_I$ exactly when $K\cap\N^J=S_I$.  Put $E=\C^J/R_I$.  For $\alpha\in\N^J\setminus S_I$, let $\bar\alpha\ne0$ be its image in $E$ and set
$$
\Sigma_\alpha=\{P\in\operatorname{Gr}_{\delta_I}(E):\bar\alpha\in P\}.
$$
If $\delta_I=0$, this set is empty.  If $\delta_I>0$, it is the standard closed incidence, or Schubert, subvariety isomorphic to $\operatorname{Gr}_{\delta_I-1}(E/\C\bar\alpha)$; in particular it is proper.  Thus preserving the semigroup is exactly the condition of avoiding the countable union $\bigcup_{\alpha\notin S_I}\Sigma_\alpha$.  The Grassmannian is projective, hence a compact metric space and therefore a Baire space, while every proper complex subvariety has empty interior.  The complement is consequently a dense $G_\delta$ subset.  Since
$
\dim E=d_I+\delta_I,
$
the Grassmannian has dimension $\delta_I d_I$.  If this dimension is positive, it has no isolated points and the complement of a countable union of nowhere dense subsets is uncountable.  This proves the assertions for $\delta_I=0$ and $\delta_I>0$.

Uniform maximal rank imposes the additional conditions $K\cap E_A=0$, where $E_A=\Span\{e_j:j\in A\}$ and $A\subset J$ satisfies $1\le |A|\le d_I$.  Dependence of the corresponding residual weights is equivalent to $K\cap E_A\ne0$.  If the uniform maximal-rank locus is non-empty, then $R_I\cap E_A=0$, and after passage to $E$ the condition becomes a standard Schubert incidence condition $P\cap\overline E_A\ne0$.  It is proper because some point of the assumed non-empty uniform maximal-rank locus avoids it.  Removing the finitely many such loci gives the stated subfamily.  If $r_I=|I|$, then $|J|-d_I=n-k$, so $\delta_I=n-k-\dim Q_I$.  The realisation assertion follows from Theorem~\ref{thm:higher-realisation}.
\end{proof}

The quantity $\delta_I d_I$ is the dimension of the family of residual configurations not distinguished by the labelled first-integral algebra.  Uniform maximal rank selects a general-position locus inside this Grassmannian family.

\begin{corollary}\label{cor:maximal-exact-recovery}
Assume $r_I=|I|=k-1$.  Then $d_I=1$.  The residual configurations with the same coordinate-labelled envelope are parametrised by a dense $G_\delta$ subset of $\mathbb P^{\delta_I}$, where $\delta_I=n-k-\dim Q_I$.  Thus the labelled envelope determines the projective residual weights if and only if $\dim Q_I=n-k$.
\end{corollary}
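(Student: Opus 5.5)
This is a direct specialisation of Theorem~\ref{thm:exact-envelope-information}, so the plan is to check its hypotheses and read off the numbers.

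First, from $r_I=|I|=k-1$ we get $d_I=\dim\h_I=k-r_I=1$. Since $r_I=|I|$, the zero-foliation factor $\C^{|I|-r_I}$ is trivial. Hence the transverse envelope at support $I$ is exactly $Q_I$, by Corollary~\ref{cor:full-transverse-envelope}. We also have $|J|-d_I=(n-k+1)-1=n-k$, so
\[
\delta_I=|J|-d_I-\dim Q_I=n-k-\dim Q_I,
\]
as stated.

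Next, apply Theorem~\ref{thm:exact-envelope-information} with $d_I=1$. It says that the linear-equivalence classes of spanning labelled one-dimensional residual configurations with semigroup $S_I$ form a dense $G_\delta$ subset of $\operatorname{Gr}_{\delta_I}(\C^J/R_I)$. Here $\dim(\C^J/R_I)=d_I+\delta_I=\delta_I+1$. So this Grassmannian consists of hyperplanes in a $(\delta_I+1)$-dimensional space, and duality identifies it with $\mathbb P^{\delta_I}$. Its dimension is $\delta_I d_I=\delta_I$, consistent with the theorem.

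The step that needs care is interpreting "linear equivalence" in dimension one as "projective residual weights". A labelled configuration $(\mu_j)_{j\in J}$ in $(\C^1)^*\simeq\C$ is a vector in $\C^J$. A linear isomorphism of $\h_I\simeq\C$ is multiplication by a nonzero scalar. So the equivalence classes are exactly the points $[\mu_j]_{j\in J}$ of $\mathbb P(\C^J)$, which are the projective residual weights; they are well defined since the configuration spans, so not all $\mu_j$ vanish. Under this identification the kernel $K=\ker\mu$ is the hyperplane annihilated by the vector $\mu$. The constraint $K\supset R_I$ says $\mu$ lies in the annihilator $R_I^\perp\simeq(\C^J/R_I)^*$, whose projectivisation is $\mathbb P^{\delta_I}$. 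This is consistent with the Grassmannian description.

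Finally, the determination criterion in Theorem~\ref{thm:exact-envelope-information} is $\delta_I=0$. This is the case exactly when $\mathbb P^{\delta_I}$ is a point, that is, when $\dim Q_I=n-k$. This also matches the "full resonance rank" terminology.

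If one wants the uniform-maximal-rank version, note that for $d_I=1$ uniform maximal rank just means every $\mu_j\neq0$. That removes only the finitely many coordinate hyperplanes $\{\mu_j=0\}$, so the locus is still dense $G_\delta$ whenever it is non-empty. I expect no real obstacle here.
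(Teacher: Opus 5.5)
Your proposal is correct and follows the paper's own argument. The paper likewise specialises Theorem~\ref{thm:exact-envelope-information} to $d_I=1$, notes that $\dim(\C^J/R_I)=1+\delta_I$ so that $\operatorname{Gr}_{\delta_I}(\C^J/R_I)\simeq\mathbb P^{\delta_I}$, and identifies linear equivalence of one-dimensional configurations with common non-zero scaling; your description of $K\supset R_I$ as $\mu\in R_I^\perp\simeq(\C^J/R_I)^*$ simply makes the duality identification explicit.
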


\begin{proof}
For a maximal support, $d_I=1$, so $\C^J/R_I$ has dimension $1+\delta_I$ and
 $
\operatorname{Gr}_{\delta_I}(\C^J/R_I)\simeq\mathbb P^{\delta_I}.$
Linear equivalence of one-dimensional residual configurations is common non-zero scaling.
\end{proof}

\subsection{Classical transverse residues of the original foliation}\label{subsec:original-transverse-residues}

The residue comparison is local along the singular component.  The following local conditions ensure that $Z_I$ is a singular component of the expected codimension.

\begin{lemma}\label{lem:maximal-singular-component}
Let $I$ satisfy $|I|=k-1$.  Assume that the weights $\{\Lambda_i:i\in I\}$ are linearly independent and that $\Lambda_j|_{\h_I}\ne0$ for every $j\notin I$.  Put $J=I^c$ and $q=n-k$.  Then
$
Z_I=\{z_j=0:j\in J\}
$
is an irreducible component of $\operatorname{Sing}(\cF)$ of codimension $q+1$.  At every point of its dense stratum with support exactly $I$, a transversal is $\C^J$ and the induced foliation is the residual one-dimensional diagonal foliation of $\h_I$ with non-zero weights.
\end{lemma}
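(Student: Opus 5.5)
The plan is to compute $\operatorname{Sing}(\cF)$ directly from Proposition~\ref{prop:leaf-dim}, and then read off the transversal from Theorem~\ref{thm:residual-model}.

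First, by Proposition~\ref{prop:leaf-dim}, a point with support $K$ is singular exactly when $r_K<k$. Hence $\operatorname{Sing}(\cF)$ is the union of the coordinate subspaces $\{z_j=0:j\notin K\}$ taken over those $K$ with $r_K<k$. This is a finite union of linear subspaces, so its irreducible components are the maximal members of this family.

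Next, show that $Z_I$ is maximal. We have $r_I=|I|=k-1<k$, so $Z_I\subset\operatorname{Sing}(\cF)$. Suppose a coordinate subspace $Z_K$ with $K\supsetneq I$ and $r_K<k$ contained $Z_I$. Pick $j\in K\setminus I$. Since $\Lambda_j|_{\h_I}\ne0$, the weight $\Lambda_j$ does not lie in $\Span\{\Lambda_i:i\in I\}$. Indeed, that span is exactly the annihilator of $\h_I$. It follows that $r_K\ge r_{I\cup\{j\}}=k$, a contradiction. So $Z_I$ is an irreducible component. Its codimension is $|J|=n-(k-1)=q+1$.

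For the second assertion, let $p$ have support exactly $I$; such points are dense in $Z_I$. Independence of the support weights makes $B=I$ admissible in Lemma~\ref{lem:slice}. Then $\Sigma_I\simeq\C^J$ and the zero-foliation factor has dimension $|I|-r_I=0$. Theorem~\ref{thm:residual-model} identifies the induced foliation with the diagonal action of $\h_I$ on $\C^J$ with weights $\mu_j=\Lambda_j|_{\h_I}$. Here $\dim\h_I=k-r_I=1$, and each $\mu_j$ is non-zero by hypothesis.

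The only step requiring care is the maximality argument. Specifically, one must justify that $\Span\{\Lambda_i:i\in I\}$ is precisely the annihilator of $\h_I$; this holds because $\h_I$ is defined as the common kernel of these weights. Everything else is a direct citation.
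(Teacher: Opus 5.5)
Your proposal is correct and follows essentially the same route as the paper: describe $\operatorname{Sing}(\cF)$ as the finite union of coordinate subspaces attached to rank-deficient supports, show $I$ is maximal because $\Lambda_j\notin\Span\{\Lambda_i:i\in I\}$ forces $r_{I\cup\{j\}}=k$, read off the codimension $|J|=q+1$, and apply Theorem~\ref{thm:residual-model} with $B=I$. For the maximality step you only need the trivial inclusion of $\Span\{\Lambda_i:i\in I\}$ in the annihilator of $\h_I$, so the point you flag as delicate holds directly from the definition of $\h_I$.
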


\begin{proof}
The support weights have rank $k-1$, hence every point with support contained in $I$ is singular and $Z_I\subset\operatorname{Sing}(\cF)$.  Since $\h_I$ is one-dimensional, the condition $\Lambda_j|_{\h_I}\ne0$ is equivalent to $\Lambda_j\notin\Span\{\Lambda_i:i\in I\}$.  Thus $I\cup\{j\}$ has rank $k$ for every $j\in J$ 
For a diagonal action, a point is singular exactly when the weights indexed by its support have rank less than $k$.  Hence the reduced singular locus is the finite union of the coordinate subspaces corresponding to maximal rank-deficient supports.  The preceding rank calculation shows that $I$ is maximal among such supports, so $Z_I$ is an irreducible component.  Its codimension is $|J|=n-k+1=q+1$.  Theorem~\ref{thm:residual-model}, with $B=I$, gives the transverse action of the one-dimensional space $\h_I$ on $\C^J$; the residual weights are non-zero by hypothesis.
\end{proof}

Theorem~\ref{thm:exact-envelope-information} identifies when the first-integral quotient determines the projective eigenvalues of this transverse singularity.  These projective eigenvalues enter directly into the Baum--Bott and Camacho--Sad invariants.  Full resonance rank determines every Baum--Bott coefficient considered here; in transverse dimension two the Camacho--Sad criterion is also necessary.  In higher transverse dimension an individual symmetric Baum--Bott residue can be coarser than the projective weights.

Let $I$ satisfy the hypotheses of Lemma~\ref{lem:maximal-singular-component}, put $J=I^c$ and $q=n-k$, and choose $0\ne\xi_I\in\h_I$.  Set $\lambda_j=\Lambda_j(\xi_I)$ for $j\in J$.  Since $\dim\h_I=1$, Theorem~\ref{thm:residual-model} gives the transverse field
\begin{equation}\label{eq:maximal-transverse-field}
Y_I=\sum_{j\in J}\lambda_j w_j\frac{\partial}{\partial w_j}.
\end{equation}
Every $\lambda_j$ is non-zero, so the transverse singularity is isolated.  At a generic point of $Z_I$, Baum--Bott localisation identifies the coefficient of $Z_I$ with the Grothendieck residue of this transverse one-dimensional foliation; it is independent of the generic point and of the chosen transversal~\cite{BaumBott,CorreaLourenco}.  Denote it by $\operatorname{BB}_{\varphi}(\cF;Z_I)$.

\begin{theorem}\label{thm:original-transverse-residues}
Let $I$ satisfy the hypotheses of Lemma~\ref{lem:maximal-singular-component}, and suppose $q=n-k\ge1$.  For every homogeneous symmetric polynomial $\varphi$ of degree $q+1$,
\begin{equation}\label{eq:original-transverse-BB}
\operatorname{BB}_{\varphi}(\cF;Z_I)
=\frac{\varphi((\lambda_j)_{j\in J})}{\prod_{j\in J}\lambda_j}.
\end{equation}
The number in~\eqref{eq:original-transverse-BB} is independent of the choice of $\xi_I$.  If $I$ has full resonance rank, that is, $\dim Q_I=q$, then the coordinate-labelled quotient map $q_I:(\C^J,0)\to(Q_I,o_I)$ determines~\eqref{eq:original-transverse-BB} for every $\varphi$.

If $q=1$, write $J=\{j_1,j_2\}$ and use coordinates $x=w_{j_1}$ and $y=w_{j_2}$.  The coordinate axes are separatrices of the transverse foliation, and
\begin{equation}\label{eq:original-transverse-CS}
\begin{aligned}
\operatorname{Ind}_{\mathrm{CS}}(\cF_I,\{y=0\};0)&=\frac{\lambda_{j_2}}{\lambda_{j_1}},\\
\operatorname{Ind}_{\mathrm{CS}}(\cF_I,\{x=0\};0)&=\frac{\lambda_{j_1}}{\lambda_{j_2}},
\end{aligned}
\end{equation}
where $\cF_I$ denotes the residual transverse foliation.  At full resonance rank these two Camacho--Sad indices are determined by the coordinate-labelled envelope.
\end{theorem}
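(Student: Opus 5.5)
The proof has three parts: the Baum--Bott formula, its determination at full resonance rank, and the Camacho--Sad computation.

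\emph{Baum--Bott formula.} By the localisation statement recalled before the theorem, $\operatorname{BB}_\varphi(\cF;Z_I)$ equals the Grothendieck residue of the transverse one-dimensional foliation $Y_I$ of \eqref{eq:maximal-transverse-field} at $0\in\C^J$. Here $|J|=q+1$ and $\deg\varphi=q+1=\dim\C^J$. For a vector field with an isolated zero, this residue is
\[
\operatorname{Res}_0\left[\frac{\varphi(DY_I)\,dw}{(\lambda_jw_j)_{j\in J}}\right].
\]
The linear part is $DY_I=\operatorname{diag}(\lambda_j)$, so $\varphi(DY_I)=\varphi((\lambda_j))$ is a constant. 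The residue of a constant $c$ over the regular sequence $(\lambda_jw_j)$ is $c/\prod_j\lambda_j$, because $\operatorname{Res}[dw/(w_j)]=1$ and the Jacobian of the rescaling is $\prod_j\lambda_j$. This gives \eqref{eq:original-transverse-BB}. Replacing $\xi_I$ by $c\xi_I$ multiplies the numerator by $c^{q+1}$ and the denominator by $c^{q+1}$, since $\varphi$ is homogeneous of degree $q+1$ and $|J|=q+1$. Hence the value is independent of $\xi_I$.

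\emph{Determination at full resonance rank.} Assume $\dim Q_I=q$. The labelled quotient map $q_I$ determines the pullback algebra, that is, the set of monomials $w^\alpha$ occurring as first integrals. By Proposition~\ref{prop:first-integrals} this recovers $S_I\subset\N^J$, hence $M_I$ and $R_I=M_I\otimes\C$. Here $r_I=|I|$ and $d_I=1$, so $\delta_I=n-k-\dim Q_I=0$. Then $R_I$ has dimension $q=|J|-1$ and is contained in $\ker\mu_I$, which has the same dimension; thus $R_I=\ker\mu_I$. Corollary~\ref{cor:maximal-exact-recovery} (equivalently Theorem~\ref{thm:exact-envelope-information} with $\delta_I=0$) then determines $(\lambda_j)_{j\in J}$ up to a common non-zero scalar, as the point of $\mathbb P(\C^J/R_I)^*$ annihilating $R_I$. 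Since \eqref{eq:original-transverse-BB} is invariant under common scaling, it is determined.

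\emph{Camacho--Sad indices.} For $q=1$, the transverse field is $\lambda_{j_1}x\partial_x+\lambda_{j_2}y\partial_y$, and the axes are invariant curves. Along $\{y=0\}$, the standard formula for a separatrix of $\omega=\lambda_{j_1}x\,dy-\lambda_{j_2}y\,dx$ gives $\operatorname{Ind}_{\mathrm{CS}}=\operatorname{Res}_{x=0}(\lambda_{j_2}/(\lambda_{j_1}x))\,dx=\lambda_{j_2}/\lambda_{j_1}$. The index along $\{x=0\}$ is obtained symmetrically. Both indices are ratios of the $\lambda_j$, so at full resonance rank they are determined by the previous paragraph. The only point requiring care is the sign and normalisation convention for the Camacho--Sad index; I would fix it using the form $\omega$ dual to $Y_I$, and check it on the linear model.
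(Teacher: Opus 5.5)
Your proposal is correct and follows essentially the same route as the paper: you localise to the transverse diagonal field $Y_I$, evaluate the diagonal Baum--Bott residue, use homogeneity of degree $q+1=|J|$ for independence of $\xi_I$, invoke Corollary~\ref{cor:maximal-exact-recovery} to recover the projective weights at full resonance rank, and compute the Camacho--Sad indices of the coordinate separatrices. The only difference is that you evaluate the Grothendieck residue and the Camacho--Sad integral directly, via the dual $1$-form $\omega$, whereas the paper quotes the diagonal formula and the normal-bundle residue formula of Brasselet--Seade--Suwa; both give the same values.
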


\begin{proof}
By Lemma~\ref{lem:maximal-singular-component} and Theorem~\ref{thm:residual-model}, the foliation induced on a transversal to $Z_I$ is generated by~\eqref{eq:maximal-transverse-field}.  Since all $\lambda_j$ are non-zero, its singular set is the origin.  The diagonal formula~\cite[Example~6.1.1]{BSS}, together with the identification of vector-field and one-dimensional foliation residues~\cite[Remark~6.2.2]{BSS}, then gives~\eqref{eq:original-transverse-BB}.  Replacing $\xi_I$ by $c\xi_I$, with $c\in\C^*$, replaces every $\lambda_j$ by $c\lambda_j$.  Since $|J|=q+1$ and $\varphi$ has degree $q+1$, numerator and denominator in~\eqref{eq:original-transverse-BB} are both multiplied by $c^{q+1}$.
If $\dim Q_I=q$, Corollary~\ref{cor:maximal-exact-recovery} shows that the coordinate-labelled envelope determines the residual weights $(\lambda_j)_{j\in J}$ up to common non-zero scaling.  Formula~\eqref{eq:original-transverse-BB} is invariant under that scaling, so the labelled quotient determines it.

Suppose finally that $q=1$.  For the separatrix $\{y=0\}$, the defining function is $y$ and $Y_I(y)=\lambda_{j_2}y$, while the induced vector field on the curve is $\lambda_{j_1}x\partial/\partial x$.  The normal-bundle residue formula~\cite[Theorem~6.3.8]{BSS} gives the Grothendieck residue $\lambda_{j_2}/\lambda_{j_1}$; for a smooth invariant curve in a surface this is the Camacho--Sad index~\cite{CamachoSad} (see also~\cite[Remark~6.3.3]{BSS}).  The second formula is symmetric.  Since both ratios are invariant under common scaling, the full-resonance labelled envelope determines them.
\end{proof}

\begin{corollary}\label{cor:exact-CS-recovery}
Let $I$ satisfy the hypotheses of Lemma~\ref{lem:maximal-singular-component}, and suppose $n-k=1$.  The coordinate-labelled envelope determines the two Camacho--Sad indices in~\eqref{eq:original-transverse-CS} if and only if $\dim Q_I=1$.
\end{corollary}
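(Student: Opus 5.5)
The corollary has two directions: sufficiency, which is already in Theorem~\ref{thm:original-transverse-residues}, and necessity, which needs two configurations with the same labelled envelope but different indices. Throughout, $n-k=1$ and $|J|=2$. Since $|I|=k-1$ with independent support weights, $r_I=|I|$ and $d_I=1$. Hence $\delta_I=1-\dim Q_I$, so either $\dim Q_I=1$ or $\dim Q_I=0$.

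\emph{Sufficiency.} If $\dim Q_I=1$, this is the last assertion of Theorem~\ref{thm:original-transverse-residues}. In more detail, Corollary~\ref{cor:maximal-exact-recovery} recovers $(\lambda_{j_1}:\lambda_{j_2})$ from the labelled envelope. Both ratios in~\eqref{eq:original-transverse-CS} are invariant under common scaling, so the envelope determines them.

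\emph{Necessity.} Assume $\dim Q_I=0$, so $S_I=\{0\}$ and the labelled envelope is a point. I would show that two residual configurations with $S_I=\{0\}$ can have different ordered index pairs.
\begin{itemize}
\item By Corollary~\ref{cor:maximal-exact-recovery} with $\delta_I=1$, the configurations with this labelled envelope form a dense $G_\delta$ subset of $\mathbb P^1$. These are the classes $(\lambda_{j_1}:\lambda_{j_2})$ with both entries non-zero and no non-zero non-negative integral relation.
\item This set is uncountable, so it contains two points with different ratios $\lambda_{j_2}/\lambda_{j_1}$.
\item Concretely, I would take $(1:\sqrt2)$ and $(1:\sqrt3)$. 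Both have $S=\{0\}$, because positive reals admit no non-negative relation.
\end{itemize}

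The remaining point is realisation: the statement concerns the coordinate-labelled envelope of the original foliation $\cF$ at a support satisfying Lemma~\ref{lem:maximal-singular-component}. So both residual configurations must occur inside ambient configurations with the same numbers $n,k$, meeting those hypotheses. For this I would use Theorem~\ref{thm:higher-realisation} (or Corollary~\ref{cor:rankone-realisation}) with $m=k-1$, $d=1$ and $s=2$. Uniform maximal rank in dimension one means only that both residual weights are non-zero. The resulting ambient configuration has uniform maximal rank, so the support weights are independent and $\Lambda_j|_{\h_I}=\beta_j\ne0$, as the lemma requires.

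Both lifts then have $Q_I$ equal to a point with the same labelling, while their Camacho--Sad pairs are $(\sqrt2,1/\sqrt2)$ and $(\sqrt3,1/\sqrt3)$. Hence the labelled envelope does not determine the indices when $\dim Q_I=0$.

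The main obstacle is keeping the comparison honest: the two foliations must share the same $n$, $k$ and $I$, and their labelled envelopes must coincide as labelled objects. The realisation theorem with fixed $m$ and $s$ provides exactly this, and it gives Poincaré-domain examples as the introduction claims.
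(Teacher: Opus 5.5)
Your proof is correct and follows the paper's argument. Sufficiency comes from Theorem~\ref{thm:original-transverse-residues}, and necessity comes from $\delta_I=1$ and the dense $G_\delta$ family of Corollary~\ref{cor:maximal-exact-recovery}, whose distinct points have distinct ratios $\lambda_{j_2}/\lambda_{j_1}$. Your explicit lift of $(1:\sqrt2)$ and $(1:\sqrt3)$ via Theorem~\ref{thm:higher-realisation} with $m=k-1$ spells out the realisation step that the paper leaves implicit (it is the realisation clause of Theorem~\ref{thm:exact-envelope-information}). That theorem needs $m\ge1$; when $k=1$ (so $I=\varnothing$) the residual weights are the ambient weights, and no lift is needed.
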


\begin{proof}
If $\dim Q_I=1$, Theorem~\ref{thm:original-transverse-residues} gives the determination.  If $\dim Q_I=0$, then $\delta_I=1$.  Corollary~\ref{cor:maximal-exact-recovery} gives a dense $G_\delta$ family in $\mathbb P^1$ of projective residual weight pairs with the same labelled envelope.  Distinct points of this family have distinct labelled ratios $\lambda_{j_2}/\lambda_{j_1}$, so~\eqref{eq:original-transverse-CS} shows that their Camacho--Sad indices are different.  Hence the envelope does not determine the indices.
\end{proof}

At these locally non-degenerate codimension-one supports, full resonance rank makes the classical residues functions of the first-integral quotient because the quotient determines the projective residual weights.  If $\delta_I>0$, the envelope fails to distinguish a positive-dimensional family of projective weights.  In transverse dimension two the Camacho--Sad indices measure this ambiguity exactly.  Proposition~\ref{prop:original-residue-nondetermination} gives such a family with the logarithmic connection and monodromy fixed.

\begin{proposition}\label{prop:original-residue-nondetermination}
There are Poincar\'e-domain diagonal actions of uniform maximal rank with maximal singular supports for which the coordinate-labelled quotient map, the flat logarithmic connection with labelled support coordinates and the monodromy representation are all identical, while the transverse Camacho--Sad indices of the original foliation vary.  More precisely, for every $a>0$ there is such an action with residual weights $(1,a)$ on two labelled transverse coordinates.  Then $Q_I$ is a point and the envelope connection and monodromy are trivial, whereas
\begin{equation}\label{eq:varying-original-CS}
\begin{aligned}
\operatorname{Ind}_{\mathrm{CS}}(\cF_I,\{w_2=0\};0)&=a,\\
\operatorname{Ind}_{\mathrm{CS}}(\cF_I,\{w_1=0\};0)&=\frac1a.
\end{aligned}
\end{equation}
Thus below full resonance rank the labelled envelope, connection and monodromy need not determine the classical transverse residues.
\end{proposition}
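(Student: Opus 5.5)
The construction will be an explicit $\C^2$-action built from Corollary~\ref{cor:rankone-realisation}, or more precisely from Theorem~\ref{thm:prescribed-monodromy} with $r=1$ and $d=1$. The point is to keep every envelope-side datum fixed while one real parameter moves the residual weights.

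Fix $a>0$. We take residual weights $\beta_1=1$ and $\beta_2=a$ in $(\C^1)^*$. These are non-zero, so they have uniform maximal rank in dimension $1$. Explicitly we set $k=2$, $n=3$ and
\[
\Lambda_0=(1,0),\qquad \Lambda_1=(1,1),\qquad \Lambda_2=(1,a).
\]
We then check the required properties in turn.
\begin{enumerate}[(i)]
\item \emph{Uniform maximal rank.} The pairwise determinants are $1$, $a$ and $a-1$. When $a=1$ we replace $\Lambda_2$ by $(2,a)$, and more generally use an integral shift in the first coordinate as in Lemma~\ref{lem:integral-perturbation}; then all determinants are non-zero.
\item \emph{Poincar\'e domain.} This holds with $\xi=(1,0)$.
\item \emph{Local setting.} The support $I=\{0\}$ satisfies the hypotheses of Lemma~\ref{lem:maximal-singular-component}: $|I|=k-1$, and $\h_I=\{0\}\oplus\C$ with $\Lambda_j|_{\h_I}=1$ and $a$, both non-zero. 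Thus $Z_I$ is a singular component, its transversal is $\C^2$, and the transverse field is $w_1\partial_{w_1}+a\,w_2\partial_{w_2}$.
\end{enumerate}

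Next we compute the envelope data. Since $1>0$ and $a>0$ have the same sign, the semigroup $S_I$ is $\{0\}$; this is immediate from the same-sign argument of Proposition~\ref{prop:rankone-dim}, applied to real positive weights rather than integers. Hence $Q_I$ is a point and $M_I=0$. The coordinate-labelled quotient map is therefore the constant map to a point, the same for every $a$. The algebra $\C[S_I]=\C$ forces $\delta_i=0$, so the connection \eqref{eq:logarithmic-flat-operator} is $\nabla_I=d$ on $\cO_{L_I}$, and the monodromy $\rho_I$ is trivial by \eqref{eq:tau-on-monomials}. All three envelope data are independent of $a$.

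Finally, Theorem~\ref{thm:original-transverse-residues} with $q=1$, $\lambda_{j_1}=1$ and $\lambda_{j_2}=a$ gives the indices \eqref{eq:varying-original-CS}, which vary with $a$. The only delicate step is the uniform-maximal-rank bookkeeping at the exceptional value $a=1$. The integral shift handles it, since it changes neither the residual weights nor, as $Q_I$ is a point, the monodromy.
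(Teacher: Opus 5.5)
Your proposal is correct and follows the paper's argument: $S_I=\{0\}$ because $a>0$, so $Q_I$ is a point and the connection and monodromy are trivial; the residual weights $(1,a)$ are realised along a maximal support of a uniform-maximal-rank Poincar\'e-domain $2$-dimensional action; and the Camacho--Sad formula~\eqref{eq:original-transverse-CS} gives the varying indices. The only difference is that you write the lift explicitly, with the integral shift at $a=1$, where the paper simply invokes Corollary~\ref{cor:rankone-realisation}.
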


\begin{proof}
For $a>0$, a relation $\alpha_1+a\alpha_2=0$ with $\alpha_1,\alpha_2\in\N$ forces $\alpha_1=\alpha_2=0$.  Hence $S_I=\{0\}$, so $Q_I$ is a point, $M_I=0$, and the logarithmic connection and monodromy are trivial.  Corollary~\ref{cor:rankone-realisation} realises these residual weights along a one-dimensional singular orbit of a diagonal holomorphic $\C^2$-action with uniform maximal rank in the Poincar\'e domain; this support is maximal and $n-k=1$.  The residual transverse field is $w_1\partial/\partial w_1+a w_2\partial/\partial w_2$.  Formula~\eqref{eq:original-transverse-CS} gives~\eqref{eq:varying-original-CS}.  The labelled configurations $(1,a)$ are pairwise non-equivalent for distinct $a$, since a scalar preserving the first labelled weight must be one.
\end{proof}

Thus $\delta_I$ measures the projective family not distinguished by first integrals; in transverse dimension two, the Camacho--Sad indices vary along this family even when the envelope connection and monodromy are fixed.

\subsection{Foliations associated with the logarithmic connection}\label{subsec:connection-foliations}

Fix a non-empty singular support $I$ with linearly independent support weights.  The logarithmic connection $\nabla_I$ and the coordinate derivations $\delta_i$ are then given by~\eqref{eq:logarithmic-flat-operator} and the definition preceding it.

Theorem~\ref{thm:original-transverse-residues} concerns the original transverse foliation.  The logarithmic connection also defines one-dimensional foliations.  Fix $i\in I$ and let $\Delta_i\subset\overline L_I$ be the germ of the coordinate disk defined by $z_\ell=1$ for $\ell\ne i$, with coordinate $t=z_i$.  The restriction of the extended envelope algebra is $\cO_{\Delta_i}\otimes_\C\C[S_I]$, and the connection becomes
\begin{equation}\label{eq:restricted-horizontal-connection}
\nabla_{I,i}=d-\frac{dt}{t}\,\delta_i.
\end{equation}
The derivation
\begin{equation}\label{eq:connection-foliation-field}
Z_{I,i}=t\frac{\partial}{\partial t}+\delta_i
\end{equation}
defines a holomorphic vector field $\mathscr H_{I,i}$ and, on its smooth locus, a one-dimensional holomorphic foliation
$\mathscr H_{I,i}$.

\begin{proposition}\label{prop:connection-foliation}
Over $\Delta_i^*=\Delta_i\setminus\{0\}$, the leaves of $\mathscr H_{I,i}$ are exactly the horizontal curves of the restricted connection~\eqref{eq:restricted-horizontal-connection}.  For every $\alpha\in S_I$, analytic continuation around a positive loop about $t=0$ acts on $X^\alpha$ by
\begin{equation}\label{eq:connection-foliation-holonomy}
X^\alpha\longmapsto e^{2\pi i c_i(\alpha)}X^\alpha.
\end{equation}
Thus the holonomy of $\mathscr H_{I,i}$ is the $i$th local envelope monodromy.
\end{proposition}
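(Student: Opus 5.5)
The plan is to work on the monomial summands, since $\delta_i$ is diagonal on the basis $X^\alpha$, $\alpha\in S_I$. Write a local section of the restricted envelope algebra over $\Delta_i^*$ as $\sum_\alpha f_\alpha(t)X^\alpha$. By \eqref{eq:restricted-horizontal-connection}, a section is horizontal if and only if, for every $\alpha$,
\[
t f_\alpha'(t)=c_i(\alpha)f_\alpha(t).
\]
I would then interpret $Z_{I,i}$ in \eqref{eq:connection-foliation-field} as a vector field on the total space $\Delta_i^*\times Q_I$. Here $t\partial/\partial t$ acts on the base and $\delta_i$ acts as the torus derivation with $\delta_i(X^\alpha)=c_i(\alpha)X^\alpha$. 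Applying $Z_{I,i}$ to the coordinate functions $t$ and $X^\alpha$ gives the differential system
\[
\dot t=t,\qquad \dot X^\alpha=c_i(\alpha)X^\alpha .
\]
Because $t\ne0$ on $\Delta_i^*$, the field has no zeros there, and we may use $t$ itself as the parameter along integral curves. Eliminating the time parameter gives $dX^\alpha/dt=c_i(\alpha)X^\alpha/t$. This is exactly the restriction of \eqref{eq:horizontal-equation} to $\Delta_i$, that is, $dX^\alpha=X^\alpha c_i(\alpha)\,dt/t$.

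The first claim then follows by comparing graphs with integral curves. Both the leaves of $\mathscr H_{I,i}$ and the horizontal curves are graphs over $\Delta_i^*$ of solutions of the same first-order equation. By uniqueness of solutions with given initial value $q_0\in Q_I$ at a base point, they coincide.

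For the monodromy, I would solve explicitly: $X^\alpha(t)=X^\alpha(q_0)\,(t/t_0)^{c_i(\alpha)}$. Analytic continuation along a positive loop about $t=0$ replaces $\log t$ by $\log t+2\pi i$, which multiplies the value by $e^{2\pi i c_i(\alpha)}$. This gives \eqref{eq:connection-foliation-holonomy}.

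To identify this with the $i$th local monodromy, I would compare with \eqref{eq:residue-monodromy} in Proposition~\ref{prop:connection-residues}. Alternatively, one can use Proposition~\ref{prop:holonomy-arithmetic}: the loop in $\Delta_i^*$ represents the period $2\pi i e_i$, which acts on $X^\alpha$ by $e^{2\pi i c_i(\alpha)}$.

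The only delicate point is making precise that $\delta_i$ really defines a holomorphic vector field on $Q_I$ whose flow preserves $Q_I$. I would handle this by noting that $\delta_i$ is the infinitesimal generator of the one-parameter subgroup $s\mapsto(\alpha\mapsto e^{sc_i(\alpha)})$ of $T_I$. This subgroup acts on $Q_I$ by the torus action, as in the proof of Theorem~\ref{thm:envelope-holonomy}. Its flow is therefore a restriction of $\tau_I$, hence holomorphic, and it preserves $Q_I$. The remaining steps are routine.
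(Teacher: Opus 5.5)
Your proposal is correct and follows essentially the same route as the paper. Both write the integral-curve equations of $Z_{I,i}$, eliminate the time parameter to get $dX^\alpha=c_i(\alpha)X^\alpha\,dt/t$, recognise this as the horizontal equation of \eqref{eq:restricted-horizontal-connection}, and read off the multiplier $e^{2\pi i c_i(\alpha)}$ from $\log t\mapsto\log t+2\pi i$, matching \eqref{eq:residue-monodromy}; your additional remarks on uniqueness of solutions and on $\delta_i$ integrating to the torus flow $s\mapsto\tau_I(se_i)$ on $Q_I$ just make explicit what the paper leaves implicit.
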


\begin{proof}
Let $(t(s),q(s))$ be an integral curve of~\eqref{eq:connection-foliation-field}.  For $\alpha\in S_I$, the integral-curve equations are $\frac{d}{ds}X^\alpha(q(s))=c_i(\alpha)X^\alpha(q(s))$ and $\frac{dt}{ds}=t$.  Hence
\begin{equation*}
dX^\alpha=c_i(\alpha)X^\alpha\frac{dt}{t},
\end{equation*}
which is the horizontal equation for~\eqref{eq:restricted-horizontal-connection}.  Conversely, the same equations determine the vector field~\eqref{eq:connection-foliation-field} on $\Delta_i^*\times Q_I$.  A positive loop changes $\log t$ by $2\pi i$, giving~\eqref{eq:connection-foliation-holonomy}, which agrees with~\eqref{eq:residue-monodromy}.
\end{proof}

At the boundary singularity $(0,o_I)$, the logarithmic residue coefficients enter the Baum--Bott formula directly.
\begin{theorem}\label{thm:connection-foliation-BB}
Assume that $o_I\in Q_I$ is smooth and put $m=\dim Q_I>0$.  Let $\beta^1,\ldots,\beta^m$ be the Hilbert basis of $S_I\simeq\N^m$, and set $c_{ia}=c_i(\beta^a)$ for $1\le a\le m$.  If $c_{ia}\ne0$ for every $a$, then $(0,o_I)$ is an isolated singularity of $\mathscr H_{I,i}$.  For every homogeneous symmetric polynomial $\varphi$ of degree $m+1$, its Baum--Bott residue is
\begin{equation}\label{eq:connection-foliation-BB}
\operatorname{Res}_{\varphi}(\mathscr H_{I,i},(0,o_I))
=\frac{\varphi(1,c_{i1},\ldots,c_{im})}{c_{i1}\cdots c_{im}}.
\end{equation}
In particular, the Baum--Bott residues of $\mathscr H_{I,i}$ are determined by the eigenvalues of $-\operatorname{Res}_{D_i^0}(\overline\nabla_I)$ on the character generators.
\end{theorem}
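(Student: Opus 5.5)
The plan is to reduce the vector field $Z_{I,i}$ near $(0,o_I)$ to a linear diagonal field on $\C^{m+1}$ and then apply the diagonal Baum--Bott formula already used in Theorem~\ref{thm:original-transverse-residues}.

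First, smoothness. Since $o_I$ is smooth, Proposition~\ref{prop:smoothness} gives $S_I\simeq\N^m$ with Hilbert basis $\beta^1,\ldots,\beta^m$. The monomials $u_a=X^{\beta^a}$ then form global coordinates identifying $Q_I$ with $\C^m$, with $o_I=0$; this is because $\C[S_I]=\C[u_1,\ldots,u_m]$ is a polynomial ring.

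Second, the field in these coordinates. The derivation $\delta_i$ acts on $\C[S_I]$ by $\delta_i(X^\alpha)=c_i(\alpha)X^\alpha$. By additivity of $C_I$, on the generators this reads $\delta_i(u_a)=c_{ia}u_a$. A derivation of a polynomial ring is determined by its values on the generators, so $\delta_i=\sum_a c_{ia}u_a\partial/\partial u_a$. Hence on $\Delta_i\times\C^m$ we obtain
$$
Z_{I,i}=t\frac{\partial}{\partial t}+\sum_{a=1}^m c_{ia}u_a\frac{\partial}{\partial u_a}.
$$
This is a linear diagonal field with eigenvalues $1,c_{i1},\ldots,c_{im}$. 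Its zero set is $\{t=0\}\cap\{c_{ia}u_a=0\ \forall a\}$, which under the hypothesis $c_{ia}\ne0$ is the single point $(0,0)$. So the singularity is isolated.

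Third, the residue. Since the field is linear with non-zero eigenvalues, the Baum--Bott residue of the associated one-dimensional foliation is the Grothendieck residue $\varphi(\text{eigenvalues})/\det$. This is the diagonal formula \cite[Example~6.1.1]{BSS}, combined with \cite[Remark~6.2.2]{BSS} to pass from vector-field residues to foliation residues. The formula gives~\eqref{eq:connection-foliation-BB}, with $\varphi$ of degree $m+1=\dim(\Delta_i\times Q_I)$ as required. The final sentence of the statement then follows from~\eqref{eq:residue-on-character}, which says that $-\operatorname{Res}_{D_i^0}(\overline\nabla_I)$ has eigenvalue $c_i(\beta^a)=c_{ia}$ on $X^{\beta^a}$.

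The main obstacle is making precise that $\mathscr H_{I,i}$ is a holomorphic vector field on the ambient smooth manifold $\Delta_i\times Q_I$ near $(0,o_I)$. Concretely, one has to check that the derivation $\delta_i$ of the algebra $\C[S_I]$ really corresponds to the stated linear vector field in the toric coordinates. This is where smoothness is used: without it, $Q_I$ would only be a singular toric germ and the classical Baum--Bott formula would not directly apply. Everything else is a direct computation.
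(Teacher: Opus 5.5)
Your proposal is correct and follows the paper's proof essentially step for step. Smoothness makes the $X^{\beta^a}$ polynomial coordinates on $Q_I$, in which $\delta_i=\sum_a c_{ia}x_a\,\partial/\partial x_a$, so $Z_{I,i}$ is a linear diagonal field with eigenvalues $1,c_{i1},\ldots,c_{im}$; the Brasselet--Seade--Suwa diagonal formula then gives the residue, and Proposition~\ref{prop:connection-residues} identifies the $c_{ia}$ with the eigenvalues of $-\operatorname{Res}_{D_i^0}(\overline\nabla_I)$.
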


\begin{proof}
With $x_a=X^{\beta^a}$, smoothness gives $Q_I\simeq\C^m$ at $o_I$ and
\begin{equation*}
\delta_i=\sum_{a=1}^m c_{ia}x_a\frac{\partial}{\partial x_a}.
\end{equation*}
Hence $\mathscr H_{I,i}$ is generated at $(0,o_I)$ by the diagonal vector field
\begin{equation*}
Z_{I,i}=t\frac{\partial}{\partial t}
+\sum_{a=1}^m c_{ia}x_a\frac{\partial}{\partial x_a}.
\end{equation*}
The non-vanishing hypothesis makes its zero isolated.  Formula~\eqref{eq:connection-foliation-BB} is the diagonal Baum--Bott formula of Brasselet--Seade--Suwa~\cite[Example~6.1.1]{BSS}, together with the identification of the vector-field and one-dimensional foliation residues~\cite[Remark~6.2.2]{BSS}.  Proposition~\ref{prop:connection-residues} identifies the eigenvalues $c_{ia}$ with those of $-\operatorname{Res}_{D_i^0}(\overline\nabla_I)$.
\end{proof}

In dimension one the residue coefficient itself is a Camacho--Sad index, and its exponential is the corresponding holonomy multiplier.
\begin{corollary}\label{cor:connection-foliation-CS}
Assume that $Q_I$ is smooth of dimension one, let $\beta$ generate $S_I\simeq\N$, put $x=X^\beta$, and assume $c_i(\beta)\ne0$.  The curve
 $
C_{I,i}=\Delta_i\times\{o_I\}=\{x=0\}
 $
is invariant by $\mathscr H_{I,i}$ and
\begin{equation}\label{eq:connection-foliation-CS}
\operatorname{Ind}_{\mathrm{CS}}(\mathscr H_{I,i},C_{I,i};(0,o_I))
=c_i(\beta).
\end{equation}
Consequently,
\begin{equation}\label{eq:CS-holonomy-multiplier}
\exp \left(2\pi i\,\operatorname{Ind}_{\mathrm{CS}}(\mathscr H_{I,i},C_{I,i};(0,o_I))\right)
=e^{2\pi i c_i(\beta)},
\end{equation}
which is the envelope monodromy multiplier on $X^\beta$, while
\begin{equation}\label{eq:CS-connection-residue}
\operatorname{Ind}_{\mathrm{CS}}(\mathscr H_{I,i},C_{I,i};(0,o_I))
=-\operatorname{eig}_{X^\beta}\operatorname{Res}_{D_i^0}(\overline\nabla_I).
\end{equation}
\end{corollary}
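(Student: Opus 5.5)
The plan is to reduce the corollary to the two-dimensional diagonal field already written down in the proof of Theorem~\ref{thm:connection-foliation-BB}, and then to apply the classical Camacho--Sad computation, as in Theorem~\ref{thm:original-transverse-residues}. With $m=1$ and $x=X^\beta$ a coordinate on $Q_I\simeq\C$ near $o_I$, that proof gives
\[
Z_{I,i}=t\frac{\partial}{\partial t}+c_i(\beta)\,x\frac{\partial}{\partial x}
\]
on a neighbourhood of $(0,o_I)$ in $\Delta_i\times Q_I$.

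First I check that $C_{I,i}$ is invariant. We have $Z_{I,i}(x)=c_i(\beta)x$, and this vanishes on $\{x=0\}$, so the curve $\{x=0\}=\Delta_i\times\{o_I\}$ is $\mathscr H_{I,i}$-invariant. Moreover the hypothesis $c_i(\beta)\ne0$, together with the coefficient $1$ on $t\,\partial/\partial t$, makes $(0,o_I)$ an isolated zero.

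Next I compute the index. The defining function of $C_{I,i}$ is $x$, and we have $Z_{I,i}(x)=c_i(\beta)\,x$. The field induced on $C_{I,i}$ is $t\,\partial/\partial t$. The normal-bundle residue formula \cite[Theorem~6.3.8]{BSS}, or directly the Camacho--Sad definition for the $1$-form $\omega=c_i(\beta)x\,dt-t\,dx$ with $\omega=g\,dx+x\,\eta$, gives the index as the ratio of the normal eigenvalue to the tangential eigenvalue. That ratio is $c_i(\beta)/1=c_i(\beta)$, which proves \eqref{eq:connection-foliation-CS}. One must take some care with the orientation convention, so that one gets $c_i(\beta)$ and not its reciprocal. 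The safest way is to mirror exactly the computation for $\{y=0\}$ in the proof of Theorem~\ref{thm:original-transverse-residues}, with $t$ in the role of $x$ and $\lambda$-ratio $c_i(\beta)/1$. I expect this sign and ratio bookkeeping to be the only real obstacle.

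Finally, exponentiating \eqref{eq:connection-foliation-CS} gives the left side of \eqref{eq:CS-holonomy-multiplier}. By \eqref{eq:connection-foliation-holonomy} in Proposition~\ref{prop:connection-foliation}, equivalently \eqref{eq:residue-monodromy}, the quantity $e^{2\pi i c_i(\beta)}$ is the monodromy multiplier on $X^\beta$. For \eqref{eq:CS-connection-residue}, the identity \eqref{eq:residue-on-character} gives $\operatorname{Res}_{D_i^0}(\overline\nabla_I)(X^\beta)=-c_i(\beta)X^\beta$, so the eigenvalue is $-c_i(\beta)$. Combining this with \eqref{eq:connection-foliation-CS} gives the stated equality.
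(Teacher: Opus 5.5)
Your proposal is correct and matches the paper's proof: you reduce to the diagonal generator $t\,\partial/\partial t+c_i(\beta)\,x\,\partial/\partial x$ and read off the Camacho--Sad index as the normal-to-tangential eigenvalue ratio $c_i(\beta)/1$ via the Brasselet--Seade--Suwa normal-bundle residue formula. You then obtain the remaining two identities from Proposition~\ref{prop:connection-foliation} and \eqref{eq:residue-on-character}, as the paper does, and your explicit $1$-form check with $c_i(\beta)x\,dt-t\,dx$ confirms the convention (index $c_i(\beta)$, not its reciprocal).
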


\begin{proof}
$\mathscr H_{I,i}$ is generated by
\begin{equation*}
Z_{I,i}=t\frac{\partial}{\partial t}
+c_i(\beta)x\frac{\partial}{\partial x}.
\end{equation*}
Along $C_{I,i}$ the tangent eigenvalue is $1$ and the normal eigenvalue is $c_i(\beta)$.  The normal-bundle residue formula of Brasselet--Seade--Suwa therefore gives~\eqref{eq:connection-foliation-CS}; for a smooth invariant curve in a surface this residue is the Camacho--Sad index~\cite[Theorem~6.3.8 and Remark~6.3.3]{BSS}.  Equations~\eqref{eq:CS-holonomy-multiplier} and~\eqref{eq:CS-connection-residue} follow from Proposition~\ref{prop:connection-foliation} and~\eqref{eq:residue-on-character}.
\end{proof}

The integer ambiguity in the passage $c_i(\beta)\mapsto e^{2\pi i c_i(\beta)}$ occurs within the Poincar\'e-domain class.

\begin{proposition}\label{prop:same-holonomy-different-CS}
Fix $a,b\in\C$ with $\operatorname{Re}a>0$ and $\operatorname{Re}b>0$.  For $N=0,1,2,\ldots$, consider the diagonal $\C^2$-action on $\C^3$ with weights $\Lambda_0=(1,0)$, $\Lambda_1^{(N)}=(a+N,1)$ and $\Lambda_2=(b,-1)$.  Every member has uniform maximal rank and lies in the Poincar\'e domain.  Along the singular support $I=\{0\}$, all members have the same residual weights $+1,-1$, hence the same affine-line envelope $Q_I\simeq\C$ with coordinate $X=w_1w_2$.  Their envelope monodromy is independent of $N$ and acts by
\begin{equation}\label{eq:integer-lift-monodromy}
X\longmapsto e^{2\pi i (a+b)}X.
\end{equation}
However, the Camacho--Sad index of the invariant curve $C_{I,0}$ is
\begin{equation}\label{eq:integer-lift-CS}
\operatorname{Ind}_{\mathrm{CS}}(\mathscr H_{I,0}^{(N)},C_{I,0};(0,o_I))
=a+b+N.
\end{equation}
Thus the transverse envelope, the envelope monodromy, its action groupoid and its quotient stack are the same for all $N$, whereas the logarithmic connections and the Camacho--Sad indices~\eqref{eq:integer-lift-CS} are pairwise distinct.
\end{proposition}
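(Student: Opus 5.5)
The proof is a direct verification in four parts: uniform maximal rank and the Poincaré condition, the residual model along $I=\{0\}$, the monodromy via Proposition~\ref{prop:segre-holonomy}, and the connection foliation via Corollary~\ref{cor:connection-foliation-CS}.

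\textbf{Uniform maximal rank and Poincaré domain.} Here $k=2$, so we need every pair of weights to be independent. The $2\times2$ determinants are:
\begin{itemize}
\item $\det(\Lambda_0,\Lambda_1^{(N)})=1$;
\item $\det(\Lambda_0,\Lambda_2)=-1$;
\item $\det(\Lambda_1^{(N)},\Lambda_2)=-(a+N)-b$.
\end{itemize}
The last one is non-zero because $\operatorname{Re}(a+b+N)>0$. For the Poincaré condition take $\xi=(1,0)$. Then $\Lambda_0(\xi)=1$, $\operatorname{Re}\Lambda_1^{(N)}(\xi)=\operatorname{Re}a+N>0$ and $\operatorname{Re}\Lambda_2(\xi)=\operatorname{Re}b>0$.

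\textbf{Residual model.} For $I=\{0\}$ we have $\h_I=\ker\Lambda_0=\{0\}\oplus\C$. Restricting the other two weights to this line gives $+1$ and $-1$, independently of $N$. By Proposition~\ref{prop:rankone-dim} and the first example of the examples subsection, $S_I=\N(1,1)$, so $Q_I\simeq\C$ with coordinate $X=w_1w_2$.

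\textbf{Monodromy.} The support weight $\Lambda_0$ is independent, so Lemma~\ref{lem:leaf-uniformisation} applies. With $\beta=(1,1)$, the relevant covector is
$$
\nu_\beta=\Lambda_1^{(N)}+\Lambda_2=(a+N+b,0)=(a+b+N)\Lambda_0,
$$
hence $c_0(\beta)=a+b+N$. Proposition~\ref{prop:holonomy-arithmetic}, or equivalently Proposition~\ref{prop:segre-holonomy} with $p=q=1$, then gives the multiplier $e^{2\pi i(a+b+N)}=e^{2\pi i(a+b)}$. This is \eqref{eq:integer-lift-monodromy}, and it does not depend on $N$.

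\textbf{Camacho--Sad index.} $Q_I$ is smooth of dimension one and $c_0(\beta)\neq0$ because its real part is positive. Corollary~\ref{cor:connection-foliation-CS} therefore gives $\operatorname{Ind}_{\mathrm{CS}}=c_0(\beta)=a+b+N$, which is \eqref{eq:integer-lift-CS}. These indices are pairwise distinct as $N$ varies.

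\textbf{Connections distinct, groupoid and stack equal.} The connection is $\nabla_I=d-\frac{dz}{z}\,c_0(\beta)$ on $X$. Its residue $-c_0(\beta)$ on $D_0^0$ changes with $N$ by Proposition~\ref{prop:connection-residues}, so the logarithmic connections are pairwise distinct. On the other hand, $\Gamma_I=2\pi i\Z$ is the same for every $N$, and $\rho_I$ is the same by \eqref{eq:integer-lift-monodromy}. Hence the action groupoid $\Gamma_I\ltimes_{\rho_I}Q_I$ and the stack $[Q_I/\Gamma_I]$ coincide for all $N$.

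\textbf{Main obstacle.} The only delicate points are bookkeeping. First, $\Gamma_I$ must be identified with $2\pi i\Z$ through $\Lambda_0$, using the basis vector $e_0=(1,0)$ in $V_I$. Second, the sign convention between the residue and $c_0$ must match the one fixed in Proposition~\ref{prop:connection-residues}.
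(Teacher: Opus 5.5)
Your proposal is correct and follows the paper's proof essentially step for step. You use the same determinant check and the same direction $\xi=(1,0)$, the same residual computation on $\h_I=\ker\Lambda_0$, the key identity $\nu_\beta=\Lambda_1^{(N)}+\Lambda_2=(a+b+N)\Lambda_0$ giving $c_0(\beta)=a+b+N$, and Corollary~\ref{cor:connection-foliation-CS} for the index. There are two cosmetic differences: you read off the monodromy from Proposition~\ref{prop:holonomy-arithmetic} (equivalently Proposition~\ref{prop:segre-holonomy} with $p=q=1$) rather than from Proposition~\ref{prop:connection-foliation}, and you make explicit, via the residues of Proposition~\ref{prop:connection-residues}, why the logarithmic connections are pairwise distinct.
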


\begin{proof}
These weights are in the Poincar\'e domain because evaluation on $\xi=(1,0)$ has positive real part on all three weights.  The pairs containing $\Lambda_0$ are independent, while
\begin{equation*}
\det\!\begin{pmatrix}a+N&1\\ b&-1\end{pmatrix}=-(a+b+N)\ne0,
\end{equation*}
so uniform maximal rank holds.  At $I=\{0\}$ the connected stabiliser is the second coordinate axis, and the residual weights are $+1$ and $-1$.  The resonance semigroup is generated by $\beta=(1,1)$ and $X=w_1w_2$.  Moreover,
\begin{equation*}
\nu_\beta=\Lambda_1^{(N)}+\Lambda_2=(a+b+N)\Lambda_0,
\end{equation*}
so $c_0^{(N)}(\beta)=a+b+N$.  Proposition~\ref{prop:connection-foliation} gives the monodromy multiplier $e^{2\pi i (a+b+N)}=e^{2\pi i (a+b)}$, proving~\eqref{eq:integer-lift-monodromy}; Corollary~\ref{cor:connection-foliation-CS} gives~\eqref{eq:integer-lift-CS}.  Since the envelope and the representation of the period group are independent of $N$, the associated action groupoids and quotient stacks are identical under the natural identifications.
\end{proof}

Thus the monodromy does not determine the logarithmic connection: it depends only on the exponential of a residue coefficient, whereas the Camacho--Sad index of the horizontal foliation equals the coefficient itself.

\subsection{Residue derivations and support variation}\label{subsec:characteristic-residues}

The infinitesimal generators of envelope transport define an abelian action on the toric factor.  For $u\in V_I$ and $\alpha\in S_I$, set
\begin{equation}\label{eq:Theta-I}
\Theta_I(u)(X^\alpha)=\nu_\alpha(u)X^\alpha.
\end{equation}

\begin{proposition}\label{prop:residue-Lie-action}
For every non-empty singular support $I$, formula~\eqref{eq:Theta-I} defines a homomorphism of abelian Lie algebras
\begin{equation*}
\Theta_I:V_I\longrightarrow\operatorname{Der}_{\C}(\C[S_I]).
\end{equation*}
If the support weights are linearly independent and $e_i\in V_I$, $i\in I$, is their dual basis, then $\Theta_I(e_i)=\delta_i$, and
\begin{equation}\label{eq:connection-Lie-action}
\nabla_I=d-\sum_{i\in I}\frac{dz_i}{z_i}\,\Theta_I(e_i).
\end{equation}
Under this independence hypothesis, the residues of $\overline\nabla_I$ are the infinitesimal generators $\pm\Theta_I(e_i)$ of the transport action.  Moreover, if $I\subset I'$ are non-empty singular supports and $v'\in V_{I'}$, then
\begin{equation}\label{eq:residue-support-compatibility}
\Theta_{I'}(v')\circ\operatorname{res}_{I,I'}
=\operatorname{res}_{I,I'}\circ\Theta_I\!\left(\pi_{I',I}(v')\right).
\end{equation}
\end{proposition}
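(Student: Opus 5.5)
The plan is to verify the assertions of Proposition~\ref{prop:residue-Lie-action} directly on the monomial basis $\{X^\alpha\}_{\alpha\in S_I}$ of $\C[S_I]$, since all operators involved are diagonal in this basis.

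\emph{Derivation and homomorphism property.} Fix $u\in V_I$. Recall that $\nu_\alpha$ descends to $V_I^*$, and that $\alpha\mapsto\nu_\alpha$ is additive, so $\nu_{\alpha+\beta}(u)=\nu_\alpha(u)+\nu_\beta(u)$. Hence
\[
\Theta_I(u)(X^\alpha X^\beta)=\Theta_I(u)(X^\alpha)X^\beta+X^\alpha\Theta_I(u)(X^\beta)
\]
on basis monomials. Extending linearly, $\Theta_I(u)$ is a $\C$-derivation. Linearity of $\nu_\alpha$ in $u$ makes $\Theta_I$ linear. Any two operators that are diagonal in the same basis commute, so $[\Theta_I(u),\Theta_I(u')]=0$, which is the abelian bracket on $V_I$. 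Thus $\Theta_I$ is a Lie algebra homomorphism. I would also note that $\Theta_I(u)$ is the derivative at $s=0$ of $\tau_I(su)^*$, by~\eqref{eq:tau-on-monomials}. This justifies calling the $\Theta_I(u)$ infinitesimal generators of the transport.

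\emph{Independent support weights.} Write $\nu_\alpha=\sum_{\ell\in I}c_\ell(\alpha)\Lambda_\ell$ and evaluate at the dual basis vector $e_i$, which satisfies $\Lambda_\ell(e_i)=\delta_{i\ell}$. This gives $\nu_\alpha(e_i)=c_i(\alpha)$, so $\Theta_I(e_i)(X^\alpha)=c_i(\alpha)X^\alpha=\delta_i(X^\alpha)$. Substituting into~\eqref{eq:logarithmic-flat-operator} yields~\eqref{eq:connection-Lie-action}. The residue statement then follows from~\eqref{eq:connection-residues}: the residues are $\operatorname{Res}_{D_i^0}=-\Theta_I(e_i)$ and $\operatorname{Res}_{D_i^\infty}=\Theta_I(e_i)$.

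\emph{Support compatibility.} This is the one step that needs care. I would mimic the proof of Proposition~\ref{prop:support-equivariance}, using the comorphism identity~\eqref{eq:support-equivariance-comorphism}. Take $\alpha\in S_I$ and set $D=I'\setminus I$.
\begin{itemize}
\item If $\alpha_d>0$ for some $d\in D$, then $\operatorname{res}_{I,I'}(X^\alpha)=0$. Since $\Theta_I$ is diagonal, it preserves this property, so both sides of~\eqref{eq:residue-support-compatibility} vanish on $X^\alpha$.
\item Otherwise $\alpha$ is supported on $J'$, lies in $S_{I'}$, and has the same ambient covector $\nu_\alpha=\sum_{j\in J'}\alpha_j\Lambda_j$ in both residual models. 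Choose a lift $t\in\g$ of $v'$; it also represents $\pi_{I',I}(v')$. Then $\nu_\alpha(v')=\nu_\alpha(t)=\nu_\alpha(\pi_{I',I}(v'))$, and both sides of~\eqref{eq:residue-support-compatibility} equal $\nu_\alpha(t)X^{\alpha|_{J'}}$.
\end{itemize}
Alternatively, one can differentiate~\eqref{eq:support-equivariance-comorphism} along $s\mapsto sv'$.
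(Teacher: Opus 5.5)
Your proposal is correct and follows essentially the paper's argument: the Leibniz rule comes from additivity of $\alpha\mapsto\nu_\alpha$, commutativity from diagonality in the monomial basis, $\Theta_I(e_i)=\delta_i$ from evaluating on the dual basis, and the residue statement from Proposition~\ref{prop:connection-residues}. For the support compatibility the paper differentiates the comorphism identity~\eqref{eq:support-equivariance-comorphism} at the identity, while your direct check on monomials is the infinitesimal form of the same computation, and you also mention that alternative.
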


\begin{proof}
For $u\in V_I$ and $\alpha,\beta\in S_I$, additivity of $\nu_\alpha$ gives
\begin{equation*}
\Theta_I(u)(X^{\alpha+\beta})
=\nu_{\alpha+\beta}(u)X^{\alpha+\beta}
=\Theta_I(u)(X^\alpha)X^\beta+X^\alpha\Theta_I(u)(X^\beta),
\end{equation*}
so $\Theta_I(u)$ is a derivation.  The map $u\mapsto\Theta_I(u)$ is linear, and the derivations are diagonal in the monomial basis; hence their brackets vanish.  If the support weights are linearly independent, then $\nu_\alpha=\sum_{i\in I}c_i(\alpha)\Lambda_i$ and $\Lambda_\ell(e_i)=\delta_{i\ell}$, so $\Theta_I(e_i)(X^\alpha)=c_i(\alpha)X^\alpha=\delta_i(X^\alpha)$.  Equation~\eqref{eq:connection-Lie-action} and the residue assertion then follow from Proposition~\ref{prop:connection-residues}.
For the last assertion, differentiate the coordinate-ring form of the transport equivariance~\eqref{eq:support-equivariance-comorphism} at the identity.  The derivative of $\tau_I(v)^*$ at $v=0$ is $\Theta_I(v)$ by~\eqref{eq:tau-on-monomials}, yielding~\eqref{eq:residue-support-compatibility}.
\end{proof}

When the distinguished point of the envelope is smooth, the infinitesimal transport action determines the characteristic residues of the induced fields.  At independent supports the logarithmic residues determine the same characteristic residues.  The conventions are those of Brasselet--Seade--Suwa~\cite[Chapter~6]{BSS}.

\begin{theorem}\label{thm:smooth-envelope-BB}
Let $I$ be a non-empty singular support and assume that the distinguished point $o_I\in Q_I$ is smooth.  Put $m=\dim Q_I>0$, and let $\beta^1,\ldots,\beta^m$ be the Hilbert basis of the free semigroup $S_I\simeq\N^m$.  For $u\in V_I$, set $\lambda_a(u)=\nu_{\beta^a}(u)$, $1\le a\le m$.  If $\lambda_a(u)\ne0$ for every $a$, then $o_I$ is an isolated zero of the vector field $\Theta_I(u)$.  For every homogeneous symmetric polynomial $\varphi$ of degree $m$, its Baum--Bott residue is
\begin{equation}\label{eq:BB-from-connection}
\operatorname{Res}_{\varphi}(\Theta_I(u),o_I)
=\frac{\varphi\bigl(\lambda_1(u),\ldots,\lambda_m(u)\bigr)}
{\lambda_1(u)\cdots\lambda_m(u)}.
\end{equation}
The family~\eqref{eq:BB-from-connection}, as $u$ varies, is determined by the envelope algebra and the infinitesimal transport action $\Theta_I$.  If the support weights are linearly independent and $u=\sum_{i\in I}u_i e_i$, then
\begin{equation}\label{eq:BB-residue-coefficients}
\lambda_a(u)=\sum_{i\in I}u_i c_i(\beta^a).
\end{equation}
In this case the same family is determined by the logarithmic residues of $\overline\nabla_I$.
\end{theorem}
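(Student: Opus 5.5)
The plan is to follow the proof of Theorem~\ref{thm:connection-foliation-BB}, with the horizontal direction $t\partial/\partial t$ removed and the derivation $\Theta_I(u)$ in place of $\delta_i$.

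First, I would use smoothness at $o_I$. By Proposition~\ref{prop:smoothness}, $S_I\simeq\N^m$ with Hilbert basis $\beta^1,\ldots,\beta^m$. The monomials $x_a=X^{\beta^a}$ therefore give a global coordinate system $Q_I\simeq\C^m$ with $o_I$ at the origin. A derivation of $\C[x_1,\ldots,x_m]$ is determined by its values on the generators. Formula~\eqref{eq:Theta-I} gives $\Theta_I(u)(x_a)=\nu_{\beta^a}(u)x_a=\lambda_a(u)x_a$, so
\[
\Theta_I(u)=\sum_{a=1}^m\lambda_a(u)\,x_a\frac{\partial}{\partial x_a}.
\]
This is a linear diagonal vector field. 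When every $\lambda_a(u)\ne0$, its zero set is $\{x=0\}=\{o_I\}$, and the zero is nondegenerate, hence isolated.

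Second, I would apply the diagonal Baum--Bott formula~\cite[Example~6.1.1]{BSS}, together with~\cite[Remark~6.2.2]{BSS} to identify the vector-field and foliation residues. A nondegenerate zero has Grothendieck residue $\varphi(\text{eigenvalues})/\det$, and here the eigenvalues are the $\lambda_a(u)$. This yields~\eqref{eq:BB-from-connection}; the degree of $\varphi$ equals $m=\dim Q_I$, as required.

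Third, I would check the determination claims. The Hilbert basis, and so the coordinates $x_a$, are intrinsic to the algebra $\C[S_I]$ with its monomial structure. The eigenvalues $\lambda_a(u)$ can be read off as the eigenvalues of $\Theta_I(u)$ on the generators. Hence the whole family in~\eqref{eq:BB-from-connection} is a function of $(\C[S_I],\Theta_I)$. The one subtle point is coordinate-independence if one forgets the toric structure. For this I would note that the linearisation of $\Theta_I(u)$ on the cotangent space $\mathfrak m/\mathfrak m^2$ at $o_I$ has exactly these eigenvalues, so the formula is intrinsic in any case. In the independent case, put $u=\sum_i u_ie_i$ and write $\nu_{\beta^a}=\sum_i c_i(\beta^a)\Lambda_i$. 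Then $\Lambda_\ell(e_i)=\delta_{i\ell}$ gives~\eqref{eq:BB-residue-coefficients}, so $\Theta_I(u)=\sum_iu_i\delta_i$. Proposition~\ref{prop:connection-residues} identifies $\delta_i$ with $\operatorname{Res}_{D_i^\infty}(\overline\nabla_I)=-\operatorname{Res}_{D_i^0}(\overline\nabla_I)$, so the logarithmic residues determine the family.

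I expect no real obstacle. The only care needed is matching the sign and normalisation conventions of~\cite{BSS}, as was done in the proof of Theorem~\ref{thm:connection-foliation-BB}.
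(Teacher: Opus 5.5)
Your proposal is correct and follows the paper's proof essentially step for step. Smoothness gives $\C[S_I]=\C[x_1,\ldots,x_m]$ with $x_a=X^{\beta^a}$, so $\Theta_I(u)$ becomes the diagonal field $\sum_a\lambda_a(u)x_a\,\partial/\partial x_a$; the residue formula is then the diagonal Baum--Bott formula of Brasselet--Seade--Suwa, and the independent case follows from $\Theta_I(e_i)=\delta_i$ together with the residue identification (your remark about the linearisation on $\mathfrak m/\mathfrak m^2$ is a harmless extra that the paper does not need).
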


\begin{proof}
By Proposition~\ref{prop:smoothness}, smoothness of $o_I$ is equivalent to $S_I\simeq\N^m$.  With $x_a=X^{\beta^a}$, the coordinate ring is $\C[x_1,\ldots,x_m]$, and~\eqref{eq:Theta-I} becomes
\begin{equation*}
\Theta_I(u)=\sum_{a=1}^m\lambda_a(u)x_a\frac{\partial}{\partial x_a}.
\end{equation*}
If all $\lambda_a(u)$ are non-zero, the origin is its only zero.  Formula~\eqref{eq:BB-from-connection} is the diagonal Baum--Bott formula of Brasselet--Seade--Suwa~\cite[Example~6.1.1]{BSS}; the same number is the Baum--Bott residue of the one-dimensional foliation generated by $\Theta_I(u)$ by~\cite[Remark~6.2.2]{BSS}.  Under the independence hypothesis, $\Theta_I(e_i)=\delta_i$ gives~\eqref{eq:BB-residue-coefficients}.
\end{proof}

\begin{corollary}\label{cor:CS-from-connection}
In the setting of Theorem~\ref{thm:smooth-envelope-BB}, assume $m=2$ and write $x=X^{\beta^1}$, $y=X^{\beta^2}$.  Let $C_x=\{y=0\}$ and $C_y=\{x=0\}$.  If $\lambda_1(u)\lambda_2(u)\ne0$, the two curves are invariant by the foliation $\mathscr G_{I,u}$ generated by $\Theta_I(u)$, and
\begin{equation*}
\begin{aligned}
\operatorname{Ind}_{\mathrm{CS}}(\mathscr G_{I,u},C_x;o_I)&=\frac{\lambda_2(u)}{\lambda_1(u)},\\
\operatorname{Ind}_{\mathrm{CS}}(\mathscr G_{I,u},C_y;o_I)&=\frac{\lambda_1(u)}{\lambda_2(u)}.
\end{aligned}
\end{equation*}
If the support weights are linearly independent and $u=\sum_{i\in I}u_i e_i$, both indices are rational functions of the linear forms $\sum_i u_i c_i(\beta^a)$ and are therefore determined by the logarithmic residues.
\end{corollary}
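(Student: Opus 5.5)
The plan is to reduce everything to the diagonal description of $\Theta_I(u)$ obtained in the proof of Theorem~\ref{thm:smooth-envelope-BB}. With $m=2$, smoothness gives $S_I\simeq\N^2$ with Hilbert basis $\beta^1,\beta^2$. The envelope coordinate ring is then $\C[x,y]$ with $x=X^{\beta^1}$ and $y=X^{\beta^2}$. Formula~\eqref{eq:Theta-I} yields
\begin{equation*}
\Theta_I(u)=\lambda_1(u)\,x\frac{\partial}{\partial x}+\lambda_2(u)\,y\frac{\partial}{\partial y},
\end{equation*}
a linear diagonal vector field on $(\C^2,0)$. When $\lambda_1(u)\lambda_2(u)\ne0$, its only zero near $o_I$ is the origin, so it generates a reduced one-dimensional foliation $\mathscr G_{I,u}$ with an isolated singularity.

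First, I will check invariance of the two axes. Since $\Theta_I(u)(y)=\lambda_2(u)y$ lies in the ideal $(y)$, the curve $C_x=\{y=0\}$ is invariant. Symmetrically, $\Theta_I(u)(x)=\lambda_1(u)x$ shows that $C_y=\{x=0\}$ is invariant.

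Next, I will compute the Camacho--Sad indices exactly as in the proof of Theorem~\ref{thm:original-transverse-residues}, $q=1$ case. Along $C_x$, the normal eigenvalue is $\lambda_2(u)$ and the field induced on the curve is $\lambda_1(u)x\,\partial/\partial x$. The normal-bundle residue formula \cite[Theorem~6.3.8]{BSS}, identified with the Camacho--Sad index through \cite[Remark~6.3.3]{BSS}, gives $\lambda_2(u)/\lambda_1(u)$. Equivalently, one can use the $1$-form $\omega=\lambda_1(u)x\,dy-\lambda_2(u)y\,dx$ and the classical formula $-\operatorname{Res}_{x=0}(\eta|_{C})$ with $\omega=h\,dy+y\,\eta$. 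The index along $C_y$ follows by exchanging the roles of $x$ and $y$.

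Finally, under linear independence of the support weights, \eqref{eq:BB-residue-coefficients} writes each $\lambda_a(u)$ as $\sum_i u_ic_i(\beta^a)$. Proposition~\ref{prop:connection-residues} identifies the numbers $c_i(\beta^a)$ with the eigenvalues of the residues $\operatorname{Res}_{D_i^0}(\overline\nabla_I)$ (up to sign) on the character generators. The two ratios are therefore rational functions of these linear forms, and so they are determined by the logarithmic residues.

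The only delicate point is sign and orientation conventions for the Camacho--Sad index, which decide whether the result is $\lambda_2/\lambda_1$ or its reciprocal. I will fix this by matching with the convention already used in \eqref{eq:original-transverse-CS}, which applies to the same diagonal normal form.
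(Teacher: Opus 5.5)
Your proposal is correct and is essentially the paper's proof: diagonalise $\Theta_I(u)$ on $\C[x,y]$, observe that $x$ and $y$ are eigenfunctions so the axes are invariant, apply the Brasselet--Seade--Suwa normal-bundle residue formula to get $\operatorname{Res}_0\bigl(\lambda_2(u)\,dx/(\lambda_1(u)x)\bigr)=\lambda_2(u)/\lambda_1(u)$, and read off the dependence on the logarithmic residues from \eqref{eq:BB-residue-coefficients}. One small slip in your optional $1$-form cross-check: the classical formula is $-\operatorname{Res}_0\bigl((\eta/h)|_{C}\bigr)$, not $-\operatorname{Res}_0(\eta|_C)$. With $h=\lambda_1(u)x$ and $\eta=-\lambda_2(u)\,dx$, the form $\eta|_C$ is holomorphic, so your formula as written would give $0$.
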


\begin{proof}
For $C_x$, the defining function is $y$ and $\Theta_I(u)(y)=\lambda_2(u)y$, while the induced vector field on $C_x$ is $\lambda_1(u)x\partial/\partial x$.  The normal-bundle residue formula of Brasselet--Seade--Suwa~\cite[Theorem~6.3.8]{BSS} therefore gives the one-variable Grothendieck residue
\begin{equation*}
\operatorname{Res}_0\!\left(\frac{\lambda_2(u)\,dx}{\lambda_1(u)x}\right)
=\frac{\lambda_2(u)}{\lambda_1(u)}.
\end{equation*}
For a smooth invariant curve in a surface this normal-bundle residue is the Camacho--Sad index~\cite[Remark~6.3.3]{BSS}.  The calculation for $C_y$ is symmetric.
\end{proof}

In the singular toric case, the residue formulas use two ingredients: the equations of the envelope and the infinitesimal action on them.  Let $\beta^1,\ldots,\beta^N$ be a Hilbert basis of $S_I$, let $P_I=\C[x_1,\ldots,x_N]$, and let
\begin{equation}\label{eq:toric-presentation-residues}
P_I\longrightarrow\C[S_I],\; x_a\longmapsto X^{\beta^a},
\end{equation}
have kernel $J_I$.

\begin{proposition}\label{prop:toric-lci-residues}
For $u\in V_I$, put $\lambda_a(u)=\nu_{\beta^a}(u)$ and define
\begin{equation}\label{eq:ambient-residue-field}
\widetilde\Theta_I(u)
=\sum_{a=1}^N\lambda_a(u)x_a\frac{\partial}{\partial x_a}
\end{equation}
on $\C^N$.  Then $\widetilde\Theta_I(u)(J_I)\subset J_I$, and the induced derivation on $P_I/J_I$ is $\Theta_I(u)$.  More precisely, if $x^p-x^q\in J_I$, then
\begin{equation}\label{eq:binomial-residue-weight}
\begin{aligned}
\widetilde\Theta_I(u)(x^p-x^q)&=\kappa_{p,q}(u)(x^p-x^q),\\
\kappa_{p,q}(u)&=\sum_a p_a\lambda_a(u)=\sum_a q_a\lambda_a(u).
\end{aligned}
\end{equation}
Suppose in addition that $Q_I$ is a local complete intersection at $o_I$.  For any local regular sequence $f=(f_1,\ldots,f_r)$ generating $J_I\cO_{\C^N,0}$, there is a holomorphic matrix $K_u$ such that
\begin{equation}\label{eq:Ku-action}
\widetilde\Theta_I(u)(f)=K_u f.
\end{equation}
If the chosen generators are binomial eigenvectors as in~\eqref{eq:binomial-residue-weight}, $K_u$ is diagonal with the corresponding eigenvalues $\kappa_{p,q}(u)$.  If moreover $Q_I$ is smooth near $o_I$ or has $o_I$ as an isolated singular point, and every $\lambda_a(u)$ is non-zero, the relative normal residues $\operatorname{Res}_{\varphi}(\widetilde\Theta_I(u),N_{Q_I};o_I)$ and the virtual tangent residues $\operatorname{Res}_{\psi}(\widetilde\Theta_I(u),\tau_{Q_I};o_I)$ are determined by the toric presentation~\eqref{eq:toric-presentation-residues} and the eigenvalues $\lambda_a(u)$.  If the support weights are linearly independent, these eigenvalues are linear forms in the logarithmic residue coefficients $c_i(\beta^a)$.
\end{proposition}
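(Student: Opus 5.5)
We first establish the two algebraic facts about $\widetilde\Theta_I(u)$ and $J_I$, and then reduce the residue assertions to the Brasselet--Seade--Suwa formalism.

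\textbf{Step 1: the ideal $J_I$ is preserved.} Since $\C[S_I]$ is an affine semigroup algebra, the kernel $J_I$ of~\eqref{eq:toric-presentation-residues} is spanned over $\C$ by the binomials $x^p-x^q$ with $\sum_a p_a\beta^a=\sum_a q_a\beta^a$. This is the standard description of toric ideals: split a polynomial in $J_I$ according to the image in $S_I$ of its exponents. The vector field~\eqref{eq:ambient-residue-field} is diagonal, so $\widetilde\Theta_I(u)(x^p)=\bigl(\sum_a p_a\lambda_a(u)\bigr)x^p$. Additivity of $\alpha\mapsto\nu_\alpha$ gives
\[
\sum_a p_a\lambda_a(u)=\nu_{\sum_a p_a\beta^a}(u)=\nu_{\sum_a q_a\beta^a}(u)=\sum_a q_a\lambda_a(u).
\]
This proves~\eqref{eq:binomial-residue-weight} and hence $\widetilde\Theta_I(u)(J_I)\subset J_I$. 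On the images of the generators $x_a\mapsto X^{\beta^a}$, the induced derivation agrees with~\eqref{eq:Theta-I}. A derivation is determined by its values on generators, so the induced derivation is $\Theta_I(u)$.

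\textbf{Step 2: the matrix $K_u$.} Assume $Q_I$ is a local complete intersection at $o_I$, and let $f$ be a regular sequence generating $J_I\cO_{\C^N,0}$. By Step 1 each $\widetilde\Theta_I(u)(f_\ell)$ lies in $J_I\cO_{\C^N,0}$. Hence it can be written as $\sum_m (K_u)_{\ell m}f_m$ with holomorphic coefficients, which is~\eqref{eq:Ku-action}. The matrix $K_u$ is not unique, but it is well defined modulo the usual Koszul ambiguity. If the $f_\ell$ are chosen to be binomials, then~\eqref{eq:binomial-residue-weight} shows that one may take $K_u=\operatorname{diag}(\kappa_\ell(u))$.

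\textbf{Step 3: the residues.} When every $\lambda_a(u)\ne0$, the origin is the only zero of $\widetilde\Theta_I(u)$ on $\C^N$, so it is an isolated zero on $Q_I$. In BSS~\cite[Chapter~6]{BSS}, for an lci variety with an isolated singularity or smooth point, the relative normal residue and the virtual tangent residue are expressed by Grothendieck residues. The data entering these residues are the Jacobian-type matrices of $\widetilde\Theta_I(u)$, namely $\operatorname{diag}(\lambda_a(u))$, together with $K_u$ and the generators $f$. All of these are read off from the presentation~\eqref{eq:toric-presentation-residues} and the $\lambda_a(u)$. We also note that the residues are independent of the choice of $f$ and $K_u$, which is part of the BSS theory. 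This independence is the step where care is needed: we will cite their invariance statement rather than reprove it.

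\textbf{Step 4: independent support weights.} If the support weights are linearly independent, then~\eqref{eq:BB-residue-coefficients} expresses each $\lambda_a(u)$ as a linear form in the coefficients $c_i(\beta^a)$. This gives the final assertion.

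The main obstacle is matching the hypotheses to the precise residue theorems in BSS: an isolated singular point, compatibility of $K_u$ with a regular-sequence presentation, and the non-degeneracy needed for the Grothendieck residue to be defined.
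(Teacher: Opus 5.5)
Your proposal is correct and follows essentially the same route as the paper. Both arguments obtain invariance of $J_I$ and the eigenvalue formula from its binomial description and the additivity of $\alpha\mapsto\nu_\alpha(u)$, get $K_u$ from invariance of $J_I\cO_{\C^N,0}$, reduce the residue claims to the Lehmann--Suwa/Brasselet--Seade--Suwa relative residue formulas with Jacobian $\operatorname{diag}(\lambda_a(u))$ and an isolated zero at the origin, and take the final assertion from \eqref{eq:BB-residue-coefficients}.
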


\begin{proof}
The toric ideal $J_I$ is generated by binomials $x^p-x^q$ for which $\sum_a p_a\beta^a=\sum_a q_a\beta^a$.  Applying the additive functional $\alpha\mapsto\nu_\alpha(u)$ gives $\sum_a p_a\lambda_a(u)=\sum_a q_a\lambda_a(u)$, which proves~\eqref{eq:binomial-residue-weight} and the invariance of $J_I$.  The induced derivation sends $X^{\beta^a}$ to $\lambda_a(u)X^{\beta^a}$, hence equals $\Theta_I(u)$.

If $J_I\cO_{\C^N,0}$ is generated by the regular sequence $f_1,\ldots,f_r$, invariance of the ideal gives~\eqref{eq:Ku-action}.  When the generators are binomial eigenvectors, formula~\eqref{eq:binomial-residue-weight} makes $K_u$ diagonal.  If $Q_I$ is smooth near $o_I$ or has $o_I$ as an isolated singular point, and all $\lambda_a(u)$ are non-zero, the ambient field~\eqref{eq:ambient-residue-field} has an isolated zero at the origin and the singular set entering the local residue formulas is isolated there.  The hypotheses of the relative residue formulas of Lehmann--Suwa and Brasselet--Seade--Suwa are therefore satisfied; see~\cite{LehmannSuwa} and~\cite[Theorems~6.3.8 and~6.3.11]{BSS}.  Their Jacobian matrix is $\operatorname{diag}(\lambda_1(u),\ldots,\lambda_N(u))$, while the action on the defining equations is~\eqref{eq:Ku-action}.  The equations are determined by $J_I$, and the eigenvalues $\lambda_a(u)$ are determined by the infinitesimal transport action.  Thus the corresponding normal and virtual residues are determined by $J_I$ together with these eigenvalues.  Under independence of the support weights, formula~\eqref{eq:BB-residue-coefficients} expresses them in terms of the coefficients $c_i(\beta^a)$.
\end{proof}

Let $I\subset I'$ be non-empty singular supports and assume that the support weights are linearly independent at both supports.  Put $D=I'\setminus I$ and $J'=(I')^c$, and set
\begin{equation*}
B_{I,I'}=S_I\cap\N^{J'}\subset S_{I'}.
\end{equation*}
Thus $B_{I,I'}$ consists of the resonances already present at $I$ whose exponents use only coordinates in $J'$.  For $\alpha\in S_{I'}$, write
\begin{equation}\label{eq:support-expansion-Iprime}
\nu_\alpha=\sum_{\ell\in I'}c_\ell^{I'}(\alpha)\Lambda_\ell.
\end{equation}

\begin{proposition}\label{prop:resonances-support-inclusion}
With this notation, for $\alpha\in S_{I'}$ the following are equivalent:
\begin{enumerate}[(i)]
\item $\alpha\in B_{I,I'}$;
\item $c_d^{I'}(\alpha)=0$ for every $d\in D$.
\end{enumerate}
When these conditions hold, $c_i^{I'}(\alpha)=c_i^I(\alpha)$ for every $i\in I$.  Moreover, the subalgebra
 $
\C[B_{I,I'}]\subset\C[S_{I'}]
 $
is stable under $\Theta_{I'}(V_{I'})$, and the induced action factors through $\pi_{I',I}:V_{I'}\to V_I$.  Equivalently, every $v'\in\ker\pi_{I',I}$ satisfies
\begin{equation}\label{eq:new-directions-vanish-common}
\Theta_{I'}(v')|_{\C[B_{I,I'}]}=0.
\end{equation}
\end{proposition}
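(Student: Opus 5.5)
The plan is to compare the two expansions of the single ambient covector $\nu_\alpha=\sum_{j\in J'}\alpha_j\Lambda_j$ and to use linear independence of the support weights at $I'$. Fix $\alpha\in S_{I'}$ and regard it as an element of $\N^{J_I}$ with zero coordinates on $D$. Since the support weights $\Lambda_\ell$, $\ell\in I'$, are linearly independent, their classes form a basis of $V_{I'}^*=\h_{I'}^\perp$ by Lemma~\ref{lem:leaf-uniformisation}, so the coefficients in~\eqref{eq:support-expansion-Iprime} are unique.

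For (i)$\Rightarrow$(ii), suppose $\alpha\in B_{I,I'}=S_I\cap\N^{J'}$. Then $\nu_\alpha$ vanishes on $\h_I$, so it lies in $\h_I^\perp=\Span\{\Lambda_i:i\in I\}$. Hence $\nu_\alpha=\sum_{i\in I}c_i^I(\alpha)\Lambda_i$. Comparing this with~\eqref{eq:support-expansion-Iprime} and using uniqueness of coefficients over the independent family $\{\Lambda_\ell\}_{\ell\in I'}$ gives $c_d^{I'}(\alpha)=0$ for $d\in D$. It also gives $c_i^{I'}(\alpha)=c_i^I(\alpha)$ for $i\in I$.

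For (ii)$\Rightarrow$(i), if $c_d^{I'}(\alpha)=0$ for all $d\in D$, then $\nu_\alpha\in\Span\{\Lambda_i:i\in I\}$, so $\nu_\alpha|_{\h_I}=0$. Because $\alpha$ is supported on $J'\subset J_I$ and $\Lambda_i|_{\h_I}=0$ for $i\in I$, the relation $\sum_{j\in J_I}\alpha_j\Lambda_j|_{\h_I}=0$ holds. Thus $\alpha\in S_I\cap\N^{J'}$.

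For the stability statement, note that $\Theta_{I'}(v')$ acts diagonally on monomials by~\eqref{eq:Theta-I}. Any subalgebra spanned by monomials is therefore stable, and in particular $\C[B_{I,I'}]$ is stable. For $\alpha\in B_{I,I'}$ we have $\nu_\alpha\in\h_I^\perp$, so $\nu_\alpha(v')$ depends only on $\pi_{I',I}(v')$. Concretely,
$$\Theta_{I'}(v')X^\alpha=\nu_\alpha(\pi_{I',I}(v'))X^\alpha=\Theta_I(\pi_{I',I}(v'))X^\alpha.$$
This shows the factorisation. Taking $v'\in\ker\pi_{I',I}$ then gives~\eqref{eq:new-directions-vanish-common}.

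An alternative route to the factorisation is to restrict~\eqref{eq:residue-support-compatibility} to $B_{I,I'}$, where $\operatorname{res}_{I,I'}$ is the identity on monomials. The main point needing care is the identification $\h_I^\perp=\Span\{\Lambda_i:i\in I\}$ inside $\g^*$, which holds by the definition of $\h_I$ as an intersection of kernels, and the uniqueness of coefficients. The rest is routine bookkeeping.
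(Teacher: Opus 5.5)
Your proposal is correct and follows essentially the same route as the paper. Comparing the two expansions of $\nu_\alpha$ and using uniqueness of coefficients over the independent weights $\{\Lambda_\ell\}_{\ell\in I'}$ gives (i)$\Rightarrow$(ii) together with $c_i^{I'}(\alpha)=c_i^I(\alpha)$; vanishing of $\Span\{\Lambda_i:i\in I\}$ on $\h_I$ gives the converse; and the diagonal action of $\Theta_{I'}(v')$ on monomials, with $\nu_\alpha(v')=\nu_\alpha(\pi_{I',I}(v'))$, gives stability and the factorisation, where you merely make explicit the identification $\h_I^\perp=\Span\{\Lambda_i:i\in I\}$ that the paper leaves implicit.
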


\begin{proof}
If $\alpha\in B_{I,I'}$, then $\alpha$ is an $I$-resonance supported on $J'$.  The two expansions of $\nu_\alpha$ are
\begin{equation*}
\nu_\alpha=\sum_{i\in I}c_i^I(\alpha)\Lambda_i
=\sum_{\ell\in I'}c_\ell^{I'}(\alpha)\Lambda_\ell.
\end{equation*}
Since the weights indexed by $I'$ are linearly independent, uniqueness gives $c_i^{I'}(\alpha)=c_i^I(\alpha)$ for $i\in I$ and $c_d^{I'}(\alpha)=0$ for $d\in D$.
Conversely, suppose $c_d^{I'}(\alpha)=0$ for every $d\in D$.  Then~\eqref{eq:support-expansion-Iprime} expresses $\nu_\alpha$ as a linear combination of the weights indexed by $I$.  Every such combination vanishes on $\h_I$, so $\sum_{j\in J'}\alpha_j\Lambda_j|_{\h_I}=0$.  Viewing $\alpha$ as an element of $\N^{J_I}$ by inserting zero coordinates on $D$ gives $\alpha\in S_I\cap\N^{J'}=B_{I,I'}$. 
The algebra $\C[B_{I,I'}]$ is spanned by the monomials $X^\alpha$ with $\alpha\in B_{I,I'}$.  For such a monomial, Proposition~\ref{prop:residue-Lie-action} gives
\begin{equation*}
\Theta_{I'}(v')(X^\alpha)
=\nu_\alpha(v')X^\alpha
=\nu_\alpha(\pi_{I',I}(v'))X^\alpha.
\end{equation*}
Thus the action preserves $\C[B_{I,I'}]$ and factors through $V_I$.  If $v'\in\ker\pi_{I',I}$, the last expression is zero, proving~\eqref{eq:new-directions-vanish-common}.
\end{proof}

\begin{corollary}\label{cor:residue-support-kernel}
For $I\subset I'$ satisfying the preceding hypotheses,
\begin{equation}\label{eq:residue-support-kernel}
\dim_\C\Theta_{I'}(\ker\pi_{I',I})
=\dim_\C\operatorname{span}\left\{
\bigl(c_d^{I'}(\alpha)\bigr)_{d\in D}:\alpha\in S_{I'}
\right\}.
\end{equation}
The following conditions are equivalent:
\begin{enumerate}[(i)]
\item $S_{I'}=B_{I,I'}$;
\item $\bigl(c_d^{I'}(\alpha)\bigr)_{d\in D}=0$ for every $\alpha\in S_{I'}$;
\item $\ker\pi_{I',I}$ acts trivially on $\C[S_{I'}]$;
\item $\delta_d^{I'}=0$ for every $d\in D$.
\end{enumerate}
\end{corollary}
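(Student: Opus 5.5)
The proof reduces everything to linear algebra of the additive map $C^D:S_{I'}\to\C^D$, $\alpha\mapsto(c_d^{I'}(\alpha))_{d\in D}$, which extends additively to $M_{I'}$. The first step is to compute $\Theta_{I'}$ on $\ker\pi_{I',I}$ explicitly. Since the support weights at $I'$ are independent, Lemma~\ref{lem:leaf-uniformisation} gives the dual basis $e_\ell$, $\ell\in I'$, of $V_{I'}$. The kernel of $\pi_{I',I}:\g/\h_{I'}\to\g/\h_I$ is $\h_I/\h_{I'}$, which consists of the classes annihilated by every $\Lambda_i$ with $i\in I$. In the dual basis this kernel is $\Span\{e_d:d\in D\}$. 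Writing $v'=\sum_{d\in D}u_de_d$ and using Proposition~\ref{prop:residue-Lie-action}, one gets
\[
\Theta_{I'}(v')(X^\alpha)=\Bigl(\sum_{d\in D}u_d\,c_d^{I'}(\alpha)\Bigr)X^\alpha.
\]

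The second step identifies the image. The derivations $\Theta_{I'}(v')$ are diagonal in the monomial basis, so the linear map $u\mapsto\Theta_{I'}(\sum u_de_d)$ has kernel
\[
\{u\in\C^D:\langle u,C^D(\alpha)\rangle=0\text{ for all }\alpha\in S_{I'}\}.
\]
This is the annihilator of $W=\operatorname{span}_\C C^D(S_{I'})$. By rank--nullity, $\dim\Theta_{I'}(\ker\pi_{I',I})=|D|-\dim W^\perp=\dim W$, which is \eqref{eq:residue-support-kernel}.

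For the equivalences, each implication follows from an earlier result or a short remark.

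\begin{itemize}
\item (i)$\Leftrightarrow$(ii): this is Proposition~\ref{prop:resonances-support-inclusion}, applied element by element to $\alpha\in S_{I'}$.
\item (ii)$\Leftrightarrow$(iii): condition (ii) says $W=0$, and by the dimension formula this is equivalent to $\Theta_{I'}(\ker\pi_{I',I})=0$. To pass from the infinitesimal statement to triviality of the action on $\C[S_{I'}]$, read ``acts'' through $\Theta_{I'}$. Alternatively, integrate: by \eqref{eq:tau-on-monomials}, $\tau_{I'}(v')$ multiplies $X^\alpha$ by $e^{\nu_\alpha(v')}$, and $\nu_\alpha(v')=0$ for all $v'$ in the subspace $\ker\pi_{I',I}$ exactly when the derivation vanishes.
\item (ii)$\Leftrightarrow$(iv): here $\delta_d^{I'}=\Theta_{I'}(e_d)$ acts by $c_d^{I'}(\alpha)$ on $X^\alpha$. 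So $\delta_d^{I'}=0$ for all $d\in D$ means every coordinate of $C^D(\alpha)$ vanishes for all $\alpha\in S_{I'}$, which is (ii).
\end{itemize}

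The only point needing care is the identification $\ker\pi_{I',I}=\Span\{e_d:d\in D\}$. It requires that $\{\Lambda_\ell\}_{\ell\in I'}$ descend to a basis of $V_{I'}^*$, so that a vector of $V_{I'}$ is determined by its values under the $\Lambda_\ell$. It also requires that the classes killed by $\Lambda_i$, $i\in I$, be exactly those of $\h_I$. Both facts follow directly from the independence hypothesis and the definitions of $\h_I$ and $\h_{I'}$.
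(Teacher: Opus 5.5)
Your proposal is correct and follows the paper's proof: both identify $\ker\pi_{I',I}$ with $\Span\{e_d:d\in D\}$ in the dual basis, read the dimension formula off the diagonal action $\Theta_{I'}\bigl(\sum_d v_de_d\bigr)(X^\alpha)=\bigl(\sum_d v_dc_d^{I'}(\alpha)\bigr)X^\alpha$ on the monomial basis, and obtain (i)$\Leftrightarrow$(ii) from Proposition~\ref{prop:resonances-support-inclusion}, (iii) from the displayed action, and (iv) from $\delta_d^{I'}=\Theta_{I'}(e_d)$. Your rank--nullity argument spells out what the paper compresses into ``since the monomials form a basis.'' If you use the integrated reading of (iii) via $\tau_{I'}$, add one line: a linear functional on a complex subspace that takes values in $2\pi i\Z$ must vanish.
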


\begin{proof}
Under the bases dual to the support weights, $\ker\pi_{I',I}$ is spanned by the vectors $e_d$, $d\in D$.  For $v'=\sum_{d\in D}v_de_d$ and $\alpha\in S_{I'}$,
\begin{equation*}
\Theta_{I'}(v')(X^\alpha)
=\left(\sum_{d\in D}v_dc_d^{I'}(\alpha)\right)X^\alpha.
\end{equation*}
Since the monomials form a basis of $\C[S_{I'}]$, equation~\eqref{eq:residue-support-kernel} follows.  Proposition~\ref{prop:resonances-support-inclusion} shows that (i) and (ii) are equivalent.  The equivalence with (iii) follows from the displayed action, and (iv) follows from $\delta_d^{I'}=\Theta_{I'}(e_d)$.
\end{proof}

Thus, if $\alpha\in S_{I'}\setminus B_{I,I'}$, then $c_d^{I'}(\alpha)\ne0$ for some $d\in I'\setminus I$. 
Put
\begin{equation*}
Q_{I,I'}=\operatorname{Spec}\C[B_{I,I'}]
\end{equation*}
and denote its distinguished toric point by $o_{I,I'}$.

\begin{theorem}\label{thm:common-characteristic-residues}
Let $I\subset I'$ be non-empty singular supports and $v'\in V_{I'}$.  There is a derivation $\Theta_{I,I'}(v')$ of $\C[B_{I,I'}]$ characterised, for $\alpha\in B_{I,I'}$, by
\begin{equation}\label{eq:common-residue-field}
\Theta_{I,I'}(v')(X^\alpha)=\nu_\alpha(v')X^\alpha.
\end{equation}
It is both the derivation induced by $\Theta_I(\pi_{I',I}(v'))$ through the quotient $\C[S_I]\to\C[B_{I,I'}]$ and the restriction of $\Theta_{I'}(v')$ to $\C[B_{I,I'}]\subset\C[S_{I'}]$.

Assume that $o_{I,I'}$ is smooth and put $m=\dim Q_{I,I'}>0$.  Let $\beta^1,\ldots,\beta^m$ be the Hilbert basis of $B_{I,I'}\simeq\N^m$ and set $\lambda_a(v')=\nu_{\beta^a}(v')=\nu_{\beta^a}(\pi_{I',I}(v'))$.  If every $\lambda_a(v')$ is non-zero, then for every homogeneous symmetric polynomial $\varphi$ of degree $m$,
\begin{equation}\label{eq:common-BB-support}
\operatorname{Res}_{\varphi}\bigl(\Theta_{I,I'}(v'),o_{I,I'}\bigr)
=\frac{\varphi\bigl(\lambda_1(v'),\ldots,\lambda_m(v')\bigr)}
{\lambda_1(v')\cdots\lambda_m(v')}.
\end{equation}
This value is the same whether the derivation is obtained from $I$ or by restriction from $I'$.  If $m=2$, the two invariant toric curves have Camacho--Sad indices $\lambda_2(v')/\lambda_1(v')$ and $\lambda_1(v')/\lambda_2(v')$, respectively, with the same values for both supports.
\end{theorem}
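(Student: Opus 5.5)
The argument is formal: define $\Theta_{I,I'}(v')$ by~\eqref{eq:common-residue-field}, check that it agrees with both derivations named in the statement, and then repeat the smooth diagonal computation of Theorem~\ref{thm:smooth-envelope-BB} on $Q_{I,I'}$.

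\textbf{Step 1: the derivation and its two descriptions.} First, for $\alpha\in B_{I,I'}$ the covector $\nu_\alpha=\sum_{j\in J'}\alpha_j\Lambda_j$ is a single ambient covector. It vanishes on $\h_I\supset\h_{I'}$, so $\nu_\alpha(v')=\nu_\alpha(\pi_{I',I}(v'))$ for every $v'\in V_{I'}$, exactly as in the proof of Proposition~\ref{prop:support-equivariance}. Additivity of $\alpha\mapsto\nu_\alpha$ gives the Leibniz rule on monomials, as in Proposition~\ref{prop:residue-Lie-action}, so~\eqref{eq:common-residue-field} defines a derivation of $\C[B_{I,I'}]$.

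\emph{Restriction from $I'$.} Since $B_{I,I'}\subset S_{I'}$ and $\Theta_{I'}(v')(X^\alpha)=\nu_\alpha(v')X^\alpha$, the derivation $\Theta_{I'}(v')$ preserves $\C[B_{I,I'}]$ and restricts to $\Theta_{I,I'}(v')$. This is the stability statement of Proposition~\ref{prop:resonances-support-inclusion}, but here it needs no independence hypothesis.

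\emph{Quotient from $I$.} By Proposition~\ref{prop:support-restriction-resonances}, the map $\C[S_I]\to\C[B_{I,I'}]$ is $\operatorname{res}_{I,I'}$, and its kernel is the monomial ideal of exponents meeting $D$. The derivation $\Theta_I(u)$ is diagonal on monomials, so it preserves this ideal and descends. On $X^\alpha$ with $\alpha\in B_{I,I'}$ the descended derivation acts by $\nu_\alpha(\pi_{I',I}(v'))=\nu_\alpha(v')$. Equivalently, this is the identity~\eqref{eq:residue-support-compatibility} restricted to the image of $\operatorname{res}_{I,I'}$.

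\textbf{Step 2: the residues in the smooth case.} Assume $o_{I,I'}$ is smooth. Then Proposition~\ref{prop:smoothness} applies, because $B_{I,I'}$ is a pointed saturated affine semigroup. Saturation holds since $B_{I,I'}$ is the intersection of the saturated semigroup $S_I$ with a coordinate face, so $\operatorname{gp}(B_{I,I'})\cap\R_{\ge0}B_{I,I'}=B_{I,I'}$ by the argument of Lemma~\ref{lem:L-saturated}. Hence $B_{I,I'}\simeq\N^m$, with coordinates $x_a=X^{\beta^a}$. In these coordinates
\[
\Theta_{I,I'}(v')=\sum_a\lambda_a(v')\,x_a\frac{\partial}{\partial x_a}.
\]
If all $\lambda_a(v')\ne0$, this field has an isolated zero at the origin. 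The diagonal Baum--Bott formula \cite[Example~6.1.1, Remark~6.2.2]{BSS} then gives~\eqref{eq:common-BB-support}, exactly as in Theorem~\ref{thm:smooth-envelope-BB}.

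For $m=2$, the Camacho--Sad indices of the coordinate curves $\{x_2=0\}$ and $\{x_1=0\}$ are computed as in Corollary~\ref{cor:CS-from-connection}. They equal $\lambda_2(v')/\lambda_1(v')$ and $\lambda_1(v')/\lambda_2(v')$.

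\textbf{Step 3: independence of the support.} The residues depend only on the derivation $\Theta_{I,I'}(v')$, together with $o_{I,I'}$ and the Hilbert basis. By Step 1 this derivation is the same whether obtained from $I$ or from $I'$, and $\lambda_a(v')$ is computed from the same covector $\nu_{\beta^a}$ in both descriptions. So the values coincide.

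The only point needing care is the saturation of $B_{I,I'}$ used in Step 2 to invoke the smoothness criterion. If one prefers to avoid it, one can read smoothness of $o_{I,I'}$ as directly meaning that $\C[B_{I,I'}]$ is a polynomial ring on its Hilbert basis, which is how the statement uses it. The rest of the argument is bookkeeping on monomials.
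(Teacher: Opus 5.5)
Your proposal is correct and follows the paper's proof essentially step for step. Both arguments descend $\Theta_I(\pi_{I',I}(v'))$ through the monomial kernel of $\operatorname{res}_{I,I'}$ and identify it with the restriction of $\Theta_{I'}(v')$ via~\eqref{eq:residue-support-compatibility}; both then read off~\eqref{eq:common-BB-support} and the Camacho--Sad indices from the diagonal field on $Q_{I,I'}\simeq\C^m$ using Proposition~\ref{prop:smoothness} and the formulas of~\cite{BSS}.

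You also add two valid refinements of points the paper leaves implicit. First, the restriction step needs no independence hypothesis, whereas the paper cites Proposition~\ref{prop:resonances-support-inclusion}, which is stated under that hypothesis. Second, you check explicitly that $B_{I,I'}$ is the set of non-negative points of a saturated lattice, which is what makes the smoothness criterion applicable.
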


\begin{proof}
The quotient $\C[S_I]\to\C[B_{I,I'}]$ is the non-zero part of $\operatorname{res}_{I,I'}$, and its kernel is a monomial ideal.  Since $\Theta_I(\pi_{I',I}(v'))$ acts diagonally on monomials, this ideal is invariant and the derivation descends.  Proposition~\ref{prop:resonances-support-inclusion} and~\eqref{eq:residue-support-compatibility} show that the descended derivation agrees with the restriction of $\Theta_{I'}(v')$, giving~\eqref{eq:common-residue-field}. 
If $o_{I,I'}$ is smooth, Proposition~\ref{prop:smoothness} identifies $\C[B_{I,I'}]$ with $\C[x_1,\ldots,x_m]$, where $x_a=X^{\beta^a}$.  The common derivation is
\begin{equation*}
\Theta_{I,I'}(v')
=\sum_{a=1}^m\lambda_a(v')x_a\frac{\partial}{\partial x_a}.
\end{equation*}
Formula~\eqref{eq:common-BB-support} follows from the diagonal Baum--Bott formula~\cite[Example~6.1.1]{BSS}.  When $m=2$, Corollary~\ref{cor:CS-from-connection} gives the stated Camacho--Sad indices.
\end{proof}

\subsection{Cohomology and support compatibility}

Throughout this subsection, assume that the support weights $\Lambda_i$, $i\in I$, are linearly independent.  Put $S_I^{\mathrm{per}}=S_I\cap M_I^{\mathrm{per}}$.  Let $L_I^{\mathrm{alg}}=(\C^*)^I$ and $$A_I^{\mathrm{alg}}=\C[z_i^{\pm1}:i\in I]\otimes_\C\C[S_I].$$
Formula~\eqref{eq:logarithmic-flat-operator} defines an algebraic flat connection on $A_I^{\mathrm{alg}}$.  Denote by
$H^q_{\mathrm{dR}}(L_I^{\mathrm{alg}},A_I^{\mathrm{alg}},\nabla_I)$ the cohomology of its algebraic de Rham complex.

\begin{theorem}\label{thm:envelope-de-rham}
For every $q\ge0$ there is a natural isomorphism
\begin{equation}\label{eq:envelope-de-rham}
H^q_{\mathrm{dR}}(L_I^{\mathrm{alg}},A_I^{\mathrm{alg}},\nabla_I)
\simeq
\C[S_I^{\mathrm{per}}]\otimes_\C\bigwedge^q\C^I.
\end{equation}
For $\alpha\in S_I^{\mathrm{per}}$ and $i_1<\cdots<i_q$, the element
\begin{equation}\label{eq:de-rham-representative}
z^{C_I(\alpha)}X^\alpha
\frac{dz_{i_1}}{z_{i_1}}\wedge\cdots\wedge\frac{dz_{i_q}}{z_{i_q}},
\end{equation}
represents the class corresponding to $X^\alpha\otimes(u_{i_1}\wedge\cdots\wedge u_{i_q})$.  In particular,
\begin{equation}\label{eq:horizontal-global-algebra}
H^0_{\mathrm{dR}}(L_I^{\mathrm{alg}},A_I^{\mathrm{alg}},\nabla_I)
\simeq\C[S_I^{\mathrm{per}}].
\end{equation}
\end{theorem}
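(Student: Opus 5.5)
The plan is to decompose the algebraic de Rham complex of $(A_I^{\mathrm{alg}},\nabla_I)$ into its monomial summands. Each summand will turn out to be a twisted Koszul complex on a Laurent polynomial ring, and its cohomology can be computed by hand.

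The connection~\eqref{eq:logarithmic-flat-operator} acts diagonally on the basis $X^\alpha$, $\alpha\in S_I$, of $\C[S_I]$. The forms $dz_i/z_i$ form a global basis of $\Omega^1$ on $L_I^{\mathrm{alg}}=(\C^*)^I$. Hence the complex splits as a direct sum over $\alpha\in S_I$ of complexes
$$
K_\alpha^\bullet=\C[z^{\pm1}]\otimes\textstyle\bigwedge^\bullet\C^I,
$$
with differential $\omega\mapsto d\omega-\bigl(\sum_i c_i(\alpha)\,dz_i/z_i\bigr)\wedge\omega$.

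Next I split each $K_\alpha^\bullet$ further over Laurent monomials $z^m$, $m\in\Z^I$. On $z^m X^\alpha\,dz_{i_1}/z_{i_1}\wedge\cdots$ the differential is wedge with
$$
\sum_i\bigl(m_i-c_i(\alpha)\bigr)\frac{dz_i}{z_i}.
$$
So each graded piece is the Koszul complex of $\bigwedge^\bullet\C^I$ with respect to the vector $m-C_I(\alpha)\in\C^I$. That complex is exact whenever the vector is non-zero, because wedging with a non-zero vector of a finite-dimensional space is exact. When the vector is zero the differential vanishes, and the piece contributes all of $\bigwedge^q\C^I$.

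A zero vector requires $C_I(\alpha)=m\in\Z^I$. By Corollary~\ref{cor:residue-arithmetic} (equation~\eqref{eq:integral-residue-lattice}) this is exactly the condition $\alpha\in M_I^{\mathrm{per}}$, so that $\alpha\in S_I^{\mathrm{per}}$. In that case the unique surviving $m$ is $C_I(\alpha)$, and the surviving cochains are exactly the forms~\eqref{eq:de-rham-representative}. They are closed and not exact, since they lie in a graded piece with zero differential. Summing over $\alpha$ gives~\eqref{eq:envelope-de-rham}, and the case $q=0$ gives~\eqref{eq:horizontal-global-algebra}.

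The step that needs care is justifying that the de Rham complex really is the direct sum over the bigrading $(\alpha,m)$. The differential preserves each graded piece, since $d(z^m)=z^m\sum_i m_i\,dz_i/z_i$ and $\nabla_I$ is diagonal in $X^\alpha$. Because we work with algebraic, finitely supported Laurent polynomials, cohomology commutes with this direct sum. This is exactly why the statement is algebraic: the analytic version would require convergence control.

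Naturality should be read relative to the choices involved: the isomorphism depends only on the identification of $\Omega^1$ with $\C^I$ via the logarithmic basis $dz_i/z_i\mapsto u_i$, and on the canonical monomial $z^{C_I(\alpha)}$. A short remark about this should suffice.
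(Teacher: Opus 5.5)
Your proposal is correct and matches the paper's proof essentially step for step: the bigrading by $(m,\alpha)$, the Koszul complex of $\lambda_{m,\alpha}=m-C_I(\alpha)$ on each piece, acyclicity when $\lambda_{m,\alpha}\neq0$ (the paper writes down the contracting homotopy $\iota_u$ explicitly), and survival exactly at $m=C_I(\alpha)\in\Z^I$, i.e.\ $\alpha\in S_I^{\mathrm{per}}$. The paper adds two small remarks: the de Rham cohomology may be computed on global sections because $L_I^{\mathrm{alg}}$ is affine, and additivity of $C_I$ makes the identification of $H^0$ with $\C[S_I^{\mathrm{per}}]$ an algebra isomorphism.
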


\begin{proof}
Since $L_I^{\mathrm{alg}}$ is affine, its algebraic de Rham complex is computed on global sections.  Let $E_I=\C^I$ with standard basis $(u_i)_{i\in I}$, identified with the span of the logarithmic forms $dz_i/z_i$.  In degree $q$ the complex decomposes as
\begin{equation*}
\bigoplus_{m\in\Z^I}\ \bigoplus_{\alpha\in S_I}
\C z^mX^\alpha\otimes\bigwedge^q E_I.
\end{equation*}
For $\omega\in\bigwedge^q E_I$, formula~\eqref{eq:logarithmic-flat-operator} gives
\begin{equation*}
\nabla_I(z^mX^\alpha\otimes\omega)
=z^mX^\alpha\otimes
\left(\sum_{i\in I}(m_i-c_i(\alpha))u_i\right)\wedge\omega.
\end{equation*}
The summand indexed by $(m,\alpha)$ is the Koszul complex
\begin{equation*}
\left(\bigwedge^\bullet E_I,\lambda_{m,\alpha}\wedge-\right),
\end{equation*}
where $\lambda_{m,\alpha}=\sum_{i\in I}(m_i-c_i(\alpha))u_i$.  If $\lambda_{m,\alpha}\ne0$, then choose $u\in E_I^*$ with $u(\lambda_{m,\alpha})=1$.  Interior multiplication by $u$ satisfies
\begin{equation*}
\iota_u(\lambda_{m,\alpha}\wedge\omega)
+\lambda_{m,\alpha}\wedge\iota_u\omega
=\omega,
\end{equation*}
so this summand is acyclic.  If $\lambda_{m,\alpha}=0$, then its differential is zero.  This occurs exactly when $m=C_I(\alpha)\in\Z^I$, equivalently when $\alpha\in S_I^{\mathrm{per}}$.  The non-acyclic summands are generated by~\eqref{eq:de-rham-representative}; additivity of $C_I$ identifies their algebra with $\C[S_I^{\mathrm{per}}]$.  Equations~\eqref{eq:envelope-de-rham} and~\eqref{eq:horizontal-global-algebra} follow.
\end{proof}

For arbitrary non-empty singular supports, the flat envelope algebras $\mathscr A_I$ over $L_I$ are defined by~\eqref{eq:flat-envelope-connection}.  When the support weights are independent, Lemma~\ref{lem:leaf-uniformisation} identifies $L_I$ with $(\C^*)^I$.  The support morphisms are horizontal for these coordinate-free connections.
\begin{proposition}\label{prop:horizontal-support}
Let $I\subset I'$ be non-empty singular supports.  The map $\pi_{I',I}:V_{I'}\to V_I$ of Proposition~\ref{prop:support-equivariance} descends to
\begin{equation}\label{eq:support-base-map}
\overline\pi_{I',I}:L_{I'}\longrightarrow L_I.
\end{equation}
The homomorphism $\operatorname{res}_{I,I'}:\C[S_I]\to\C[S_{I'}]$ induces
\begin{equation}\label{eq:horizontal-support-algebra-map}
\operatorname{res}_{I,I'}:\overline\pi_{I',I}^*\mathscr A_I\longrightarrow\mathscr A_{I'},
\end{equation}
and this map is horizontal:
\begin{equation}\label{eq:horizontal-support-identity}
\nabla_{I'}\circ\operatorname{res}_{I,I'}
=(d\overline\pi_{I',I}\otimes\operatorname{res}_{I,I'})\circ\overline\pi_{I',I}^*\nabla_I.
\end{equation}
For $I\subset I'\subset I''$, the base maps and algebra maps compose.  The flat toric envelope algebras form an inverse system over the poset of non-empty singular supports.
\end{proposition}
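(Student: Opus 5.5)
The plan is to reduce every assertion to computations on the monomial basis $X^\alpha$ of $\C[S_I]$ and on the universal covers $V_I$, $V_{I'}$. For the base map, I would first check that $\pi_{I',I}$ carries $\Gamma_{I'}$ into $\Gamma_I$. This is already proved in Proposition~\ref{prop:support-equivariance}: if $t\in K_{I'}$ then $t\in K_I$. Since $\pi_{I',I}$ is linear and equivariant for the two deck actions, it descends to a holomorphic map $\overline\pi_{I',I}:L_{I'}=V_{I'}/\Gamma_{I'}\to L_I=V_I/\Gamma_I$. Because $\mathscr A_I=\cO_{L_I}\otimes_\C\C[S_I]$ is free with constant fibre, the pullback is $\overline\pi_{I',I}^*\mathscr A_I=\cO_{L_{I'}}\otimes_\C\C[S_I]$. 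The map \eqref{eq:horizontal-support-algebra-map} is then defined by $f\otimes X^\alpha\mapsto f\otimes\operatorname{res}_{I,I'}(X^\alpha)$. It is $\cO_{L_{I'}}$-linear, and it is multiplicative by Proposition~\ref{prop:support-restriction}.

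For horizontality, I would compare the two sides of \eqref{eq:horizontal-support-identity} on $fX^\alpha$, using \eqref{eq:flat-envelope-connection}. The pulled-back connection is $\overline\pi^*\nabla_I(fX^\alpha)=df\,X^\alpha-f\,\overline\pi^*\omega_\alpha\,X^\alpha$. There are two cases.
\begin{enumerate}[(i)]
\item If $\alpha_d>0$ for some $d\in D$, then $\operatorname{res}_{I,I'}(X^\alpha)=0$ and both sides vanish.
\item Otherwise $\alpha\in S_{I'}$, and the ambient covector $\nu_\alpha=\sum_{j\in J'}\alpha_j\Lambda_j$ is the same at both supports, as observed in the proof of Proposition~\ref{prop:support-equivariance}. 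The forms $\omega_\alpha^{I'}$ and $\overline\pi_{I',I}^*\omega_\alpha^{I}$ both pull back to $V_{I'}$ as the constant form $d\nu_\alpha$, because $\pi_{I',I}$ is induced by the identity of $\g$. Since pullback to the cover is injective on forms, $\omega_\alpha^{I'}=\overline\pi_{I',I}^*\omega_\alpha^I$, and the two sides agree on $fX^\alpha$.
\end{enumerate}
Both sides are additive and satisfy the same Leibniz rule in $f$, so agreement on the generators $fX^\alpha$ gives the identity.

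The main obstacle is not analytic but notational: one has to interpret the right-hand side of \eqref{eq:horizontal-support-identity} correctly as $\operatorname{res}$ applied to the pulled-back connection, where $d\overline\pi$ acts on form parts as pullback. One also has to check that $\omega_\alpha$ really descends compatibly, which is exactly the identity of pullbacks to $V_{I'}$ above. Alternatively, I could differentiate the transport equivariance \eqref{eq:support-transport-equivariance}: horizontal sections are $v\mapsto\tau(v)q_0$, and $\vartheta_{I,I'}$ maps horizontal sections to horizontal sections. I would use this only as a consistency check.

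Composition for $I\subset I'\subset I''$ follows from \eqref{eq:support-projection-compose}, which descends to $\overline\pi_{I'',I}=\overline\pi_{I',I}\circ\overline\pi_{I'',I'}$, and from $\operatorname{res}_{I,I''}=\operatorname{res}_{I',I''}\circ\operatorname{res}_{I,I'}$ (Proposition~\ref{prop:support-restriction}). The pullbacks compose naturally, so the horizontal maps compose, and the flat toric envelope algebras form an inverse system over the poset of non-empty singular supports.
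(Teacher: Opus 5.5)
Your proposal is correct and follows essentially the same route as the paper. You descend the base map via $\pi_{I',I}(\Gamma_{I'})\subset\Gamma_I$, and you split into the case where $\operatorname{res}_{I,I'}$ kills $X^\alpha$ and the case $\alpha\in S_{I'}$, where $\overline\pi_{I',I}^*\omega_\alpha^I=\omega_\alpha^{I'}$ because $\nu_\alpha$ is the same ambient covector; composition then comes from \eqref{eq:support-projection-compose} and $\operatorname{res}_{I,I''}=\operatorname{res}_{I',I''}\circ\operatorname{res}_{I,I'}$. Your additional details, namely the explicit description of the pulled-back free module and injectivity of pullback to the universal cover, only make explicit what the paper leaves implicit.
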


\begin{proof}
Proposition~\ref{prop:support-equivariance} gives $\pi_{I',I}(\Gamma_{I'})\subset\Gamma_I$, so~\eqref{eq:support-base-map} is well defined.  Put $D=I'\setminus I$ and let $\alpha\in S_I$.  If $\alpha_d>0$ for some $d\in D$, then $\operatorname{res}_{I,I'}(X^\alpha)=0$, and both sides of~\eqref{eq:horizontal-support-identity} vanish on $X^\alpha$. 
Assume that $\alpha_d=0$ for every $d\in D$.  Then $\alpha\in S_{I'}$.  The ambient covector $\nu_\alpha=\sum_{j\in(I')^c}\alpha_j\Lambda_j$ is the same in the two residual models, and its pullback from $V_I$ to $V_{I'}$ is obtained through $\pi_{I',I}$.  Therefore
$$
\overline\pi_{I',I}^*\omega_\alpha^I=\omega_\alpha^{I'}.
$$
Using~\eqref{eq:flat-envelope-connection}, the two sides of~\eqref{eq:horizontal-support-identity} agree on $fX^\alpha$ by the Leibniz rule.  The monomials span the semigroup algebra, so the identity follows.  Composition is Proposition~\ref{prop:support-equivariance} together with the commutative triangle~\eqref{eq:theta-compose}.
\end{proof}

The bundle $\mathscr Q_I$ represents the semiglobal family of toric residual envelopes; it is not a geometric leaf-space quotient of a neighbourhood of the singular orbit, whose existence is excluded by Theorem~\ref{thm:singular-noquot}.  Its analytic bundle is trivial, while its flat structure has monodromy determined by the periods $\nu_\alpha(\Gamma_I)$.  At an independent support, $\nu_\alpha(2\pi i e_i)=2\pi i c_i(\alpha)$, so the corresponding monodromy multiplier is $e^{2\pi i c_i(\alpha)}$.

\section{Envelope groupoids and holonomy stacks}\label{sec:envelope-groupoids}

For the diagonal action, $Q_I$ is the toric residual factor of the holomorphic separation defined by the first integrals.  If the support weights are dependent, the full transverse envelope has an additional zero-foliation factor on which every period acts trivially.  Thus all non-trivial holonomy is carried by the action of $\Gamma_I$ on $Q_I$.  For a general leaf carrying transverse envelopes, foliation-preserving transverse germs descend to the full envelopes; quotienting by the germs that fix every holomorphic first integral gives the effective transverse germ groupoid.  It is the maximal quotient of the transverse germ groupoid that acts faithfully on the envelopes.

\subsection{Envelope groupoids and holonomy}

For the diagonal action, parallel transport gives the monodromy action groupoid on the envelope; its quotient is the corresponding analytic holonomy stack.

\begin{proposition}\label{prop:connection-holonomy-groupoid}
Let $I$ be a non-empty singular support.  Parallel transport of $\nabla_I$ along a path in $L_I$ is the envelope transport induced by $\tau_I$.  After choosing the base point represented by $0\in V_I$, its holonomy representation is
\begin{equation*}
\rho_I:\Gamma_I\simeq\pi_1(L_I)\longrightarrow\Aut(Q_I).
\end{equation*}
The monodromy groupoid of the flat toric envelope connection is the action groupoid
\begin{equation}\label{eq:connection-action-groupoid}
\Gamma_I\ltimes_{\rho_I}Q_I,
\end{equation}
and its quotient analytic stack is
\begin{equation}\label{eq:envelope-holonomy-stack}
\mathfrak H_I=[Q_I/\Gamma_I].
\end{equation}
If $r_I<|I|$, then after choosing a slice as in Theorem~\ref{thm:residual-model}, the monodromy groupoid of the full transverse envelope is the product of this action groupoid with the trivial groupoid on $\C^{|I|-r_I}$; its quotient stack is $\C^{|I|-r_I}\times\mathfrak H_I$.
\end{proposition}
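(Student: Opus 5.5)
The argument assembles results already proved and takes them in the order of the statement.

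The first step identifies parallel transport. By Theorem~\ref{thm:analytic-triviality}, the horizontal lifts of $\nabla_I$ through $q_0$ over a path in $L_I$ starting at $[0]$ are obtained by lifting the path to $v(s)\in V_I$ from $0$; the lift is then $s\mapsto\tau_I(v(s))q_0$, since $X^\alpha(\tau_I(v)q_0)=e^{\nu_\alpha(v)}X^\alpha(q_0)$ solves~\eqref{eq:horizontal-equation-general}. Because the monomials $X^\alpha$, $\alpha\in S_I$, separate points of $Q_I$, solutions of this equation are unique, so parallel transport from $[0]$ to $[v]$ is $\tau_I(v)$. By Theorem~\ref{thm:envelope-holonomy}(i), this is exactly the envelope transport. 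Lemma~\ref{lem:leaf-uniformisation} identifies $V_I\to L_I$ with the universal cover and deck group $\Gamma_I$. Hence $\Gamma_I\simeq\pi_1(L_I,[0])$: a loop corresponds to the endpoint $\gamma$ of its lift. The holonomy around that loop is $\tau_I(\gamma)=\rho_I(\gamma)$, by Theorem~\ref{thm:envelope-holonomy}(ii) and the monodromy computation in Theorem~\ref{thm:analytic-triviality}.

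The second step describes the groupoid. The monodromy groupoid of the flat bundle $\mathscr Q_I\to L_I$ has as objects points of the fibres and as arrows the parallel-transport germs along homotopy classes of paths. Restricting to the fibre over the base point gives the full subgroupoid with object space $Q_I$, whose arrows $q\to\rho_I(\gamma)q$ are indexed by $\gamma\in\Gamma_I$. This is the action groupoid $\Gamma_I\ltimes_{\rho_I}Q_I$. Since $L_I$ is connected, every object is transported into the base fibre, so the restriction is essentially surjective and the inclusion is a Morita equivalence. I would make this precise using the suspension description $\mathscr Q_I=(V_I\times Q_I)/\Gamma_I$ of Theorem~\ref{thm:semiglobal-bundle}. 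Pulling the flat bundle back to $V_I$ makes it a product with trivial transport, and the remaining equivalence is the $\Gamma_I$-action~\eqref{eq:Gamma-bundle-action}. The quotient stack of the action groupoid is by definition $[Q_I/\Gamma_I]$, and Morita equivalent groupoids present the same stack, which gives~\eqref{eq:envelope-holonomy-stack}. I expect this step to be the main obstacle. The difficulty is not computational: it is fixing which notion of monodromy groupoid is meant, namely the full groupoid versus its restriction to a fibre, and then checking the Morita equivalence cleanly. I would handle it with the pullback-to-$V_I$ argument above.

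The third step treats the case $r_I<|I|$. By Corollary~\ref{cor:full-transverse-envelope}, the full envelope is $\C^{|I|-r_I}\times Q_I$. As observed after Theorem~\ref{thm:semiglobal-bundle}, every period acts trivially on the factor $\C^{|I|-r_I}$, because $e^{\Lambda_i(\gamma)}=1$ for $i\in I$ and the ambient transport acts on the $u$-coordinates by these multipliers. The transport of the full envelope is therefore $\operatorname{id}\times\tau_I$ and its holonomy is $\operatorname{id}\times\rho_I$. The action groupoid of $\Gamma_I$ on the product then splits as the trivial groupoid on $\C^{|I|-r_I}$ times $\Gamma_I\ltimes_{\rho_I}Q_I$, and its quotient stack is $\C^{|I|-r_I}\times\mathfrak H_I$.
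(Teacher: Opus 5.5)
Your proposal is correct and follows the paper's own argument. Both lift the path to $V_I$ and take the horizontal section $t\mapsto\tau_I(\widetilde\gamma(t)-\widetilde\gamma(0))q_0$ in the product trivialisation of Theorem~\ref{thm:analytic-triviality}. Both check the horizontal equation on monomials and read off $\rho_I=\tau_I|_{\Gamma_I}$ on loops, after which the action groupoid and quotient stack are those attached to this representation.

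The paper treats the groupoid and stack identifications as immediate. It also leaves the $r_I<|I|$ clause to its earlier remark that periods act trivially on the zero-foliation factor. Your uniqueness remark, your Morita-equivalence argument via the suspension, and your explicit third step therefore supply detail the paper omits, rather than a different route.
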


\begin{proof}
Let $\gamma:[0,1]\to L_I$ be a path and lift it to $\widetilde\gamma:[0,1]\to V_I$.  In the product trivialisation of Theorem~\ref{thm:analytic-triviality}, a horizontal section through $q_0\in Q_I$ is
\begin{equation*}
t\longmapsto\tau_I\!\left(\widetilde\gamma(t)-\widetilde\gamma(0)\right)q_0.
\end{equation*}
Evaluation on each monomial satisfies~\eqref{eq:horizontal-equation-general}.  If $\gamma$ is a loop, then the endpoint difference lies in $\Gamma_I$, and the resulting automorphism is $\tau_I|_{\Gamma_I}=\rho_I$.  Equations~\eqref{eq:connection-action-groupoid} and~\eqref{eq:envelope-holonomy-stack} are the action groupoid and quotient stack attached to this holonomy representation.
\end{proof}

Let $L$ be a leaf of a holomorphic foliation.  For each $x\in L$, choose a transverse germ
$(\Sigma_x,\cF_x,x)$ and an analytic envelope
$q_x:(\Sigma_x,x)\to(Q_x,o_x)$ satisfying the factorisation property of
Theorem~\ref{thm:universal}.  Let $\mathscr G_L$ be a groupoid whose objects are the points of $L$ and whose arrows
$x\to y$ are germs of biholomorphisms
$h:(\Sigma_x,x)\to(\Sigma_y,y)$ which map local leaves of $\cF_x$ bijectively onto local leaves of $\cF_y$.
Let $\operatorname{Iso}(Q_L)$ denote the groupoid with objects $(Q_x,o_x)$ and arrows the germs of biholomorphisms between them.

\begin{proposition}\label{prop:envelope-groupoid}
There is a functor
\begin{equation}\label{eq:envelope-functor}
\mathsf E_L:\mathscr G_L\longrightarrow\operatorname{Iso}(Q_L),
\end{equation}
which sends an arrow $h:x\to y$ to the unique germ $\bar h:(Q_x,o_x)\to(Q_y,o_y)$ satisfying
\begin{equation*}
q_y\circ h=\bar h\circ q_x.
\end{equation*}
For $x\in L$, set
\begin{equation*}
\mathscr K_x=
\{h\in\mathscr G_L(x,x):\mathsf E_L(h)=\operatorname{id}_{Q_x}\}.
\end{equation*}
The groups $\mathscr K_x$ form a wide normal subgroupoid $\mathscr K_L\subset\mathscr G_L$.  The functor~\eqref{eq:envelope-functor} factors through a faithful functor
\begin{equation}\label{eq:effective-envelope-groupoid}
\overline{\mathsf E}_L:
\mathscr G_L^{\mathrm{env}}:=\mathscr G_L/\mathscr K_L
\longrightarrow\operatorname{Iso}(Q_L).
\end{equation}
Moreover,
\begin{equation}\label{eq:envelope-kernel-first-integrals}
\mathscr K_x=
\{h\in\mathscr G_L(x,x):h^*f=f
\text{ for every }f\in\cO_{\Sigma_x,x}^{\cF_x}\}.
\end{equation}
\end{proposition}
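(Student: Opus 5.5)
The plan is to build $\mathsf E_L$ directly from Proposition~\ref{prop:pseudogroup-descent} and then read off the kernel and its properties from functoriality. For an arrow $h:x\to y$ of $\mathscr G_L$, that proposition, applied with $(\Sigma_a,\cF_a,p_a)=(\Sigma_x,\cF_x,x)$ and $(\Sigma_b,\cF_b,p_b)=(\Sigma_y,\cF_y,y)$, gives a unique germ of biholomorphism $\bar h:(Q_x,o_x)\to(Q_y,o_y)$ with $q_y\circ h=\bar h\circ q_x$. I set $\mathsf E_L(x)=(Q_x,o_x)$ and $\mathsf E_L(h)=\bar h$. The same proposition shows that descent respects identities and composition, $\overline{h_2\circ h_1}=\bar h_2\circ\bar h_1$; this comes from the uniqueness clause of the factorisation property. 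So $\mathsf E_L$ is a functor, and it also respects inverses.

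Next I treat the kernel. Each $\mathscr K_x$ is the kernel of the group homomorphism $\mathscr G_L(x,x)\to\Aut(Q_x,o_x)$ given by restricting $\mathsf E_L$, so it is a normal subgroup of the isotropy group. It contains every identity, so the family $\mathscr K_L$ is wide. For normality in the groupoid sense I take $g:x\to y$ and $k\in\mathscr K_x$. Functoriality gives
$$\mathsf E_L(gkg^{-1})=\bar g\,\mathrm{id}_{Q_x}\,\bar g^{-1}=\mathrm{id}_{Q_y},$$
so $g\mathscr K_xg^{-1}\subset\mathscr K_y$, with equality by symmetry.

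The quotient groupoid $\mathscr G_L/\mathscr K_L$ keeps the same objects. Its arrows $x\to y$ are the classes of $h$ modulo the relation $h\sim h'$ iff $h'^{-1}h\in\mathscr K_x$, equivalently $h'=hk$. Normality makes this relation compatible with composition. The functor $\mathsf E_L$ is constant on classes, because $\mathsf E_L(hk)=\bar h$, so it descends to $\overline{\mathsf E}_L$. Faithfulness is the converse: if $\bar h=\bar h'$, then $\mathsf E_L(h'^{-1}h)=\mathrm{id}_{Q_x}$. Hence $h'^{-1}h\in\mathscr K_x$ and $[h]=[h']$.

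Finally, the description \eqref{eq:envelope-kernel-first-integrals} of $\mathscr K_x$ is exactly Corollary~\ref{cor:effective-pseudogroup}, applied to the isotropy group $\mathscr G_L(x,x)$. That corollary only uses the factorisation property, which holds for all elements of this group: $\bar h=\mathrm{id}$ if and only if $q_x\circ h=q_x$, if and only if $h^*f=f$ for every first integral $f$. For the converse direction one uses that the components of $q_x$ in a local embedding of $Q_x$ are first integrals.

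There is no serious obstacle. The only point needing care is checking that the equivalence relation defining $\mathscr G_L/\mathscr K_L$ is well defined on composites of arrows between different objects. This reduces to the normality computation above.
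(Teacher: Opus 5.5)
Your proposal is correct and follows the paper's proof essentially step by step. You obtain functoriality from Proposition~\ref{prop:pseudogroup-descent}, normality from $\mathsf E_L(gkg^{-1})=\mathrm{id}_{Q_y}$, faithfulness from $h'^{-1}h\in\mathscr K_x$, and the kernel description from Corollary~\ref{cor:effective-pseudogroup}, with only a slightly more explicit description of the quotient relation than the paper gives.
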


\begin{proof}
For an arrow $h:x\to y$, Proposition~\ref{prop:pseudogroup-descent} gives a unique $\bar h$ satisfying $q_y\circ h=\bar h\circ q_x$.  The same proposition gives
$\overline{h_2\circ h_1}=\bar h_2\circ\bar h_1$, $\overline{h^{-1}}=\bar h^{-1}$ and
$\overline{\operatorname{id}}=\operatorname{id}$, so~\eqref{eq:envelope-functor} is a functor. 
For $k\in\mathscr K_x$ and $h\in\mathscr G_L(x,y)$,
\begin{equation*}
\mathsf E_L(hkh^{-1})
=\mathsf E_L(h)\mathsf E_L(k)\mathsf E_L(h)^{-1}
=\operatorname{id}_{Q_y}.
\end{equation*}
Thus $h\mathscr K_xh^{-1}=\mathscr K_y$, and $\mathscr K_L$ is normal.  The quotient groupoid exists and~\eqref{eq:envelope-functor} factors through~\eqref{eq:effective-envelope-groupoid}.  If $h_1,h_2\in\mathscr G_L(x,y)$ induce the same arrow of $\operatorname{Iso}(Q_L)$, then $\mathsf E_L(h_2^{-1}h_1)=\operatorname{id}_{Q_x}$, so $h_2^{-1}h_1\in\mathscr K_x$ and $h_1,h_2$ define the same arrow of $\mathscr G_L^{\mathrm{env}}$.  The functor $\overline{\mathsf E}_L$ is faithful.  Formula~\eqref{eq:envelope-kernel-first-integrals} is Corollary~\ref{cor:effective-pseudogroup} applied at $x$.
\end{proof}

\begin{corollary}\label{cor:rho-envelope-groupoid}
Let $L_p$ be a singular orbit of the diagonal action with non-empty support $I$.  The action of the global stabiliser $K_I$ on the toric residual factor $Q_I$ is trivial on the connected stabiliser $\h_I$ and factors through $\Gamma_I=K_I/\h_I$.  On the additional zero-foliation factor present when $r_I<|I|$, every period acts trivially.
The induced representation on $Q_I$ is $\rho_I:\Gamma_I\longrightarrow\Aut(Q_I)$ from Theorem~\ref{thm:envelope-holonomy}.  The effective envelope isotropy of ambient diagonal transport is
\begin{equation}\label{eq:effective-envelope-isotropy}
\Gamma_I^{\mathrm{env}}\simeq\Gamma_I/\ker\rho_I.
\end{equation}
\end{corollary}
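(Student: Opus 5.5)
The plan is to make the ambient diagonal transport explicit on the standard transversal at $p$, and then read off its action on $Q_I$ from formula~\eqref{eq:tau-on-monomials}. Let $t\in K_I$. Then $e^{\Lambda_i(t)}=1$ for every $i\in I$, so the diagonal map $D_t(z)=(e^{\Lambda_j(t)}z_j)_j$ fixes $p$. It preserves the slice $\Sigma_B$, because $z_b=p_b$ is preserved for $b\in B$. It also maps local leaves of the induced foliation onto local leaves, since $D_t$ commutes with the action. Hence $D_t$ is an arrow of $\mathscr G_{L_p}(p,p)$. On the coordinates $u_i=z_i-p_i$, $i\in I\setminus B$, we have $D_t(u_i)=e^{\Lambda_i(t)}z_i-p_i=u_i$, so $D_t$ is the identity on the zero-foliation factor $\C^{|I|-r_I}$. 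On the normal coordinates it acts by $w_j\mapsto e^{\Lambda_j(t)}w_j$.

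Next I would descend $D_t$ to the envelope. By Proposition~\ref{prop:pseudogroup-descent} and Corollary~\ref{cor:full-transverse-envelope}, $\mathsf E(D_t)$ is $\mathrm{id}\times\bar D_t$ on $\C^{|I|-r_I}\times Q_I$. The computation in the proof of Theorem~\ref{thm:envelope-holonomy}(i) gives $q_I\circ D_t=\tau_I([t])\circ q_I$, where $[t]$ is the class of $t$ in $V_I$. Uniqueness in the factorisation property then yields $\bar D_t=\tau_I([t])$. If $t\in\h_I$, then $[t]=0$, and in fact $\nu_\alpha(t)=0$ for all $\alpha\in S_I$, so the action is trivial. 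The $K_I$-action is a homomorphism, by functoriality of descent and because $t\mapsto D_t$ is a homomorphism. So it factors through $K_I/\h_I=\Gamma_I$, and the induced map is $\tau_I|_{\Gamma_I}=\rho_I$ by Theorem~\ref{thm:envelope-holonomy}(ii).

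Finally I would compute the effective isotropy. The image of $\Gamma_I$ in $\mathscr G_{L_p}^{\mathrm{env}}(p,p)$ embeds faithfully into $\Aut(Q_x)$ by Proposition~\ref{prop:envelope-groupoid}, so the effective isotropy is the image of $\mathsf E$ restricted to $\Gamma_I$. Because the first factor is acted on trivially, this image is $\rho_I(\Gamma_I)\simeq\Gamma_I/\ker\rho_I$, which is \eqref{eq:effective-envelope-isotropy}. It agrees with \eqref{eq:effective-holonomy}.

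The main point to check is well-definedness of the descent. One must make sure the slice-dependent identification of the full envelope with $\C^{|I|-r_I}\times Q_I$ is compatible with $D_t$. This holds because $D_t$ preserves $\Sigma_B$ and fixes its $u$-coordinates exactly, so the factorisation argument applies directly.
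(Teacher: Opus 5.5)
Your proposal is correct and follows essentially the same route as the paper. Both identify the descended ambient transport with $\tau_I$ via Theorem~\ref{thm:envelope-holonomy}(i), use~\eqref{eq:tau-on-monomials} to see that $\h_I$ acts trivially, identify the induced $\Gamma_I$-action with $\rho_I$ by part (ii), and pass to the quotient by $\ker\rho_I$. Your explicit check that $D_t$ preserves $\Sigma_B$ and fixes the $u_i$-coordinates also supplies the zero-foliation-factor claim, which the paper's proof leaves to the earlier remark following Theorem~\ref{thm:semiglobal-bundle}.
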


\begin{proof}
Theorem~\ref{thm:envelope-holonomy}(i) identifies $\tau_I(v)$ with the map on the toric residual factors induced by ambient diagonal transport through the class $v\in V_I$.  If $h\in\h_I$, then $\nu_\alpha(h)=0$ for every $\alpha\in M_I$, so~\eqref{eq:tau-on-monomials} gives $\tau_I(h)^*(X^\alpha)=X^\alpha$.  The stabiliser action on $Q_I$ factors through $K_I/\h_I=\Gamma_I$.  Theorem~\ref{thm:envelope-holonomy}(ii) identifies this factor with $\rho_I$.  Quotienting by its kernel gives~\eqref{eq:effective-envelope-isotropy}.
\end{proof}

For arbitrary non-empty singular supports, the equivariant support maps descend to the holonomy action groupoids and their quotient stacks.
\begin{corollary}\label{cor:support-stack-system}
Let $I\subset I'$ be non-empty singular supports.  The pair $(\pi_{I',I},\vartheta_{I,I'})$ induces a functor
\begin{equation}\label{eq:support-action-groupoid-functor}
\Gamma_{I'}\ltimes_{\rho_{I'}}Q_{I'}
\longrightarrow
\Gamma_I\ltimes_{\rho_I}Q_I,
\end{equation}
and hence a morphism
\begin{equation}\label{eq:support-stack-morphism}
\mathfrak H_{I'}\longrightarrow\mathfrak H_I.
\end{equation}
For $I\subset I'\subset I''$, these functors and stack morphisms compose.  The flat toric envelope algebras, their holonomy action groupoids and their quotient stacks form compatible inverse systems over the poset of non-empty singular supports.  On the subposet of independent supports, the flat connections have the logarithmic coordinate form~\eqref{eq:logarithmic-flat-operator}.
\end{corollary}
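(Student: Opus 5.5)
The plan is to build the functor on objects and on arrows, and then to check the group laws and its compatibility with the quotient stacks.

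\emph{Objects.} A point $q'\in Q_{I'}$ of the action groupoid $\Gamma_{I'}\ltimes_{\rho_{I'}}Q_{I'}$ is sent to $\vartheta_{I,I'}(q')\in Q_I$.

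\emph{Arrows.} An arrow $(\gamma,q'):q'\to\rho_{I'}(\gamma)q'$ is sent to $(\pi_{I',I}(\gamma),\vartheta_{I,I'}(q'))$. This makes sense because Proposition~\ref{prop:support-equivariance} gives $\pi_{I',I}(\Gamma_{I'})\subset\Gamma_I$.

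\emph{Targets are correct.} By \eqref{eq:support-monodromy-equivariance} we have $\vartheta_{I,I'}(\rho_{I'}(\gamma)q')=\rho_I(\pi_{I',I}(\gamma))\vartheta_{I,I'}(q')$. So the image arrow ends at the image of the target, and sources and targets are respected.

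\emph{Functoriality.} Composition in an action groupoid is $(\gamma_2,\rho(\gamma_1)q)\circ(\gamma_1,q)=(\gamma_1+\gamma_2,q)$. This is preserved because $\pi_{I',I}$ is additive. Identities $(0,q')$ go to identities. The object and arrow maps are holomorphic, since $\vartheta_{I,I'}$ is a morphism and $\Gamma_{I'}$ is discrete. Hence we get a morphism of analytic groupoids, which is \eqref{eq:support-action-groupoid-functor}.

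\emph{Stack morphism.} A morphism of action groupoids induced by an equivariant pair (homomorphism, equivariant map) induces a morphism of quotient stacks $[Q_{I'}/\Gamma_{I'}]\to[Q_I/\Gamma_I]$. Concretely, a $\Gamma_{I'}$-torsor $P\to T$ with an equivariant map $P\to Q_{I'}$ goes to the pushed torsor $P\times^{\Gamma_{I'}}\Gamma_I$ with the induced map to $Q_I$. This gives \eqref{eq:support-stack-morphism}.

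\emph{Composition for $I\subset I'\subset I''$.} On arrows this follows from \eqref{eq:support-projection-compose}, and on objects from the triangle \eqref{eq:theta-compose} of Proposition~\ref{prop:support-restriction}. The functors therefore compose strictly. The stack morphisms then compose, up to the canonical $2$-isomorphism $P\times^{\Gamma_{I''}}\Gamma_{I'}\times^{\Gamma_{I'}}\Gamma_I\simeq P\times^{\Gamma_{I''}}\Gamma_I$.

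\emph{Inverse systems.} For the flat algebras this is Proposition~\ref{prop:horizontal-support}. The logarithmic form on independent supports is \eqref{eq:logarithmic-flat-operator}, obtained via Lemma~\ref{lem:leaf-uniformisation}.

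The only step needing care is the stack level: there composition holds only up to canonical $2$-isomorphism. I would handle it by working with the strict groupoid functors and invoking the standard fact that strictly composing groupoid morphisms induce coherently composing stack morphisms.
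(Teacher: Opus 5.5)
Your proposal is correct and follows the paper's argument. The equivariance identity \eqref{eq:support-monodromy-equivariance} makes $(\pi_{I',I},\vartheta_{I,I'})$ a morphism of action groupoids, passing to quotient stacks gives the stack morphism, and composition follows from \eqref{eq:support-projection-compose}, the triangle \eqref{eq:theta-compose} and Proposition~\ref{prop:horizontal-support}. Your explicit torsor description, and your remark that the stack morphisms compose only up to canonical $2$-isomorphism, add detail the paper leaves implicit without changing the route.
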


\begin{proof}
For $\gamma\in\Gamma_{I'}$ and $q\in Q_{I'}$, Proposition~\ref{prop:support-equivariance} gives
\begin{equation*}
\vartheta_{I,I'}\bigl(\rho_{I'}(\gamma)q\bigr)
=\rho_I\!\left(\pi_{I',I}(\gamma)\right)\vartheta_{I,I'}(q).
\end{equation*}
Thus $(\pi_{I',I},\vartheta_{I,I'})$ defines~\eqref{eq:support-action-groupoid-functor}; passing to quotient stacks gives~\eqref{eq:support-stack-morphism}.  Compatibility with composition follows from Proposition~\ref{prop:support-equivariance} and Proposition~\ref{prop:horizontal-support}.
\end{proof}

The universal property also has a direct quotient-stack formulation.  For a singular support $I$, let $J=I^c$ and let the vector group $\h_I$ act on $(\C^J,0)$ by the residual action~\eqref{eq:resaction}.  Denote its analytic quotient stack by
\begin{equation}\label{eq:residual-quotient-stack}
\mathfrak X_I=[(\C^J,0)/\h_I].
\end{equation}

\subsection{Quotient stacks}

Write $\mathrm{An}$ for the category of complex analytic germs and $\mathrm{AnSt}$ for analytic stacks.  The universal property of the envelope is equivalently an affinisation property for this quotient stack. Here $\Hom_{\mathrm{AnSt}}$ denotes morphisms up to $2$-isomorphism.

\begin{proposition}\label{prop:stack-affinisation}
The quotient map $q_I:(\C^J,0)\to(Q_I,o_I)$ induces a morphism $\mathbf q_I:\mathfrak X_I\longrightarrow(Q_I,o_I)$ with the following universal property.  For every complex analytic germ $(Y,y)$, composition with $\mathbf q_I$ gives a natural bijection
\begin{equation}\label{eq:stack-affinisation}
\Hom_{\mathrm{An}}((Q_I,o_I),(Y,y))
\xrightarrow{\ \sim\ }
\Hom_{\mathrm{AnSt}}(\mathfrak X_I,(Y,y)).
\end{equation}
\end{proposition}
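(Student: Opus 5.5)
The plan is to reduce the stack-theoretic statement to the universal factorisation property of Theorem~\ref{thm:universal}(ii), using the description of morphisms from a quotient stack to a space. First, the morphism $\mathbf q_I$ itself. The quotient map $q_I$ is invariant under the residual $\h_I$-action, since it is given by the invariant monomials $w^{\beta^a}$. Hence $q_I\circ\mathrm{act}=q_I\circ\mathrm{pr}_2$ on $\h_I\times(\C^J,0)$, with the identity $2$-isomorphism satisfying the cocycle condition trivially. By the defining property of the quotient stack $[(\C^J,0)/\h_I]$, this descends to a morphism $\mathbf q_I:\mathfrak X_I\to(Q_I,o_I)$ whose composite with the atlas $(\C^J,0)\to\mathfrak X_I$ is $q_I$.

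Second, I would identify the right-hand side of \eqref{eq:stack-affinisation}. Since $(Y,y)$ is an analytic germ (a sheaf, with no non-trivial automorphisms of morphisms), a morphism $\mathfrak X_I\to(Y,y)$ up to $2$-isomorphism is the same as a germ $F:(\C^J,0)\to(Y,y)$ with $F\circ\mathrm{act}=F\circ\mathrm{pr}_2$. The $2$-isomorphism classes collapse to equalities because the target is representable. The induced map $\Hom_{\mathrm{AnSt}}(\mathfrak X_I,(Y,y))\to\{\h_I\text{-invariant germs }F\}$ given by pulling back along the atlas is therefore a bijection. Here one must interpret invariance for germs: $F(e^{\mu(v)}w)=F(w)$ for $(v,w)$ near $(0,0)$. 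By connectedness of $\h_I$ and differentiation, this is equivalent to $Y_v(F_\nu)=0$ for all components in a local embedding, that is, $F$ is constant along connected local leaves.

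Third, composition with $\mathbf q_I$ corresponds under this identification to $g\mapsto g\circ q_I$. Theorem~\ref{thm:universal}(ii) then says that every leaf-constant germ $F$ factors as $g\circ q_I$ with $g$ unique. Together, these give surjectivity and injectivity of \eqref{eq:stack-affinisation}; injectivity also follows from the injectivity of $q_I^*$ established in the proof of Theorem~\ref{thm:universal}. Naturality in $(Y,y)$ holds because everything is defined by composition.

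The main obstacle is the second step: making precise the notion of a quotient stack of a germ by a non-compact vector group acting only locally, and justifying that maps to a germ are exactly invariant germs. I would handle this by working with a representative $\h_I$-stable neighbourhood. Polydiscs are not stable, so instead I would use the groupoid $\h_I\ltimes U$ restricted to a neighbourhood of the identity, that is, the local action groupoid, and take its stackification. A morphism to a sheaf is then determined by its invariance under this local groupoid, which is precisely leaf-constancy.
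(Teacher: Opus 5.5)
Your proposal is correct and follows essentially the paper's proof: present $\mathfrak X_I$ by the action groupoid, identify morphisms to the representable germ $(Y,y)$ with germs $F$ satisfying $F\circ s=F\circ t$, translate this invariance into constancy on connected local leaves, and invoke Theorem~\ref{thm:universal}(ii) for existence and uniqueness of the factorisation through $q_I$. You also spell out points the paper leaves implicit: the descent of $q_I$ to $\mathbf q_I$, the collapse of $2$-isomorphisms for a representable target, and the use of a local action groupoid, since $\h_I$ does not preserve small neighbourhoods of $0$.
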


\begin{proof}
The stack~\eqref{eq:residual-quotient-stack} is presented by the action groupoid
\begin{equation*}
\h_I\times(\C^J,0)\rightrightarrows(\C^J,0).
\end{equation*}
Its source and target maps are $s(v,w)=w$ and $t(v,w)=v\cdot w$, respectively.  A morphism from $\mathfrak X_I$ to the representable analytic germ $(Y,y)$ is represented by a holomorphic germ
$F:(\C^J,0)\to(Y,y)$ satisfying
\begin{equation}\label{eq:stack-invariance}
F\circ s=F\circ t.
\end{equation}
Condition~\eqref{eq:stack-invariance} is exactly invariance under the residual $\h_I$-action, hence constancy on its connected local leaves.  Theorem~\ref{thm:universal}(ii) gives a unique holomorphic germ
$\bar F:(Q_I,o_I)\to(Y,y)$ with $F=\bar F\circ q_I$.  Conversely, every such composite satisfies~\eqref{eq:stack-invariance}.  These two constructions are inverse and functorial in $(Y,y)$, giving~\eqref{eq:stack-affinisation}.
\end{proof}

Since $\h_I$ acts trivially on the zero-foliation factor, the morphism
$$
\C^{|I|-r_I}\times[(\C^J,0)/\h_I]
\longrightarrow
(\C^{|I|-r_I}\times Q_I,(0,o_I)),
$$
induced by $\operatorname{id}\times q_I$, is the analytic affinisation of the full transverse quotient stack.  Indeed, a morphism from the source to an analytic germ is represented by a holomorphic germ on $\C^{|I|-r_I}\times\C^J$ invariant under the residual $\h_I$-action, hence constant on the transverse leaves; Corollary~\ref{cor:full-transverse-envelope} gives its unique factorisation through $\C^{|I|-r_I}\times Q_I$.

Proposition~\ref{prop:stack-affinisation} identifies $Q_I$ as the analytic affinisation of the residual quotient stack.  The closed-orbit description of Theorem~\ref{thm:closed-orbit-fibres} gives a stack-theoretic refinement of each fibre of the reductive quotient.  Let $G$ be the Zariski closure of the residual exponential subgroup, put $\mathfrak X^G=[\C^s/G]$, and write $BH=[\mathrm{pt}/H]$ for the classifying stack of a complex Lie group $H$.

\begin{corollary}\label{cor:closed-orbit-stack-fibre}
Let $a\in Q$ and choose $x\in q^{-1}(a)$ with support $K$.  The fibre stack $\mathfrak X^G_a=[q^{-1}(a)/G]$ contains a unique closed orbit substack.  It is
 $
[G\cdot x^\circ/G]
\simeq B G_{x^\circ},
$
where
\begin{equation}\label{eq:closed-orbit-stabiliser}
G_{x^\circ}
=\bigcap_{j\in B_K}\ker(\lambda_j:G\to\C^*).
\end{equation}
\end{corollary}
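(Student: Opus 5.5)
The plan is to reduce everything to the closed-orbit description of Theorem~\ref{thm:closed-orbit-fibres} and then compute the stabiliser of a point on that orbit directly from the diagonal form of the action.

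First I identify the closed orbit substack. By Theorem~\ref{thm:universal}(iii) and Theorem~\ref{thm:closed-orbit-fibres}, the fibre $q^{-1}(a)$ is a closed $G$-stable subvariety of $\C^s$ containing exactly one closed $G$-orbit, namely $G\cdot x^\circ$. Lemma~\ref{lem:closed-orbit} shows this orbit is closed in $\C^s$, hence closed in $q^{-1}(a)$. A closed orbit substack of $[q^{-1}(a)/G]$ corresponds to a closed $G$-orbit in $q^{-1}(a)$. Uniqueness of the substack therefore follows from uniqueness of the orbit. I also need independence of the choice of $x$ in the fibre. This holds because \eqref{eq:fibre-closed-orbit} gives $G\cdot x^\circ=G\cdot y^\circ$ for any other $y\in q^{-1}(a)$.

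Second, I use the standard identification of an orbit quotient stack. The orbit map $G\to G\cdot x^\circ$, $g\mapsto g\cdot x^\circ$, induces a $G$-equivariant isomorphism $G/G_{x^\circ}\simeq G\cdot x^\circ$. For an algebraic torus acting on a locally closed orbit this is an isomorphism of varieties, since we are in characteristic zero and orbit maps are separable. This gives $[G\cdot x^\circ/G]\simeq[(G/G_{x^\circ})/G]\simeq[\mathrm{pt}/G_{x^\circ}]=BG_{x^\circ}$. Passing to analytification keeps this valid in the analytic category.

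Third, I compute the stabiliser. Write $g\in G\subset\T_s$ as $(g_1,\dots,g_s)$, so that $g\cdot x^\circ=(g_jx^\circ_j)_j$. The support of $x^\circ$ is exactly $B_K$, because $x_j\neq0$ for $j\in K\supset B_K$. Hence $g$ fixes $x^\circ$ if and only if $g_j=1$ for every $j\in B_K$. The coordinate function $g\mapsto g_j$ restricted to $G$ is precisely the character $\lambda_j\in M_G=X^*(G)$, the image of $e_j$. This gives \eqref{eq:closed-orbit-stabiliser}. The stabiliser does not depend on the point chosen on the orbit, since $G$ is abelian.

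The only mildly delicate step is the second one: making the equality $[G\cdot x^\circ/G]\simeq BG_{x^\circ}$ precise. This requires treating $G_{x^\circ}$ scheme-theoretically. It may be disconnected, being a diagonalisable group with character group $M_G/\langle\lambda_j:j\in B_K\rangle$, but it is reduced in characteristic zero, so the reduced and scheme-theoretic stabilisers agree.
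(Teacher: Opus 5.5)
Your proposal is correct and follows the paper's proof. Like the paper, you take $G\cdot x^\circ$ as the unique closed orbit in $q^{-1}(a)$ from Theorem~\ref{thm:closed-orbit-fibres}, identify $[G\cdot x^\circ/G]$ with $BG_{x^\circ}$ through the orbit map, and read off the stabiliser from $\supp(x^\circ)=B_K$. Your additional remarks on independence of the choice of $x$ in the fibre and on the scheme-theoretic stabiliser being reduced in characteristic zero are sound refinements that the paper leaves implicit.
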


\begin{proof}
Theorem~\ref{thm:closed-orbit-fibres} says that $G\cdot x^\circ$ is the unique closed $G$-orbit in $q^{-1}(a)$.  Its quotient is the unique closed orbit substack of $\mathfrak X^G_a$.  The orbit map identifies
$G\cdot x^\circ$ with the homogeneous space $G/G_{x^\circ}$, so
\begin{equation*}
[G\cdot x^\circ/G]
\simeq[G/G_{x^\circ}/G]
\simeq B G_{x^\circ}.
\end{equation*}
Since $x_j^\circ\ne0$ exactly for $j\in B_K$, an element $g\in G$ fixes $x^\circ$ if and only if
$\lambda_j(g)=1$ for every $j\in B_K$, which is~\eqref{eq:closed-orbit-stabiliser}.
\end{proof}

The monodromy realisation theorem determines the action groupoids as well.

\begin{corollary}\label{cor:prescribed-envelope-groupoid}
Let $\beta_1,\ldots,\beta_s\in(\C^d)^*$ have uniform maximal rank, and let $Q_\beta$ and $T_\beta$ be as in Theorem~\ref{thm:prescribed-monodromy}.  Fix $r\ge1$.  For every homomorphism $\eta:\Z^r\longrightarrow T_\beta$ there is a configuration of uniform maximal rank in the Poincar\'e domain whose transverse analytic envelope is $Q_\beta$ and whose envelope monodromy action groupoid is
\begin{equation}\label{eq:prescribed-action-groupoid}
\Z^r\ltimes_\eta Q_\beta.
\end{equation}
The local analytic envelope $Q_\beta$ does not determine the holonomy stack $[Q_\beta/\Z^r]$ of its canonical flat envelope connection.
\end{corollary}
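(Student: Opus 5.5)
This is a direct assembly of Theorem~\ref{thm:prescribed-monodromy} and Proposition~\ref{prop:connection-holonomy-groupoid}, followed by a non-isomorphism check for the final sentence.

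\emph{Step 1: the configuration.} Given $\eta:\Z^r\to T_\beta$, apply Theorem~\ref{thm:prescribed-monodromy}. It produces a configuration in $(\C^{r+d})^*$ of uniform maximal rank in the Poincar\'e domain, with the support $I_0=\{1,\ldots,r\}$. Along $I_0$ the residual weights are the $\beta_j$, so the transverse envelope is $Q_\beta$. Uniform maximal rank gives $r_{I_0}=|I_0|$, so there is no zero-foliation factor.

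\emph{Step 2: the period group.} The support weights $\varepsilon_1,\ldots,\varepsilon_r$ are independent. Lemma~\ref{lem:leaf-uniformisation} therefore identifies $\Gamma_{I_0}$ with $(2\pi i\Z)^r\simeq\Z^r$. Under this identification $\rho_{I_0}=\eta$, by the statement of Theorem~\ref{thm:prescribed-monodromy}.

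\emph{Step 3: the groupoid.} Proposition~\ref{prop:connection-holonomy-groupoid} identifies the monodromy groupoid of the flat envelope connection with $\Gamma_{I_0}\ltimes_{\rho_{I_0}}Q_{I_0}$. Transported along the identifications of Step 2, this is~\eqref{eq:prescribed-action-groupoid}.

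\emph{Step 4: non-determination of the stack.} This is the only point needing care, because it requires an invariant that separates the stacks $[Q_\beta/\Z^r]$ for different $\eta$. The natural choice is the generic isotropy, that is, the kernel of the action on the dense torus $T_\beta$. For trivial $\eta$ this kernel is all of $\Z^r$. When $\rk M_\beta>0$, take $\eta$ with an injective image, for instance one with Zariski-dense image (the dense locus of Corollary~\ref{cor:monodromy-loci} is non-empty). Then points of $T_\beta$ have trivial isotropy. An isomorphism of quotient stacks preserves the isotropy groups of points, so these two stacks are non-isomorphic even though $Q_\beta$ is the same.

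If $Q_\beta$ is a point, the only representation is the trivial one. The final assertion should then be read for $\rk M_\beta>0$, or understood as the family statement of Corollary~\ref{cor:local-monodromy-independence}. In that form the claim is simply that distinct $\eta$ give distinct action groupoids, which follows from Theorem~\ref{thm:monodromy-parameter-space}.
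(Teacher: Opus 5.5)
Your Steps~1--3 are the paper's proof. The paper simply invokes Theorem~\ref{thm:prescribed-monodromy}, identifies the period group with $\Z^r$, and takes the action groupoid and its quotient stack. It never checks that different $\eta$ give inequivalent stacks, so your Step~4 is an addition. So is your caveat about $M_\beta=0$: there $T_\beta$ is a point, only the trivial $\eta$ exists, and the last sentence of the statement fails literally.

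There is one slip in Step~4. A Zariski-dense $\eta$ need not be injective: take $r=2$, $T_\beta\simeq\C^*$, $\eta(e_1)=\lambda$ of infinite order and $\eta(e_2)=1$. So points of $T_\beta$ need not have trivial isotropy; in general their isotropy is $\ker\eta$. The argument still works with this correction. When $\rk M_\beta>0$, a Zariski-dense image is infinite, so $\ker\eta$ has infinite index in $\Z^r$. Hence it has rank $<r$ and is not isomorphic to $\Z^r$. By contrast, every point of the trivial stack $Q_\beta\times B\Z^r$ has isotropy $\Z^r$. Alternatively, choose $\eta$ injective directly: in a splitting $T_\beta\simeq(\C^*)^\ell$, set $\eta(e_i)=(e^{x_i},1,\ldots,1)$ with $x_1,\ldots,x_r,2\pi i$ linearly independent over $\Q$.

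Note that infinite image is genuinely needed for this invariant. For a non-trivial $\eta$ with finite image, $\ker\eta\simeq\Z^r$ as an abstract group, so isotropy groups alone do not separate that stack from the trivial one.
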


\begin{proof}
Theorem~\ref{thm:prescribed-monodromy} constructs a configuration of uniform maximal rank in the Poincar\'e domain with residual envelope $Q_\beta$ and monodromy representation equal to $\eta$ under the standard identification of the period group with $\Z^r$.  The associated action groupoid is~\eqref{eq:prescribed-action-groupoid}.  Taking its quotient stack gives the stated holonomy stack.
\end{proof}

\section{Reconstruction of the weights}\label{sec:reconstruction}

\subsection{Full resonance rank}\label{subsec:full-resonance-reconstruction}

Recall the notion of linear equivalence of labelled residual configurations introduced in Section~\ref{subsec:exact-envelope-information}.  Fix a singular support $I$, put $J=I^c$, and let $\mu_I:\C^J\to\h_I^*$ be the linear map satisfying
\begin{equation}\label{eq:residual-weight-map}
\mu_I(e_j)=\Lambda_j|_{\h_I},
\end{equation}
for every $j\in J$.
Its kernel is the complex vector space of linear relations among the residual weights.  Since $M_I=\operatorname{gp}(S_I)$ consists of integral residual relations, its complex span lies in $\ker\mu_I$.

\begin{proposition}\label{prop:full-resonance-criterion}
For every support $I$,
\begin{equation}\label{eq:general-envelope-dimension-bound}
\dim Q_I\le n-k-(|I|-r_I)\le n-k.
\end{equation}
If $\dim Q_I=n-k$, then $r_I=|I|$ and
\begin{equation}\label{eq:full-relation-space}
M_I\otimes_{\Z}\C=\ker\mu_I.
\end{equation}
Consequently the support weights are linearly independent.  With $W_I=\C^J/(M_I\otimes_{\Z}\C)$, the labelled classes of the standard basis determine the residual weight configuration up to linear equivalence.
\end{proposition}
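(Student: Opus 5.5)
The plan is to compare three complex vector spaces: $M_I\otimes_\Z\C\subset\ker\mu_I\subset\C^J$.

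\emph{Step 1 (the bound).} Proposition~\ref{prop:normal-toric} gives $\dim Q_I=\rk_\Z M_I=\dim_\C(M_I\otimes_\Z\C)$. Since every element of $S_I$, hence of $M_I$, is an integral relation among the residual weights, $M_I\otimes_\Z\C$ lies in $\ker\mu_I$. The natural map $M_I\otimes_\Z\C\to\C^J$ is injective: $M_I\subset\Z^J$ is free and a $\Z$-basis of it stays $\C$-linearly independent in $\C^J$. Hence $\dim Q_I\le\dim\ker\mu_I$.

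\emph{Step 2 (computing $\dim\ker\mu_I$).} The residual weights span $\h_I^*$. Indeed, the support weights vanish on $\h_I$, and all ambient weights span $\g^*$, so their restrictions span $\h_I^*$. Thus $\mu_I$ is surjective and $\dim\ker\mu_I=|J|-\dim\h_I=(n-|I|)-(k-r_I)=n-k-(|I|-r_I)$. Since $r_I\le|I|$, this is at most $n-k$, which proves~\eqref{eq:general-envelope-dimension-bound}.

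\emph{Step 3 (the equality case).} If $\dim Q_I=n-k$, both inequalities are equalities. The second equality gives $r_I=|I|$, so the support weights are linearly independent. The first equality, combined with the inclusion from Step 1 and equal finite dimensions, gives~\eqref{eq:full-relation-space}.

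\emph{Step 4 (reconstruction).} Equation~\eqref{eq:full-relation-space} shows $W_I=\C^J/\ker\mu_I$, and $\mu_I$ induces a linear isomorphism $W_I\simeq\h_I^*$ carrying $e_j+M_I\otimes\C$ to $\Lambda_j|_{\h_I}$. The space $M_I=\operatorname{gp}(S_I)$, and hence $W_I$ with its labelled classes, is determined by the coordinate-labelled semigroup. Any other residual configuration with the same labelled envelope therefore has the same kernel, provided it also satisfies full resonance rank. The kernel then determines the configuration up to linear equivalence, exactly as in the proof of Theorem~\ref{thm:exact-envelope-information}: two spanning labelled configurations are equivalent if and only if their kernels agree.

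The only delicate point is this last step. I must check that the labelled envelope determines $M_I$, and not merely the isomorphism type of $Q_I$. This holds because the labelling fixes $S_I\subset\N^J$, and therefore its group completion. The remaining steps are dimension counts.
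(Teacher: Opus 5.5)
Your proof is correct and follows the paper's argument essentially step for step. Surjectivity of $\mu_I$ gives $\dim\ker\mu_I=n-k-(|I|-r_I)$, the inclusion $M_I\otimes_\Z\C\subset\ker\mu_I$ gives the bound, equality forces $r_I=|I|$ and \eqref{eq:full-relation-space}, and $\mu_I$ then induces a label-preserving isomorphism $W_I\simeq\h_I^*$. The proviso in your Step 4 is in fact automatic: any competing configuration $\mu'$ with the same labelled $S_I$ satisfies $R_I\subset\ker\mu'$ and $\dim\ker\mu'\le n-k=\dim R_I$, which forces equality.
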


\begin{proof}
Because the ambient weights span $\g^*$, the residual weights span $\h_I^*$.  Indeed, any $\ell\in\h_I^*$ extends to a covector on $\g$; writing that extension as a linear combination of the ambient weights and restricting back to $\h_I$ kills every support weight $\Lambda_i$, $i\in I$, and expresses $\ell$ as a linear combination of the residual weights.  Thus $\mu_I:\C^J\to\h_I^*$ is surjective.  Hence
$$
\dim\ker\mu_I=|J|-\dim\h_I=(n-|I|)-(k-r_I)=n-k-(|I|-r_I).
$$
Since $M_I\otimes_{\Z}\C\subset\ker\mu_I$ and $\dim Q_I=\rk_{\Z}M_I$, the first inequality follows; the second is $r_I\le|I|$. 
If $\dim Q_I=n-k$, both inequalities are equalities.  Thus $r_I=|I|$ and $\dim(M_I\otimes\C)=\dim\ker\mu_I$, which gives~\eqref{eq:full-relation-space}.  The map $\mu_I$ therefore induces an isomorphism $W_I\simeq\h_I^*$ carrying the class of $e_j$ to $\Lambda_j|_{\h_I}$, so the labelled residual configuration is determined up to linear equivalence.
\end{proof}

A support has \emph{full resonance rank} when $\dim Q_I=n-k$.  Proposition~\ref{prop:full-resonance-criterion} shows that this condition automatically forces the support weights to be linearly independent, and Theorem~\ref{thm:exact-envelope-information} then shows that the coordinate-labelled envelope determines the residual configuration.  If the quotient map is considered with the labels of the residual coordinates, Proposition~\ref{prop:first-integrals} gives
$S_I=\{\alpha\in\N^J:w^\alpha\in q_I^*\cO_{Q_I,o_I}\}$; hence the labelled quotient determines $S_I\subset\N^J$ and $M_I\subset\Z^J$.  The labels of the orbit coordinates $z_i$, $i\in I$, identify the boundary components of the standard compactification.  Proposition~\ref{prop:connection-residues} then determines $C_I:M_I\to\C^I$ from the logarithmic residues.  Write $R_I=M_I\otimes_{\Z}\C$, and denote by $C_I^{\C}:R_I\to\C^I$ the complex-linear extension of $C_I$.

\begin{theorem}\label{thm:labelled-reconstruction}
Let $I$ be a non-empty singular support of full resonance rank.  Suppose that the quotient germ $q_I:(\C^J,0)\to(Q_I,o_I)$ is given with the labels of the residual coordinates $w_j$, $j\in J$, and that the logarithmic connection $\nabla_I$ is given with the labels of the support coordinates $z_i$, $i\in I$.  Then the labelled quotient and the labelled logarithmic connection determine the ambient weight configuration $(\Lambda_1,\ldots,\Lambda_n)$ up to linear equivalence.

Set $W_I=\C^J/R_I$ and let $\pi_I^{\mathrm{res}}:\C^J\to W_I$ be the quotient map.  Choose any linear extension $A:\C^J\to\C^I$ of $C_I^{\C}:R_I\to\C^I$.  On $\C^I\oplus W_I$, define
\begin{equation}\label{eq:reconstructed-weights}
\begin{aligned}
\widehat\Lambda_i&=(e_i,0), && i\in I,\\
\widehat\Lambda_j&=\bigl(A(e_j),\pi_I^{\mathrm{res}}(e_j)\bigr), && j\in J.
\end{aligned}
\end{equation}
The linear-equivalence class of~\eqref{eq:reconstructed-weights} is independent of the extension $A$ and is the linear-equivalence class of the original weights.
\end{theorem}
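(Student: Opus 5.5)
Everything will be reduced to linear algebra in $\g^*$ by producing an explicit isomorphism $\g^*\to\C^I\oplus W_I$. By Proposition~\ref{prop:full-resonance-criterion}, full resonance rank gives $r_I=|I|$ and $R_I=\ker\mu_I$. Hence the support weights $\Lambda_i$, $i\in I$, are linearly independent, and Lemma~\ref{lem:leaf-uniformisation} identifies $V_I$ with $\C^I$. The labelled quotient determines $S_I$, and therefore $M_I$ and $R_I$, through Proposition~\ref{prop:first-integrals}. The labelled logarithmic connection determines $C_I:M_I\to\C^I$ through its residues, by Proposition~\ref{prop:connection-residues}. So the input data of~\eqref{eq:reconstructed-weights} are intrinsic, and it remains to show that the resulting configuration is linearly equivalent to the original one for every choice of $A$.

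To build the comparison map, consider the restriction $\g^*\to\h_I^*$. Its kernel is $\Span\{\Lambda_i:i\in I\}$, which has basis $(\Lambda_i)_{i\in I}$. The surjection $\mu_I$ induces $W_I\simeq\h_I^*$, again by Proposition~\ref{prop:full-resonance-criterion}. This gives an exact sequence
$$0\to\C^I\xrightarrow{\ e_i\mapsto\Lambda_i\ }\g^*\to W_I\to0.$$
I would split this sequence in a way adapted to $A$. Define a linear map $\Phi:\C^J\to\g^*$ by
$$\Phi(e_j)=\Lambda_j-\sum_{i\in I}A_i(e_j)\Lambda_i.$$
For $\alpha\in M_I$ we have $\sum_j\alpha_j\Lambda_j=\nu_\alpha=\sum_i c_i(\alpha)\Lambda_i$ and $A(\alpha)=C_I(\alpha)$, so $\Phi(\alpha)=0$. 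By linearity $\Phi$ vanishes on $R_I$ and descends to $\bar\Phi:W_I\to\g^*$. The composite of $\bar\Phi$ with restriction to $\h_I^*$ is the identification $W_I\simeq\h_I^*$, because the $\Lambda_i$ restrict to zero on $\h_I$. Hence $\bar\Phi$ is a section of the sequence.

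Now define $\Psi:\C^I\oplus W_I\to\g^*$ by $\Psi(x,w)=\sum_i x_i\Lambda_i+\bar\Phi(w)$. This is an isomorphism, since it combines the kernel inclusion with a section, and the dimensions add up: $|I|+(k-|I|)=k$. Checking the labels, $\Psi(\widehat\Lambda_i)=\Lambda_i$ for $i\in I$, and $\Psi(\widehat\Lambda_j)=\sum_iA_i(e_j)\Lambda_i+\Phi(e_j)=\Lambda_j$ for $j\in J$. So~\eqref{eq:reconstructed-weights} is linearly equivalent to the original configuration, whatever $A$ is. Independence of $A$ then follows because every choice yields the same class. Only the data $S_I$, $C_I$ and the labels entered, so the class is determined by the labelled quotient and labelled connection.

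The main subtlety is the vanishing of $\Phi$ on $R_I$. This needs two facts: the residue coefficients are exactly the coordinates of $\nu_\alpha$ in the basis $(\Lambda_i)$, which uses independence of the support weights, and $R_I$ is all of $\ker\mu_I$, which is where full resonance rank is used. A further point of care is the convention linking the residue signs in~\eqref{eq:residue-on-character} to $c_i(\alpha)$: reading $C_I$ off from $\operatorname{Res}_{D_i^\infty}$, or from minus $\operatorname{Res}_{D_i^0}$, using the labelled $z_i$, gives $c_i$ with the correct sign.
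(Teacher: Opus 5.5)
Your proof is correct and is essentially the paper's argument. Both identify $\C^I\oplus W_I$ with $\g^*$ through the extension $0\to\Span\{\Lambda_i:i\in I\}\to\g^*\to\h_I^*\to0$, the isomorphism $W_I\simeq\h_I^*$ coming from full resonance rank, and a map $\Psi(u,w)=\sum_i u_i\Lambda_i+\sigma(w)$ with $\sigma:W_I\to\g^*$ a section, the key computation being $\nu_\alpha=\sum_i c_i(\alpha)\Lambda_i$ on $M_I$. The only difference is organisational: the paper fixes a splitting $s$, extracts one admissible extension $A_0$ and handles other $A$ by the shear $(u,w)\mapsto(u+\phi(w),w)$, whereas you build the section $\bar\Phi$ directly from an arbitrary admissible $A$, so independence of $A$ comes for free.
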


\begin{proof}
Because the residual coordinates of the quotient map are labelled, its pullback algebra is known as a subalgebra of $\cO_{\C^J,0}$ with the coordinates $w_j$ fixed.  Proposition~\ref{prop:first-integrals} therefore determines
$S_I=\{\alpha\in\N^J:w^\alpha\in q_I^*\cO_{Q_I,o_I}\}$ and hence its group completion $M_I$, its complex span $R_I$ and the quotient $W_I$.  Proposition~\ref{prop:full-resonance-criterion} determines the residual weights on $W_I$.  For every $\alpha\in S_I$, the label identifies the character $X^\alpha$ uniquely by $q_I^*X^\alpha=w^\alpha$.  Proposition~\ref{prop:connection-residues} then identifies the eigenvalue $-c_i(\alpha)$ of the residue along the boundary component indexed by $i\in I$.  Thus the labelled residues determine $C_I$ on $S_I$; additivity determines it on $M_I$, and complex linearity determines $C_I^{\C}$ on $R_I$.

Let $A$ and $A'$ be two extensions of $C_I^{\C}$.  Their difference vanishes on $R_I$, so there is a unique linear map $\phi:W_I\to\C^I$ such that $A'-A=\phi\circ\pi_I^{\mathrm{res}}$.  The map $F_\phi:\C^I\oplus W_I\to\C^I\oplus W_I$ defined by $F_\phi(u,w)=(u+\phi(w),w)$ is an automorphism.  It fixes every $\widehat\Lambda_i$, $i\in I$, and carries the weights constructed from $A$ to those constructed from $A'$.  The shear therefore proves that the configuration in~\eqref{eq:reconstructed-weights} has a well-defined linear-equivalence class.

To identify this class with that of the original configuration, put $U_I=\Span\{\Lambda_i:i\in I\}\subset\g^*$.  Restriction gives the exact sequence $0\to U_I\to\g^*\to\h_I^*\to0$.  Choose a linear splitting $s:\h_I^*\to\g^*$.  Since the support weights form a basis of $U_I$, for every $j\in J$ write uniquely
\begin{equation*}
\Lambda_j=\sum_{i\in I}a_{ij}\Lambda_i+s(\Lambda_j|_{\h_I}),
\end{equation*}
and let $A_0:\C^J\to\C^I$ have columns $(a_{ij})_{i\in I}$.  For $\alpha\in M_I$,
\begin{equation*}
\sum_{j\in J}\alpha_j\Lambda_j
=\sum_{i\in I}(A_0\alpha)_i\Lambda_i
=\sum_{i\in I}c_i(\alpha)\Lambda_i.
\end{equation*}
Linear independence of the support weights gives $A_0|_{M_I}=C_I$; complex linearity then gives $A_0|_{R_I}=C_I^{\C}$.  Hence $A_0$ is an admissible extension.

\begin{samepage}
Let $\iota_I:\C^I\to U_I$ be the isomorphism $e_i\mapsto\Lambda_i$.  The isomorphism $\overline\mu_I:W_I\to\h_I^*$ of Proposition~\ref{prop:full-resonance-criterion} and the chosen splitting define
$\Psi:\C^I\oplus W_I\to\g^*$ by $\Psi(u,w)=\sum_{i\in I}u_i\Lambda_i+s(\overline\mu_I(w))$.  The reconstructed and original extension sequences are identified by
\begin{equation*}
\begin{tikzcd}[column sep=2.2em,row sep=1.8em]
0 \arrow[r]
& \C^I \arrow[r] \arrow[d,"\iota_I"',"\sim"]
& \C^I\oplus W_I \arrow[r] \arrow[d,"\Psi","\sim"']
& W_I \arrow[r] \arrow[d,"\overline\mu_I","\sim"']
& 0 \\
0 \arrow[r]
& U_I \arrow[r]
& \g^* \arrow[r]
& \h_I^* \arrow[r]
& 0.
\end{tikzcd}
\end{equation*}
\end{samepage}
For the configuration constructed from $A_0$, the identity $\Psi(\widehat\Lambda_\ell)=\Lambda_\ell$ holds for every $\ell$.  Independence of the extension proves the theorem.
\end{proof}

\begin{corollary}\label{cor:single-support-reconstruction}
A single non-empty support of full resonance rank suffices for reconstruction: its labelled quotient map and logarithmic connection with labelled support coordinates determine the ambient weight configuration up to linear equivalence.  The residual envelopes, support morphisms and monodromy representations at all other supports are then determined by these weights.
\end{corollary}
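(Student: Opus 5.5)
The first assertion follows directly from Theorem~\ref{thm:labelled-reconstruction}. I will check that its hypotheses hold for any single non-empty support $I$ of full resonance rank. By Proposition~\ref{prop:full-resonance-criterion}, full resonance rank forces $r_I=|I|$. Hence the support weights are linearly independent, and Lemma~\ref{lem:leaf-uniformisation} identifies $L_I$ with $(\C^*)^I$. Proposition~\ref{prop:connection-residues} then shows that the logarithmic connection $\nabla_I$, with its labelled boundary components $D_i^0$, is defined. The reconstruction theorem uses only data at this one support: the labelled quotient gives $S_I$, $M_I$, $R_I$ and $W_I$, and the labelled residues give $C_I$. The explicit formula~\eqref{eq:reconstructed-weights} therefore returns the linear-equivalence class of $(\Lambda_1,\ldots,\Lambda_n)$ without reference to any other support.

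For the second assertion, I will show that every object attached to an arbitrary support $I'$ is invariant under linear equivalence of the ambient configuration. Let $\Phi:\g^*\to\g^*$ be a linear isomorphism with $\Phi(\Lambda_\ell)=\Lambda'_\ell$ for all $\ell$. Its transpose $\Phi^{\mathsf T}:\g\to\g$ identifies the two actions, up to reparametrising the acting group. It carries $\h_{I'}$ onto $\h'_{I'}$, carries $K_{I'}$ onto $K'_{I'}$, and hence carries $\Gamma_{I'}$ onto $\Gamma'_{I'}$ and $V_{I'}$ onto $V'_{I'}$. The restricted weights $\Lambda_j|_{\h_{I'}}$ correspond under this map. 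Thus the integral relations, and so $S_{I'}$, $Q_{I'}$ and the quotient maps $q_{I'}$ on the labelled coordinates, are literally equal. The covectors $\nu_\alpha$ correspond as well, so formula~\eqref{eq:tau-on-monomials} shows that $\tau_{I'}$ and $\rho_{I'}$ are intertwined by the identification of period groups. The support morphisms $\vartheta_{I,I'}$ in~\eqref{eq:res-monomial} depend only on the semigroups and so agree.

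I expect the only delicate point to be stating precisely what ``determined'' means for the monodromy. It is determined up to the canonical identification of period lattices induced by $\Phi^{\mathsf T}$. At independent supports the identification with $(2\pi i\Z)^{I'}$ from Lemma~\ref{lem:leaf-uniformisation} is canonical, because $\Lambda_{I'}$ is intertwined. In that case the representation is literally equal, and so are the coefficients $c_i(\alpha)$, since they are the coordinates of $\nu_\alpha$ in the basis $\Lambda_{i}$, which is preserved by $\Phi$.
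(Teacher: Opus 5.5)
Your proposal is correct and follows essentially the same route as the paper. The paper invokes Theorem~\ref{thm:labelled-reconstruction} for the first assertion, and for the second says only that the envelopes, support morphisms and monodromy at the other supports follow from their definitions once the weights are known (adding that the monodromy can also be read off from the connection via~\eqref{eq:residue-monodromy}); your invariance-under-linear-equivalence check spells this out, with one harmless slip: with $\Lambda'_\ell=\Phi(\Lambda_\ell)$ one has $\Lambda'_\ell=\Lambda_\ell\circ\Phi^{\mathsf T}$, so $\Phi^{\mathsf T}$ carries $\h'_{I'}$ onto $\h_{I'}$ rather than the reverse.
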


\begin{proof}
The reconstruction assertion is Theorem~\ref{thm:labelled-reconstruction}.  Once the ambient weights are determined, the residual envelopes, support morphisms and monodromy representations follow from their definitions; the monodromy may also be recovered directly from the connection by~\eqref{eq:residue-monodromy}.
\end{proof}

Reconstruction from a single support can fail below full resonance rank.

\begin{proposition}\label{prop:reconstruction-nondetermination}
There exist two diagonal holomorphic actions whose weight configurations have uniform maximal rank and lie in the Poincar\'e domain, with singular supports $I$ satisfying $\dim Q_I<n-k$, whose labelled quotient maps, flat logarithmic connections with labelled support coordinates, and monodromy representations are identical, but whose labelled residual weight configurations are not linearly equivalent.  Their classical transverse Camacho--Sad indices may also be different.
\end{proposition}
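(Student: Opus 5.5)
The plan is to exhibit an explicit pair of $\C^2$-actions on $\C^3$. The example should be obtained from Proposition~\ref{prop:original-residue-nondetermination} by choosing the lifts so that the logarithmic data also coincide. Take $k=2$, $n=3$ and the support $I=\{0\}$, so that $n-k=1$. Choose residual weights $(1,a)$ with $a>0$ on the two labelled transverse coordinates, and use two distinct values $a\ne a'$. As in the proof of that proposition, $S_I=\{0\}$. Hence $Q_I$ is a point, $\dim Q_I=0<1=n-k$, $M_I=0$, the labelled quotient map is the constant map, and there are no characters on which residues or monodromy could act. The flat logarithmic connection on $\mathscr A_I=\cO_{L_I}\otimes\C$ is therefore the trivial connection $d$ in both cases, and the monodromy representations are trivial. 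These data are identical once the support coordinate $z_0$ and the residual coordinates are labelled.

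For the ambient lifts I would write them down explicitly rather than invoke Corollary~\ref{cor:rankone-realisation}. Set
\[
\Lambda_0=(1,0),\qquad \Lambda_1=(c_1,1),\qquad \Lambda_2=(c_2,a),
\]
with $c_1,c_2>0$, and similarly for $a'$. The Poincar\'e condition holds with $\xi=(1,0)$, since every first coordinate is positive. Uniform maximal rank amounts to three non-vanishing $2\times2$ determinants: the two involving $\Lambda_0$ are $1$ and $a$, and the third is $c_1a-c_2$. So any choice with $c_2\ne c_1a$ works, for example $c_1=c_2=1$ when $a\ne1$. Along $I=\{0\}$ we have $\h_I=\{0\}\oplus\C$, and the residual weights are $(1,a)$, respectively $(1,a')$.

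Two points remain. First, the labelled residual configurations $(1,a)$ and $(1,a')$ are not linearly equivalent. A linear isomorphism of $\C$ carrying each labelled weight to the corresponding one is multiplication by a scalar $c$, and $c\cdot1=1$ forces $c=1$, which would give $a=a'$. Second, by Theorem~\ref{thm:original-transverse-residues}, the transverse Camacho--Sad indices are $(a,1/a)$ and $(a',1/a')$. These differ once $a\ne a'$ and, for instance, $aa'\ne1$, or simply once $a\ne a'$ for the first index.

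The only real obstacle is to make precise that "identical" connections and monodromy are meant under the natural identifications of $L_I\simeq\C^*$ with its labelled coordinate $z_0$ and of $Q_I$ with a point. This is immediate here because $M_I=0$.
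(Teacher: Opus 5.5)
Your proposal is correct and follows the paper's proof, which takes two distinct parameters $a\ne a'$ in the family of Proposition~\ref{prop:original-residue-nondetermination}: there $S_I=\{0\}$ makes the labelled quotient, the connection and the monodromy trivial, while the labelled configurations $(1,a)$ are inequivalent and the Camacho--Sad indices vary with $a$. The only difference is that you write the Poincar\'e-domain, uniform-maximal-rank lift $\Lambda_0=(1,0)$, $\Lambda_1=(c_1,1)$, $\Lambda_2=(c_2,a)$ with $c_2\ne c_1a$ explicitly, instead of invoking Corollary~\ref{cor:rankone-realisation}, which produces exactly such a lift.
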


\begin{proof}
Take two distinct parameters in the family of Proposition~\ref{prop:original-residue-nondetermination}.  The labelled envelope, connection and monodromy are identical, while the labelled residual configurations $(1,a)$ are non-equivalent and their transverse Camacho--Sad indices vary with $a$.
\end{proof}

\subsection*{Acknowledgements}
MC is partially supported by the Universit\`a degli Studi di Bari and
is a member of INdAM-GNSAGA. JS is partially supported by UNAM-PAPIIT grant IN101424.

\subsection*{AI disclosure}
AI was used solely for proofreading and improving the clarity of the exposition. All mathematical content, arguments, and results are entirely the authors' own, and the authors assume full responsibility for the content of the manuscript.

\end{document}